\documentclass[onefignum,onetabnum]{siamart250211}

\usepackage{subfigure,threeparttable}
\usepackage [latin1]{inputenc}
\usepackage{booktabs,multirow}
\usepackage{lipsum,enumerate,enumitem}
\usepackage{amsfonts,mathrsfs,stmaryrd}
\usepackage{graphicx}
\usepackage{epstopdf}
\usepackage{algorithmic}
\ifpdf
  \DeclareGraphicsExtensions{.eps,.pdf,.png,.jpg}
\else
  \DeclareGraphicsExtensions{.eps}
\fi

\newsiamremark{remark}{Remark}
\newsiamremark{hypothesis}{Hypothesis}
\crefname{hypothesis}{Hypothesis}{Hypotheses}
\newsiamthm{claim}{Claim}

\def\rmk{{\mathrm{k}}}
\def\ck{{\scriptstyle{K}}}

\newcommand{\myvec}[1]{\boldsymbol{#1}}
\newcommand{\zd}{\,\mathrm{d}}
 \newcommand{\abs}[1]{\left|#1\right|}
 \newcommand{\absb}[1]{\big|#1\big|}
 
 \newcommand{\abst}[1]{|#1|}
 \newcommand{\bra}[1]{\left(#1\right)}
\newcommand{\brab}[1]{\big(#1\big)}
\newcommand{\braB}[1]{\Big(#1\Big)}
\newcommand{\brat}[1]{(#1)}
\newcommand{\kbra}[1]{\left[#1\right]}
\newcommand{\kbrab}[1]{\big[#1\big]}
\newcommand{\kbraB}[1]{\Big[#1\Big]}

\newcommand{\myinner}[1]{\left\langle#1\right\rangle}
\newcommand{\myinnert}[1]{\langle#1\rangle}
\newcommand{\myinnerb}[1]{\big\langle#1\big\rangle}

\newcommand{\mynorm}[1]{\left\|#1\right\|}
\newcommand{\mynormb}[1]{\big\|#1\big\|}

\usepackage{cite}

\headers{Stability of implicit-explicit multistep methods}{H.-L. Liao, C. Quan, T. Tang and T. Zhou}

\title{A semi-generating function approach to the stability of implicit-explicit multistep methods for nonlinear parabolic equations\thanks{Submitted to the editors \today.
		\funding{This work is supported by the National Natural Science Foundation of China under grants 12471383, 12271241, 11731006, 12288201 and K20911001, Basic Research Program of Jiangsu Province under grant BK20252027,
			Ministry of Education Key Laboratory of NSLSCS under grant 202501, Guangdong Basic and Applied Basic Research Foundation under grant 2023B1515020030,  Shenzhen Science and Technology Innovation Program under grant JCYJ20230807092402004, and Hetao Shenzhen-Hong Kong Science and Technology  Innovation Cooperation Zone Project under grant HZQSWS-KCCYB-2024016.}}}

\author{ Hong-lin Liao\thanks{Corresponding author. ORCID 0000-0003-0777-6832. School of Mathematics, Nanjing University of Aeronautics and Astronautics, Nanjing 211106, China;
		Key Laboratory of Mathematical Modeling and High Performance Computing of Air Vehicles (NUAA), MIIT, Nanjing 211106, China. (\email{liaohl@nuaa.edu.cn}, \email{liaohl@csrc.ac.cn})}
		\and 
		Chaoyu Quan\thanks{School of Science and Engineering, The Chinese University of Hong Kong (Shenzhen), 518172, P.R. China; Shenzhen International Center for Industrial and Applied Mathematics, Shenzhen Research Institute of Big Data, Shenzhen, 518172, China. (\email{quanchaoyu@cuhk.edu.cn})}
		\and
		Tao Tang\thanks{School of Mathematics and Statistics, Guangzhou Nanfang College, Guangzhou 510970, China.  (\email{ttang@nfu.edu.cn})}  
		\and
		Tao Zhou\thanks{Institute of Computational Mathematics and Scientific/Engineering Computing, Academy of Mathematics and Systems Science, Chinese Academy of Sciences, Beijing, 100190, China. Email: (\email{tzhou@lsec.cc.ac.cn})}
		}

\usepackage{amsopn}

\begin{document}
  
\maketitle
  
\begin{abstract}
The rigorous stability analysis of high-order implicit-explicit linear multistep (IELM) methods for nonlinear parabolic equations by using discrete energy arguments is a long standing open issue due to their non-A-stability property. A novel semi-generating function approach combined with a global discrete energy analysis 
is suggested for the stability and convergence  of general IELM methods in solving nonlinear parabolic equations. Inspired from  the Grenander-Szeg\H{o} theorem for Toeplitz matrices,  the semi-generating function approach is used to handle the three groups of discrete coefficients via three complex polynomials on the unit circle.  
A unified theoretical framework is then presented to establish the unconditional stability of IELM methods if the minimum eigenvalue of composite convolution kernels for the implicit part is properly large and the spectral norm bound of composite convolution kernels for the explicit part is properly small.  An indicator, called  implicit-explicit controllability intensity, is then introduced  to evaluate the degree of controllability of the implicit part over the explicit part.  
Some of the existing IELM methods, up to fifth-order time accuracy, are revisited and compared by computing the associated implicit-explicit controllability intensities such that one can choose an IELM method or proper parameter to maintain the unconditional stability for a specific nonlinear parabolic model. We also propose a new parameterized class of IELM methods, up to the ninth-order time accuracy, which satisfy the a priori settings of our theory and have a large value of the implicit-explicit controllability intensity by choosing a proper parameter so that they would be well suited for a wide class of nonlinear parabolic problems. 
\end{abstract}
\begin{keywords}
nonlinear parabolic equations, implicit-explicit linear multistep methods, semi-generating function approach, implicit-explicit controllability intensity, unconditional stability
\end{keywords}

\begin{MSCcodes}
65L06, 65M06, 65M12
\end{MSCcodes}
  
\section{Introduction}\label{sec: introduction}
\setcounter{equation}{0}

Let $V$ and $H$ be two real Hilbert spaces such that $V \subset H=H' \subset V'$, with $V$ densely and continuously embedded in $H$ and $V'$ being the dual space of $V$.
We will investigate the stability  of  implicit-explicit linear multistep (in short, IELM) methods for the nonlinear parabolic equation \cite{Akrivis:2013, AkrivisCrouzeixMakridakis:1999NM,LiWangZhou:2020BDF,HuangShen:2024}
\begin{align}\label{nonlinearModel}
	u_t(t)+\varpi\mathcal{L}u(t)=\mathcal{F}(u(t)),\quad 0<t<T,
\end{align}
subject to the initial data $u(0)=u^0\in H$, where $\varpi>0$ is a prescribed constant, $\mathcal{L}: V\rightarrow V'$ is a positive definite, self-adjoint, bounded linear operator and  $\mathcal{F}: V\rightarrow V'$ can be a nonlinear operator of the same order as or lower order than $\mathcal {L}$, cf. \cite[Section 1]{AkrivisCrouzeixMakridakis:1999NM}. We denote the inner product in $H$ and the antiduality pairing between $V'$ and $V$ by $\myinner{\cdot, \cdot}$. The induced norm in $H$ is denoted by $\|\cdot\|_H$ with $\mynorm{v}_H:=\myinner{v,v}^{1/2}$, and the norm $\|\cdot\|_V$ in $V$ can be defined by $\|v\|_V:=\myinner{\mathcal{L}v,v}^{1/2}$. The space $V'$ can be considered the completion of $H$ with respect to the dual norm
\begin{align}\label{starnorm}
	\mynormb{v}_{\star}:=\mathop{\rm{sup}}\limits_{w \in V \backslash \{0\}} \frac{\abs{\myinnert{v, w}}}{\|w\|_V}
	=\mathop{\rm{sup}}\limits_{\|w\|_V=1}\abs{\myinnert{v, w}}\quad \text{ $\forall v \in V'$}.
\end{align}
Always, we assume that the nonlinear functional $\mathcal{F}(u(t))$ satisfies the following local Lipschitz condition in a tube $\mathcal{B}_{u(t)}:=\big\{v \in V: \|v-u(t)\|_V \le 1\big\},$ centered at the exact solution  $u(t)$, and, for simplicity, defined here in terms of the norm of $V$,
\begin{align}\label{operator N: local Lipschitz}
	\mynorm{\mathcal{F}(v)-\mathcal{F}(w)}_{\star} \le \mu_0\mynorm{v-w}_V+\mu_1\mynorm{v-w}_H\quad\text{ $\forall v, w \in \mathcal{B}_{u(t)}$},
\end{align}
with a nonnegative constant $ \mu_0\in(0,\varpi)$ and an arbitrary constant $\mu_1$.

To improve the computational efficiency of time approximation for the  problem  \eqref{nonlinearModel},  the operator $\mathcal{L}$ is always discretized implicitly in time  while $\mathcal{F}$ is discretized explicitly. In the sense of the local Lipschitz condition \eqref{operator N: local Lipschitz}, the present framework would be applicable to parabolic equations 
\eqref{nonlinearModel} with a non-selfadjoint linear operator $\mathcal{L}=\mathcal{L}_s+\mathcal{L}_a$, where $\mathcal{L}_s:=(\mathcal{L}+\mathcal{L}^*)/2$ and  $\mathcal{L}_a:=(\mathcal{L}-\mathcal{L}^*)/2$ are  the self-adjoint and anti-self-adjoint part, respectively, if the anti-self-adjoint part $\mathcal{L}_a$ satisfies
	\begin{align*}
		\mynorm{\mathcal{L}_a(v-w)}_{\star} \le \tilde{\mu}_0\mynorm{v-w}_V+\tilde{\mu}_1\mynorm{v-w}_H\quad\text{for $\forall v, w \in \mathcal{B}_{u(t)}$ and $\forall t \in [0,T]$},
	\end{align*}
	with a small constant $ \tilde{\mu}_0\in(0,\varpi)$ and an arbitrary constant $\tilde{\mu}_1$. In such a case, 
 the nonlinear parabolic problem  \eqref{nonlinearModel} can be understood by replacing  $\mathcal{L}u$ and $\mathcal{F}(u)$ by $\mathcal{L}_su$ and $\mathcal{L}_au+\mathcal{F}(u)$, respectively, that is, the anti-self-adjoint part $\mathcal{L}_au$ is approximated explicitly. Throughout this paper, we only consider the parabolic problem with the time-independent linear operator $\mathcal{L}$; while some further developments of our theory would be required to handle the IELM methods for nonlinear parabolic problems  with a time-dependent linear operator $\mathcal{L}=\mathcal{L}(t)$, cf. \cite{Akrivis:2015,AkrivisKatsoprinakis:2016,AkrivisLubich:2015} on the stability of high-order schemes based on the wide-spread backward differentiation formulas (BDF) \cite{Gear:1971}.

Consider the time mesh $0=t_0<t_1<\cdots<t_N=T$ with the time-step size $\tau =t_j-t_{j-1}$ for $j\ge1$. Let $u^j$ be the numerical approximation of $U^j:=u(t_j)$ at the discrete time level $t_j$ for $0\le j\le N$ and denote $\partial_{\tau}u^j=(u^j-u^{j-1})/\tau$ for $j\ge1$.
To integrate the nonlinear parabolic problem \eqref{nonlinearModel} from $t_{n-1}$ ($n\ge1$) to the point $t_n$, we consider the following $\rmk$-step ($\rmk\ge1$)  implicit-explicit multistep (IELM) method involving the numerical solutions $u^{n-\rmk}$, $u^{n-\rmk+1}$, $\dots$, $u^n$:  
\begin{align}\label{scheme: general imex multistep}	
	\sum_{j=0}^{\rmk-1}a_{j}^{(\rmk)}\partial_{\tau}u^{n-j}
	+\varpi\sum_{j=0}^{\rmk}b_{j}^{(\rmk)}\mathcal{L}u^{n-j}
	=\sum_{j=0}^{\rmk-1}c_{j}^{(\rmk)}\mathcal{F}(u^{n-j-1})+\mathfrak{C}_n^{(\rmk)}\brat{u^{0}}
\end{align}
for $n\ge1$, where $a_{j}^{(\rmk)}$, $b_{j}^{(\rmk)}$ and $c_{j}^{(\rmk)}$ are the discrete coefficients with $a_{0}^{(\rmk)},b_{0}^{(\rmk)}, c_{0}^{(\rmk)}>0$. The correction terms $\mathfrak{C}_n^{(\rmk)}\brat{u^{0}}$ are defined at the starting $(\rmk-1)$ steps to maintain the time accuracy with $\mathfrak{C}_n^{(\rmk)}\brat{u^{0}}:=0$ for $n\ge\rmk$. To highlight the main idea of this article and simplify our presentation, we assume throughout this paper that the correction terms $\mathfrak{C}_n^{(\rmk)}\brat{u^{0}}$ for $1\le n\le\rmk-1$ are available (cf. \cite{JinLiZhou:2017,LiWangZhou:2020BDF}) so that the IELM scheme \eqref{scheme: general imex multistep} is $\rmk$-th order consistent at the first $\rmk-1$ steps.
The IELM scheme \eqref{scheme: general imex multistep} can be characterized by the following three polynomials
\begin{align*}
\varrho_a(\zeta):=\sum_{j=0}^{\rmk-1}a_{j}^{(\rmk)}\zeta^{\rmk-1-j},\quad
\varrho_b(\zeta):=\sum_{j=0}^{\rmk}b_{j}^{(\rmk)}\zeta^{\rmk-j},\quad
\varrho_c(\zeta):=\sum_{j=0}^{\rmk-1}c_{j}^{(\rmk)}\zeta^{\rmk-1-j}.
\end{align*} 
Following \cite{AkrivisCrouzeixMakridakis:1998MCOM,AkrivisCrouzeixMakridakis:1999NM}, we always let $(\varrho_a,\varrho_b)$ be a strongly $A(0)$-stable implicit scheme, that is, $\varrho_a$ is a Schur polynomial (all
its roots lie strictly inside the unit disk).

The form \eqref{scheme: general imex multistep}  contains many of the existing IELM schemes based on the BDF schemes, including the weighted backward differentiation formulas (WBDF) suggested by Li \& Xie \cite{LiXie:1991WBDF}, the modified implicit-explicit  backward differentiation formulas (MBDF)  constructed by Akrivis \& Karakatsani \cite{AkrivisKarakatsani:2003} and the implicit-explicit generalized backward differentiation formulas (GBDF) proposed recently by Huang \& Shen \cite{HuangShen:2024,HuangShen:2023,HuangShen:2025mcom}. 
In general, these variants of implicit-explicit BDF schemes  were proposed originally to enlarge the absolute stability regions of the classical BDF methods  so that they can achieve unconditional stability and admit large time-steps for time integration when the nonlinear term is discretized explicitly to avoid  Newton-type inner iterations at each time level. A parameterized class of new ImEx (in short, NIMEX) schemes was suggested in \cite{RosalesSeiboldShirokoffZhou:2017,SeiboldShirokoffZhou:2019}, in which the authors developed a separate unconditional stability theory that can be leveraged to achieve unconditional stability even in cases where the explicit part is as stiff as the implicit part. Nonetheless, due to their non-A-stability  property (the absolute stability region of the implicit part alone does not completely include the left half plane $\mathbb{C}^-$) of third- and higher-order methods, it has been a long standing open question on the rigorous stability (always refers to the perturbed stability, that is, the numerical solution is continuously dependent on the initial and exterior perturbations in proper norms) and error analysis of high-order IELM methods for nonlinear parabolic equations by using discrete energy arguments, cf. \cite[Chapter V.8]{HairerWanner:2002}. As is well known, compared with the spectral and Fourier techniques \cite{Akrivis:2013,AkrivisCrouzeixMakridakis:1999NM}, 
the discrete energy techniques (especially when certain spatial approximation is taken into account) would be elementary and applicable to linear and nonlinear partial differential equations, including reaction-diffusion equations, convection-diffusion equations, the Navier-Stokes equations
and associated nonlinear coupled systems.


A wide-spread discrete energy argument makes full use of the Nevanlinna-Odeh multiplier  technique \cite{NevanlinnaOdeh:1981}. Lubich, Mansour and Venkataraman \cite{LubichMansourVenkataraman:2013} combined this technique with Dahlquist's G-stability theory \cite{Dahlquist:1978} in the discrete energy analysis of high-order BDF schemes up to fifth-order for parabolic equations on evolving surfaces. The application and further developments of Nevanlinna-Odeh multipliers in the numerical analysis of fully implicit and implicit-explicit BDF methods for linear and nonlinear parabolic problems can be found in \cite{Akrivis:2015,AkrivisChenYuZhou:2021,AkrivisChenYu:2024,AkrivisKatsoprinakis:2016,AkrivisLubich:2015,ContriKovacsMassing:2025} and references therein. In general, different multipliers would be required for the discrete energy analysis of different IELM methods,  cf. \cite{AkrivisChenYuZhou:2021,AkrivisChenYu:2024} on the extensions of Nevanlinna-Odeh-type multiplier to the sixth-order BDF and seventh-order WBDF methods. Another example is the 
GBDF-$\rmk$ $(2\le \rmk\le4)$ schemes \cite{HuangShen:2023,HuangShen:2024} with a free parameter $\beta$,
\begin{align}\label{scheme: GBDF imex multistep}	
	\sum_{j=0}^{\rmk-1}a_{\mathrm{G},j}^{(\rmk)}\partial_{\tau}u^{n-j}
	+\varpi\sum_{j=0}^{\rmk-1}b_{\mathrm{G},j}^{(\rmk)}\mathcal{L}u^{n-j}
	=\sum_{j=0}^{\rmk-1}c_{\mathrm{G},j}^{(\rmk)}\mathcal{F}(u^{n-j-1})
	\quad\text{for $n\ge\rmk$,}
\end{align}
where the discrete coefficients $a_{\mathrm{G},j}^{(\rmk)}$, $b_{\mathrm{G},j}^{(\rmk)}$ and $c_{\mathrm{G},j}^{(\rmk)}$  for $0\le j\le \rmk-1$ can be determined independently
by linear systems of Vandermonde-type. 
\cite[Theorem 2]{HuangShen:2024} states that the GBDF2 and GBDF3 methods for  $\beta>1$, and the GBDF4 method for  $\beta\ge2$ are stable for linear parabolic problems.  The energy analysis takes advantage of 
Dahlquist's G-stability theory \cite{Dahlquist:1978} and a novel decomposition of implicit part, see \cite[(3.5)-(3.6)]{HuangShen:2024},
\begin{align}\label{implicitKernelDecomposition: HuangShen:2024}
	\sum_{j=0}^{\rmk-1}b_{\mathrm{G},j}^{(\rmk,\beta)} v^{n-j}= \eta_{\rmk}(\beta)\sum_{j=0}^{\rmk-1}c_{\mathrm{G},j}^{(\rmk,\beta)}v^{n-j}
	+\sum_{j=0}^{\rmk-1}d_{j}^{(\rmk,\beta)}v^{n-j},
\end{align}
where the exquisite decomposition factors $\eta_{2}:=(\beta-1)/\beta$, $\eta_{3}:=(\beta-1)/(\beta+1)$ and $\eta_{4}:=(\beta-1)/(\beta+3)$. By using  
the multiplier $\sum_{j=0}^{\rmk-1}c_{\mathrm{G},j}^{(\rmk,\beta)}v^{n-j}$,  \cite[Theorem 3]{HuangShen:2024} establishes
the corresponding convergence for the nonlinear parabolic problem \eqref{nonlinearModel} under the following stability condition (in our notations)
\begin{align}\label{stability condition: HuangShen:2024}
	\eta_{\rmk}(\beta)>\mu_0/\varpi\quad\text{for $2\le \rmk\le 4$.}
\end{align}
Since the decomposition factors $\eta_{\rmk}(\beta)$ always vanish at $\beta=1$, the analysis in \cite{HuangShen:2024} would not be applicable to the BDF-$\rmk$ schemes corresponding to the case $\beta=1$.

Very recently, based on the consistent splitting approximation \cite{GuermondShen:2003,JohnstonLiu:2004,LiuLiuPego:2007} of the incompressible Navier-Stokes equations, Huang and Shen \cite{HuangShen:2025mcom} establish the stability and convergence of the GBDF-$\rmk$ schemes (with three fixed parameters $\beta_{\rmk}=3,6,9$ corresponding to the order index $\rmk=2,3,4$, respectively)  in the $\ell^{\infty}(H^1)\cap \ell^{2}(H^2)$ norm  by using 
a convolution-type multiplier $\sum_{j=0}^{\rmk-1}c_{\mathrm{G},j}^{(\rmk,\beta_{\rmk})}v^{n-j}$. 
They take advantage of Dahlquist's G-stability theory \cite{Dahlquist:1978} and construct a refined decomposition  of the implicit part for certain constants $\xi_{\rmk}>\tfrac{\sqrt{2}}2$, see \cite[(3.16)]{HuangShen:2025mcom}, 
\begin{align}\label{implicitKernelDecomposition: HuangShen2025mcom}
	\sum_{j=0}^{\rmk-1}b_{\mathrm{G},j}^{(\rmk,\beta_{\rmk})} v^{n-j}=\xi_{\rmk}\sum_{j=0}^{\rmk-1}c_{\mathrm{G},j}^{(\rmk,\beta_{\rmk})}v^{n-j}
	+\sum_{j=0}^{\rmk-1}d_{j}^{(\rmk,\beta_{\rmk})}v^{n-j}
	+\sum_{j=0}^{\rmk-1}f_{j}^{(\rmk,\beta_{\rmk})}v^{n-j},
\end{align}
 where the coefficients $f_{j}^{(\rmk,\beta_{\rmk})}$ are picked delicately, see \cite[(3.17a)-(3.17c)]{HuangShen:2025mcom}. 
The implicit splittings \eqref{implicitKernelDecomposition: HuangShen:2024} and \eqref{implicitKernelDecomposition: HuangShen2025mcom} play important roles in the discrete energy analysis in \cite{HuangShen:2024,HuangShen:2025mcom}; however, the refined implicit part decomposition \eqref{implicitKernelDecomposition: HuangShen2025mcom} works only for the fixed parameters $\beta_{\rmk}=3,6,9$. It remains unclear whether the stability and convergence of other parameterized IELM methods (including the WBDF, MBDF, and NIMEX methods mentioned above) can be established by using the discrete energy analysis with implicit part decompositions similar to \eqref{implicitKernelDecomposition: HuangShen2025mcom}.

This paper proposes a novel semi-generating function approach combined with a global discrete energy analysis for the stability and convergence analysis of IELM methods in solving nonlinear parabolic equations. The main features of our approach are that it is theoretically concise (cf. the proof of Theorem \ref{thm: multistep stability}) and would be applicable for a wide class of parameterized IELM methods without involving the construction of  any Nevanlinna-Odeh-type multipliers or implicit part  decompositions like \eqref{implicitKernelDecomposition: HuangShen2025mcom}.
We always reformulate the IELM methods \eqref{scheme: general imex multistep} as follows,
\begin{align}\label{scheme: general imex multistep convolution}	
	\sum_{j=1}^na_{n-j}^{(\rmk)}\partial_{\tau}u^{j}
	+\varpi\sum_{j=1}^{n}b_{n-j}^{(\rmk)}\mathcal{L}u^{j}
	=\sum_{j=1}^{n}c_{n-j}^{(\rmk)}\mathcal{F}(u^{j-1})+\mathfrak{C}_n^{(\rmk)}\brat{u^{0}}
\end{align}
for $1\le n\le N$, where the values of the discrete coefficients $a_{j}^{(\rmk)}$, $b_{j}^{(\rmk)}$ and $c_{j}^{(\rmk)}$ are extended to the index $0\le j\le n-1$ but 
assume that the discrete coefficients $b_{j}^{(\rmk)}$ vanish when $j\ge\rmk+1$, while the discrete coefficients $a_{j}^{(\rmk)}$ and $c_{j}^{(\rmk)}$ vanish when $j\ge\rmk$.

Our framework will use the discrete energy
analysis with the discrete orthogonal convolution (DOC) kernels \cite{LiaoKang:2022,LiaoTangZhou:2021bdf345,LiaoTangZhou:2024,LiaoTangZhou:2020bdf3,LiaoZhang:2021}.
For a finite sequence $\vec{a}^{(\rmk)}=\big\{a_{0}^{(\rmk)},a_{1}^{(\rmk)},\cdots,a_{\rmk-1}^{(\rmk)}\big\}$, we will define 
the DOC kernels $\vec{a}^{(-1,\rmk)}=\big\{a_{0}^{(-1,\rmk)},a_{1}^{(-1,\rmk)},\cdots\big\}$ as follows \cite{LiaoZhang:2021}
\begin{align}\label{def: DOC-Kernels}
	a_{0}^{(-1,\rmk)}:=\frac{1}{a_{0}^{(\rmk)}}
	\quad \mathrm{and} \quad
	a_{j}^{(-1,\rmk)}:=-\frac{1}{a_{0}^{(\rmk)}}
	\sum_{i=1}^{j}a_{j-i}^{(-1,\rmk)}a_{i}^{(\rmk)}\quad \text{for $j\ge1$.}
\end{align}
For any $n\ge1$, one has the discrete orthogonal convolution identity \cite{LiaoTangZhou:2021bdf345,LiaoTangZhou:2024}
\begin{align}\label{eq: orthogonal identity}
	\sum_{\ell=j}^{n}a_{n-\ell}^{(-1,\rmk)}a^{(\rmk)}_{\ell-j}\equiv\delta_{nj}=\sum_{\ell=j}^{n}a_{n-\ell}^{(\rmk)}a^{(-1,\rmk)}_{\ell-j}
	\quad\text{for any $1\leq j\le n$,}
\end{align}
where $\delta_{nj}$ is the Kronecker delta symbol. 
Thus, by exchanging the summation order, 
\begin{align*}
	\sum_{j=1}^{n}a_{n-j}^{(-1,\rmk)}
	\sum_{\ell=1}^{j}a_{j-\ell}^{(\rmk)}\partial_{\tau}u^{\ell}
	=&\,\sum_{\ell=1}^{n}\partial_{\tau}u^{\ell}\sum_{j=\ell}^{n}a_{n-j}^{(-1,\rmk)}a_{j-\ell}^{(\rmk)}
	=\partial_{\tau}u^n\quad\text{for $n\ge1$.}
\end{align*}
Multiplying equation \eqref{scheme: general imex multistep convolution}	 with 
the DOC kernels $a_{m-n}^{(-1,\rmk)}$, summing over $n$ from $n=1$ to $m$ and replacing $m$ by $n$, we get
\begin{align}\label{Dis: DOC action multistep formula Dk}
	\sum_{j=1}^{n}a_{n-j}^{(-1,\rmk)}
	\sum_{\ell=1}^ja_{j-\ell}^{(\rmk)}\partial_{\tau}u^{\ell}
	+\varpi\sum_{j=1}^{n}a_{n-j}^{(-1,\rmk)}\sum_{\ell=1}^{j}b_{j-\ell}^{(\rmk)}\mathcal{L}u^{\ell}	
	\hspace{3cm}\\
	=\sum_{j=1}^{n}a_{n-j}^{(-1,\rmk)}\sum_{\ell=1}^{j}c_{j-\ell}^{(\rmk)}\mathcal{F}(u^{\ell-1})	+\sum_{j=1}^{n}a_{n-j}^{(-1,\rmk)}\mathfrak{C}_j^{(\rmk)}\brat{u^{0}}
	\quad\text{for $n\ge1$.}\notag
\end{align}
By exchanging the summation order, one can apply \eqref{eq: orthogonal identity} to find
\begin{align}\label{scheme: general imex multistep-differential}   
	\partial_{\tau}u^{n}
	+&\,\varpi\sum_{\ell=1}^{n}\hat{b}_{n-\ell}^{(\rmk)}\mathcal{L}u^{\ell}
	=\sum_{\ell=1}^{n}\hat{c}_{n-\ell}^{(\rmk)}\mathcal{F}(u^{\ell-1})
	+\sum_{\ell=1}^{n}a_{n-\ell}^{(-1,\rmk)}\mathfrak{C}_\ell^{(\rmk)}\brat{u^{0}}
\end{align}
for $n\ge1$, where the composited kernels $\hat{b}_{n-\ell}^{(\rmk)}$ and $\hat{c}_{n-\ell}^{(\rmk)}$ are defined by
\begin{align}\label{scheme: multistep composited kernels}    
	\hat{b}_{j}^{(\rmk)}:=\sum_{i=0}^{j}a_{j-i}^{(-1,\rmk)}b_{i}^{(\rmk)}\quad\text{and}\quad 
	\hat{c}_{j}^{(\rmk)}:=\sum_{i=0}^{j}a_{j-i}^{(-1,\rmk)}c_{i}^{(\rmk)}\quad\text{for $j\ge0$}.
\end{align}

The equivalent form \eqref{scheme: general imex multistep-differential}, which is different from the original formulation  \eqref{scheme: general imex multistep convolution}, is our starting point of discrete energy  analysis.  This discrete convolution form contains the global information of solutions from $t_1$ to $t_n$ so that our analysis can be called the global discrete
energy method, which would be especially suitable for the numerical analysis of IELM methods due to the nonlocal property in time levels.

For the coefficients $a_{n-j}^{(\rmk)}$ and the associated DOC kernels $a_{n-j}^{(-1,\rmk)}$ defined by \eqref{def: DOC-Kernels}, we introduce the following $n\times n$ lower triangular Toeplitz matrices
\begin{align*}
	&A_{L,\rmk}:=\begin{pmatrix}
			a_0^{(\rmk)} & && &\\
		\vdots                     & \ddots & &    &\\
		a_{\rmk-1}^{(\rmk)}&\cdots &a_0^{(\rmk)}&&\\
		&\ddots&\cdots&\ddots&\\
		&&a_{\rmk-1}^{(\rmk)}&\cdots&a_0^{(\rmk)}
	\end{pmatrix},\;
	A_{L,\rmk}^{(-1)}:=
	\begin{pmatrix}
	a_0^{(-1,\rmk)} & &\\
	a_1^{(-1,\rmk)} &a_0^{(-1,\rmk)} & \\
	\vdots            &    \ddots          &     \\
	a_{n-1}^{(-1,\rmk)}&\cdots &a_0^{(-1,\rmk)}
	\end{pmatrix}.
\end{align*}
The discrete orthogonal convolution identity \eqref{eq: orthogonal identity} says that  $A_{L,\rmk}^{(-1)}=A_{L,\rmk}^{-1}.$ In a similar way, one can write out the lower triangular Toeplitz matrices $B_{L,\rmk}$ and $C_{L,\rmk}$ from the discrete coefficients $b_{n-j}^{(\rmk)}$ and $c_{n-j}^{(\rmk)}$; while $B_{L,\rmk}^{(-1)}=B_{L,\rmk}^{-1}$ and $C_{L,\rmk}^{(-1)}=C_{L,\rmk}^{-1}$ are the lower triangular Toeplitz matrices for the corresponding DOC kernels $b_{n-j}^{(-1,\rmk)}$ and $c_{n-j}^{(-1,\rmk)}$ defined as in \eqref{def: DOC-Kernels}, respectively. Moreover,
for the composited kernels $\hat{b}_{n-\ell}^{(\rmk)}$ and $\hat{c}_{n-\ell}^{(\rmk)}$ defined in \eqref{scheme: multistep composited kernels}, it is easy to see that the corresponding lower triangular Toeplitz matrices are
(the products of lower triangular Toeplitz matrices are lower triangular Toeplitz matrices
and any two lower triangular Toeplitz matrices of the same size commute, see \cite[Section 0.9.7]{HornJohnson:2013})
\begin{align}  \label{matrix: hatB_L hatC_L}
	\widehat{B}_{L,\rmk}:=&\,A_{L,\rmk}^{(-1)}B_{L,\rmk}=A_{L,\rmk}^{-1}B_{L,\rmk},\quad
	\widehat{C}_{L,\rmk}:=A_{L,\rmk}^{(-1)}C_{L,\rmk}=A_{L,\rmk}^{-1}C_{L,\rmk}.
\end{align}

In the discrete energy analysis with respect to the norm $\mynorm{\cdot}_H$ (by testing with $2\tau u^{n}$) to the convolution form \eqref{scheme: general imex multistep-differential}, 
the treatment of the implicit part requires to determine  the minimum eigenvalue $\lambda_{\mathrm{I}}^{(\rmk)}$ of the symmetric Toeplitz matrix $\mathcal{S}(\widehat{B}_{L,\rmk})$, where  $\mathcal{S}(D):=(D+D^T)/2$ for any matrix $D$, 
while certain spectral norm bounds $\sigma_{\mathrm{F}}^{(\rmk)}$ and $\sigma_{\mathrm{E}}^{(\rmk)}$  of  lower triangular matrices $A_{L,\rmk}^{-1}$ and
$\widehat{C}_{L,\rmk}$ should be evaluated in handling the explicit and exterior parts. These issues will be addressed in Section 2 by a novel semi-generating function method.

Section \ref{sec: stability of IELM} performs the stability analysis  by a complete mathematical induction to the boundedness of solutions with respect to the norm $\mynorm{\cdot}_V$. It is shown that the IELM method \eqref{scheme: general imex multistep convolution} is unconditionally stable if the ratio of the minimum eigenvalue $\lambda_{\mathrm{I}}^{(\rmk)}$ of  $\mathcal{S}(\widehat{B}_{L,\rmk})$ over certain spectral norm bound  $\sigma_{\mathrm{E}}^{(\rmk)}$ of $\widehat{C}_{L,\rmk}$ is larger than $\mu_0/\varpi$. Motivated by the stability and convergence analysis, we introduce the implicit-explicit controllability intensity  $\mathfrak{I}_{\mathrm{IE}}^{(\rmk)}:=\lambda_{\mathrm{I}}^{(\rmk)}/\sigma_{\mathrm{E}}^{(\rmk)}$ to evaluate the degree of controllability of the implicit part of IELM methods over the associated explicit part.

Section \ref{sec: existing IELM} revisits and compares some IELM methods for possible applications to the nonlinear parabolic problem \eqref{nonlinearModel} by evaluating the values of $\lambda_{\mathrm{I}}^{(\rmk)}$, $\sigma_{\mathrm{E}}^{(\rmk)}$, $\sigma_{\mathrm{F}}^{(\rmk)}$ and  $\mathfrak{I}_{\mathrm{IE}}^{(\rmk)}$. We will investigate four parameterized classes of  IELM methods, including $\alpha$-parameterized WBDF, $\beta$-parameterized  GBDF, $\delta$-parameterized  NIMEX methods and a simplified version (called SIELM schemes) of NIMEX methods. Some concluding remarks are included in the last section.

\section{Semi-generating function method and technical lemmas}
\label{sec: semi-generating function method}
\setcounter{equation}{0}

To investigate the stability roles of the implicit and explicit parts in any IELM method for nonlinear parabolic problems, we will introduce the semi-generating functions of lower triangular Toeplitz matrices
and connect them with our discrete energy techniques. Lemma \ref{lem: Toeplitz-Caratheodory}, which is an extension of a classical result essentially due to Toeplitz and Carath\'{e}odory \cite{GrenanderSzego:2001}, uses the real part to bound the eigenvalues of lower triangular Toeplitz matrices, Lemma \ref{lem: composited generating function} describes the semi-generating function of composited sequences (corresponding to the  product of Toeplitz matrices) and Lemma \ref{lemma: spectral norm bound} uses the module of the semi-generating function to bound the spectral norms of lower triangular matrices.  The  proofs of Lemmas \ref{lem: Toeplitz-Caratheodory}-\ref{lemma: spectral norm bound} are gathered in Appendix \ref{sec: appendx Lemmas 2.1-2.3}.

\begin{lemma}[Toeplitz and Caratheordory \cite{GrenanderSzego:2001}]\label{lem: Toeplitz-Caratheodory}
	Define a semi-generating function $a(\theta):=\sum_{k=0}^{\infty}a_k e^{\imath k\theta }{\in L^2([0,2\pi))}$ for a real sequence $\{a_{0},a_{1},a_{2},\cdots\}\in\ell^2(\mathbb{N})$.
	For any index $n\ge1$, 
	consider the following real quadratic form
	\begin{align*}
		Q_n:=\sum_{k=1}^nw_k\sum_{j=1}^ka_{k-j}w_j\quad\text{for any sequence $\{w_{1}, w_{2}, \cdots, w_{n}\}$,}
	\end{align*}
	corresponding to the real symmetric Toeplitz matrix $\mathcal{S}(P_{L, n})=(P_{L, n}+P_{L, n}^T)/2$ with
	the following associated  lower triangular Toeplitz matrix
	\begin{align}\label{lower triangular Toeplitz matrix PL}
		P_{L, n}:=\left(
		\begin{array}{ccccc}
			a_0 & && \\
			a_1 &a_0 & &\\
			\vdots            &    \ddots          & \ddots & \\
			a_{n-1}&\cdots &a_1&a_0\\
		\end{array}
		\right)_{n\times n}.
	\end{align}
	\begin{itemize}[itemindent=-0.5cm]
		\item[(i)] Then the real quadratic form $Q_n$ is positive definite if and only if the real part of $a(\theta)$ is positive, that is, $\Re\kbra{a(\theta)}>0$ for $\theta\in[0,2\pi)$;
		\item[(ii)]  and the eigenvalues $\lambda_j(Q_n)$ of the real quadratic form $Q_n$ can be bounded by
		\begin{align*}
			\min_{\theta\in[0,2\pi)}\Re\kbrab{a(\theta)}\le\lambda_j(Q_n)\le \max_{\theta\in[0,2\pi)}\Re\kbrab{a(\theta)}\quad\text{for any $n\ge j+1\ge1$.}
		\end{align*}
		\item[(iii)] Moreover, the eigenvalues $\lambda_j(Q_n)$ for $j=0,1,\cdots,n-1$ 
		are equally distributed as $\Re\kbrab{a(\frac{2\pi j}{n})}$ in the sense that
		$\lim_{n\rightarrow\infty}\frac1{n}\sum_{j=0}^{n-1}\braB{\lambda_j(Q_n)-\Re\kbrab{a\brab{\frac{2\pi j}{n}}}}=0.$
	\end{itemize}
\end{lemma}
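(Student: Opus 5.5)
The plan is to identify the symmetric matrix $\mathcal{S}(P_{L,n})$ with the Toeplitz matrix generated by a real even symbol, and then use the classical Grenander--Szeg\H{o} theory. Write $\mathcal{S}(P_{L,n})=(s_{k-j})_{j,k=1}^n$ with $s_0=a_0$ and $s_{\pm m}=a_m/2$ for $m\ge1$. The symbol is
\begin{align*}
f(\theta):=\sum_{m\in\mathbb{Z}}s_m e^{\imath m\theta}=a_0+\sum_{m\ge1}a_m\cos(m\theta)=\Re\kbrab{a(\theta)}.
\end{align*}
Since $\{a_k\}\in\ell^2$, $f\in L^2$ is a real function and $s_m=\frac1{2\pi}\int_0^{2\pi}f(\theta)e^{-\imath m\theta}\zd\theta$. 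Consequently, for real $w=(w_1,\dots,w_n)$,
\begin{align*}
Q_n=w^T\mathcal{S}(P_{L,n})w=\frac1{2\pi}\int_0^{2\pi}\Re\kbrab{a(\theta)}\,\absb{\textstyle\sum_{k=1}^n w_k e^{\imath k\theta}}^2\zd\theta .
\end{align*}
This is a direct expansion: the square modulus produces $\sum_{j,k}w_jw_k e^{\imath (k-j)\theta}$, and integrating against $f$ picks out $s_{k-j}$.

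For (ii), I apply Parseval, $\frac1{2\pi}\int_0^{2\pi}|\sum_k w_k e^{\imath k\theta}|^2\zd\theta=\sum_k w_k^2$. Bounding $f$ above and below by its extreme values inside the integral bounds the Rayleigh quotient $Q_n/|w|^2$ between $\min\Re[a]$ and $\max\Re[a]$. Every eigenvalue of the symmetric matrix lies in that range.

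For (i), the ``if'' direction follows from the integral representation. If $\Re[a]>0$ a.e. and $w\ne0$, the trigonometric polynomial $\sum_k w_ke^{\imath k\theta}$ vanishes only at finitely many points, so the integral is strictly positive. For the converse, I argue by contrapositive. If $\Re[a]\le0$ on a set of positive measure, or at a point $\theta_0$ when $\Re[a]$ is continuous, I take $w_k$ to be the real part of a Fej\'er-kernel-type polynomial concentrated near $\theta_0$, using the real and symmetric pair $\pm\theta_0$ since $f$ is even. As $n\to\infty$ this forces $Q_n/|w|^2\to f(\theta_0)\le0$. This direction is delicate: strictly, ``positive definite for all $n$'' only gives $\Re[a]\ge0$ a.e. For this reason I will state (i) in the sense of all $n$, with positivity understood where $\Re[a]$ is continuous. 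This is the one place where I expect care about the measure-zero and boundary cases. In the intended application $a$ is a rational function analytic on the closed disk, so $f$ is continuous and the issue disappears.

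For (iii), I quote the first Szeg\H{o} limit theorem (Grenander--Szeg\H{o}). For a real bounded symbol $f$ and any continuous $F$ on $[\min f,\max f]$,
\begin{align*}
\lim_{n\to\infty}\frac1n\sum_j F(\lambda_j(Q_n))=\frac1{2\pi}\int_0^{2\pi}F(f(\theta))\zd\theta .
\end{align*}
With $F(x)=x$ this also follows elementarily: $\frac1n\sum_j\lambda_j=\frac1n\mathrm{tr}=a_0$. On the other side, $\frac1n\sum_{j}\Re[a(2\pi j/n)]$ is a Riemann sum converging to $\frac1{2\pi}\int f=a_0$ when $f$ is continuous (Riemann integrable). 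Their difference therefore tends to zero, which gives the stated averaged statement.
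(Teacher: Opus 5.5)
Your route is the same as the paper's. The paper also writes $\mathcal{S}(P_{L,n})=(\bar a_{i-j})$ with $\bar a_0=a_0$, $\bar a_{\pm k}=a_k/2$, identifies the generating function as $\Re[a(\theta)]$, and then cites the Grenander--Szeg\H{o} theorem for (i)--(iii). The difference is that you prove the cited facts instead of quoting them. The representation $Q_n=\frac1{2\pi}\int_0^{2\pi}\Re[a(\theta)]\,|\sum_k w_k e^{\imath k\theta}|^2\,\mathrm{d}\theta$ together with Parseval gives (ii) and the ``if'' half of (i). For (iii), the trace identity $\frac1n\mathrm{tr}\,\mathcal{S}(P_{L,n})=a_0$ plus the Riemann-sum limit is more elementary than the Szeg\H{o} limit theorem, and it is all the averaged statement needs. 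You rightly note that this requires $\Re[a]$ to be continuous, or at least Riemann integrable, so that the point values $a(2\pi j/n)$ make sense.

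The one claim to correct is about the converse in (i). Your Fej\'er-concentration argument only shows that the Rayleigh quotients approach $\Re[a(\theta_0)]$. That contradicts positive definiteness only when $\Re[a(\theta_0)]<0$. When $\Re[a]$ merely touches zero you get $\lambda_{\min}\to 0^+$, which is no contradiction.

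Continuity does not rescue this, contrary to your remark that ``the issue disappears.'' Take $a_0=a_1=1$ and $a_k=0$ otherwise. Then $\Re[a(\theta)]=1+\cos\theta$ vanishes at $\theta=\pi$. Yet $Q_n=\tfrac12 w_1^2+\tfrac12 w_n^2+\tfrac12\sum_{k=2}^n(w_k+w_{k-1})^2$ is positive definite for every $n$, with eigenvalues $1+\cos\frac{j\pi}{n+1}>0$. So the strict ``only if'' in (i) is false as stated. For a single fixed $n$ it fails trivially (for example $n=1$, where $Q_1=a_0w_1^2$).

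What can actually be proved is: $\Re[a]>0$ implies $Q_n$ is positive definite for all $n$; conversely, positive definiteness for all $n$ implies only $\Re[a]\ge 0$ (everywhere when $\Re[a]$ is continuous). The paper's proof gives no more than this, since the result it quotes is only the sufficient condition. Only that direction and the lower bound in (ii) are used later, in Lemma~\ref{lemma: bound quadratic form}. So replace the ``issue disappears'' sentence with this one-directional (resp.\ $\ge 0$) form of (i).
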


\begin{lemma}\label{lem: composited generating function}
	Let the $L^2$ norm bounded functions $a(\theta):=\sum_{j=0}^{\infty}a_je^{\imath j\theta}{\in L^2([0,2\pi))}$ and $b(\theta):=\sum_{j=0}^{\infty}b_je^{\imath j\theta}{\in L^2([0,2\pi))}$ be the semi-generating functions for real sequences $\{a_{0},a_{1},\cdots,a_{k},\cdots\}$  and $\{b_{0},b_{1},\cdots,b_{k},\cdots\}$, respectively. 
	\begin{itemize}[itemindent=-0.5cm]
		\item[(i)] 	For the composited sequence $\{\hat{b}_{0},\hat{b}_{1},\cdots,\hat{b}_{k},\cdots\}$ defined by $\hat{b}_{j}:=\sum_{k=0}^{j}a_{j-k}b_{k},$
		the semi-generating function $\hat{b}(\theta)=\sum_{j=0}^{\infty}\hat{b}_je^{\imath j\theta}$ satisfies $\hat{b}(\theta)=a(\theta)b(\theta)$.
		\item[(ii)] Assume that $\{\xi_{0},\xi_{1},\cdots,\xi_{k},\cdots\}$ are the DOC kernels of  $\{a_{0},a_{1},\cdots,a_{k},\cdots\}$, defined by  $\xi_0:=\frac1{a_0}$ and $\xi_j:=-\frac1{a_0}\sum_{k=1}^{j}\xi_{j-k}a_{k}$ for $j\ge1$.
		Then
		the associated semi-generating function $\xi(\theta)=\sum_{j=0}^{\infty}\xi_je^{\imath j\theta}$ satisfies $\xi(\theta)=1/a(\theta)$.		 
	\end{itemize}
\end{lemma}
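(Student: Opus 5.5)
The plan is to prove both parts by identifying each semi-generating function with the power series $\sum_j a_j z^j$ evaluated on the unit circle $z=e^{\imath\theta}$. Part (ii) will then follow from part (i).

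For part (i), the first step is to compute the Cauchy product of the two series formally:
\begin{align*}
a(\theta)b(\theta)=\sum_{i=0}^{\infty}\sum_{k=0}^{\infty}a_ib_ke^{\imath(i+k)\theta}=\sum_{j=0}^{\infty}\Big(\sum_{k=0}^{j}a_{j-k}b_k\Big)e^{\imath j\theta}=\hat b(\theta).
\end{align*}
This identity is exact for the truncated sums modulo terms of degree larger than the truncation index. To justify it analytically, I will use $f_r(z)=\sum_j a_jr^je^{\imath j\theta}$ with $0<r<1$. Since the sequences are in $\ell^2$, their power series converge absolutely for $|z|<1$. There the Cauchy product theorem gives $\hat b(z)=a(z)b(z)$ for the holomorphic extensions, and by Cauchy--Schwarz the coefficients $\hat b_j$ are bounded by $\|a\|_{\ell^2}\|b\|_{\ell^2}$. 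Letting $r\to1^-$, the Abel means $a(re^{\imath\theta})\to a(\theta)$ in $L^2$, and similarly for $b$. Hence $a_rb_r\to ab$ in $L^1$.

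The main obstacle is to make sense of $\hat b(\theta)$ as a boundary function and to identify it with $ab$. Boundedness of the coefficients alone does not guarantee that $\hat b(\theta)$ is an $L^2$ function. The route I would take is the following. The product $ab\in L^1$ has Fourier coefficients equal to $\hat b_j$ for $j\ge0$ and zero for $j<0$, which I obtain by computing the Fourier coefficients of $a_rb_r$ and passing to the limit. The series $\hat b(\theta)$ is then interpreted as the Fourier series of $ab$, i.e. as the boundary value of the $H^1$-function $a(z)b(z)$. In the intended application the sequences are finite, or geometrically decaying because $\varrho_a$ is Schur, so everything converges absolutely and this subtlety disappears. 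I would note this remark explicitly in the proof.

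For part (ii), the recursion $\xi_j=-\frac1{a_0}\sum_{k=1}^{j}\xi_{j-k}a_k$ is equivalent to $\sum_{k=0}^{j}\xi_{j-k}a_k=\delta_{j0}$ for all $j\ge0$. This says exactly that the composited sequence of $\{\xi_j\}$ with $\{a_j\}$ is $\{1,0,0,\dots\}$. Its semi-generating function is the constant $1$, so part (i) gives $\xi(\theta)a(\theta)=1$, i.e. $\xi(\theta)=1/a(\theta)$.

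Applying part (i) requires $\xi\in L^2$, and this is where a hypothesis is needed. At the level of power series, $\xi(z)=1/a(z)$ holds for $|z|$ small and extends holomorphically to the disk as long as $a(z)\ne0$ there. I would therefore state, and check for the multistep coefficients, that $a(z)$ has no zeros in the closed unit disk. For $a(z)=z^{\rmk-1}\varrho_a(1/z)$ this is precisely the Schur property of $\varrho_a$. Under this condition $1/a$ is analytic on a larger disk, so the $\xi_j$ decay geometrically, and the identity holds pointwise on the circle.
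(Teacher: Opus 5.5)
This is essentially the paper's proof. Part (i) is the Cauchy product obtained by exchanging the order of summation, and part (ii) follows by writing the DOC recursion as $\sum_{k=0}^{j}\xi_{j-k}a_k=\delta_{j0}$ and applying (i). The paper argues purely formally, so your Abel-mean/$H^1$ justification and your zero-free hypothesis on $a(z)$ for $|z|\le1$ are added rigor, not a different route. That hypothesis is in fact needed for (ii): for $a(\theta)=1-2e^{\imath\theta}$ one gets $\xi_j=2^j$. In the paper's application it is supplied by the Schur assumption on $\varrho_a$ in Lemma \ref{lemma: bound quadratic form}.
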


\begin{lemma}\label{lemma: spectral norm bound}
	For the lower triangular Toeplitz matrix $P_{L,n}$ in \eqref{lower triangular Toeplitz matrix PL}
	and the associated semi-generating function $a(\theta)=\sum_{k=0}^{\infty}a_k e^{\imath k\theta }{\in L^2([0,2\pi))}$, 
	the spectral norm of $P_{L,n}$ is not larger than $\max_{\theta\in[0,2\pi)}\abs{a(\theta)}$ for any order index $n\ge1$.
\end{lemma}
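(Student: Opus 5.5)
The plan is to bound $w^TP_{L,n}v$ for arbitrary real unit vectors $v,w\in\mathbb{R}^n$ by a Fourier (Parseval) representation. The spectral norm equals $\sup_{\|v\|=\|w\|=1}|w^TP_{L,n}v|$, so this suffices.

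For vectors $v=(v_1,\dots,v_n)^T$ and $w=(w_1,\dots,w_n)^T$, introduce the trigonometric polynomials
\[
V(\theta):=\sum_{j=1}^n v_j e^{\imath j\theta},\qquad W(\theta):=\sum_{k=1}^n w_k e^{\imath k\theta}.
\]
Orthogonality of $\{e^{\imath m\theta}\}$ on $[0,2\pi)$ gives
\[
\frac{1}{2\pi}\int_0^{2\pi}|V(\theta)|^2\zd\theta=\|v\|_2^2 .
\]
The product $a(\theta)V(\theta)$ has Fourier coefficient $\sum_{j\le k}a_{k-j}v_j=(P_{L,n}v)_k$ at the frequency $e^{\imath k\theta}$, for $1\le k\le n$. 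The one-sidedness of $a$ (only nonnegative powers) is exactly what makes this coefficient coincide with the lower triangular Toeplitz action. Consequently
\[
w^TP_{L,n}v=\frac{1}{2\pi}\int_0^{2\pi}a(\theta)V(\theta)\overline{W(\theta)}\zd\theta ,
\]
because $\overline{W}$ only picks up the frequencies $1,\dots,n$.

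Next apply the triangle inequality and Cauchy--Schwarz:
\[
|w^TP_{L,n}v|\le \max_\theta|a(\theta)|\,\frac{1}{2\pi}\int_0^{2\pi}|V||W|\zd\theta\le \max_\theta|a(\theta)|\,\|v\|_2\|w\|_2 .
\]
This yields $\|P_{L,n}\|_2\le\max_\theta|a(\theta)|$ for every $n$.

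An equivalent viewpoint is that $P_{L,n}=\Pi_n T(a)\Pi_n^*$, a compression of the infinite Toeplitz operator $T(a)$ to its first $n$ coordinates. Since $\|T(a)\|=\|a\|_{L^\infty}$ (the symbol is a multiplier on $H^2$ followed by projection), compression cannot increase the norm.

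The main obstacle is justifying the integral identity when $a$ is only given in $L^2$ with an infinite series. Two facts handle this. First, $V\overline{W}$ is a trigonometric polynomial, so the pairing $\int aV\overline{W}$ only involves finitely many coefficients $a_0,\dots,a_{n-1}$, and the identity holds by $L^2$ orthogonality without any convergence issues. Second, the maximum in the statement is meaningful only when $a$ is bounded; I would interpret it as $\operatorname{ess\,sup}|a|$, which is what Cauchy--Schwarz actually uses. If $a$ is unbounded the bound is trivial.

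Everything else is routine bookkeeping of indices. It parallels the proof of Lemma \ref{lem: Toeplitz-Caratheodory}(ii), with $V=W$ and the real part there replaced by the modulus here.
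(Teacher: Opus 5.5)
Your proof is correct and is essentially the paper's argument. The paper views $P_{L,n}$ as the compression of the infinite lower triangular Toeplitz operator $P_L$ on $\ell^2(\mathbb{N})$, shows via discrete Fourier transforms that $Y(\theta)=X(\theta)\overline{a(\theta)}$, and obtains $\|P_L\|_{\ell^2}\le\max_\theta|a(\theta)|$ by Parseval, which is exactly your ``equivalent viewpoint.'' Your bilinear-form version with finitely supported vectors repackages this slightly more cleanly: it needs no justification of the Fourier identity for general $\ell^2$ sequences, and it avoids the paper's small slip of writing $\|P_{L,n}z\|_{\ell^2}=\|P_L\tilde{z}\|_{\ell^2}$ where only $\le$ holds.
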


Now we return to the coefficients $a_{j}^{(\rmk)}$, $b_{j}^{(\rmk)}$ and $c_{j}^{(\rmk)}$  of the IELM method \eqref{scheme: general imex multistep}. 
According to Lemma \ref{lem: Toeplitz-Caratheodory}, the associated semi-generating functions are defined by
\begin{align}\label{matrix: A_L B_L C_L generating function}
	a^{(\rmk)}(\theta):=\sum_{j=0}^{\rmk-1}a_{j}^{(\rmk)}e^{\imath j\theta},\quad
	b^{(\rmk)}(\theta):=\sum_{j=0}^{\rmk}b_{j}^{(\rmk)}e^{\imath j\theta}\quad\text{and}\quad
	c^{(\rmk)}(\theta):=\sum_{j=0}^{\rmk-1}c_{j}^{(\rmk)}e^{\imath j\theta}.
\end{align}
Lemma \ref{lem: composited generating function} (ii) gives the semi-generating function 
for the DOC kernels $a_{n-j}^{(-1,\rmk)}$,
\begin{align}\label{matrix: A_L DOC generating function}
	a^{(-1,\rmk)}(\theta):=\sum_{j=0}^{\infty}a_{j}^{(-1,\rmk)}e^{\imath j\theta}
	=\frac{1}{a^{(\rmk)}(\theta)}.
\end{align}
For the composited discrete kernels $\hat{b}_{n-\ell}^{(\rmk)}$ and $\hat{c}_{n-\ell}^{(\rmk)}$ defined in \eqref{scheme: multistep composited kernels},
Lemma \ref{lem: composited generating function} (i)  gives the associated semi-generating functions
\begin{align}  
	\hat{b}^{(\rmk)}(\theta):=&\,\sum_{j=0}^{\infty}\hat{b}_{j}^{(\rmk)}e^{\imath j\theta}
	=a^{(-1,\rmk)}(\theta)b^{(\rmk)}(\theta)=\frac{b^{(\rmk)}(\theta)}{a^{(\rmk)}(\theta)},\label{matrix: hatB_L generating function}\\
	\hat{c}^{(\rmk)}(\theta):=&\,\sum_{j=0}^{\infty}\hat{c}_{j}^{(\rmk)}e^{\imath j\theta}
	=a^{(-1,\rmk)}(\theta)c^{(\rmk)}(\theta)=\frac{c^{(\rmk)}(\theta)}{a^{(\rmk)}(\theta)}.\label{matrix: hatC_L generating function}
\end{align}


Then, we have the following result, which builds some close relationships between the semi-generating functions of lower triangular Toeplitz matrices and our discrete energy techniques for the stability of IELM methods. 
\begin{lemma}\label{lemma: bound quadratic form}	
	Let $\varrho_a$ be a Schur polynomial.  For the semi-generating functions $a^{(\rmk)}(\theta)$, $b^{(\rmk)}(\theta)$ and $c^{(\rmk)}(\theta)$ defined in \eqref{matrix: A_L B_L C_L generating function}, assume that there exist positive (finite) constants $\sigma_{\mathrm{F}}^{(\rmk)}>0$, $\sigma_{\mathrm{E}}^{(\rmk)}>0$ and $\lambda_{\mathrm{I}}^{(\rmk)}>0$  such that 
	\begin{align}\label{def: lambda sigma}
		\sigma_{\mathrm{F}}^{(\rmk)}=\max\limits_{\theta\in[0,2\pi)}\abs{\frac{1}{a^{(\rmk)}(\theta)}},\;
		\sigma_{\mathrm{E}}^{(\rmk)}=\max\limits_{\theta\in[0,2\pi)}\abs{\frac{c^{(\rmk)}(\theta)}{a^{(\rmk)}(\theta)}},	\;
		\lambda_{\mathrm{I}}^{(\rmk)}=\min_{\theta\in[0,2\pi)}
		\Re\kbra{\frac{b^{(\rmk)}(\theta)}{a^{(\rmk)}(\theta)}}.
	\end{align}
	Then, for $n\ge1$, the spectral
	norms of the lower triangular Toeplitz matrices  $A_{L,\rmk}^{-1}$ and $A_{L,\rmk}^{-1}C_{L,\rmk}$ are bounded by the positive constants $\sigma_{\mathrm{F}}^{(\rmk)}$ and $\sigma_{\mathrm{E}}^{(\rmk)}$, respectively, and all eigenvalues of the symmetric matrix $\mathcal{S}(A_{L,\rmk}^{-1}B_{L,\rmk})$ are larger than $\lambda_{\mathrm{I}}^{(\rmk)}$. 
	For any sequences $\{v^{i},u^i: i \geq 1\}$, it holds that
	\begin{align*}
		(i)\;\;&\,\sum_{i=1}^{n} \sum_{j=1}^{i}\hat{b}_{i-j}^{(\rmk)}v^{j} v^{i} \geq \lambda_{\mathrm{I}}^{(\rmk)}\sum_{i=1}^{n}\absb{v^{i}}^2\quad\text{for $n\ge1$,}	\\
		(ii)\;\;&\,\sum_{i=1}^{n} \sum_{j=1}^{i}a_{i-j}^{(-1,\rmk)}v^{j} u^{i} \leq
		\sigma_{\mathrm{F}}^{(\rmk)}\sqrt{\sum_{i=1}^n\absb{v^{i}}^2}\sqrt{\sum_{i=1}^n\absb{u^{i}}^2} \quad\text{for $n\ge1$,}\\
		(iii)\;\;&\,\sum_{i=1}^{n} \sum_{j=1}^{i}\hat{c}_{i-j}^{(\rmk)}v^{j}u^{i} \leq
		\sigma_{\mathrm{E}}^{(\rmk)}\sqrt{\sum_{i=1}^n\absb{v^{i}}^2}\sqrt{\sum_{i=1}^n\absb{u^{i}}^2} \quad\text{for $n\ge1$.}	
	\end{align*}
\end{lemma}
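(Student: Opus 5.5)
The plan is to reduce everything to Lemmas \ref{lem: Toeplitz-Caratheodory}--\ref{lemma: spectral norm bound} applied to the three infinite sequences $\{a_j^{(-1,\rmk)}\}$, $\{\hat b_j^{(\rmk)}\}$, $\{\hat c_j^{(\rmk)}\}$. Before any of those lemmas can be used, I must check that these sequences lie in $\ell^2(\mathbb{N})$, so that their semi-generating functions are genuine $L^2$ functions given by \eqref{matrix: A_L DOC generating function}--\eqref{matrix: hatC_L generating function}. This is where the Schur hypothesis on $\varrho_a$ enters, and I expect it to be the main (though mild) obstacle. Note that $a^{(\rmk)}(\theta)=e^{\imath(\rmk-1)\theta}\varrho_a(e^{-\imath\theta})$. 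Hence the power series $\sum_j a_j^{(-1,\rmk)}z^j=1/\sum_j a_j^{(\rmk)}z^j$ has poles exactly at the reciprocals of the roots of $\varrho_a$, all of which lie strictly outside the closed unit disk. Its radius of convergence therefore exceeds $1$, so the DOC kernels decay geometrically and are in $\ell^1\subset\ell^2$. The recursion \eqref{def: DOC-Kernels} is exactly the coefficient identity of $a\cdot\xi=1$, which gives Lemma \ref{lem: composited generating function}(ii). Since $b_j^{(\rmk)}$ and $c_j^{(\rmk)}$ are finitely supported, the convolutions $\hat b^{(\rmk)}$ and $\hat c^{(\rmk)}$ are also geometrically decaying. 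Their generating functions are continuous on $[0,2\pi]$ and equal $b^{(\rmk)}/a^{(\rmk)}$ and $c^{(\rmk)}/a^{(\rmk)}$ by Lemma \ref{lem: composited generating function}(i). In particular, $a^{(\rmk)}(\theta)\neq0$ on the circle, so the maxima and minimum in \eqref{def: lambda sigma} exist and are finite.

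Next come the spectral statements. The matrices $A_{L,\rmk}^{-1}$ and $A_{L,\rmk}^{-1}C_{L,\rmk}$ are by \eqref{matrix: hatB_L hatC_L} the lower triangular Toeplitz matrices $P_{L,n}$ built from $\{a_j^{(-1,\rmk)}\}$ and $\{\hat c_j^{(\rmk)}\}$. Lemma \ref{lemma: spectral norm bound} then bounds their spectral norms by $\max|1/a^{(\rmk)}|=\sigma_{\mathrm F}^{(\rmk)}$ and $\max|c^{(\rmk)}/a^{(\rmk)}|=\sigma_{\mathrm E}^{(\rmk)}$. Likewise, $A_{L,\rmk}^{-1}B_{L,\rmk}$ is the $P_{L,n}$ built from $\{\hat b_j^{(\rmk)}\}$. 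Lemma \ref{lem: Toeplitz-Caratheodory}(ii) gives that every eigenvalue of $\mathcal S(A_{L,\rmk}^{-1}B_{L,\rmk})$ is at least $\min_\theta\Re[b^{(\rmk)}/a^{(\rmk)}]=\lambda_{\mathrm I}^{(\rmk)}$.

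Finally, the three inequalities are rewritten in matrix form. Put $\mathbf v=(v^1,\dots,v^n)^T$ and $\mathbf u=(u^1,\dots,u^n)^T$. The left side of (i) is $\mathbf v^T\widehat B_{L,\rmk}\mathbf v=\mathbf v^T\mathcal S(\widehat B_{L,\rmk})\mathbf v$, which is at least $\lambda_{\min}\,|\mathbf v|^2\ge\lambda_{\mathrm I}^{(\rmk)}|\mathbf v|^2$ by the Rayleigh quotient. The left sides of (ii) and (iii) are $\mathbf u^TA_{L,\rmk}^{-1}\mathbf v$ and $\mathbf u^T\widehat C_{L,\rmk}\mathbf v$. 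By Cauchy--Schwarz, each is at most the spectral norm times $|\mathbf u||\mathbf v|$, and the bounds just obtained conclude the proof. For vector-valued $v^i,u^i\in H$ (the intended use), I would apply the same argument after expanding in an orthonormal basis, or equivalently use the Kronecker product with the identity, which preserves both the eigenvalue bound and the norm bound.
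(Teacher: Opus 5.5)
Your proposal is correct and follows essentially the same route as the paper: Lemmas \ref{lem: Toeplitz-Caratheodory}--\ref{lemma: spectral norm bound} are applied to the sequences $\{a_j^{(-1,\rmk)}\}$, $\{\hat b_j^{(\rmk)}\}$ and $\{\hat c_j^{(\rmk)}\}$ to get the eigenvalue and spectral-norm bounds, and (ii)--(iii) then follow by Cauchy--Schwarz. The differences are minor: you get (i) directly from the Rayleigh quotient of the $n\times n$ Toeplitz matrix where the paper cites Cauchy's interlacing theorem, and you explicitly justify well-definedness (geometric decay of the DOC kernels via the Schur hypothesis), which the paper only treats informally in Remark \ref{remark: bound quadratic form}.
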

\begin{proof}Lemma \ref{lem: composited generating function} and the minimum eigenvalue estimate in Lemma \ref{lem: Toeplitz-Caratheodory} (ii) say that all eigenvalues of the symmetric matrix $\mathcal{S}(A_{L,\rmk}^{-1}B_{L,\rmk})$ are larger than $\lambda_{\mathrm{I}}^{(\rmk)}$. Then, the well-known Cauchy's interlacing theorem \cite[Theorem 4.3.17]{HornJohnson:2013} yields the first result (i)  immediately. Lemmas \ref{lem: composited generating function} and \ref{lemma: spectral norm bound} imply that the spectral
	norms of the lower triangular Toeplitz matrices  $A_{L,\rmk}^{-1}$ and $A_{L,\rmk}^{-1}C_{L,\rmk}$ are bounded by the two constants $\sigma_{\mathrm{F}}^{(\rmk)}$ and $\sigma_{\mathrm{E}}^{(\rmk)}$, respectively. Thus the claimed results (ii)-(iii) can be verified by the Cauchy-Schwarz inequality. The proof is complete.
\end{proof}


\begin{remark}\label{remark: bound quadratic form}
The definitions in \eqref{matrix: A_L B_L C_L generating function} of the semi-generating functions $a^{(\rmk)}( \theta)$, $b^{(\rmk)}( \theta)$ and $c^{(\rmk)}( \theta)$ are stemmed from the Grenander-Szeg\H{o} theorem and the standard generating function of a symmetric Toeplitz matrix; while they are closely related to the characteristic polynomials $\varrho_{a}^{(\rmk)},$ $\varrho_{b}^{(\rmk)}$ and $\varrho_{c}^{(\rmk)}$ of the IELM method \eqref{scheme: general imex multistep}.
One has
	\begin{align*}
		a^{(\rmk)}( \theta)=e^{\imath(\rmk-1)\theta}\varrho_{a}^{(\rmk)}(e^{-\imath\theta}),\;\;
		b^{(\rmk)}( \theta)=e^{\imath\rmk\theta}\varrho_{b}^{(\rmk)}(e^{-\imath\theta}),\;\;
		c^{(\rmk)}( \theta)=e^{\imath(\rmk-1)\theta}\varrho_{c}^{(\rmk)}(e^{-\imath\theta}).
	\end{align*}
Since $\varrho_a$ is a Schur polynomial, $g_a(\zeta):=\sum_{j=0}^{\rmk-1}a_{j}^{(\rmk)}\zeta^{-j}=
\zeta^{1-\rmk}\varrho_{a}^{(\rmk)}(\zeta)$ is always holomorphic outside the unit disk $\{\zeta: \abs{\zeta}\le1\}$.
Also let $g_c(\zeta):=\zeta^{1-\rmk}\varrho_{c}^{(\rmk)}(\zeta)$.
Then the extreme values of $1/g_a(\zeta)$ and $g_c(\zeta)/g_a(\zeta)$ will be attained on the unit circle $\{\zeta: \abs{\zeta}=1\}$ according to the extreme principle for harmonic functions. They confirm the existence of $\sigma_{\mathrm{F}}^{(\rmk)}$ and $\sigma_{\mathrm{E}}^{(\rmk)}$ in Lemma \ref{lemma: bound quadratic form}. 
	
	
	On the other hand, Lemma \ref{lemma: bound quadratic form} and the theoretical analysis throughout this paper are limited to the positivity assumption $\lambda_{\mathrm{I}}^{(\rmk)}>0$. 
	Actually, the fact $\lambda_{\mathrm{I}}^{(\rmk)}<0$ means that the composited discrete kernels  $\hat{b}_{i-j}^{(\rmk)}$ in \eqref{scheme: multistep composited kernels} introduce certain anti-dissipation effect and the corresponding IELM method would be weakly dissipative although it is not necessarily unstable for linear and nonlinear parabolic problems, cf.  \cite{AkrivisChenYuZhou:2021,AkrivisKatsoprinakis:2016, AkrivisLubich:2015,ContriKovacsMassing:2025}. 	
	As an example, we consider the BDF-$\rmk$ methods $(2\le \rmk\le6)$ with the simple case $b^{(\rmk)}(\theta)=1$. The assumptions of Lemma \ref{lemma: bound quadratic form} are valid for  $2\le \rmk\le 5$; however,  
		the setting $\lambda_{\mathrm{I}}^{(6)}>0$ is invalid although the corresponding BDF-6 method is stable for a wide class of nonlinear parabolic models \cite{AkrivisChenYuZhou:2021,AkrivisKatsoprinakis:2016, AkrivisLubich:2015,ContriKovacsMassing:2025}. Actually, the associated symmetric matrix $\mathcal{S}(A_{L,6})$ is not positive definite such that $\min_{\theta\in[0,2\pi)}\Re\kbra{a^{(6)}(\theta)}<0$ and $\lambda_{\mathrm{I}}^{(6)}<0$, cf. \cite[Lemma 2.4 and Remark 2.5]{LiaoKang:2022}.
\end{remark}

We want to emphasize that the boundedness of $\sigma_{\mathrm{F}}^{(\rmk)}$,   $\sigma_{\mathrm{E}}^{(\rmk)}$ and $\lambda_{\mathrm{I}}^{(\rmk)}$ defined in Lemma \ref{lemma: bound quadratic form} will be essential in the discrete energy analysis for the unconditional stability of IELM methods \eqref{scheme: general imex multistep}.
	From the perspective of the transformed IELM scheme \eqref{scheme: general imex multistep-differential} and the associated discrete energy  analysis  (by testing with $2\tau u^{n}$),  the spectral norm bound $\sigma_{\mathrm{F}}^{(\rmk)}$ in \eqref{def: lambda sigma} represents the ability  of the DOC kernels  $a_{i-j}^{(-1,\rmk)}$ to control the numerical growth of exterior forces (truncation errors), cf. the estimate \eqref{general imex multistep Error-product3} in the proof of Theorem \ref{thm: multistep stability}. The smaller the value of $\sigma_{\mathrm{F}}^{(\rmk)}$, the less the DOC kernels amplify the perturbation error. The spectral norm bound $\sigma_{\mathrm{E}}^{(\rmk)}$ in \eqref{def: lambda sigma} represents the ability of the composited kernels  $\hat{c}_{i-j}^{(\rmk)}$ to restrain the possible instability due to the explicit approximation of  nonlinear term $\mathcal{F}(u)$. The smaller the value of $\sigma_{\mathrm{E}}^{(\rmk)}$, the better the composited discrete kernels  $\hat{c}_{i-j}^{(\rmk)}$ control the numerical instability. Meanwhile, the minimum eigenvalue $\lambda_{\mathrm{I}}^{(\rmk)}$ in \eqref{def: lambda sigma} represents the ability of the composited discrete kernels  $\hat{b}_{i-j}^{(\rmk)}$ in the implicit part to maintain the dissipation property of $\mathcal{L}u^{j}$. 
	The larger the value of $\lambda_{\mathrm{I}}^{(\rmk)}$, the better the composited discrete kernels  $\hat{b}_{i-j}^{(\rmk)}$ inherit the dissipativity of $\mathcal{L}u^{j}$. 

The following result gives the lower bounds of perturbation amplification factor $\sigma_{\mathrm{F}}^{(\rmk)}$ and nonlinear amplification factor $\sigma_{\mathrm{E}}^{(\rmk)}$, and the upper bound of dissipation preserving factor $\lambda_{\mathrm{I}}^{(\rmk)}$ for any consistent IELM method.

\begin{lemma}\label{lemma: IELM stability intensity upper bound}
	Assume that the $\rmk$-th step IELM method \eqref{scheme: general imex multistep}	 with the discrete coefficients $a_{j}^{(\rmk)}$, $b_{j}^{(\rmk)}$ and $c_{j}^{(\rmk)}$ is consistent. It holds that 
	$	\sigma_{\mathrm{F}}^{(\rmk)}\ge1$,  $\sigma_{\mathrm{E}}^{(\rmk)}\ge1$ and $\lambda_{\mathrm{I}}^{(\rmk)}\le 1.$
	In particular, the implicit-explicit Euler scheme achieves the optimal values, that is,  $\sigma_{\mathrm{F}}^{(1)}=1$, $\sigma_{\mathrm{E}}^{(1)}=1$ and $\lambda_{\mathrm{I}}^{(1)}=1$.
\end{lemma}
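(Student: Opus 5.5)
The plan is to evaluate the three semi-generating functions at the single point $\theta=0$, where consistency pins down their values, and then compare extrema with this point value. Setting $\zeta=1$ in Remark \ref{remark: bound quadratic form} gives $a^{(\rmk)}(0)=\varrho_a(1)$, $b^{(\rmk)}(0)=\varrho_b(1)$ and $c^{(\rmk)}(0)=\varrho_c(1)$.

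The first step is to write out what consistency means for \eqref{scheme: general imex multistep}. Apply the scheme to $u(t)=1$: the difference terms vanish, so the scheme must reproduce the equation $\varpi\mathcal{L}u=\mathcal{F}(u)$ with the same weights on both sides. This forces $\sum_j b_j^{(\rmk)}=\sum_j c_j^{(\rmk)}$.

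Next apply the scheme to $u(t)=t$, taking $\mathcal{L}=0$ and $\mathcal{F}=1$. Then $\partial_\tau u^{n-j}=1$, and first-order consistency gives $\sum_j a_j^{(\rmk)}=\sum_j c_j^{(\rmk)}$. The natural normalization, which holds for all schemes written in the paper's form, is that this common value is $1$. Otherwise $\partial_\tau u$ would be approximated by a multiple of $u_t$ and the equation would be rescaled. So I will use
\[
a^{(\rmk)}(0)=b^{(\rmk)}(0)=c^{(\rmk)}(0)=1.
\]
The main point to handle carefully is justifying this normalization: the definition of consistency implicit in the paper must give exactly $\sum a_j=\sum b_j=\sum c_j=1$. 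I would state it via the local truncation error for a smooth solution, requiring that the truncation error be $O(\tau)$ with the equation itself recovered.

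With these values the three bounds follow by comparing each extremum with the value at $\theta=0$:
\[
\sigma_{\mathrm F}^{(\rmk)}\ge \frac{1}{|a^{(\rmk)}(0)|}=1,\qquad
\sigma_{\mathrm E}^{(\rmk)}\ge \frac{|c^{(\rmk)}(0)|}{|a^{(\rmk)}(0)|}=1,\qquad
\lambda_{\mathrm I}^{(\rmk)}\le \Re\frac{b^{(\rmk)}(0)}{a^{(\rmk)}(0)}=1.
\]
Since $\varrho_a$ is a Schur polynomial, $a^{(\rmk)}(0)=\varrho_a(1)\neq0$, so all these quotients are well defined.

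Finally, for the implicit-explicit Euler scheme with $\rmk=1$, I will check optimality directly. The scheme reads $\partial_\tau u^n+\varpi\mathcal{L}u^n=\mathcal{F}(u^{n-1})$, so $a_0=b_0=c_0=1$ and all other coefficients vanish. Hence $a^{(1)}(\theta)=b^{(1)}(\theta)=c^{(1)}(\theta)\equiv1$, and the three quantities in \eqref{def: lambda sigma} all equal $1$ exactly. Therefore the lower bounds for $\sigma_{\mathrm F}^{(\rmk)}$, $\sigma_{\mathrm E}^{(\rmk)}$ and the upper bound for $\lambda_{\mathrm I}^{(\rmk)}$ are attained.
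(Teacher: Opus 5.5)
Your proof is correct and follows the paper's argument: consistency gives $a^{(\rmk)}(0)=b^{(\rmk)}(0)=c^{(\rmk)}(0)=1$. Each extremum in \eqref{def: lambda sigma} is then compared with its value at $\theta=0$, and the implicit-explicit Euler case is checked directly. The normalization you flag as needing care is already built into the paper's order conditions \eqref{scheme: multistep-linear order conditions}: the first condition gives $\sum_j a_j^{(\rmk)}=1$, and the case $\ell=1$ gives $\sum_j a_j^{(\rmk)}=\sum_j b_j^{(\rmk)}=\sum_j c_j^{(\rmk)}$, so your test-function derivation can be replaced by citing these conditions.
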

\begin{proof}For  the semi-generating functions $a^{(\rmk)}(\theta)$, $b^{(\rmk)}(\theta)$ and $c^{(\rmk)}(\theta)$ defined in \eqref{matrix: A_L B_L C_L generating function}, the consistency gives $\sum_{j=0}^{\rmk-1}a_{j}^{(\rmk)}=1,$ $\sum_{j=0}^{\rmk}b_{j}^{(\rmk)}=1$ and $\sum_{j=0}^{\rmk-1}c_{j}^{(\rmk)}=1.$
	That is, $a^{(\rmk)}(0)=1$, $b^{(\rmk)}(0)=1$ and $c^{(\rmk)}(0)=1$.
	Then the definitions in \eqref{def: lambda sigma} imply that 	$$\sigma_{\mathrm{F}}^{(\rmk)}\ge\frac1{\abst{a^{(\rmk)}(0)}}=1,\quad 
	\sigma_{\mathrm{E}}^{(\rmk)}\ge\frac{\abst{c^{(\rmk)}(0)}}{\abst{a^{(\rmk)}(0)}}=1\quad \text{and}\quad
	\lambda_{\mathrm{I}}^{(\rmk)}\le \frac{b^{(\rmk)}(0)}{a^{(\rmk)}(0)}=1.$$ 
	For the implicit-explicit Euler scheme with $b_{0}^{(1)}=1$, we have $a^{(1)}(\theta)=1$, $b^{(1)}(\theta)=1$ and $c^{(1)}(\theta)=1$. The definitions in \eqref{def: lambda sigma} yield $\sigma_{\mathrm{F}}^{(1)}=1$, $\sigma_{\mathrm{E}}^{(1)}=1$ and $\lambda_{\mathrm{I}}^{(1)}=1$.
\end{proof}

\section{Consistency and stability of IELM methods}\label{sec: stability of IELM}
\setcounter{equation}{0}

We assume that both  methods $(\varrho_a,\varrho_b)$ and $(\varrho_a,\varrho_c)$ are consistent of order $q$, cf. \cite[Section 2]{AscherRuuthWetton:1995}, 
\begin{align}\label{scheme: multistep-linear order conditions} 
	\sum_{j=0}^{\rmk-1}a_{j}^{(\rmk)}=1,\quad\sum_{j=0}^{\rmk-1}a_{j}^{(\rmk)}\partial_{\tau}t_{n-j}^{\ell}
	=\ell\sum_{j=0}^{\rmk}b_{j}^{(\rmk)}t_{n-j}^{\ell-1}
	=\ell\sum_{j=0}^{\rmk-1}c_{j}^{(\rmk)}t_{n-j-1}^{\ell-1}
\end{align}
for $1\le \ell\le q$, where the first condition ensures that the term $ \sum_{j=0}^{\rmk-1}a_{j}^{(\rmk)}\partial_{\tau}u^{n-j}$ is consistent with the first-order derivative.  As pointed out by \cite[Theorem 2.1]{AscherRuuthWetton:1995}, a $\rmk$-step IELM scheme cannot have order of accuracy greater than $\rmk$. Hereafter, we always set  $q=\rmk$.  Following the derivations in \cite[Section 3]{AkrivisCrouzeixMakridakis:1998MCOM} or \cite[Section 2]{AkrivisCrouzeixMakridakis:1999NM}, one can use the order conditions in \eqref{scheme: multistep-linear order conditions} to obtain the truncation error of the $\rmk$-step IELM scheme \eqref{scheme: general imex multistep} 	for  $n\ge\rmk$, 
	\begin{align}	\label{scheme: general imex Truncation Error}
			&\,R_n^{(\rmk)}
			:=\sum_{j=0}^{\rmk-1}a_{j}^{(\rmk)}\partial_{\tau}u(t_{n-j})
			+\varpi\sum_{j=0}^{\rmk}b_{j}^{(\rmk)}\mathcal{L}u(t_{n-j})
-\sum_{j=0}^{\rmk-1}c_{j}^{(\rmk)}\mathcal{F}\kbra{u(t_{n-j-1})}\\
			=&\,\frac{1}{(\rmk+1)!}
			\braB{\sum_{j=0}^{\rmk-1}a_{j}^{(\rmk)}\partial_{\tau}t_{n-j}^{\rmk+1}
					-\sum_{j=0}^{\rmk}(\rmk+1)b_{j}^{(\rmk)}t_{n-j}^{\rmk}}\frac{\zd^{\rmk+1}u(t_n)}{\zd t^{\rmk+1}}\notag\\
			&\,+\frac{1}{\rmk!}
			\braB{\sum_{j=0}^{\rmk}b_{j}^{(\rmk)}t_{n-j}^{\rmk}-\sum_{j=0}^{\rmk-1}c_{j}^{(\rmk)}t_{n-j-1}^{\rmk}}
			\frac{\zd^{\rmk}\mathcal{F}(u(t_n))}{\zd t^{\rmk}}+O(\tau^{\rmk+1})\notag\\
			=&\,\tau^{\rmk}\frac{\zd^{\rmk+1}u(t_n)}{\zd t^{\rmk+1}}\cdot \lim_{\zeta\rightarrow1}\frac{\frac{\zeta-1}{\ln\zeta}\varrho_{a}^{(\rmk)}(\zeta)-\varrho_{b}^{(\rmk)}(\zeta)}{(\zeta-1)^{\rmk}}+b_{0}^{(\rmk)}\tau^{\rmk}\frac{\zd^{\rmk}\mathcal{F}(u(t_n))}{\zd t^{\rmk}}+O(\tau^{\rmk+1}),\notag
		\end{align}
	 where in the last equality, \cite[Chapter III.2, Theorem 2.4, pp. 368]{HairerNorsettWanner:1992} and the following relationship $\frac{\varrho_{b}^{(\rmk)}(\zeta)-\varrho_{c}^{(\rmk)}(\zeta)}{(\zeta-1)^{\rmk}}=b_{0}^{(\rmk)}$ from \cite[Remark 3.1]{AkrivisCrouzeixMakridakis:1998MCOM} are applied.
	
	\subsection{Stability of IELM methods}	
Here and hereafter, any subscripted $\ck$, such as $\ck_{u}$,  $\ck_\Omega$, $\ck_1$ and so on,
denotes a fixed constant.
The appearing constants may be dependent on the given data and the solution but are always independent of the time-step size $\tau$. 

\begin{theorem}\label{thm: multistep stability} 
	Under the local Lipschitz condition \eqref{operator N: local Lipschitz} on the operator $\mathcal{F}$,  assume that the solution $u$ of the nonlinear parabolic equation \eqref{nonlinearModel} is  regular such that $\mynormb{\partial_t^{(\rmk+1)}u(t)}_{\star}$ and $\mynormb{\partial_t^{(\rmk)}\mathcal{F}(u(t))}_{\star}$ are bounded.
	Assume further that the IELM method \eqref{scheme: general imex multistep} is $\rmk$-th order consistent and satisfies the assumptions of Lemma \ref{lemma: bound quadratic form}.	If 
	\begin{align} \label{stability condition}
		\frac{\lambda_{\mathrm{I}}^{(\rmk)}}{\sigma_{\mathrm{E}}^{(\rmk)}}>\frac{\mu_0}{\varpi},
	\end{align}
	and the step size $\tau$ (relies on the values of ${\lambda_{\mathrm{I}}^{(\rmk)}}/{\sigma_{\mathrm{E}}^{(\rmk)}}$ and $\lambda_{\mathrm{I}}^{(\rmk)}/\sigma_{\mathrm{F}}^{(\rmk)}$) is sufficiently small, the IELM method \eqref{scheme: general imex multistep} is stable and convergent with the order of $O(\tau^{\rmk})$,
	\begin{align}\label{convergence: imex multistep}
		\mynormb{u(t_n)-u^n}_H^2
		+\epsilon_1\lambda_{\mathrm{I}}^{(\rmk)}\varpi\sum_{j=1}^{n}\tau\mynormb{u(t_j)-u^j}_V^2
		\le&\,\ck_1^2\tau^{2\rmk}\quad\text{for $1\le n\le N$.}
	\end{align}	
\end{theorem}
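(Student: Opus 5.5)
We will work with the error $e^n:=u(t_n)-u^n$ (so $e^0=0$) and derive its equation from the truncation error \eqref{scheme: general imex Truncation Error}. Subtracting the scheme \eqref{scheme: general imex multistep convolution} from the exact relation gives
\begin{align*}
\sum_{j=1}^na_{n-j}^{(\rmk)}\partial_\tau e^j+\varpi\sum_{j=1}^nb_{n-j}^{(\rmk)}\mathcal{L}e^j=\sum_{j=1}^nc_{n-j}^{(\rmk)}\bra{\mathcal{F}(U^{j-1})-\mathcal{F}(u^{j-1})}+R_n^{(\rmk)}.
\end{align*}
Here $\mynorm{R_n^{(\rmk)}}_\star\le \ck_R\tau^{\rmk}$; this uses the regularity assumption and the assumed $\rmk$-th order consistency at the starting steps. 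We then apply the DOC kernels exactly as in \eqref{Dis: DOC action multistep formula Dk}--\eqref{scheme: general imex multistep-differential}. This yields
\begin{align*}
\partial_\tau e^n+\varpi\sum_{\ell=1}^n\hat b_{n-\ell}^{(\rmk)}\mathcal{L}e^\ell=\sum_{\ell=1}^n\hat c_{n-\ell}^{(\rmk)}\mathcal{N}^{\ell-1}+\sum_{\ell=1}^na_{n-\ell}^{(-1,\rmk)}R_\ell^{(\rmk)},
\end{align*}
where $\mathcal{N}^{j}:=\mathcal{F}(U^{j})-\mathcal{F}(u^{j})$.

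The argument is a complete induction on $m$. The hypothesis is that \eqref{convergence: imex multistep} holds for $n\le m-1$. In particular, for small $\tau$ this gives $\mynorm{e^j}_V\le 1$ for $j\le m-1$. Indeed, from the summed term $\tau\mynorm{e^j}_V^2\le \ck\tau^{2\rmk}$ we get $\mynorm{e^j}_V\le \ck\tau^{\rmk-1/2}\le1$. Hence $u^{j}\in\mathcal{B}_{u(t_j)}$ and the Lipschitz bound \eqref{operator N: local Lipschitz} applies to $\mathcal{N}^{j}$ for $j\le m-1$, which are exactly the terms needed at level $m$.

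At level $m$ we test the error equation with $2\tau e^n$ and sum over $n=1,\dots,m$. The left side gives
\begin{align*}
\sum_{n=1}^m\bra{\mynorm{e^n}_H^2-\mynorm{e^{n-1}}_H^2+\mynorm{e^n-e^{n-1}}_H^2}+2\tau\varpi\sum_{n=1}^m\sum_{\ell=1}^n\hat b^{(\rmk)}_{n-\ell}\myinner{\mathcal{L}e^\ell,e^n}.
\end{align*}
For the implicit term we expand in an orthonormal eigenbasis of $\mathcal{L}$ (or use $\mathcal{L}^{1/2}$, since $\mathcal{L}$ is self-adjoint and positive definite). Lemma \ref{lemma: bound quadratic form}(i) then applies componentwise and bounds the term below by $2\tau\varpi\lambda_{\mathrm{I}}^{(\rmk)}\sum_n\mynorm{e^n}_V^2$.

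For the explicit term we use $\myinner{v,w}\le\mynorm{v}_\star\mynorm{w}_V$. With Riesz representers this becomes a vector-valued version of Lemma \ref{lemma: bound quadratic form}(iii). More concretely, the spectral norm bound of the Toeplitz matrix $A_{L,\rmk}^{-1}C_{L,\rmk}$ extends to Hilbert-space-valued sequences, since a real matrix acting on $\ell^2(V')$ has the same norm. This gives
\begin{align*}
2\tau\sum_n\sum_\ell\hat c^{(\rmk)}_{n-\ell}\myinner{\mathcal{N}^{\ell-1},e^n}\le2\sigma_{\mathrm{E}}^{(\rmk)}\Big(\tau\sum_{\ell}\mynorm{\mathcal{N}^{\ell-1}}_\star^2\Big)^{1/2}\Big(\tau\sum_n\mynorm{e^n}_V^2\Big)^{1/2}.
\end{align*}
Since $e^0=0$, we have $\mynorm{\mathcal{N}^{\ell-1}}_\star\le\mu_0\mynorm{e^{\ell-1}}_V+\mu_1\mynorm{e^{\ell-1}}_H$, and the $V$-part is bounded by $2\sigma_{\mathrm{E}}^{(\rmk)}\mu_0\tau\sum_n\mynorm{e^n}_V^2$. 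The condition \eqref{stability condition} says exactly that $2\varpi\lambda_{\mathrm{I}}^{(\rmk)}-2\sigma_{\mathrm{E}}^{(\rmk)}\mu_0=:2\epsilon_0\varpi\lambda_{\mathrm{I}}^{(\rmk)}>0$. We split this margin and apply Young's inequality to the $\mu_1$ part, which leaves a term $\ck\tau\sum_n\mynorm{e^{n-1}}_H^2$. The truncation term is handled in the same way using Lemma \ref{lemma: bound quadratic form}(ii): it is bounded by $\epsilon\lambda_{\mathrm{I}}^{(\rmk)}\varpi\tau\sum\mynorm{e^n}_V^2+\ck(\sigma_{\mathrm{F}}^{(\rmk)})^2/(\epsilon\lambda_{\mathrm{I}}^{(\rmk)}\varpi)\,\tau\sum\mynorm{R_n^{(\rmk)}}_\star^2$.

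Collecting these bounds, with $\epsilon_1$ a fraction of $\epsilon_0$, gives
\begin{align*}
\mynorm{e^m}_H^2+\epsilon_1\lambda_{\mathrm{I}}^{(\rmk)}\varpi\tau\sum_{n=1}^m\mynorm{e^n}_V^2\le\ck\tau\sum_{n=1}^{m-1}\mynorm{e^n}_H^2+\ck T\tau^{2\rmk}.
\end{align*}
A discrete Gr\"onwall argument (applied with the constant independent of $m$) closes the induction with $\ck_1^2=\ck Te^{\ck T}$. The smallness of $\tau$ enters only through the bound $\mynorm{e^j}_V\le1$.

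We expect the main obstacle to be the global character of the estimate. Lemma \ref{lemma: bound quadratic form} controls whole sums from $1$ to $m$, and not increments, so the Gr\"onwall step must be set up on the summed inequality for every $m$. We also have to make sure that the nonlinear term at level $m$ involves only $e^{j}$ with $j\le m-1$, which is where the explicit treatment is essential for the induction. A secondary technical point is justifying the vector-valued versions of Lemma \ref{lemma: bound quadratic form}(i)--(iii) in $V$ and $V'$. For (i) we diagonalize $\mathcal{L}$; for (ii)--(iii) we use the operator-norm identity for Kronecker-type tensor extensions.
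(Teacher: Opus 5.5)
Your proposal is correct and follows the paper's proof essentially step by step: the DOC-kernel reformulation of the error equation, testing with $2\tau e^n$, Lemma~\ref{lemma: bound quadratic form}(i)--(iii) for the implicit, explicit and truncation terms, an induction that secures $\mynorm{e^j}_V\le1$ so the local Lipschitz bound applies, and a discrete Gr\"onwall argument.

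The differences are minor. You absorb the $\mu_0$-term directly via $\big(\sum_{j=1}^{m-1}\tau\mynorm{e^j}_V^2\big)^{1/2}\big(\sum_{j=1}^{m}\tau\mynorm{e^j}_V^2\big)^{1/2}\le\sum_{j=1}^{m}\tau\mynorm{e^j}_V^2$, which gives the margin $1-\sigma_{\mathrm{E}}^{(\rmk)}\mu_0/(\lambda_{\mathrm{I}}^{(\rmk)}\varpi)$ instead of the paper's Young-based choice $\epsilon_1=\tfrac12-\tfrac{(\sigma_{\mathrm{E}}^{(\rmk)}\mu_0)^2}{2(\lambda_{\mathrm{I}}^{(\rmk)}\varpi)^2}$. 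You also explicitly justify the Hilbert-space-valued versions of that lemma, which the paper uses without comment.
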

\begin{proof}The exact solution $U^j=u(t_j)$ satisfies the numerical method \eqref{scheme: general imex multistep} up to the consistency error $R_n^{(\rmk)}$ defined by \eqref{scheme: general imex Truncation Error},
	\begin{align*}
		\sum_{j=1}^na_{n-j}^{(\rmk)}\partial_{\tau}U^{j}
		+\varpi\sum_{j=1}^{n}b_{n-j}^{(\rmk)}\mathcal{L}U^{j}
		=\sum_{j=1}^{n}c_{n-j}^{(\rmk)}\mathcal{F}(U^{j-1})+\mathfrak{C}_n^{(\rmk)}\brat{U^{0}}
		+R_n^{(\rmk)}
	\end{align*}
	for $1\le n\le N$. Under the solution regularity and the assumption on the correction terms $\mathfrak{C}_n^{(\rmk)}\brat{u^{0}}$, there exists a positive constant $\ck_u$ such that  
	\begin{align}\label{IELM truncation error}	
		\mynormb{R_n^{(\rmk)}}_{\star}\le \ck_u\tau^{\rmk}\quad\text{for $1\le n\le N$.}
	\end{align}
	The solution errors $\tilde{u}^j=U^{j}-u^j$ satisfy the following error equation
	\begin{align}\label{scheme: general imex multistep Error}
		\sum_{j=1}^na_{n-j}^{(\rmk)}\partial_{\tau}\tilde{u}^{j}
		+\varpi\sum_{j=1}^{n}b_{n-j}^{(\rmk)}\mathcal{L}\tilde{u}^{j}
		=\sum_{j=1}^{n}c_{n-j}^{(\rmk)}\kbra{\mathcal{F}(U^{j-1})-\mathcal{F}(u^{j-1})}
		+R_n^{(\rmk)}
	\end{align}
	for $1\le n\le N$.
	We consider the complete mathematical induction for the bound
	\begin{align}\label{error: multistep H2 norm bound} 
		\mynormb{\tilde{u}^{\ell}}_V\le 1
		\quad\text{for $1\le \ell\le N$}.
	\end{align}
	It holds for $\ell=0$  since $\tilde{u}^{0}=0$. We will derive the error bound for the case $\ell=m$ from the following induction hypothesis
	\begin{align}\label{error: multistep H2 norm induction hypothesis} 
		\mynormb{\tilde{u}^{\ell}}_V\le 1
		\quad\text{for $1\le \ell\le m-1$}.
	\end{align}
	This hypothesis and the local Lipschitz condition \eqref{operator N: local Lipschitz} on $\mathcal{F}$ imply that
	\begin{align}\label{error: multistep hypothesis deduced uniform bound} 
		\mynormb{\mathcal{F}(U^{\ell})-\mathcal{F}(u^{\ell})}_{\star}\le \mu_0 \mynormb{\tilde{u}^{\ell}}_V+\mu_1\mynormb{\tilde{u}^{\ell}}_H\quad\text{for $1\le \ell\le m-1$.}
	\end{align}

	Following the derivation of \eqref{scheme: general imex multistep-differential}, we can obtain from \eqref{scheme: general imex multistep Error} that
	\begin{align}\label{general imex multistep Error-differential}   
		\partial_{\tau}\tilde{u}^{n}
		+&\,\varpi\sum_{\ell=1}^{n}\hat{b}_{n-\ell}^{(\rmk)}\mathcal{L}\tilde{u}^{\ell}
		=\sum_{\ell=1}^{n}\hat{c}_{n-\ell}^{(\rmk)}\kbra{\mathcal{F}(U^{\ell-1})-\mathcal{F}(u^{\ell-1})}
		+\sum_{\ell=1}^{n}a_{n-\ell}^{(-1,\rmk)}R_\ell^{(\rmk)}
	\end{align}
	for $1\le n\le N$, where the composited kernels $\hat{b}_{n-\ell}^{(\rmk)}$ and $\hat{c}_{n-\ell}^{(\rmk)}$ are defined by \eqref{scheme: multistep composited kernels}.
	By testing the equation \eqref{general imex multistep Error-differential} with $2\tau\tilde{u}^{n}$,
	and summing over $n$ from $n=1$ to $m$, we have
	\begin{align}\label{general imex multistep Error-product}   
		&\,\mynormb{\tilde{u}^{m}}_H^2-\mynormb{\tilde{u}^{0}}_H^2
		+\tau^2\sum_{j=1}^m\mynormb{\partial_{\tau}\tilde{u}^{j}}_H^2
		+2\varpi\tau\sum_{j=1}^{m}\sum_{\ell=1}^{j}\hat{b}_{j-\ell}^{(\rmk)}\myinnerb{\mathcal{L}\tilde{u}^{\ell},\tilde{u}^{j}}\\
		&\,\hspace{0.5cm}=\underbrace{2\tau\sum_{j=1}^{m}\sum_{\ell=1}^{j}\hat{c}_{j-\ell}^{(\rmk)}\myinnerb{\mathcal{F}(U^{\ell-1})-\mathcal{F}(u^{\ell-1}),\tilde{u}^{j}}}_{\textrm{RHS}_1}
		+2\tau\sum_{j=1}^{m}\sum_{\ell=1}^{j}a_{j-\ell}^{(-1,\rmk)}\myinnerb{R_\ell^{(\rmk)},\tilde{u}^{j}}.\notag
	\end{align}
	Lemma \ref{lemma: bound quadratic form} (i) gives 
	\begin{align*}    
		&\,2\varpi\tau\sum_{j=1}^{m}\sum_{\ell=1}^{j}\hat{b}_{j-\ell}^{(\rmk)}\myinnerb{\mathcal{L}\tilde{u}^{\ell},\tilde{u}^{j}}\ge 2\lambda_{\mathrm{I}}^{(\rmk)}\varpi\sum_{j=1}^{m}\tau\mynormb{\tilde{u}^{j}}_V^2.
	\end{align*}
	Applying Lemma \ref{lemma: bound quadratic form} (iii) and the estimate \eqref{error: multistep hypothesis deduced uniform bound}, the first term ($\textrm{RHS}_1$) on the right hand side of 
	\eqref{general imex multistep Error-product}  can be bounded by
	\begin{align*}    
	\textrm{RHS}_1\le&\, 
		2\sigma_{\mathrm{E}}^{(\rmk)}\sqrt{\sum_{i=1}^{m}\tau\mynormb{\mathcal{F}(U^{i-1})-\mathcal{F}(u^{i-1})}_{\star}^2}
		\sqrt{\sum_{i=1}^{m}\tau\mynormb{\tilde{u}^{i}}_V^2}\\
		\le&\, 
		2\sigma_{\mathrm{E}}^{(\rmk)}\mu_0\sqrt{\sum_{i=1}^{m-1}\tau\mynormb{\tilde{u}^{i}}_V^2}
		\sqrt{\sum_{i=1}^{m}\tau\mynormb{\tilde{u}^{i}}_V^2}
		+2\sigma_{\mathrm{E}}^{(\rmk)}\mu_1\sqrt{\sum_{i=1}^{m-1}\tau\mynormb{\tilde{u}^{i}}_H^2}
		\sqrt{\sum_{i=1}^{m}\tau\mynormb{\tilde{u}^{i}}_V^2},
	\end{align*}
	where the triangle inequality was used in the last step. 
	Thus, by using the Young inequality and Lemma \ref{lemma: bound quadratic form} (ii), the right hand side (RHS) of \eqref{general imex multistep Error-product} is bounded by
	\begin{align*}    
		\textrm{RHS}\le &\,\lambda_{\mathrm{I}}^{(\rmk)}\varpi\sum_{i=1}^{m}\tau\mynormb{\tilde{u}^{i}}_V^2
		+\frac{(\sigma_{\mathrm{E}}^{(\rmk)}\mu_0)^2}{\lambda_{\mathrm{I}}^{(\rmk)}\varpi}
		\sum_{i=1}^{m-1}\tau\mynormb{\tilde{u}^{i}}_V^2+\frac{\epsilon_1}{2}\lambda_{\mathrm{I}}^{(\rmk)}\varpi\sum_{i=1}^{m}\tau\mynormb{\tilde{u}^{i}}_V^2\\
		&\,+\frac{2(\sigma_{\mathrm{E}}^{(\rmk)}\mu_1)^2}{\epsilon_1\lambda_{\mathrm{I}}^{(\rmk)}\varpi}
		\sum_{i=1}^{m-1}\tau\mynormb{\tilde{u}^{i}}_H^2+\frac{2(\sigma_{\mathrm{F}}^{(\rmk)})^2}{\epsilon_1\lambda_{\mathrm{I}}^{(\rmk)}\varpi}\sum_{i=1}^{m}\tau \mynormb{R_i^{(\rmk)}}_*^2
		+\frac{\epsilon_1}{2}\lambda_{\mathrm{I}}^{(\rmk)}\varpi\sum_{i=1}^{m}\tau\mynormb{\tilde{u}^{i}}_V^2,
	\end{align*}
	where  $\epsilon_1>0$ is a parameter to be determined.  
	Then it follows from \eqref{general imex multistep Error-product}   that
	\begin{align}\label{general imex multistep Error-product3}
		\mynormb{\tilde{u}^{m}}_H^2+&\,(1-\epsilon_1)\lambda_{\mathrm{I}}^{(\rmk)}
		\varpi\tau\mynormb{\tilde{u}^{m}}_V^2
		+\kbra{1-\epsilon_1-\frac{(\sigma_{\mathrm{E}}^{(\rmk)}\mu_0)^2}
			{(\lambda_{\mathrm{I}}^{(\rmk)}\varpi)^2}}\lambda_{\mathrm{I}}^{(\rmk)}\varpi\sum_{j=1}^{m-1}\tau\mynormb{\tilde{u}^{j}}_V^2\\
		\le&\,\frac{2(\sigma_{\mathrm{E}}^{(\rmk)}\mu_1)^2}{\epsilon_1\lambda_{\mathrm{I}}^{(\rmk)}\varpi}
		\sum_{i=1}^{m-1}\tau\mynormb{\tilde{u}^{i}}_H^2+\frac{2(\sigma_{\mathrm{F}}^{(\rmk)})^2}{\epsilon_1\lambda_{\mathrm{I}}^{(\rmk)}\varpi}\sum_{i=1}^{m}\tau \mynormb{R_i^{(\rmk)}}_*^2.\notag
	\end{align}
	Under the stability condition \eqref{stability condition}, we choose  $\epsilon_1:=\frac12-\frac{(\sigma_{\mathrm{E}}^{(\rmk)}\mu_0)^2}
	{2(\lambda_{\mathrm{I}}^{(\rmk)}\varpi)^2}>0$ such that
	\begin{align*} 
		\mynormb{\tilde{u}^{m}}_H^2+\epsilon_1\lambda_{\mathrm{I}}^{(\rmk)}\varpi\sum_{j=1}^{m}\tau\mynormb{\tilde{u}^{j}}_V^2
		\le&\,\frac{2(\sigma_{\mathrm{E}}^{(\rmk)}\mu_1)^2}{\epsilon_1\lambda_{\mathrm{I}}^{(\rmk)}\varpi}
		\sum_{i=1}^{m-1}\tau\mynormb{\tilde{u}^{i}}_H^2+\frac{2(\sigma_{\mathrm{F}}^{(\rmk)})^2}{\epsilon_1\lambda_{\mathrm{I}}^{(\rmk)}\varpi}\sum_{i=1}^{m}\tau \mynormb{R_i^{(\rmk)}}_*^2.
	\end{align*}
	The standard discrete Gr\"{o}nwall inequality, such as \cite[Lemma 3.1]{LiaoZhang:2021},  gives
	\begin{align*}  
		\mynormb{\tilde{u}^{m}}_H^2+\epsilon_1\lambda_{\mathrm{I}}^{(\rmk)}\varpi\sum_{j=1}^{m}\tau\mynormb{\tilde{u}^{j}}_V^2
		\le&\,\frac{2(\sigma_{\mathrm{F}}^{(\rmk)})^2}{\epsilon_1\lambda_{\mathrm{I}}^{(\rmk)}\varpi}
		\exp\braB{\frac{2(\sigma_{\mathrm{E}}^{(\rmk)}\mu_1)^2}{\epsilon_1\lambda_{\mathrm{I}}^{(\rmk)}\varpi}t_{m-1}}
		\sum_{i=1}^{m}\tau \mynormb{R_i^{(\rmk)}}_*^2.
	\end{align*}
	Recalling the consistency estimates in \eqref{IELM truncation error}, we get
	\begin{align}  \label{general imex multistep Error-product4}
		\mynormb{\tilde{u}^{m}}_H^2+\epsilon_1\lambda_{\mathrm{I}}^{(\rmk)}\varpi\sum_{j=1}^{m}\tau\mynormb{\tilde{u}^{j}}_V^2
		\le&\,\frac{2(\sigma_{\mathrm{F}}^{(\rmk)})^2}{\epsilon_1\lambda_{\mathrm{I}}^{(\rmk)}\varpi}
		\exp\braB{\frac{2(\sigma_{\mathrm{E}}^{(\rmk)}\mu_1)^2}{\epsilon_1\lambda_{\mathrm{I}}^{(\rmk)}\varpi} t_{m-1}}\ck_u^2t_{m}\tau^{2\rmk},
	\end{align}
	and then, $\mynormb{\tilde{u}^{m}}_V
	\le\ck_1\tau^{\rmk-\frac12}$, where the constant $\ck_1:=\frac{\sigma_{\mathrm{F}}^{(\rmk)}\ck_u\sqrt{2T}}{\epsilon_1\lambda_{\mathrm{I}}^{(\rmk)}\varpi}
	\exp\braB{\frac{(\sigma_{\mathrm{E}}^{(\rmk)}\mu_1)^2}
		{\epsilon_1\lambda_{\mathrm{I}}^{(\rmk)}\varpi}T}$.
	By choosing a small time-step size $\tau\le \ck_1^{\frac{2}{1-2\rmk}}$ or
	\begin{align}\label{error: small time-step} 
	\tau\le&\, \kbra{\frac{\epsilon_1\lambda_{\mathrm{I}}^{(\rmk)}\varpi}{\sigma_{\mathrm{F}}^{(\rmk)}\ck_u\sqrt{2T}}
			\exp\braB{-\frac{(\sigma_{\mathrm{E}}^{(\rmk)}\mu_1)^2}
					{\epsilon_1\lambda_{\mathrm{I}}^{(\rmk)}\varpi}T}}^{\frac{2}{2\rmk-1}},
	\end{align}
	one gets $\mynormb{\tilde{u}^{m}}_V\le1$.
	It says that the error bound \eqref{error: multistep H2 norm bound} holds for $\ell=m$ and
	completes the mathematical induction. Thus the IELM method \eqref{scheme: general imex multistep} or \eqref{scheme: general imex multistep convolution} is unconditionally stable. The above inequality  \eqref{general imex multistep Error-product4} leads to the claimed error estimate \eqref{convergence: imex multistep}	and completes the proof.
\end{proof}

One can see from the above proof that the stability requirement \eqref{stability condition} vanishes when we consider only the linear parabolic problem or the semilinear parabolic problem with $\mu_0=0$. In such cases,  $\lambda_{\mathrm{I}}^{(\rmk)}>0$ is sufficient for the unconditional stability of such problems. In the physical meaning, the stability condition \eqref{stability condition} says that the implicit part should maintain the original dissipativity as much as possible and the explicit part should suppress the possible nonlinear instability as much as possible so that the numerical dissipation can well balance the nonlinear instability to achieve the unconditional stability of IELM methods \eqref{scheme: general imex multistep}. 

In the mathematical sense, the restriction \eqref{stability condition} is only a sufficient (possibly not sharp) condition to the unconditional  stability of IELM methods for the abstract parabolic problem \eqref{nonlinearModel} due to the application of the discrete energy method. Actually, the requirement \eqref{stability condition} would be not sharper (see Remark \ref{remark: BDF stability conditions}) than the existing stability condition derived by the spectral and Fourier techniques, see \cite[(1.5)]{AkrivisCrouzeixMakridakis:1999NM},
\begin{align}\label{stability condition: AkrivisKarakatsani:2003}
	\frac1{\mathcal{K}_{\rmk}}>\frac{\mu_0}{\varpi}\quad\text{with}\quad 
	\mathcal{K}_{\rmk}:=\sup\limits_{x>0}\max\limits_{\abs{\zeta}=1}
	\abs{\frac{x\varrho_{c}^{(\rmk)}(\zeta)}{(\zeta-1)\varrho_{a}^{(\rmk)}(\zeta)+x\varrho_{b}^{(\rmk)}(\zeta)}}\,.
\end{align}

For the given parameters $\varpi$ and $\mu_0$ from the problem \eqref{nonlinearModel}, the stability condition \eqref{stability condition} presents a key requirement for potential users to choose a high-order IELM method especially when the ratio $\mu_0/\varpi$ is properly large. As an example, the well-known consistent splitting algorithm of the incompressible Navier-Stokes equations imposes a special value $\mu_0/\varpi=\sqrt{2}/2$, see \cite{KangLiaoLiu:2027}, which is determined by the Stokes pressure estimate of the unconstrained Navier-Stokes system.

In many practical applications, cf. \cite[Section 2.2]{AkrivisLubich:2015} and \cite[Section 2]{LiWangZhou:2020BDF}, the value of $\mu_0$ might be chosen appropriately small to satisfy the required stability condition \eqref{stability condition}; nonetheless, it always introduces  a larger value of $\mu_1$ and eventually imposes a smaller value of the maximum step-size since the step-size restriction \eqref{error: small time-step}  relies on the value of $1/\mu_1$. This is obviously not what users expect because large time steps are always preferred  when one adopts high-order time approximations to accelerate the numerical simulations. 
In such a case, a high-order IELM method having properly large values of $\lambda_{\mathrm{I}}^{(\rmk)}/\sigma_{\mathrm{E}}^{(\rmk)}$ and $\lambda_{\mathrm{I}}^{(\rmk)}/\sigma_{\mathrm{F}}^{(\rmk)}$ would be also desirable since it admits a properly large time-step size according to the step-size restriction \eqref{error: small time-step}.

In general,  we always have $\lambda_{\mathrm{I}}^{(\rmk)}/\sigma_{\mathrm{E}}^{(\rmk)}\le1$ for any $\rmk$-step IELM method according to Lemma \ref{lemma: IELM stability intensity upper bound}, while the implicit-explicit  Euler scheme is unconditionally stable and convergent for the problem \eqref{nonlinearModel} due to the optimal value  $\lambda_{\mathrm{I}}^{(1)}/\sigma_{\mathrm{E}}^{(1)}=1$.

\subsection{Implicit-explicit controllability intensity}

The above stability analysis inspires us to introduce an indicator, named the implicit-explicit controllability intensity $\mathfrak{I}_{\mathrm{IE}}^{(\rmk)}$, defined by the ratio of the minimum eigenvalue (dissipation preserving factor) $\lambda_{\mathrm{I}}^{(\rmk)}$ from the implicit part over the spectral norm bound (nonlinear amplification factor) $\sigma_{\mathrm{E}}^{(\rmk)}$ from the explicit part,
\begin{align}\label{def: stability intensity}
	\mathfrak{I}_{\mathrm{IE}}^{(\rmk)}:=\frac{\lambda_{\mathrm{I}}^{(\rmk)}}{\sigma_{\mathrm{E}}^{(\rmk)}}
	=\frac{\min\limits_{\theta\in[0,2\pi)}
		\Re\kbra{\frac{b^{(\rmk)}(\theta)}{a^{(\rmk)}(\theta)}}}{\max\limits_{\theta\in[0,2\pi)}\abs{\frac{c^{(\rmk)}(\theta)}{a^{(\rmk)}(\theta)}}}\,,
\end{align}
where the deduced formula follows from the definitions in \eqref{def: lambda sigma}. As discussed in Section 2, it would represent the degree of controllability of the implicit part over the explicit part of a given IELM method. Actually, the controllability intensity $\mathfrak{I}_{\mathrm{IE}}$ is determined by the IELM method itself, while the underlying physical model determines the required controllability intensity threshold, such as $\mu_0/\varpi$ from the nonlinear parabolic model \eqref{nonlinearModel}.

In the next section, we will revisit and compare some IELM methods for solving the nonlinear parabolic problem \eqref{nonlinearModel} by computing their controllability intensity $\mathfrak{I}_{\mathrm{IE}}^{(\rmk)}$. As the end of this section, we mention the following corollary of Lemma \ref{lemma: IELM stability intensity upper bound}.

\begin{corollary}\label{corollary: IELM stability intensity upper bound}
	For any  consistent $\rmk$-step IELM methods \eqref{scheme: general imex multistep}, the implicit-explicit controllability intensity $\mathfrak{I}_{\mathrm{IE}}$ can not be larger than 1, that is, $\mathfrak{I}_{\mathrm{IE}}^{(\rmk)}\le 1$, while the optimal value 1 can be achieved by the implicit-explicit Euler scheme with $\mathfrak{I}_{\mathrm{IE}}^{(1)}=1$.
\end{corollary}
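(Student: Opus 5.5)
The corollary follows directly from Lemma \ref{lemma: IELM stability intensity upper bound} and the definition \eqref{def: stability intensity}. Under the standing assumptions of Lemma \ref{lemma: bound quadratic form}, the dissipation preserving factor satisfies $\lambda_{\mathrm{I}}^{(\rmk)}>0$ and the nonlinear amplification factor satisfies $\sigma_{\mathrm{E}}^{(\rmk)}>0$. Hence $\mathfrak{I}_{\mathrm{IE}}^{(\rmk)}=\lambda_{\mathrm{I}}^{(\rmk)}/\sigma_{\mathrm{E}}^{(\rmk)}$ is a well-defined positive number.

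For the upper bound, I invoke Lemma \ref{lemma: IELM stability intensity upper bound}. For any consistent $\rmk$-step IELM method it gives $\lambda_{\mathrm{I}}^{(\rmk)}\le 1$ and $\sigma_{\mathrm{E}}^{(\rmk)}\ge 1$. Dividing the first inequality by the positive quantity $\sigma_{\mathrm{E}}^{(\rmk)}$ and using the second yields
$$\mathfrak{I}_{\mathrm{IE}}^{(\rmk)}=\frac{\lambda_{\mathrm{I}}^{(\rmk)}}{\sigma_{\mathrm{E}}^{(\rmk)}}\le \frac{1}{\sigma_{\mathrm{E}}^{(\rmk)}}\le 1 .$$
The only point needing care is the sign. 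The first division requires $\sigma_{\mathrm{E}}^{(\rmk)}>0$, which holds. If $\lambda_{\mathrm{I}}^{(\rmk)}$ were allowed to be nonpositive, the bound $\mathfrak{I}_{\mathrm{IE}}^{(\rmk)}\le 1$ would hold trivially anyway. So this step poses no real obstacle.

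For optimality, I use the second part of Lemma \ref{lemma: IELM stability intensity upper bound}. For the implicit-explicit Euler scheme ($\rmk=1$, $a_0^{(1)}=b_0^{(1)}=c_0^{(1)}=1$), we have $a^{(1)}(\theta)=b^{(1)}(\theta)=c^{(1)}(\theta)=1$. This gives $\lambda_{\mathrm{I}}^{(1)}=\sigma_{\mathrm{E}}^{(1)}=1$, and hence $\mathfrak{I}_{\mathrm{IE}}^{(1)}=1$, so the bound is attained.
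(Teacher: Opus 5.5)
Your proposal is correct and follows essentially the same route as the paper, which obtains the corollary directly from Lemma~\ref{lemma: IELM stability intensity upper bound} ($\lambda_{\mathrm{I}}^{(\rmk)}\le 1$ and $\sigma_{\mathrm{E}}^{(\rmk)}\ge 1$, hence $\lambda_{\mathrm{I}}^{(\rmk)}/\sigma_{\mathrm{E}}^{(\rmk)}\le 1$) together with the explicit values $\lambda_{\mathrm{I}}^{(1)}=\sigma_{\mathrm{E}}^{(1)}=1$ for the implicit-explicit Euler scheme. Your sign remark is a harmless extra, since only $\sigma_{\mathrm{E}}^{(\rmk)}\ge 1>0$ is needed for the chain $\lambda_{\mathrm{I}}^{(\rmk)}/\sigma_{\mathrm{E}}^{(\rmk)}\le 1/\sigma_{\mathrm{E}}^{(\rmk)}\le 1$.
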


\section{Controllability intensities of some IELM methods}\label{sec: existing IELM}
\setcounter{equation}{0}

In this section, we will evaluate the effectiveness of four different parameterized classes of  IELM schemes, including $\alpha$-parameterized WBDF  \cite{LiXie:1991WBDF},  $\beta$-parameterized GBDF \cite{HuangShen:2024}, $\delta$-parameterized NIMEX \cite{RosalesSeiboldShirokoffZhou:2017,SeiboldShirokoffZhou:2019} and a new class of $\gamma$-parameterized SIELM schemes, for the nonlinear parabolic problem \eqref{nonlinearModel} by calculating the values of theoretical indicators $\sigma_{\mathrm{F}}^{(\rmk)}$,   $\sigma_{\mathrm{E}}^{(\rmk)}$, $\lambda_{\mathrm{I}}^{(\rmk)}$ and  $\mathfrak{I}_{\mathrm{IE}}^{(\rmk)}$. It should be noted that the calculations and comparisons here mainly demonstrate the theoretical effectiveness of the semi-generating function method and the global discrete energy method in the above two sections, and would not represent their actual effects (such as the numerical precision and  admissible maximum time-step size) in the numerical simulations of a specific application.

\subsection{WBDF methods}
The WBDF-$\rmk$ $(2\le \rmk\le7)$ formulas \cite{LiXie:1991WBDF} are constructed by using 
the backward differentiation formula with the $\alpha$-weighted approximation for the implicit part.
As shown in \cite[Theorem 4]{LiXie:1991WBDF}, the WBDF2 method is A-stable if $\alpha\ge\frac12$; while \cite[Theorem 5]{LiXie:1991WBDF} states that the WBDF-$\rmk$ ($\rmk=3,4,5$) methods are $A(\theta)$-stable if $\alpha>\frac12$,  the WBDF-$\rmk$ ($\rmk=6,7$) methods are $A(\theta)$-stable if $\alpha\ge\frac{13}{5}$, and furthermore, the absolute stability regions of 
WBDF-$\rmk$ methods enlarge as $\alpha$ increases.
One has the  implicit-explicit WBDF-$\rmk$ methods for the model \eqref{nonlinearModel},
\begin{align}\label{scheme: WBDF imex multistep}	
	\sum_{j=0}^{\rmk-1}a_{\mathrm{W},j}^{(\rmk)}\partial_{\tau}u^{n-j}
	+\varpi\sum_{j=0}^{\rmk}b_{\mathrm{W},j}^{(\rmk)}\mathcal{L}u^{n-j}
	=\sum_{j=0}^{\rmk-1}c_{\mathrm{W},j}^{(\rmk)}\mathcal{F}(u^{n-j-1})\quad\text{for $n\ge\rmk$,}
\end{align}
where we set $b_{\mathrm{W},0}^{(\rmk)}=\alpha$, $b_{\mathrm{W},1}^{(\rmk)}=1-\alpha$ and $b_{\mathrm{W},j}^{(\rmk)}=0$ for $2\le j\le \rmk$. 
The corresponding three characteristic polynomials read
$$\varrho_{a,\mathrm{W}}^{(\rmk)}(\zeta):=\sum_{j=1}^{\rmk}\frac{f_{\mathrm{W}}^{(j)}(1)}{j!}(\zeta-1)^{j-1}\quad \text{with}\quad f_{\mathrm{W}}(z)=(\alpha z-\alpha+1)z^{\rmk-1}\ln z,$$ 
$\varrho_{b,\mathrm{W}}^{(\rmk)}(\zeta):=\zeta^{\rmk-1}(\alpha\zeta-\alpha+1)$ and $\varrho_{c,\mathrm{W}}^{(\rmk)}(\zeta):=\zeta^{\rmk-1}(\alpha\zeta-\alpha+1)-\alpha(\zeta-1)^{\rmk}$.
Without special declarations, here we consider the stability property of WBDF-$\rmk$ ($2\le \rmk\le 5$) methods for $\alpha\ge1$, since the WBDF6 and WBDF7 schemes can not satisfy the priori assumption $\lambda_{\mathrm{I},\mathrm{W}}^{(\rmk,\alpha)}>0$ in Lemma \ref{lemma: bound quadratic form}.

\begin{table}[htb!]
	\centering
	\begin{threeparttable}
		\centering
		\caption{Stability property of WBDF-$\rmk$ methods}
		\vspace*{0.3pt}
		\def\temptablewidth{0.9\textwidth}
		\label{table: controllability intensity WBDF}
		\begin{tabular*}{\temptablewidth}{@{\extracolsep{\fill}}ccccc}
			\toprule
			$\rmk$&$\sigma_{\mathrm{F},\mathrm{W}}^{(\rmk,\alpha)}$ &$\sigma_{\mathrm{E},\mathrm{W}}^{(\rmk,\alpha)}$&$\lambda_{\mathrm{I},\mathrm{W}}^{(\rmk,\alpha)}$ 
			&$\mathfrak{I}_{\mathrm{IE},\mathrm{W}}^{(\rmk,\alpha)}$\\  \midrule		
			$2$  &1 & $\frac{2\alpha+1}{2\alpha}$& $\frac{2\alpha-1}{2\alpha}$
			&$\tfrac{2\alpha-1}{2\alpha +1}$ \\[4pt]
			$3$  &1 & $\frac{3(6\alpha+1)}{2(6\alpha-1)}$& $\frac{3(2\alpha-1)}{2(6\alpha-1)}$ 
			&$\tfrac{2\alpha-1}{6\alpha +1}$ \\[4pt]
			$4$ & $1$&$\frac{3(14\alpha+1)}{4(5\alpha-1)}$
			& $	\frac{120\alpha-61}{80(5\alpha-1) }$
			&$	\frac{120\alpha-61}{60(14\alpha +1)}$\\[4pt]
			$5$  &$\tfrac{47\alpha-2}{40\alpha}$	&$\frac{45 (10 \alpha +1)}{32 (5 \alpha -1)}$
			&$\frac{5 (6 \alpha -5)}{32 (5 \alpha -1)}$
			&$\frac{6 \alpha -5}{90 \alpha +9}$
		\end{tabular*}
		{\rule{\temptablewidth}{0.5pt}}
	\end{threeparttable}
\end{table}	

Table \ref{table: controllability intensity WBDF} collects the upper bounds of  $\sigma_{\mathrm{F},\mathrm{W}}^{(\rmk,\alpha)}$ and $\sigma_{\mathrm{E},\mathrm{W}}^{(\rmk,\alpha)}$, the lower bounds of $\lambda_{\mathrm{I},\mathrm{W}}^{(\rmk,\alpha)}$ and $\mathfrak{I}_{\mathrm{IE},\mathrm{W}}^{(\rmk,\alpha)}$ for the WBDF-$\rmk$ $(2\le \rmk\le 5)$ methods, see Propositions \ref{prop: WBDF2}-\ref{prop: WBDF5} in section \ref{sec: SM-WBDF} of the supplementary material. As seen, the implicit-explicit controllability intensity $\mathfrak{I}_{\mathrm{IE},\mathrm{W}}^{(\rmk,\alpha)}$ is always increasing with the free parameter $\alpha$ so that the WBDF schemes enhance the applicability to the nonlinear parabolic problem \eqref{nonlinearModel} compared with the BDF methods with $\alpha=1$. Since one can find some appropriate parameter $\alpha>\frac{\varpi+\mu_0}{2(\varpi-\mu_0)}$ to satisfy the  condition \eqref{stability condition} for any $0<\mu_0<\varpi$, we say that  the WBDF2 scheme has good adaptability to  \eqref{nonlinearModel}. 

In contrast, the WBDF-$\rmk$ $(3\le \rmk\le 5)$ schemes seem only applicable to the cases where $\mu_0/\varpi$ is small and their applicability to the nonlinear model \eqref{nonlinearModel} rapidly decreases as the temporal order $\rmk$ increases. It is theoretically and practically desirable to develop high-order IELM methods having a large value of $\mathfrak{I}_{\mathrm{IE}}^{(\rmk)}$.
Also, the value of  $\lambda_{\mathrm{I},\mathrm{W}}^{(\rmk,\alpha)}/\sigma_{\mathrm{F},\mathrm{W}}^{(\rmk,\alpha)}$, which is closely related to the admissible maximum time-step size, rapidly decreases as  $\rmk$ increases. It is well consistent with the usual numerical experiences: higher order BDF-type methods always require smaller time-step size to maintain the numerical stability for nonlinear parabolic problems, see the numerical tests in \cite[Example 3]{HuangShen:2024}, \cite[Example 1]{HuangShen:2025mcom}
	and \cite[Example 1]{KangLiaoLiu:2027}. 
	
\begin{remark}\label{remark: BDF stability conditions}
Taking  $\alpha = 1$ in the last column of Table \ref{table: controllability intensity WBDF}, one has the following stability conditions of the implicit-explicit BDF-$\rmk$ methods for $3\le\rmk\le5$, respectively,
$$\frac{\mu_0}\varpi<\frac{1}{7}\approx0.142857,\quad \frac{\mu_0}\varpi<\frac{59}{900}\approx0.065556,\quad
\frac{\mu_0}\varpi<\frac{1}{99}\approx0.010101.$$
For the condition \eqref{stability condition: AkrivisKarakatsani:2003} derived by the spectral and Fourier techniques, one has $\mathcal{K}_{\rmk}=2^\rmk-1$ for the implicit-explicit BDF-$\rmk$ methods, see \cite[Remark 2.4]{AkrivisCrouzeixMakridakis:1999NM}, and the stability condition \eqref{stability condition: AkrivisKarakatsani:2003} for $\rmk=3, 4$ and 5 reads, respectively,
$$\frac{\mu_0}\varpi<\frac{1}{7}\approx0.142857,\quad \frac{\mu_0}\varpi<\frac{1}{15}\approx0.066667,\quad
\frac{\mu_0}\varpi<\frac{1}{31}\approx0.032258.$$
Obviously, our stability condition \eqref{stability condition} is not sharper than \eqref{stability condition: AkrivisKarakatsani:2003} especially for the case $\rmk=5$.
Applying the discrete energy technique with optimal Nevanlinna-Odeh multipliers, Akrivis and Katsoprinakis \cite{AkrivisKatsoprinakis:2016} obtain the condition 
$\frac{\mu_0}\varpi<\frac{1}{(2^\rmk-1)(1+\hat{\eta}_\rmk)}$
with $\hat{\eta}_3 = 1 / 13$, $\hat{\eta}_4 = 0.2878$, and $\hat{\eta}_5 = 0.8097337459$.  One can check that our stability condition \eqref{stability condition} for the BDF-$\rmk$ methods is sharper than this stability condition for $\rmk=3$ and $4$, but is more restrictive than the requirement $\frac{\mu_0}\varpi < 0.01782476$ in \cite{AkrivisKatsoprinakis:2016} for the case $\rmk=5$.
Up to now, we are not able to improve the stability condition \eqref{stability condition}.
\end{remark}

\subsection{GBDF and NIMEX methods}

 The implicit part of the GBDF6 method constructed by following \cite{HuangShen:2024} is not strongly $A(0)$-stable for $\beta>2$ since the first characteristic polynomial $\varrho_{a,\mathrm{G}}$ is not a Schur polynomial if $\beta>2$.  Here we consider the stability property of GBDF-$\rmk$ ($2\le \rmk\le 5$) methods for $\beta\ge1$.

\begin{table}[htb!]
	\centering
	\begin{threeparttable}
		\centering
		\caption{Stability property of GBDF-$\rmk$ methods}
		\vspace*{0.3pt}
		\def\temptablewidth{1\textwidth}
		\label{table: controllability intensity GBDF}
		\begin{tabular*}{\temptablewidth}{@{\extracolsep{\fill}}ccccc}
			\toprule
		$\rmk$	&$\sigma_{\mathrm{F},\mathrm{G}}^{(\rmk,\beta)}$&$\sigma_{\mathrm{E},\mathrm{G}}^{(\rmk,\beta)}$ &$\lambda_{\mathrm{I},\mathrm{G}}^{(\rmk,\beta)}$ 
			&$\mathfrak{I}_{\mathrm{IE},\mathrm{G}}^{(\rmk,\beta)}$\\  \midrule		
			$2$  &1 	& $\frac{2\beta+1}{2\beta}$& $\frac{2\beta-1}{2\beta}$
			&$\tfrac{2\beta-1}{2\beta +1}$\\[4pt]
			$3$  &1& $\frac{6 \beta ^2+12 \beta +3}{6 \beta ^2+6 \beta -2}$& $\frac{6 \beta ^2-4}{6 \beta ^2+6 \beta -2}$
			& $\tfrac{6\beta ^2-4}{6\beta ^2+12\beta +3}$  \\[4pt]
			$4$ &$\frac{11\beta-1}{10\beta}$	&$\frac{4 \beta ^3+19\beta ^2+20 \beta +3}{4 (\beta ^3+3 \beta ^2+\beta -1)}$
			& $\frac{4 \beta ^3+5 \beta ^2-4 \beta -3}{4 (\beta ^3+3 \beta ^2+\beta -1)}$ 
			&  $\tfrac{4 \beta ^3+5 \beta ^2-4 \beta -3}{4 \beta ^3+19 \beta ^2+20 \beta+3}$ \\[4pt]
			 $5^*$  &$\frac{20\beta-1}{10\beta}$& $\frac{5(2 \beta ^4+30 \beta ^3+32 \beta ^2+38 \beta +5)}{10 \beta ^4+60 \beta ^3+90 \beta ^2-32}$& $\frac{5(2 \beta ^4+\beta ^3+4 \beta ^2-4 \beta -2)}{10 \beta ^4+60 \beta ^3+90 \beta ^2-32}$
			& $\frac{2 \beta ^4+\beta ^3+4 \beta ^2-4 \beta -2}{2 \beta ^4+30 \beta ^3+32 \beta ^2+38 \beta +5}$\\[4pt]
			$5^\star$  &$\frac{20\beta-1}{10\beta}$& $\frac{5 (2 \beta ^4+21 \beta ^3+29 \beta ^2+38 \beta +5)}{10 \beta ^4+60 \beta ^3+90 \beta ^2-32}$& $\frac{5 (2 \beta ^4+3 \beta ^3+25 \beta ^2-9 \beta -3)}{10 \beta ^4+60 \beta ^3+90 \beta ^2-32}$
			& $\frac{2 \beta ^4+3 \beta ^3+25 \beta ^2-9 \beta -3}{2 \beta ^4+21 \beta ^3+29 \beta ^2+38 \beta +5}$
		\end{tabular*}
		{\rule{\temptablewidth}{0.5pt}}
		\footnotesize
		\tnote{The two cases $\rmk=5^*$ and $5^\star$ require $1\le \beta<18$ and $\beta\ge18$, respectively.}
	\end{threeparttable}
\end{table}	

Table \ref{table: controllability intensity GBDF} collects the upper bounds of  $\sigma_{\mathrm{F},\mathrm{G}}^{(\rmk,\beta)}$ and $\sigma_{\mathrm{E},\mathrm{G}}^{(\rmk,\beta)}$, the lower bounds of $\lambda_{\mathrm{I},\mathrm{G}}^{(\rmk,\beta)}$ and $\mathfrak{I}_{\mathrm{IE},\mathrm{G}}^{(\rmk,\beta)}$ for the GBDF-$\rmk$ methods, see Propositions \ref{prop: GBDF3}, \ref{prop: GBDF4} and \ref{prop: GBDF5} together with Remarks \ref{remark: lambda GBDF4} and \ref{remark: lambda GBDF5} in section \ref{sec: SM-GBDF} of the supplementary material for  more details. As seen, the  controllability intensity $\mathfrak{I}_{\mathrm{IE},\mathrm{G}}^{(\rmk,\beta)}$ is always increasing with the free parameter $\beta$ so that the GBDF schemes enhance the applicability to the nonlinear parabolic problem \eqref{nonlinearModel} compared with the standard BDF methods with $\beta=1$. Moreover, one may find some appropriate parameter $\beta$ to satisfy the stability condition \eqref{stability condition} for any $0<\mu_0<\varpi$. In this sense, the GBDF-$\rmk$ methods always have better adaptability to the nonlinear parabolic equation \eqref{nonlinearModel} than the WBDF-$\rmk$ schemes in Section 4.1. Note that, the value of  $\lambda_{\mathrm{I},\mathrm{G}}^{(\rmk,\beta)}/\sigma_{\mathrm{F},\mathrm{G}}^{(\rmk,\beta)}$, which is closely related to the admissible maximum time-step size, slowly increases as  $\rmk$ increases. One can choose properly larger $\beta$ for a high-order GBDF method so that the admissible maximum time-step size is comparable to that of the GBDF2 scheme, see the experiments in \cite[Example 3]{HuangShen:2024}, \cite[Example 1]{HuangShen:2025mcom} and \cite[Example 1]{KangLiaoLiu:2027}.

At the same time,
the GBDF-$\rmk$ schemes have their own defects: the discrete coefficients involving $\rmk-1$ degree polynomials with respect to $\beta$ are rather complex (the functions $\frac1{\abst{a_{\mathrm{G}}^{(\rmk)}(\theta)}}$, $\frac{\abst{c_{\mathrm{G}}^{(\rmk)}(\theta)}}{\abst{a_{\mathrm{G}}^{(\rmk)}(\theta)}}$ and
$\Re\kbrab{\tfrac{b_{\mathrm{G}}^{(\rmk)}(\theta)}{a_{\mathrm{G}}^{(\rmk)}(\theta)}}$ are always not unimodal for $\theta\in[0,2\pi)$ and the calculations of their extreme values become rather complex), and the improvement of the controllability intensity $\mathfrak{I}_{\mathrm{IE},\mathrm{G}}^{(\rmk)}$ as the parameter $\beta$ increases at the expense of rapidly increased truncation error, referred to the proof of \cite[Theorem 4.1]{HuangShen:2025mcom} or the leading truncation errors in \eqref{truncation: GBDF3}, \eqref{truncation: GBDF4} and \eqref{truncation: GBDF5}. 

As a by-product, according to Theorem \ref{thm: multistep stability}, the theoretical results in this subsection verify that the GBDF-$\rmk$ $(2\le \rmk\le 5)$ methods are stable for the parameter $\beta\ge1$ when they are applied to linear parabolic problems (the only condition is $\lambda_{\mathrm{I}}^{(\rmk,\beta)}>0$), which essentially improve the results in \cite[Theorem 2]{HuangShen:2024}. For the nonlinear parabolic problem \eqref{nonlinearModel}, we also improve \cite[Theorem 3]{HuangShen:2024} essentially by establishing the unconditional stability of the GBDF-$\rmk$ $(2\le \rmk\le 5)$ methods under the stability  requirement $\mathfrak{I}_{\mathrm{IE},\mathrm{G}}^{(\rmk,\beta)}>\mu_0/\varpi$, which is much sharper than the condition \eqref{stability condition: HuangShen:2024}.

To end this subsection, we revisit the $\delta$-parameterized NIMEX  schemes \cite{RosalesSeiboldShirokoffZhou:2017,SeiboldShirokoffZhou:2019} with some comments for the value of implicit-explicit controllability intensity. The $\rmk$-step NIMEX  schemes for the nonlinear parabolic model \eqref{nonlinearModel} read
\begin{align}\label{scheme: NIMEX multistep}	
	\sum_{j=0}^{\rmk-1}a_{\mathrm{N},j}^{(\rmk)}\partial_{\tau}u^{n-j}
	+\varpi\sum_{j=0}^{\rmk}b_{\mathrm{N},j}^{(\rmk)}\mathcal{L}u^{n-j}
	=\sum_{j=0}^{\rmk-1}c_{\mathrm{N},j}^{(\rmk)}\mathcal{F}(u^{n-j-1})\quad\text{for $n\ge\rmk$,}
\end{align}
where the coefficients $a_{\mathrm{N},j}^{(\rmk)}$, $b_{\mathrm{N},j}^{(\rmk)}$ and $c_{\mathrm{N},j}^{(\rmk)}$ are determined by the three characteristic polynomials (for the sake of consistency in the present context, the range of free parameter $\delta$ is modified from   $0<\delta\le1$ in the original papers \cite{RosalesSeiboldShirokoffZhou:2017,SeiboldShirokoffZhou:2019} to $\delta\ge1$ with the transform $\delta\leftarrow1/\delta$)
$$\varrho_{a,\mathrm{N}}^{(\rmk)}(\zeta):=\sum_{j=1}^{\rmk}\frac{f_{\mathrm{N}}^{(j)}(1)}{j!}(\zeta-1)^{j-1}\quad \text{with}\quad f_{\mathrm{N}}(z)=(\delta z-\delta+1)^{\rmk}\ln z,$$ 
$\varrho_{b,\mathrm{N}}^{(\rmk)}(\zeta):=(\delta\zeta-\delta+1)^{\rmk}$ and $\varrho_{c,\mathrm{N}}^{(\rmk)}(\zeta):=(\delta\zeta-\delta+1)^{\rmk}-\delta^{\rmk}(\zeta-1)^{\rmk}$,
respectively.

Note that, at least in our theoretical framework with discrete energy techniques, the $\delta$-parameterized NIMEX-$\rmk$ methods \eqref{scheme: NIMEX multistep} would be weaker than the GBDF-$\rmk$ schemes on the adaptability to the nonlinear parabolic model \eqref{nonlinearModel}.  Taking the case $\rmk=2$ with the free parameter $\delta\ge6/5$,
one has $\max_{\delta\ge6/5}\lambda_{\mathrm{I},\mathrm{N}}^{(2,\delta)}=\lambda_{\mathrm{I},\mathrm{N}}^{(2,\frac{11}4)}
=\frac{27}{32}$ and $\max_{\delta\ge6/5}\mathfrak{I}_{\mathrm{IE},\mathrm{N}}^{(2,\delta)}\approx 0.795354$ as $\delta\approx8.5176$. But,  $\mathfrak{I}_{\mathrm{IE},\mathrm{G}}^{(2,\beta)}
>\max_{\delta\ge6/5}\mathfrak{I}_{\mathrm{IE},\mathrm{N}}^{(2,\delta)}$ if $\beta\ge \frac{9}{2}$. 
Considering  $\rmk=3$ with the free parameter $\delta\ge2$, 
one can find that $\frac{19}{37}\le\mathfrak{I}_{\mathrm{IE},\mathrm{N}}^{(3,\delta)}<\frac{88}{125}$ for  $\delta\ge2$. 
 However, $\mathfrak{I}_{\mathrm{IE},\mathrm{G}}^{(3,\beta)}>\max_{\delta\ge2}\mathfrak{I}_{\mathrm{IE},\mathrm{N}}^{(3,\delta)}$ if  $\beta\ge5.39469$. 
 
 Although the controllability intensities $\mathfrak{I}_{\mathrm{IE},\mathrm{N}}^{(\rmk,\delta)}$ of NIMEX-$\rmk$ methods \eqref{scheme: NIMEX multistep}	are not comparable to those of the GBDF-$\rmk$ schemes, we find that 
the implicit part of the NIMEX-$\rmk$ methods \eqref{scheme: NIMEX multistep} for $2\le\rmk\le9$ are strongly $A(0)$-stable and fulfill the assumptions of Lemma \ref{lemma: bound quadratic form} for proper parameter $\delta$. 
That is, our theory in Sections \ref{sec: semi-generating function method}-\ref{sec: stability of IELM} is applicable for  the NIMEX-$\rmk$ methods up to the ninth-order accuracy, while  the application and the numerical analysis are left to a separate report. 
This interesting property can be seen again in the next subsection, where we discuss a simplified version of NIMEX-$\rmk$ methods.

\subsection{$\gamma$-parameterized SIELM methods}

As an alternative to the GBDF-$\rmk$ schemes, this subsection discusses a new class of $\gamma$-parameterized SIELM methods for which the associated implicit-explicit controllability intensity $\mathfrak{I}_{\mathrm{IE}}^{(\rmk)}$ can approach the optimal value 1 as the parameter $\gamma$ is properly large, especially for $2\le\rmk\le5$. For the nonlinear parabolic model \eqref{nonlinearModel}, they can be formulated as follows
\begin{align}\label{scheme: our IELM multistep}	
	\sum_{j=0}^{\rmk-1}a_{\mathrm{S},j}^{(\rmk)}\partial_{\tau}u^{n-j}
	+\varpi\sum_{j=0}^{\rmk}b_{\mathrm{S},j}^{(\rmk)}\mathcal{L}u^{n-j}
	=\sum_{j=0}^{\rmk-1}c_{\mathrm{S},j}^{(\rmk)}\mathcal{F}(u^{n-j-1})\quad\text{for $n\ge\rmk$,}
\end{align}
where  $a_{\mathrm{S},j}^{(\rmk)}$, $b_{\mathrm{S},j}^{(\rmk)}$ and $c_{\mathrm{S},j}^{(\rmk)}$ are determined by the three characteristic polynomials 
$${\varrho}_{a,\mathrm{S}}^{(\rmk)}(\zeta):=\sum_{j=1}^{\rmk}\frac{f_{\mathrm{S}}^{(j)}(1)}{j!}(\zeta-1)^{j-1}\quad \text{with}\quad f_{\mathrm{S}}(z)=(\gamma z-\gamma+1)^{\rmk-1}z\ln z,$$ 
$\varrho_{b,\mathrm{S}}^{(\rmk)}(\zeta):=\zeta(\gamma\zeta-\gamma+1)^{\rmk-1}$ and $\varrho_{c,\mathrm{S}}^{(\rmk)}(\zeta):=\zeta(\gamma\zeta-\gamma+1)^{\rmk-1}-\gamma^{\rmk-1}(\zeta-1)^{\rmk}$,
respectively. Note that, the implicit part of the SIELM-$\rmk$ methods \eqref{scheme: NIMEX multistep} for $2\le\rmk\le9$ are strongly $A(0)$-stable. One can check that  all roots of $\varrho_{a,\mathrm{S}}^{(\rmk)}(\zeta)$ satisfy $\abs{\zeta}<1$ if  $\gamma>0$, $\gamma>\frac{1}{\sqrt{6}}$, $\gamma>0$, $\gamma >0.647518$, $\gamma >0.830364$, $\gamma >1.02816$, 
$\gamma>1.23687$ and $\gamma>1.45371$ corresponding to the order index $\rmk=2,3,\cdots,8$ and 9, respectively.

We see that the SIELM-$\rmk$ methods \eqref{scheme: our IELM multistep} are also generalized version of WBDF-$\rmk$ schemes since the case of $\rmk=2$ is just the WBDF2 or GBDF2 scheme. Like the GBDF-$\rmk$ methods, the discrete coefficients $a_{\mathrm{S},j}^{(\rmk)}$, $b_{\mathrm{S},j}^{(\rmk)}$ and $c_{\mathrm{S},j}^{(\rmk)}$ always involve $\rmk-1$ degree polynomials with respect to $\gamma$. Unlike the GBDF-$\rmk$ methods which attain the maximum consistency order of five, the implicit part of SIELM-$\rmk$ methods for $6\le\rmk\le9$ are also strongly $A(0)$ stable and fulfill the requirements of Lemma \ref{lemma: bound quadratic form} for certain ranges of the parameter $\gamma$.  That is, our stability theory in Sections \ref{sec: semi-generating function method}-\ref{sec: stability of IELM} is applicable for  the SIELM-$\rmk$ methods \eqref{scheme: our IELM multistep} up to the ninth-order time accuracy.

\begin{table}[htb!]
	\centering
	\begin{threeparttable}
		\centering
		\caption{Stability property of SIELM-$\rmk$ methods}
		\vspace*{0.3pt}
		\def\temptablewidth{1\textwidth}
		\label{table: controllability intensity SIELM}
		\begin{tabular*}{\temptablewidth}{@{\extracolsep{\fill}}clc}
			\toprule
			$\rmk$&Range of $\gamma$ 
			&$\mathfrak{I}_{\mathrm{IE},\mathrm{S}}^{(\rmk,\gamma)}$\\  \midrule		
			$2$  &$[1,+\infty)$
			&$\tfrac{2\gamma-1}{2\gamma +1}$\\[5pt]
			$3$  &  $[1,+\infty)$
			& $\frac{(2\gamma-1)^2}{4 \gamma ^2+4 \gamma -1}$  \\[5pt]
			$4$ &	 $[6/5,+\infty)$		
			&  $\frac{(2 \gamma -1)^3}{8 \gamma ^3+12 \gamma ^2-6 \gamma +1}$ \\[5pt]
			$5$ & 	 $[7/5,+\infty)$
				& $\frac{(2\gamma-1)^4}{16 \gamma ^4+32 \gamma ^3-24 \gamma ^2+8 \gamma -1}$\\[5pt]
			$6$ &$[10/5,80]$
			& $\frac{(2 \gamma -1)^5}{32 \gamma ^5+80 \gamma ^4-80 \gamma ^3+40 \gamma ^2-10 \gamma +1}$
			\\[5pt]
			$7$ &$[11/5,60]$
			 & {$\frac{(2 \gamma-1 )^6}{64 \gamma ^6+192 \gamma ^5-240 \gamma ^4+160 \gamma ^3-60 \gamma ^2+12 \gamma -1}$}
			\\[5pt]
			$8$ &$[13/5,40]$
			& {  $\frac{(2 \gamma -1)^7}{128 \gamma ^7+448 \gamma ^6-672 \gamma ^5+560 \gamma ^4-280 \gamma ^3+84 \gamma ^2-14 \gamma +1}$}\\[5pt]
			$9$ &$[15/5,25]$
			& {$ \frac{(2\gamma-1)^8}{256\gamma^8 + 1024\gamma^7 - 1792\gamma^6 + 1792\gamma^5 - 1120\gamma^4 + 448\gamma^3 - 112\gamma^2 + 16\gamma - 1}$}
		\end{tabular*}
		{\rule{\temptablewidth}{0.5pt}}
	\footnotesize
	\end{threeparttable}
\end{table}

For the semi-generating functions $a_{\mathrm{S}}^{(\rmk)}(\theta)$, $b_{\mathrm{S}}^{(\rmk)}(\theta)$
and $c_{\mathrm{S}}^{(\rmk)}(\theta)$, defined via \eqref{matrix: A_L B_L C_L generating function},
of the SIELM-$\rmk$ methods \eqref{scheme: our IELM multistep}, we find that the functions $\frac1{\abst{a_{\mathrm{S}}^{(\rmk)}(\theta)}}$, $\frac{\abst{c_{\mathrm{S}}^{(\rmk)}(\theta)}}{\abst{a_{\mathrm{S}}^{(\rmk)}(\theta)}}$ and
$\Re\kbrab{\tfrac{b_{\mathrm{S}}^{(\rmk)}(\theta)}{a_{\mathrm{S}}^{(\rmk)}(\theta)}}$ are unimodal over $\theta\in[0,2\pi)$ for properly large parameter $\gamma$ so that their extreme values can be attained at $\theta=0$ or $\theta=\pi$. For the order indexes $2\le \rmk\le 9$, we have
\begin{align}\label{intensity: lambda sigma-SIELM}
	\sigma_{\mathrm{F},\mathrm{S}}^{(\rmk)}=\frac{1}{\absb{a_{\mathrm{S}}^{(\rmk)}(0)}}=1,\quad
	\sigma_{\mathrm{E},\mathrm{S}}^{(\rmk)}=
	\frac{\absb{c_{\mathrm{S}}^{(\rmk)}(\pi)}}{\absb{a_{\mathrm{S}}^{(\rmk)}(\pi)}},	\quad
	\lambda_{\mathrm{I},\mathrm{S}}^{(\rmk)}=\frac{b_{\mathrm{S}}^{(\rmk)}(\pi)}{a_{\mathrm{S}}^{(\rmk)}(\pi)}
\end{align}
if $\gamma$ satisfies $\gamma_{\rmk}\le \gamma\le \gamma_{\rmk}^*$, see  Propositions \ref{prop: SIELM3}-\ref{prop: SIELM9} in section \ref{sec: SM-SIELM} of the supplementary material for the cases $\rmk=3,4,\cdots,8$ and $9$, respectively.
Table \ref{table: controllability intensity SIELM} collects the value of    $\mathfrak{I}_{\mathrm{IE},\mathrm{S}}^{(\rmk,\gamma)}$ with the ranges of $\gamma$.

It is easy to check that the controllability intensity $\mathfrak{I}_{\mathrm{IE},\mathrm{S}}^{(\rmk,\gamma)}$ is always increasing as $\gamma$ increases so that the SIELM-$\rmk$ ($2\le\rmk\le9$) schemes enhance the applicability to the nonlinear parabolic problem \eqref{nonlinearModel} as $\gamma$ increases.
Like the GBDF-$\rmk$ methods, the improvement of implicit-explicit controllability intensity $\mathfrak{I}_{\mathrm{IE},\mathrm{S}}^{(\rmk)}$ as $\gamma$ increases at the expense of rapidly increased truncation error, referred to the leading truncation errors in \eqref{truncation: SIELM3}, \eqref{truncation: SIELM4},  \eqref{truncation: SIELM5},  \eqref{truncation: SIELM6},  \eqref{truncation: SIELM7},  \eqref{truncation: SIELM8} and  \eqref{truncation: SIELM9}. 
 For the cases $2\le\rmk\le5$, one can find some $\gamma$ to satisfy the stability condition \eqref{stability condition} for any $0<\mu_0<\varpi$. 
The parameter restriction $\gamma_{\rmk}\le \gamma\le \gamma_{\rmk}^*$ of  SIELM-$\rmk$ method for $6\le\rmk\le9$ gives the maximum value of  controllability intensity as follows,
\begin{align*}
	&\,\max_{\gamma}\mathfrak{I}_{\mathrm{IE},\mathrm{S}}^{(6, \gamma)}=\mathfrak{I}_{\mathrm{IE},\mathrm{S}}^{(6, 80)}\approx0.940124,\quad
	\max_{\gamma}\mathfrak{I}_{\mathrm{IE},\mathrm{S}}^{(7, \gamma)}=\mathfrak{I}_{\mathrm{IE},\mathrm{S}}^{(7, 60)}\approx0.906633,\\
	&\,\max_{\gamma}\mathfrak{I}_{\mathrm{IE},\mathrm{S}}^{(8, \gamma)}=\mathfrak{I}_{\mathrm{IE},\mathrm{S}}^{(8, 40)}\approx0.844531,\quad 
	\max_{\gamma}\mathfrak{I}_{\mathrm{IE},\mathrm{S}}^{(9, \gamma)}=\mathfrak{I}_{\mathrm{IE},\mathrm{S}}^{(9, 25)}\approx0.740285.
\end{align*}

The SIELM-$\rmk$ methods ($2\le\rmk\le9$) always have better adaptability to the nonlinear parabolic equation \eqref{nonlinearModel} than the WBDF-$\rmk$  and NIEMX-$\rmk$ methods for $3\le \rmk\le5$. 
Typically, the controllability intensity $\mathfrak{I}_{\mathrm{IE},\mathrm{S}}^{(\rmk,\gamma)}$ of the SIELM-$\rmk$ ($2\le\rmk\le9$) method can be greater than the mathematical threshold  $\mu_0/\varpi=\sqrt{2}/2$ required for the well-known consistent splitting algorithm of the incompressible Navier-Stokes equations. As verified theoretically and experimentally in \cite{KangLiaoLiu:2027}, the SIELM-$\rmk$ ($2\le\rmk\le9$) method fulfilling the stability condition \eqref{stability condition} can maintain the unconditional stability of the corresponding consistent splitting IELM algorithm.

Note that, the value of  $\lambda_{\mathrm{I},\mathrm{S}}^{(\rmk,\gamma)}/\sigma_{\mathrm{F},\mathrm{S}}^{(\rmk,\gamma)}=\lambda_{\mathrm{I},\mathrm{S}}^{(\rmk,\gamma)}$, which is closely related to the admissible maximum step size, slowly increases as the temporal order $\rmk$ increases. One can choose properly larger $\gamma$ for a high-order SIELM method so that the admissible maximum step size is comparable to that of the SIELM2 scheme, see the numerical tests in \cite[Examples 1 and 2]{KangLiaoLiu:2027}. 

\section{Conclusions}

A novel semi-generating function approach combined with a global discrete energy analysis is suggested for the stability and convergence analysis of general IELM methods in solving nonlinear parabolic equations. 
Compared with the existing discrete energy approaches based on Dahlquist's G-stability theory \cite{Dahlquist:1978} with the Nevanlinna-Odeh-type multipliers \cite{Akrivis:2015,AkrivisKatsoprinakis:2016,AkrivisLubich:2015,LubichMansourVenkataraman:2013}  or the implicit part  decompositions \cite{HuangShen:2024,HuangShen:2025mcom}, 
the unified framework is theoretically concise and applicable for a wide class of parameterized IELM methods. Generally, the  delicate constructions of Nevanlinna-Odeh-type multipliers or implicit part decompositions are always avoided or transformed via the three extreme values, including the perturbation amplification factor $\sigma_{\mathrm{F}}^{(\rmk)}$, the nonlinear amplification factor $\sigma_{\mathrm{E}}^{(\rmk)}$ and the dissipation preserving factor $\lambda_{\mathrm{I}}^{(\rmk)}$, of the three univariate real-valued functions $\frac1{\abst{a^{(\rmk)}(\theta)}}$, $\frac{\abst{c^{(\rmk)}(\theta)}}{\abst{a^{(\rmk)}(\theta)}}$ and
$\Re\kbrab{\tfrac{b^{(\rmk)}(\theta)}{a^{(\rmk)}(\theta)}}$ in the bounded interval $\theta\in[0,2\pi)$. 

This greatly facilitates our theoretical analysis on the stability of various IELM methods. Four parameterized IELM methods, including the existing $\alpha$-parameterized WBDF, $\beta$-parameterized  GBDF, $\delta$-parameterized  NIMEX methods and a new $\gamma$-parameterized class of SIELM methods, are evaluated in detail for possible applications to \eqref{nonlinearModel} by evaluating the bounds of $\lambda_{\mathrm{I}}^{(\rmk)}$, $\sigma_{\mathrm{E}}^{(\rmk)}$, $\sigma_{\mathrm{F}}^{(\rmk)}$ and  $\mathfrak{I}_{\mathrm{IE}}^{(\rmk)}$. 
According to the theoretical range of  controllability intensity $\mathfrak{I}_{\mathrm{IE}}^{(\rmk)}$, it seems that the  GBDF-$\rmk$ ($2\le \rmk\le5$) and  SIELM-$\rmk$ ($2\le \rmk\le9$) methods always have better adaptability than other existing IELM schemes for the nonlinear parabolic model \eqref{nonlinearModel}.

To show the theoretical effectiveness of  our theory, some typical applications together with extensive numerical experiments, such as the high-order decoupled IELM algorithms for the incompressible Navier-Stokes equation and the long-time stability of high-order IELM methods for nonlinear dissipative dynamics, will be addressed in forthcoming reports, in which the implicit-explicit controllability intensity or its variants will be used to choose a good candidate of IELM methods.

\section*{Acknowledgment}
The authors would like to thank Dr. Yuanyuan Kang for her careful reading of the manuscript, thank Dr. Bingquan Ji and Dr. Xuping Wang for their helpful discussions  on Lemma \ref{lemma: bound quadratic form}, and thank Dr. Zhuo Chen for checking and updating all the theoretical and numerical computations in Section 4. We also thank the three anonymous referees for their valuable comments and suggestions, which are very helpful in improving the quality of the paper.

\begin{appendix}

\section{Proofs of Lemmas \ref{lem: Toeplitz-Caratheodory}-\ref{lemma: spectral norm bound}}
\label{sec: appendx Lemmas 2.1-2.3}
\setcounter{equation}{0}

\begin{proof}[Proof of Lemma \ref{lem: Toeplitz-Caratheodory}]
	For the	real sequence $\{a_{0},a_{1},\cdots,a_{n},\cdots\}$, define $\bar{a}_{0}=a_0$, while
	$\bar{a}_{k}=a_k/2$ and $\bar{a}_{-k}=a_k/2$ for $k\ge1$.
	Then the real symmetric Toeplitz matrix $\mathcal{S}(P_{L, n})=(\bar{a}_{i-j})$ with 
	the entries
	$\bar{a}_{ij}=\bar{a}_{i-j}$ for $i,j\ge1$ being constants along the diagonals of $\mathcal{S}(P_{L, n})$.
	According to \cite[Section 1.10]{GrenanderSzego:2001},
	let $\bar{a}_{k}$ be the Fourier coefficients of the trigonometric polynomial $\mathrm{g}$, that is,
	$\bar{a}_{k}=\frac{1}{2\pi}\int_{0}^{2\pi}\mathrm{g}(\theta)e^{-\imath k \theta}\zd \theta$. Then 
	the standard generating function of the Toeplitz matrix $\mathcal{S}(P_{L, n})$ reads
	$\mathrm{g}(\theta):=\sum_{k=-\infty}^{\infty}\bar{a}_{k}e^{\imath k \theta}.$
	The Grenander-Szeg\H{o} theorem \cite[pp. 64-65]{GrenanderSzego:2001} gives the relationship
	between the eigenvalues of $\mathcal{S}(P_{L, n})$ and the standard  generating function $\mathrm{g}$. That is, 
	the Toeplitz matrix $\mathcal{S}(P_{L, n})$ is positive definite if $\min_{\theta\in[0,2\pi)}\mathrm{g}(\theta)>0$, and  the associated eigenvalues $\lambda_j\kbrab{\mathcal{S}(P_{L, n})}$  can be bounded by
	$$\min_{\theta\in[0,2\pi)}\mathrm{g}(\theta)\le\lambda_j\kbrab{\mathcal{S}(P_{L, n})}\le \max_{\theta\in[0,2\pi)}\mathrm{g}(\theta)\quad\text{for $j=0,1,\cdots,n-1$}.$$
	Moreover, the eigenvalues $\lambda_j\kbrab{\mathcal{S}(P_{L, n})}$ for $j=0,2,\cdots,n-1$ 
	are equally distributed as $\mathrm{g}(\frac{2\pi j}{n})$ in the sense that
	$\lim_{n\rightarrow\infty}\frac1{n}\sum_{j=0}^{n-1}\kbraB{\lambda_j(Q_n)-\mathrm{g}(\frac{2\pi j}{n})}=0.$
	Thus the results (i)-(iii) follow immediately due to the following fact
	\begin{align*}
		\mathrm{g}(\theta)=&\,a_0+\sum_{k=1}^{\infty}\bar{a}_{-k}e^{-\imath k \theta}
		+\sum_{k=1}^{\infty}\bar{a}_{k}e^{\imath k \theta}
		=a_0+\sum_{k=1}^{\infty}a_{k}\cos k \theta=\Re\kbrab{a(\theta)}.
	\end{align*}
	It completes the proof.
\end{proof}

\begin{proof}[Proof of Lemma \ref{lem: composited generating function}] By exchanging the order of summation, the result (i) follows immediately,
	\begin{align*}
		\hat{b}(\theta)=\sum_{j=0}^{\infty}\braB{\sum_{k=0}^{j}a_{j-k}b_{k}}e^{\imath j\theta}
		=\sum_{k=0}^{\infty}b_{k}e^{\imath k\theta}\sum_{j=k}^{\infty}a_{j-k}e^{\imath (j-k)\theta}=a(\theta)b(\theta).
	\end{align*}
	Since $\sum_{k=0}^{j}\xi_{j-k}a_{k}=\delta_{j0}$ for any $j\ge0$, we have
	\begin{align*}
		1=\sum_{j=0}^{\infty}\braB{\sum_{k=0}^{j}\xi_{j-k}a_{k}}e^{\imath j\theta}
		=\sum_{k=0}^{\infty}a_{k}e^{\imath k\theta}\sum_{j=k}^{\infty}\xi_{j-k}e^{\imath (j-k)\theta}=a(\theta)\xi(\theta).
	\end{align*}
	This relation gives the result (ii) and completes the proof.
\end{proof}

\begin{proof}[Proof of Lemma \ref{lemma: spectral norm bound}] In this proof, the notation $\mynorm{\cdot}_{\ell^2}$ is used to denote the $\ell^2$ norm of vector and associated matrix (operator) norm. For the given semi-generating function $a(\theta)=\sum_{k=0}^{\infty}a_k e^{\imath k\theta }$, we define a lower triangular infinite  matrix (operator) $P_{L}$  as follows
	\begin{align*}
		P_{L}:=\left(
		\begin{array}{ccccc}
			a_0 & && &\\
			a_1 &a_0 & &&\\
			\vdots            &    \ddots          & \ddots & &\\
			a_{n-1}&\cdots &a_1&a_0&\\
			\vdots&\cdots &\vdots&\ddots&\ddots   
		\end{array}
		\right).
	\end{align*}
	Thus, the $m$-th component $(P_{L}\myvec{x})_m=\sum_{j=0}^ma_{m-j}x_j$ for 
		$\myvec{x}=(x_0,x_1,\cdots,x_m,\cdots)^T$. The  matrix $P_{L,n}$ is a restriction (the first $n$ components) of the operator $P_{L}$ on $\mathbb{C}^n$.
	For any vector $\myvec{z}=(z_0,z_1,\cdots,z_{n-1})^T\in \mathbb{C}^n$, define $\tilde{\myvec{z}}=(z_0,z_1,\cdots,z_{n-1},0,0,\cdots)^T\in \ell^2(\mathbb{N})$ such that
	$(P_{L,n}\myvec{z})_j=(P_{L}\tilde{\myvec{z}})_j$ for $1\le j\le n$.
	One has
	$$\mynorm{P_{L,n}\myvec{z}}_{\ell^2}=\mynorm{P_{L}\tilde{\myvec{z}}}_{\ell^2}\le 
	\mynorm{P_{L}}_{\ell^2}\mynorm{\tilde{\myvec{z}}}_{\ell^2}=\mynorm{P_{L}}_{\ell^2}\mynorm{\myvec{z}}_{\ell^2}\quad\text{for any $\myvec{z}\in \mathbb{C}^n$}.$$
	It gives the spectral norm
	\begin{align*}
		\mynorm{P_{L,n}}_{\ell^2}:=\mathop{\rm{sup}}\limits_{\myvec{z}\neq\myvec{0}} \frac{\mynorm{P_{L,n}\myvec{z}}_{\ell^2}}{\mynorm{\myvec{z}}_{\ell^2}}\le \mynorm{P_{L}}_{\ell^2}.
	\end{align*}
	
	It remains to show that $\mynorm{P_{L}}_{\ell^2}\le \max_{\theta\in[0,2\pi)}\abs{a(\theta)}$. For $\myvec{x}\in \ell^2(\mathbb{N})$ with the discrete Fourier transform $X(\theta)=\sum_{j=0}^{\infty}x_je^{-\imath j\theta}$, 
	consider $\myvec{y}=P_{L}\myvec{x}\in \ell^2(\mathbb{N})$ with the discrete Fourier transform $Y(\theta)=\sum_{k=0}^{\infty}y_ke^{-\imath k\theta}$. 
	Since $y_k=\sum_{j=0}^ka_{k-j}x_j$, one has
	\begin{align*}
		Y(\theta)=\sum_{k=0}^{\infty}\braB{\sum_{j=0}^ka_{k-j}x_j}e^{-\imath k\theta}
		=&\,\sum_{j=0}^{\infty}x_je^{-\imath j\theta}\sum_{k=j}^{\infty}a_{k-j}e^{-\imath (k-j)\theta}
		=X(\theta)\overline{a(\theta)},
	\end{align*}
	where one can take $m:=k-j\ge0$ in the second equality and $\overline{a(\theta)}$ is the conjugate of $a(\theta)$. 
	Then, the Parseval theorem gives
	\begin{align*}
		\mynorm{\myvec{y}}_{\ell^2}^2=&\,\frac{1}{2\pi}\int_0^{2\pi}\abs{Y(\theta)}^2\zd\theta
		=\frac{1}{2\pi}\int_0^{2\pi}\absb{X(\theta)\overline{a(\theta)}}^2\zd\theta
		=\frac{1}{2\pi}\int_0^{2\pi}\absb{\overline{a(\theta)}}^2\absb{X(\theta)}^2\zd\theta\\
		\le&\,\frac{1}{2\pi}\int_0^{2\pi}\absb{X(\theta)}^2\zd\theta \max_{\theta\in[0,2\pi)}\abs{a(\theta)}^2
		=\mynorm{\myvec{x}}_{\ell^2}^2\cdot\max_{\theta\in[0,2\pi)}\abs{a(\theta)}^2.
	\end{align*}
	This means that 
$\mynorm{P_{L}\myvec{x}}_{\ell^2}\le\mynorm{\myvec{x}}_{\ell^2}\max_{\theta\in[0,2\pi)}\abs{a(\theta)}$
	for $\myvec{x}\in \ell^2(\mathbb{N})$.
	Therefore, we obtain $\mynorm{P_{L}}_{\ell^2}\le \max_{\theta\in[0,2\pi)}\abs{a(\theta)}$
		and complete the proof.
\end{proof}

\end{appendix}

\newpage



\def\siamprelabel{SM}
\setcounter{section}{0}
\setcounter{page}{1}


\begin{center}
	{\bfseries{\MakeUppercase{Supplementary Materials: A semi-generating function approach to the stability of implicit-explicit multistep methods for nonlinear parabolic equations}}}
	
	\vspace{1em}
	{\small HONG-LIN LIAO, CHAOYU QUAN, TAO TANG {\scriptsize AND} TAO ZHOU}
	\vspace{2em}	
\end{center}


CONTENT: This supplementary material includes some detailed calculations of the perturbation amplification factor  $\sigma_{\mathrm{F}}^{(\rmk)}$,  nonlinear amplification factor $\sigma_{\mathrm{E}}^{(\rmk)}$, dissipation preserving factor $\lambda_{\mathrm{I}}^{(\rmk)}$ and the associated implicit-explicit controllability intensity $\mathfrak{I}_{\mathrm{IE}}^{(\rmk)}:=\lambda_{\mathrm{I}}^{(\rmk)}/\sigma_{\mathrm{E}}^{(\rmk)}$ for four parameterized classes of 
implicit-explicit linear multistep (IELM) methods, including the weighted backward differentiation formulas (WBDF)  in [J. Tsinghua Univ., 31: 1-11], the implicit-explicit generalized backward differentiation formulas (GBDF)  in [SIAM J. Numer. Anal., 62: 1609-1637], the NIMEX methods in [SIAM J. Numer. Anal., 55: 2336-2360] and a simplified version (called  SIELM schemes) of the NIMEX methods.  Hereafter, we also use a triad $\brab{\vec{a}^{(\rmk)},\vec{b}^{(\rmk)},\vec{c}^{(\rmk)}}$ to represent 
an IELM method  with the three vectors
$\vec{a}^{(\rmk)}=\brab{a_{0}^{(\rmk)},a_{1}^{(\rmk)},\cdots,a_{\rmk-1}^{(\rmk)}}$, 
$\vec{b}^{(\rmk)}=\brab{b_{0}^{(\rmk)},b_{1}^{(\rmk)},\cdots,b_{\rmk}^{(\rmk)}}$
and $\vec{c}^{(\rmk)}=\brab{c_{0}^{(\rmk)},c_{1}^{(\rmk)},\cdots,c_{\rmk-1}^{(\rmk)}}$.

 \section{$\alpha$-parameterized WBDF methods}\label{sec: SM-WBDF}

The WBDF-$\rmk$ $(2\le \rmk\le7)$ formulas \cite{LiXie:1991WBDF-SM} with a free parameter $\alpha$ are constructed by using 
the backward differentiation formula at the off-set grid point $t_{n-1+\alpha}$ for the implicit part, that is, 
\begin{align*}
	\sum_{j=0}^{\rmk-1}a_{\mathrm{W},j}^{(\rmk)}\partial_{\tau}u^{n-j}
	=&\,\alpha \varpi\mathcal{L}u^n+(1-\alpha)\varpi\mathcal{L}u^{n-1}
	\quad\text{for $n\ge\rmk$.} 	
\end{align*}
As shown in \cite[Theorem 4]{LiXie:1991WBDF-SM}, the WBDF2 method is A-stable if $\alpha\ge\frac12$; while \cite[Theorem 5]{LiXie:1991WBDF-SM} states that the WBDF-$\rmk$ ($\rmk=3,4,5$) methods are $A(\theta)$-stable if $\alpha>\frac12$, and the WBDF-$\rmk$ ($\rmk=6,7$) methods are $A(\theta)$-stable if $\alpha\ge\frac{13}{5}$.  Furthermore, the absolute stability regions of 
WBDF-$\rmk$ methods always enlarge as the parameter $\alpha$ increases.
By using the order conditions in \eqref{scheme: multistep-linear order conditions}, one has the associated implicit-explicit WBDF-$\rmk$ methods for the nonlinear parabolic model \eqref{nonlinearModel},
\begin{align}\label{scheme: WBDF imex multistep-SM}	
	\sum_{j=0}^{\rmk-1}a_{\mathrm{W},j}^{(\rmk)}\partial_{\tau}u^{n-j}
	+\varpi\sum_{j=0}^{\rmk}b_{\mathrm{W},j}^{(\rmk)}\mathcal{L}u^{n-j}
	=\sum_{j=0}^{\rmk-1}c_{\mathrm{W},j}^{(\rmk)}\mathcal{F}(u^{n-j-1})\quad\text{for $n\ge\rmk$,}
\end{align}
where we set $b_{\mathrm{W},0}^{(\rmk)}=\alpha$, $b_{\mathrm{W},1}^{(\rmk)}=1-\alpha$ and $b_{\mathrm{W},j}^{(\rmk)}=0$ for $2\le j\le \rmk$. 
Actually, this settings of $b_{\mathrm{W},j}^{(\rmk)}$ uniquely determine the coefficients $a_{\mathrm{W},j}^{(\rmk)}$ and $c_{\mathrm{W},j}^{(\rmk)}$, and also make $a_{\mathrm{W},j}^{(\rmk)}$ and $c_{\mathrm{W},j}^{(\rmk)}$ linear polynomials with respect to $\alpha$.  
The corresponding three characteristic polynomials read
$${\varrho}_{a,\mathrm{W}}^{(\rmk)}(\zeta):=\sum_{j=1}^{\rmk}\frac{f_{\mathrm{W}}^{(j)}(1)}{j!}(\zeta-1)^{j-1}\quad \text{with}\quad f_{\mathrm{W}}(z)=(\alpha z-\alpha+1)z^{\rmk-1}\ln z,$$ 
$\varrho_{b,\mathrm{W}}^{(\rmk)}(\zeta):=\zeta^{\rmk-1}(\alpha\zeta-\alpha+1)$ and $\varrho_{c,\mathrm{W}}^{(\rmk)}(\zeta):=\zeta^{\rmk-1}(\alpha\zeta-\alpha+1)-\alpha(\zeta-1)^{\rmk}$.
Without special declarations, here we consider the stability property of WBDF-$\rmk$ ($2\le \rmk\le 5$) methods for $\alpha\ge1$, since the WBDF6 and WBDF7 schemes can not satisfy the assumption $\lambda_{\mathrm{I},\mathrm{W}}^{(\rmk,\alpha)}>0$ in Lemma \ref{lemma: bound quadratic form}.


\subsection{WBDF2 scheme}
 By taking $b_{\mathrm{W},0}^{(2)}=\alpha$, $b_{\mathrm{W},2}^{(2)}=0$ in the linear system \eqref{scheme: multistep-linear order conditions} with $q=2$, one has the WBDF2 scheme with the discrete  coefficients
\begin{align}\label{scheme: WBDF2}
	&\vec{a}_{\mathrm{W}}^{(2)}=(\tfrac{1}{2}+\alpha,\tfrac{1}{2}-\alpha),\quad
	\vec{b}_{\mathrm{W}}^{(2)}=(\alpha,1-\alpha,0),\quad
	\vec{c}_{\mathrm{W}}^{(2)}=(\alpha +1,-\alpha ).
\end{align}
The first characteristic polynomial ${\varrho}_{a,\mathrm{W}}^{(2)}(\zeta)$ is a Schur polynomial if $\alpha>0$, which ensures that the implicit part of the WBDF2 scheme is strongly $A(0)$-stable.  The formula \eqref{scheme: general imex Truncation Error} gives the leading error 
\begin{align}\label{truncation: WBDF2}
	R_{\mathrm{W}}^{(2,\alpha)}=\frac{1-3 \alpha}{6}u_t^{(3)}(t_n)\tau^2+\alpha \mathcal{F}_t^{(2)}[u(t_n)]\tau^2.
\end{align}
We have the following result.
\begin{proposition}\label{prop: WBDF2}
	For the WBDF2 scheme \eqref{scheme: WBDF2} with the free parameter $\alpha\ge1$, it holds that
	\begin{align*}
		\sigma_{\mathrm{F},\mathrm{W}}^{(2,\alpha)}=1
		,\quad	\sigma_{\mathrm{E},\mathrm{W}}^{(2,\alpha)}=\frac{2\alpha+1}{2\alpha},\quad 
		\lambda_{\mathrm{I},\mathrm{W}}^{(2,\alpha)}=\frac{2\alpha-1}{2\alpha}\quad\text{such that }\quad
		\mathfrak{I}_{\mathrm{IE},\mathrm{W}}^{(2,\alpha)}=\frac{2\alpha-1}{2\alpha+1}.
	\end{align*}
\end{proposition}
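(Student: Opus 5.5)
The plan is to compute the three semi-generating functions explicitly from \eqref{scheme: WBDF2} and then evaluate the three extremal quantities in \eqref{def: lambda sigma}. Write $z=e^{\imath\theta}$. Then
$$a(\theta)=\tfrac12+\alpha+(\tfrac12-\alpha)z,\qquad b(\theta)=\alpha+(1-\alpha)z,\qquad c(\theta)=\alpha+1-\alpha z.$$
All computations reduce to rational functions of $\cos\theta\in[-1,1]$. Everything is then read off from one variable $x:=\cos\theta$, with $\alpha\ge1$ fixed.

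\emph{Step 1 ($\sigma_{\mathrm{F}}$).} We have $\abs{a}^2=(\tfrac12+\alpha)^2+(\tfrac12-\alpha)^2+2(\tfrac14-\alpha^2)x$. Since $\tfrac14-\alpha^2<0$, this is decreasing in $x$. Its minimum is therefore at $x=1$, where $\abs{a(0)}^2=1$. Hence $\max\abs{1/a}=1$.

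\emph{Step 2 ($\sigma_{\mathrm{E}}$).} We have $\abs{c}^2=(\alpha+1)^2+\alpha^2-2\alpha(\alpha+1)x$. Consider
$$r(x)=\frac{2\alpha^2+2\alpha+1-2\alpha(\alpha+1)x}{2\alpha^2+\tfrac12-2(\alpha^2-\tfrac14)x}.$$
As a ratio of linear functions, $r$ is monotone on $[-1,1]$. So the maximum is at $x=\pm1$. At $x=1$, $r=1$. At $x=-1$, $\abs{c(\pi)}=2\alpha+1$ and $\abs{a(\pi)}=2\alpha$, so $r=(2\alpha+1)^2/(2\alpha)^2>1$. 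This gives $\sigma_{\mathrm{E}}=(2\alpha+1)/(2\alpha)$.

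\emph{Step 3 ($\lambda_{\mathrm{I}}$).} Write $\Re[b/a]=\Re[b\bar a]/\abs{a}^2$. The numerator is
$$\alpha(\tfrac12+\alpha)+(1-\alpha)(\tfrac12-\alpha)+\bigl[\alpha(\tfrac12-\alpha)+(1-\alpha)(\tfrac12+\alpha)\bigr]x .$$
Both the numerator and $\abs{a}^2$ are linear in $x$. So $\Re[b/a]$ is again a monotone linear-fractional function of $x$, and its extrema are at $\theta=0$ or $\theta=\pi$. At $\theta=0$ the value is $1$. At $\theta=\pi$ we get $b(\pi)=2\alpha-1$ and $a(\pi)=2\alpha$, so the value is $(2\alpha-1)/(2\alpha)<1$. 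Hence $\lambda_{\mathrm{I}}=(2\alpha-1)/(2\alpha)$.

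\emph{Step 4.} The controllability intensity follows by division from \eqref{def: stability intensity}: $\mathfrak{I}_{\mathrm{IE},\mathrm{W}}^{(2,\alpha)}=(2\alpha-1)/(2\alpha+1)$.

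The only point requiring care is the claim in Steps 2 and 3 that a quotient of two linear functions of $x$ is monotone. This holds because the denominator $\abs{a}^2\ge1>0$ never vanishes on $[-1,1]$, so the extremum must lie at an endpoint. This step is routine but is the one to check, to confirm the endpoints were compared correctly.
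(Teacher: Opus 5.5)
Your proposal is correct and follows the paper's proof. Both arguments reduce each quantity to a function of $\cos\theta$ whose extrema lie at $\cos\theta=\pm1$, and then evaluate at $\theta=0$ and $\theta=\pi$. Your linear-fractional monotonicity argument, using $|a|^2\ge 1>0$ on $[-1,1]$, supplies the justification that the paper leaves as ``not difficult to check''.
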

\begin{proof}
	The  semi-generating functions are $a_{\mathrm{W}}^{(2)}(\theta)=\tfrac{1}{2}+\alpha+e^{\imath\theta}(\tfrac{1}{2}-\alpha)$,
	$b_{\mathrm{W}}^{(2)}(\theta)=\alpha+e^{\imath\theta}(1-\alpha)$ and $c_{\mathrm{W}}^{(2)}(\theta)=1+\alpha-e^{\imath\theta}\alpha$. 
	It is not difficult to check that the functions $\Re\kbra{\frac{p_1+p_2e^{\imath\theta}}{p_3+p_4e^{\imath\theta}}}$ and $\abs{\frac{p_1+p_2e^{\imath\theta}}{p_3+p_4e^{\imath\theta}}}$ with real coefficients $p_i$ $(1\le i\le 4)$ always achieve the extreme values  
	at $\cos\theta=\pm1$. Then the definitions in \eqref{def: lambda sigma} yield $\sigma_{\mathrm{F},\mathrm{W}}^{(2)}=1=\tfrac{1}{\abst{a_{\mathrm{W}}^{(2)}(0)}}$,
	$\sigma_{\mathrm{E},\mathrm{W}}^{(2,\alpha)}=\frac{2\alpha+1}{2\alpha}=\frac{\abst{c_{\mathrm{W}}^{(2)}(\pi)}}{\abst{a_{\mathrm{W}}^{(2)}(\pi)}}$ and $\lambda_{\mathrm{I},\mathrm{W}}^{(2,\alpha)}=\frac{2\alpha-1}{2\alpha}=\frac{b_{\mathrm{W}}^{(2)}(\pi)}{a_{\mathrm{W}}^{(2)}(\pi)}$.
	It completes the proof.
\end{proof}

\subsection{WBDF3 scheme} For  $\rmk=3$, we take $b_{\mathrm{W},0}^{(3)}=\alpha$, $b_{\mathrm{W},2}^{(3)}=0$ and $b_{\mathrm{W},3}^{(3)}=0$ in the linear system \eqref{scheme: multistep-linear order conditions} with $q=3$ and recover the WBDF3 scheme with
\begin{align}\label{scheme: WBDF3}
	&\vec{a}_{\mathrm{W}}^{(3)}=(\tfrac{3}{2}\alpha +\tfrac{1}{3},\tfrac{5}{6}-2 \alpha ,\tfrac{1}{2}\alpha -\tfrac{1}{6}),\quad
	\vec{b}_{\mathrm{W}}^{(3)}=(\alpha ,1-\alpha, 0, 0),\\
	&\vec{c}_{\mathrm{W}}^{(3)}=(2 \alpha +1,-3 \alpha ,\alpha).\notag
\end{align}
The first characteristic polynomial ${\varrho}_{a,\mathrm{W}}^{(3)}(\zeta)$ is a Schur polynomial if $\alpha>\frac16$, which ensures that the implicit part of the WBDF3 scheme is strongly $A(0)$-stable. The formula \eqref{scheme: general imex Truncation Error} gives the leading error 
\begin{align}\label{truncation: WBDF3}
	R_{\mathrm{W}}^{(3,\alpha)}=\frac{1-4 \alpha}{12}u_t^{(4)}(t_n)\tau^3+\alpha \mathcal{F}_t^{(3)}[u(t_n)]\tau^3.
\end{align}
We have the following proposition.
\begin{proposition}\label{prop: WBDF3}
	For the WBDF3 scheme \eqref{scheme: WBDF3} with the free parameter $\alpha\ge1$, it holds that $\sigma_{\mathrm{F},\mathrm{W}}^{(3,\alpha)}=1,$
	\begin{align*}
	\sigma_{\mathrm{E},\mathrm{W}}^{(3,\alpha)}=\frac{3(6\alpha+1)}{2(6\alpha-1)},\quad \lambda_{\mathrm{I},\mathrm{W}}^{(3,\alpha)}=\frac{3(2\alpha-1)}{2(6\alpha-1)}	\quad	\text{such that }	\quad	
		\mathfrak{I}_{\mathrm{IE},\mathrm{W}}^{(3,\alpha)}=\frac{2\alpha-1}{6\alpha+1}.
	\end{align*}
\end{proposition}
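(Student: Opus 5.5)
The plan is to reduce all three extremal problems in \eqref{def: lambda sigma} to sign conditions for low-degree polynomials in $x:=\cos\theta\in[-1,1]$, as in the proof of Proposition \ref{prop: WBDF2}. Write $z=e^{\imath\theta}$. The semi-generating functions of \eqref{scheme: WBDF3} are
\begin{align*}
a_{\mathrm{W}}^{(3)}(\theta)=p_0+p_1z+p_2z^2,\qquad b_{\mathrm{W}}^{(3)}(\theta)=\alpha+(1-\alpha)z,\qquad c_{\mathrm{W}}^{(3)}(\theta)=(2\alpha+1)-3\alpha z+\alpha z^2,
\end{align*}
with $p_0=\tfrac32\alpha+\tfrac13$, $p_1=\tfrac56-2\alpha$ and $p_2=\tfrac12\alpha-\tfrac16$. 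Since all coefficients are real, one has
\begin{align*}
\absb{p_0+p_1z+p_2z^2}^2=p_0^2+p_1^2+p_2^2+2p_1(p_0+p_2)x+2p_0p_2(2x^2-1),
\end{align*}
which is a quadratic polynomial in $x$. The same formula gives $\absb{c_{\mathrm{W}}^{(3)}}^2$. Likewise $\Re\kbrab{b_{\mathrm{W}}^{(3)}\overline{a_{\mathrm{W}}^{(3)}}}=\sum_{j,k}b_ja_k\cos((j-k)\theta)$ is a quadratic polynomial in $x$. The candidate extremal values come from $\theta=0$ and $\theta=\pi$:
\begin{align*}
a_{\mathrm{W}}^{(3)}(0)=1,\qquad a_{\mathrm{W}}^{(3)}(\pi)=\tfrac{2(6\alpha-1)}3,\qquad c_{\mathrm{W}}^{(3)}(\pi)=6\alpha+1,\qquad b_{\mathrm{W}}^{(3)}(\pi)=2\alpha-1.
\end{align*}
These reproduce the claimed values $\sigma_{\mathrm{E},\mathrm{W}}^{(3,\alpha)}=\frac{3(6\alpha+1)}{2(6\alpha-1)}$ and $\lambda_{\mathrm{I},\mathrm{W}}^{(3,\alpha)}=\frac{3(2\alpha-1)}{2(6\alpha-1)}$. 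Consistency gives $c_{\mathrm{W}}^{(3)}(0)=b_{\mathrm{W}}^{(3)}(0)=1$, so the ratios at $\theta=0$ equal $1$.

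The three claims reduce to the following polynomial inequalities on $[-1,1]$.
\begin{itemize}
\item[(a)] For $\sigma_{\mathrm{F},\mathrm{W}}^{(3,\alpha)}=1$: show $\absb{a_{\mathrm{W}}^{(3)}}^2-1\ge0$. This quadratic vanishes at $x=1$, so I would factor it as $(1-x)\ell_1(x)$ with $\ell_1$ linear.
\item[(b)] For $\sigma_{\mathrm{E},\mathrm{W}}^{(3,\alpha)}$: with $\sigma:=\frac{3(6\alpha+1)}{2(6\alpha-1)}$, show $\sigma^2\absb{a_{\mathrm{W}}^{(3)}}^2-\absb{c_{\mathrm{W}}^{(3)}}^2\ge0$. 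This vanishes at $x=-1$, so I would factor it as $(1+x)\ell_2(x)$.
\item[(c)] For $\lambda_{\mathrm{I},\mathrm{W}}^{(3,\alpha)}$: with $\lambda:=\frac{3(2\alpha-1)}{2(6\alpha-1)}$, show $\Re\kbrab{b_{\mathrm{W}}^{(3)}\overline{a_{\mathrm{W}}^{(3)}}}-\lambda\absb{a_{\mathrm{W}}^{(3)}}^2\ge0$. This also vanishes at $x=-1$, so I would factor it as $(1+x)\ell_3(x)$.
\end{itemize}
Each cofactor $\ell_i$ is linear in $x$, with coefficients rational in $\alpha$; after clearing the positive factor $(6\alpha-1)^2$ or $(6\alpha-1)$, they become polynomial in $\alpha$.

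Because each $\ell_i$ is linear in $x$, its nonnegativity on $[-1,1]$ is equivalent to $\ell_i(\pm1)\ge0$. So the whole proposition reduces to a finite list of explicit polynomial inequalities in $\alpha$, to be verified for $\alpha\ge1$. Note that $a_{\mathrm{W}}^{(3)}$ never vanishes on the circle, since $\varrho_{a,\mathrm{W}}^{(3)}$ is a Schur polynomial for $\alpha>\frac16$, so all the divisions above are legitimate. The last step is the quotient $\mathfrak{I}_{\mathrm{IE},\mathrm{W}}^{(3,\alpha)}=\lambda/\sigma=\frac{2\alpha-1}{6\alpha+1}$.

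The main obstacle is step (b). After multiplying through, $\ell_2(\pm1)$ are cubic or quartic polynomials in $\alpha$, and I must check their positivity for all $\alpha\ge1$ rather than just near $\alpha=1$. I would first substitute $\alpha=1+s$ and hope all coefficients in $s$ are nonnegative. If they are not, I would bound the leading terms directly. I would also double-check that the maximum of $|c/a|$ is not attained at an interior critical point of $x$ for small $\alpha$; this is exactly why the hypothesis $\alpha\ge1$ is needed.
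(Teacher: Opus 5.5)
Your proposal is correct and is essentially the paper's proof: the paper likewise expands $|a_{\mathrm{W}}^{(3)}|^2$, $|c_{\mathrm{W}}^{(3)}|^2$ and $\Re[b_{\mathrm{W}}^{(3)}(\theta)a_{\mathrm{W}}^{(3)}(-\theta)]$ in $\cos\theta$ and $\cos2\theta$, and writes $1-|a_{\mathrm{W}}^{(3)}|^2$, $\mathcal{E}_{\mathrm{W3}}$ and $\mathcal{I}_{\mathrm{W3}}$ as $\sin^2\frac{\theta}{2}=\frac{1-x}{2}$ or $\cos^2\frac{\theta}{2}=\frac{1+x}{2}$ times a factor linear in $x=\cos\theta$, whose sign is then read off at an endpoint. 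The step you flag as the main obstacle is harmless: up to a positive factor, your $\ell_2$ equals $180\alpha^2+156\alpha+5$ at $x=1$ and $720\alpha^4-96\alpha^3-136\alpha^2+328\alpha+13$ at $x=-1$, and the latter has all positive coefficients after substituting $\alpha=1+s$.
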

\begin{proof}
	The associated semi-generating functions are $$a_{\mathrm{W}}^{(3)}(\theta)=\tfrac{3}{2}\alpha +\tfrac{1}{3}+(\tfrac{5}{6}-2 \alpha)e^{\imath\theta}+(\tfrac{1}{2}\alpha -\tfrac{1}{6})e^{2\imath\theta},$$
	$b_{\mathrm{W}}^{(3)}(\theta)=\alpha+(1-\alpha)e^{\imath\theta}$ and $c_{\mathrm{W}}^{(3)}(\theta)=2 \alpha +1-3 \alpha e^{\imath\theta}+\alpha e^{2\imath\theta}$.
	The definitions in \eqref{def: lambda sigma} give the following bounds $\sigma_{\mathrm{F},\mathrm{W}}^{(3,\alpha)}\ge 
	\frac{1}{\abst{a_{\mathrm{W}}^{(3)}(0)}}=1$, $\sigma_{\mathrm{E},\mathrm{W}}^{(3,\alpha)}\ge 
	\frac{\abst{c_{\mathrm{W}}^{(3)}(\pi)}}{\abst{a_{\mathrm{W}}^{(3)}(\pi)}}=\frac{18\alpha+3}{12 \alpha-2}$ and $\lambda_{\mathrm{I},\mathrm{W}}^{(3,\alpha)}\le 
\frac{b_{\mathrm{W}}^{(3)}(\pi)}{a_{\mathrm{W}}^{(3)}(\pi)}
	=\frac{6 \alpha-3}{12 \alpha-2}$.
	
	Moreover, direct calculations give
	\begin{align*}
		&a_{\mathrm{W}}^{(3)}(\theta)a_{\mathrm{W}}^{(3)}(-\theta)= 
		\frac{39 \alpha ^2-15 \alpha +5}{6}
		+\frac{-144 \alpha ^2+48 \alpha +5}{18}\cos\theta
		+\frac{27 \alpha ^2-3 \alpha -2}{18}\cos2 \theta,
		\\
		&\Re\kbrab{b_{\mathrm{W}}^{(3)}(\theta)a_{\mathrm{W}}^{(3)}(-\theta)}
		=\frac{21 \alpha ^2-15 \alpha +5}{6}+ \frac{-24 \alpha ^2+16 \alpha +1}{6}\cos\theta 
		+\frac{3 \alpha^2 -\alpha}{3}  \cos2 \theta,\\
		&c_{\mathrm{W}}^{(3)}(\theta)c_{\mathrm{W}}^{(3)}(-\theta)
		=14 \alpha ^2+4 \alpha +1-6 \alpha  (3 \alpha +1)\cos\theta 
		+2 \alpha  (2 \alpha +1)  \cos2 \theta.
	\end{align*}
	One has $27 \alpha ^2-3 \alpha -2>0$ for $\alpha\ge1$, and
	\begin{align*}
		\mathcal{F}_{\mathrm{W3}}	:=1-a_{\mathrm{W}}^{(3)}(\theta)a_{\mathrm{W}}^{(3)}(-\theta)
		=&\,\frac{1}{9}\sin^2\frac{\theta}{2}\kbrab{2\brab{27 \alpha ^2-3 \alpha -2} \cos\theta-\brab{90 \alpha ^2-42 \alpha -1}}\\
		\le&\,-\frac{1}{3}\brab{12 \alpha ^2-12 \alpha +1}\sin^2\frac{\theta}{2}\le0,
	\end{align*}
	so that $\absb{a_{\mathrm{W}}^{(3)}(\theta)}\ge 
	1$ for $\theta\in[0,2\pi)$. Thus we have $\sigma_{\mathrm{F},\mathrm{W}}^{(3,\alpha)}=1$.
	It is not difficult to check that $180 \alpha ^4-24 \alpha ^3-79 \alpha ^2+43 \alpha +2>0$ for $\alpha\ge1$. One has
	\begin{align*}
		\mathcal{E}_{\mathrm{W3}}	:=&\,(12\alpha-2)^2c_{\mathrm{W}}^{(3)}(\theta)c_{\mathrm{W}}^{(3)}(-\theta)-(18 \alpha+3)^2a_{\mathrm{W}}^{(3)}(\theta)a_{\mathrm{W}}^{(3)}(-\theta)\\
		=&\,\cos^2\frac{\theta}{2}\Big[2\brab{180 \alpha ^4-24 \alpha ^3-79 \alpha ^2+43 \alpha +2} \cos\theta\Big.\\
		&\,\hspace{2cm}\Big.-360 \alpha ^4+48 \alpha ^3-22 \alpha ^2-242 \alpha -9\Big]\\
		\le&\,-\brab{180 \alpha ^2+156 \alpha +5}\cos^2\frac{\theta}{2}\le0,
	\end{align*}
	and then $\frac{\abst{c_{\mathrm{W}}^{(3)}(\theta)}}{\abst{a_{\mathrm{W}}^{(3)}(\theta)}}\le 
	\frac{18 \alpha +3}{12 \alpha -2}$  for $\theta\in[0,2\pi)$.  It says that $\sigma_{\mathrm{E},\mathrm{W}}^{(3,\alpha)}=\frac{18\alpha+3}{12 \alpha-2}$.

	Moreover, since $18 \alpha ^3-15 \alpha ^2-3 \alpha +2>0$ for $\alpha\ge1$, one has
	\begin{align*}
		\mathcal{I}_{\mathrm{W3}}	:=&\,(12\alpha-2)\Re\kbrab{b_{\mathrm{W}}^{(3)}(\theta)a_{\mathrm{W}}^{(3)}(-\theta)}-(6 \alpha-3)a_{\mathrm{W}}^{(3)}(\theta)a_{\mathrm{W}}^{(3)}(-\theta)\\
		=&\,\frac{1}{3}\cos^2\frac{\theta}{2}\kbrab{\brab{36 \alpha ^3-30 \alpha ^2+12 \alpha +7}-2\brab{18 \alpha ^3-15 \alpha ^2-3 \alpha +2} \cos\theta}\\
		\ge&\,\brab{6\alpha+1}\cos^2\frac{\theta}{2}\ge0,
	\end{align*}
	and then $\Re\kbraB{\frac{b_{\mathrm{W}}^{(3)}(\theta)}{a_{\mathrm{W}}^{(3)}(\theta)}}\ge \frac{6 \alpha-3}{12 \alpha-2}$ for $\theta\in[0,2\pi)$.  It gives the value of the minimum eigenvalue, $\lambda_{\mathrm{I},\mathrm{W}}^{(3,\alpha)}=\frac{6 \alpha-3}{12 \alpha-2}$, and completes the proof.
\end{proof}

\subsection{WBDF4 scheme}  
For  $\rmk=4$, we take  $b_{\mathrm{W},0}^{(4)}=\alpha$ and $b_{\mathrm{W},j}^{(4)}=0$ $(j=2,3,4)$ in the linear system \eqref{scheme: multistep-linear order conditions} with $q=4$ and recover the WBDF4 scheme with
\begin{align}\label{scheme: WBDF4}
	&\vec{a}_{\mathrm{W}}^{(4)}=(\tfrac{22 \alpha+3 }{12},\tfrac{13-36\alpha}{12},\tfrac{18 \alpha -5}{12}, \tfrac{1-4 \alpha }{12}),\;\;
	\vec{b}_{\mathrm{W}}^{(4)}=(\alpha, 1-\alpha, 0, 0,0),\\
	&\vec{c}_{\mathrm{W}}^{(4)}=(3 \alpha +1,-6 \alpha, 4 \alpha,-\alpha).\notag
\end{align}
The first characteristic polynomial ${\varrho}_{a,\mathrm{W}}^{(4)}(\zeta)$ is a Schur polynomial if $\alpha>\frac15$, which ensures that the implicit part of the WBDF4 scheme is strongly $A(0)$-stable.  The formula \eqref{scheme: general imex Truncation Error} gives the leading error 
\begin{align}\label{truncation: WBDF4}
	R_{\mathrm{W}}^{(4,\alpha)}=\frac{1-5 \alpha}{30}u_t^{(5)}(t_n)\tau^4+\alpha \mathcal{F}_t^{(4)}[u(t_n)]\tau^4.
\end{align}
We have the following proposition.
\begin{proposition}\label{prop: WBDF4}
	For the WBDF4 scheme \eqref{scheme: WBDF4} with $\alpha\ge1$,
	\begin{align*}
		&\sigma_{\mathrm{F},\mathrm{W}}^{(4,\alpha)}=1,\quad
		\sigma_{\mathrm{E},\mathrm{W}}^{(4,\alpha)}=\frac{3(14\alpha+1)}{4(5\alpha-1)},\quad
		\frac{120\alpha-61}{80(5\alpha-1)}\le
		\lambda_{\mathrm{I},\mathrm{W}}^{(4,\alpha)}\le\frac{3(2\alpha-1)}{4(5\alpha-1) }\\
		&\text{such that }\quad
		\frac{120\alpha-61}{60(14\alpha +1)}\le\mathfrak{I}_{\mathrm{IE},\mathrm{W}}^{(4,\alpha)}\le\frac{2\alpha-1}{14\alpha +1};
	\end{align*}
	and for the parameter $\alpha\ge6/5$,
	\begin{align*}
			\lambda_{\mathrm{I},\mathrm{W}}^{(4,\alpha)}=\frac{3(2\alpha-1)}{4(5\alpha-1) }\quad\text{such that }\quad
		\mathfrak{I}_{\mathrm{IE},\mathrm{W}}^{(4,\alpha)}=\frac{2\alpha-1}{14\alpha +1}.
	\end{align*}
\end{proposition}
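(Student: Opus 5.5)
I would follow the template of the proof of Proposition \ref{prop: WBDF3}. First I write out the semi-generating functions
$a_{\mathrm{W}}^{(4)}(\theta)=\tfrac{22\alpha+3}{12}+\tfrac{13-36\alpha}{12}e^{\imath\theta}+\tfrac{18\alpha-5}{12}e^{2\imath\theta}+\tfrac{1-4\alpha}{12}e^{3\imath\theta}$, $b_{\mathrm{W}}^{(4)}(\theta)=\alpha+(1-\alpha)e^{\imath\theta}$ and $c_{\mathrm{W}}^{(4)}(\theta)=3\alpha+1-6\alpha e^{\imath\theta}+4\alpha e^{2\imath\theta}-\alpha e^{3\imath\theta}$. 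Then I evaluate them at $\theta=0$ and $\theta=\pi$. One gets $a_{\mathrm{W}}^{(4)}(0)=1$ and $a_{\mathrm{W}}^{(4)}(\pi)=\tfrac{4(5\alpha-1)}{3}$, together with $c_{\mathrm{W}}^{(4)}(\pi)=14\alpha+1$ and $b_{\mathrm{W}}^{(4)}(\pi)=2\alpha-1$. By the definitions \eqref{def: lambda sigma}, these values give $\sigma_{\mathrm{F}}\ge1$, $\sigma_{\mathrm{E}}\ge\frac{3(14\alpha+1)}{4(5\alpha-1)}$ and the upper bound $\lambda_{\mathrm{I}}\le\frac{3(2\alpha-1)}{4(5\alpha-1)}$.

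For the matching reverse bounds I compute the real trigonometric polynomials $a(\theta)a(-\theta)$, $c(\theta)c(-\theta)$ and $\Re[b(\theta)a(-\theta)]$. Each is a polynomial in $x=\cos\theta$ of degree at most three. I then form the differences
\begin{align*}
\mathcal{F}_{\mathrm{W4}}&:=1-a(\theta)a(-\theta),\qquad
\mathcal{E}_{\mathrm{W4}}:=(20\alpha-4)^2c(\theta)c(-\theta)-(42\alpha+3)^2a(\theta)a(-\theta),\\
\mathcal{I}_{\mathrm{W4}}&:=4(5\alpha-1)\Re[b(\theta)a(-\theta)]-3\lambda^{*}a(\theta)a(-\theta),
\end{align*}
where $\lambda^*$ is the candidate value of $\lambda_{\mathrm{I}}$ (rescaled by the same factor).

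Because equality holds at $\theta=0$, $\mathcal{F}_{\mathrm{W4}}$ factors as $\sin^2\frac\theta2=\frac{1-x}2$ times a quadratic in $x$. Because equality holds at $\theta=\pi$, $\mathcal{E}_{\mathrm{W4}}$ factors as $\cos^2\frac\theta2=\frac{1+x}2$ times a quadratic. It then remains to show that these quadratic factors have a definite sign on $[-1,1]$ for $\alpha\ge1$. I would do this by checking the endpoint values $x=\pm1$ and the vertex location or discriminant, each of which reduces to positivity of a one-variable polynomial in $\alpha$ for $\alpha\ge1$. Together these give $\sigma_{\mathrm{F}}=1$ and the stated $\sigma_{\mathrm{E}}$.

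The main obstacle is $\lambda_{\mathrm{I}}$. With $\lambda^*=\frac{3(2\alpha-1)}{4(5\alpha-1)}$, the difference $\mathcal{I}_{\mathrm{W4}}$ again factors as $\cos^2\frac\theta2$ times a quadratic $q_\alpha(x)$. However, $q_\alpha$ is presumably nonnegative on $[-1,1]$ only when $\alpha\ge6/5$. The natural test is to verify $q_\alpha(-1)\ge0$ together with either a vertex outside $[-1,1]$ or a nonpositive discriminant. I expect the resulting polynomial condition in $\alpha$ to have its threshold near $6/5$, which would give equality $\lambda_{\mathrm{I}}=\frac{b(\pi)}{a(\pi)}$ for $\alpha\ge6/5$.

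For $1\le\alpha<6/5$ I take instead the weaker candidate $\lambda^*=\frac{120\alpha-61}{80(5\alpha-1)}$. The inequality $\Re[b\bar a]-\lambda^*|a|^2\ge0$ should then follow with some slack. Since $\lambda^*<b(\pi)/a(\pi)$, the cubic in $x$ need not factor, so I would bound it below on $[-1,1]$ directly. Concretely, I would write it as $\frac{1+x}{2}$ times the quadratic from the previous step plus the positive constant $(b(\pi)/a(\pi)-\lambda^*)|a|^2$-type term, and then use a crude lower bound $|a|^2\ge1$ from $\mathcal{F}_{\mathrm{W4}}\le0$. The lost amount is compensated by the most negative value of $\frac{1+x}{2}q_\alpha(x)$ over $x\in[-1,1]$, which I would estimate by its minimum over $\alpha\in[1,6/5]$.

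The bounds on $\mathfrak{I}_{\mathrm{IE}}$ then follow by dividing the $\lambda_{\mathrm{I}}$ bounds by $\sigma_{\mathrm{E}}$.
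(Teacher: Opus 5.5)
Your skeleton is the paper's: the same auxiliary functions $\mathcal{F}_{\mathrm{W4}},\mathcal{E}_{\mathrm{W4}},\mathcal{I}_{\mathrm{W4}}$, viewed as cubics in $x=\cos\theta$, with the factor $\sin^2\frac{\theta}{2}$ or $\cos^2\frac{\theta}{2}$ split off where equality holds. The paper omits the sign checks. For $\sigma_{\mathrm{F}}$, for $\sigma_{\mathrm{E}}$, and for the equality case $\alpha\ge 6/5$, this works, but your test needs one correction. The criterion ``$q_\alpha(-1)\ge0$ plus vertex outside $[-1,1]$ or nonpositive discriminant'' only applies to a convex quadratic. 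Normalize by $16(5\alpha-1)\cdot 12\,\Re[b\bar a]-(2\alpha-1)\cdot 144\,|a|^2=(1+x)q_\alpha(x)$. At $\alpha=6/5$, $q_\alpha$ has negative leading coefficient, vertex at $x\approx0.34$ and positive discriminant, so your test does not conclude. Use concavity instead (minimum at an endpoint), together with $q_\alpha(1)=336\alpha-24>0$. The decisive condition is then $q_\alpha(-1)=8(232\alpha^3-324\alpha^2+54\alpha+13)\ge0$. This holds from $\alpha\approx1.15$ on, so $6/5$ is safe but not sharp.

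The genuine gap is your treatment of $1\le\alpha<6/5$. Write $\lambda_\pi=b(\pi)/a(\pi)$. Then $\lambda_\pi-\lambda^*=\frac{1}{80(5\alpha-1)}$ and $\Re[b\bar a]-\lambda_\pi|a|^2=\frac{(1+x)q_\alpha(x)}{192(5\alpha-1)}$. After your replacement $|a|^2\ge1$ in the positive term, you would need $(1+x)q_\alpha(x)\ge-\frac{12}{5}$ on $[-1,1]$. This is false near $\alpha=1$: there $q_1(x)=-168x^2+256x+224$, and $(1+x)q_1(x)\approx-17.7$ at $x=-0.8$. Concretely, for BDF4 at $\cos\theta=-0.8$ one has $\Re a\approx3.832$ and $|a|^2\approx20.56$. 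Hence $\Re[b\bar a]-\frac{3}{16}|a|^2\approx-0.023$, about seven times the available margin $\frac{1}{320}$. The crude bound throws away exactly what is needed. The negative dip sits near $\theta=\pi$, where $|a|^2$ is close to $|a(\pi)|^2=\frac{16}{9}(5\alpha-1)^2$ rather than to $1$. With the true value, the same point gives $-0.023+20.56/320\approx0.041>0$; indeed $\min_\theta\Re[1/a^{(4)}(\theta)]\approx0.1864>\frac{59}{320}$ for BDF4. To close the gap you must keep $|a|^2$, in one of two ways:
\begin{itemize}
\item verify directly that the non-factorable cubic $80(5\alpha-1)\Re[b\bar a]-(120\alpha-61)|a|^2$ (the paper's $\mathcal{I}_{\mathrm{W4}}$) is nonnegative on $[-1,1]$ for $1\le\alpha\le6/5$; or
\item bound $|a|^2$ from below only on the set where $q_\alpha<0$, which is a neighbourhood of $x=-1$. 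For $\alpha=1$ this set is $[-1,-0.62]$, where $|a|^2\ge14.9$, which suffices since $14.9/320>0.023$.
\end{itemize}
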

\begin{proof}
	The associated semi-generating functions are
	$$a_{\mathrm{W}}^{(4)}(\theta)=\tfrac{22 \alpha+3 }{12}+\tfrac{13-36\alpha}{12}e^{\imath \theta}
	+\tfrac{18 \alpha -5}{12}e^{2\imath \theta}+\tfrac{1-4 \alpha }{12}e^{3\imath \theta},$$
	$b_{\mathrm{W}}^{(4)}(\theta)=\alpha+(1-\alpha)e^{\imath \theta}$ and $c_{\mathrm{W}}^{(4)}(\theta)=3 \alpha +1-6\alpha e^{\imath \theta}+4\alpha e^{2\imath \theta}-\alpha e^{3\imath \theta}$.
	The definitions in \eqref{def: lambda sigma} give the following bounds $\sigma_{\mathrm{F},\mathrm{W}}^{(4,\alpha)}\ge \frac{1}{\abst{a_{\mathrm{W}}^{(4)}(0)}}=1,$ $\sigma_{\mathrm{E},\mathrm{W}}^{(4,\alpha)}\ge 
	\frac{\abst{c_{\mathrm{W}}^{(4)}(\pi)}}{\abst{a_{\mathrm{W}}^{(4)}(\pi)}}=\frac{42 \alpha +3}{20 \alpha -4}$ and $\lambda_{\mathrm{I},\mathrm{W}}^{(4,\alpha)}\le 
	\frac{b_{\mathrm{W}}^{(4)}(\pi)}{a_{\mathrm{W}}^{(4)}(\pi)}
	=\frac{6 \alpha-3}{20 \alpha-4 }$ for $\alpha\ge1$.
	
		\begin{figure}[htb!]
		\centering
		\subfigure[ $\widetilde{\mathcal{F}}_{\mathrm{W4}}(\alpha,\theta)$]
		{\includegraphics[width=1.67in]{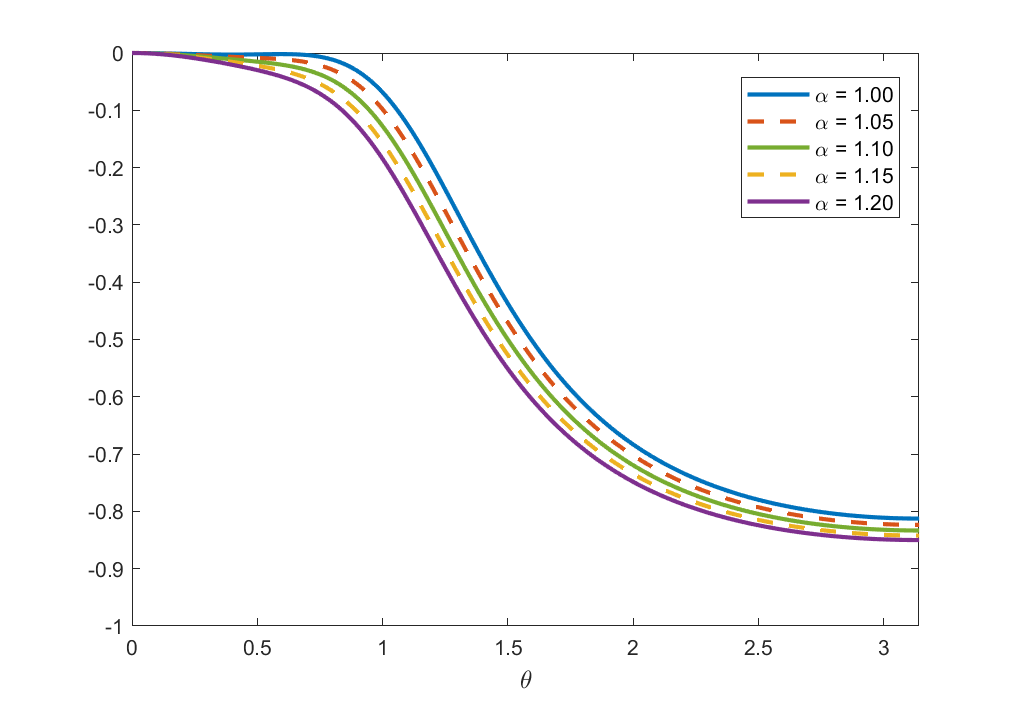}}
		\subfigure[ $\widetilde{\mathcal{E}}_{\mathrm{W4}}(\alpha,\theta)$]
		{\includegraphics[width=1.67in]{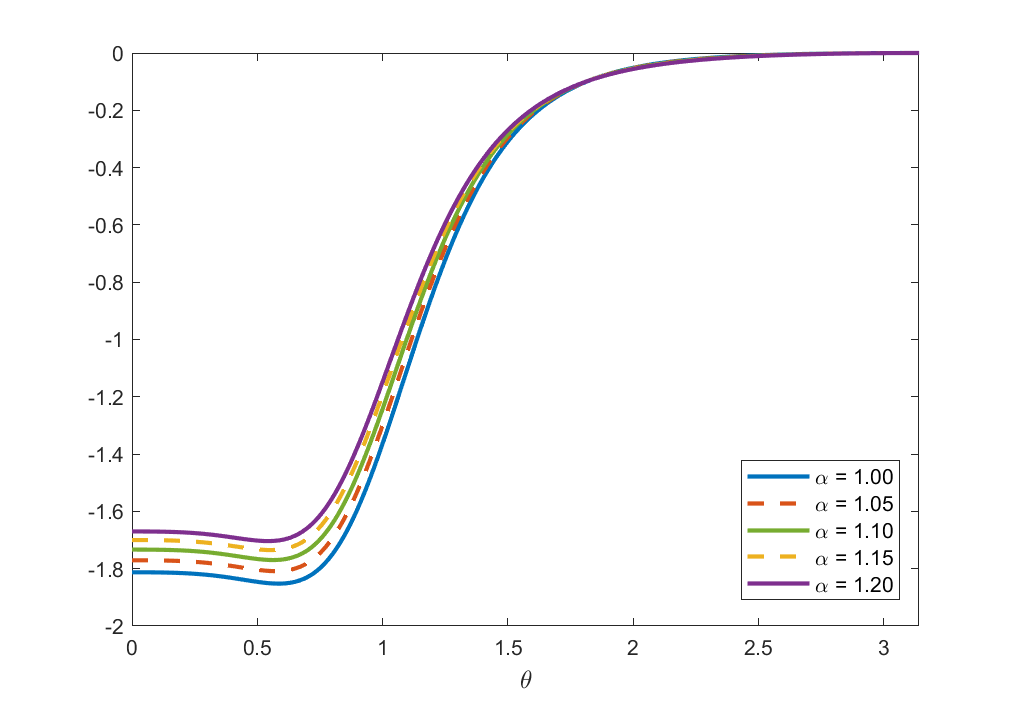}}
		\subfigure[ $\widetilde{\mathcal{I}}_{\mathrm{W4}}(\alpha,\theta)$]
		{\includegraphics[width=1.67in]{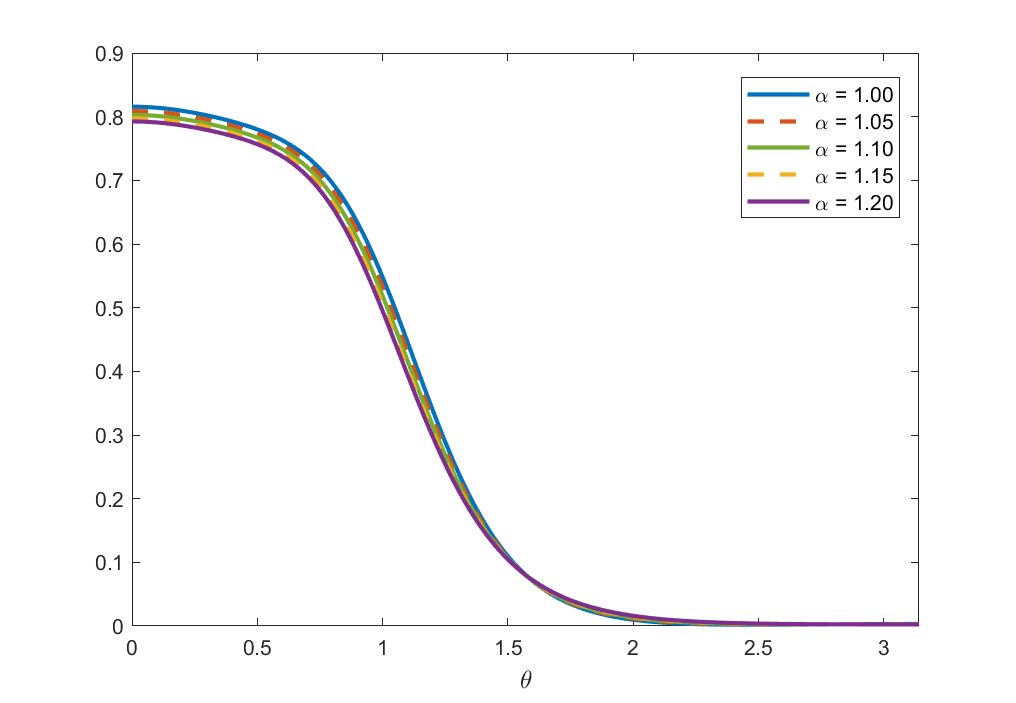}}
		\caption{Curves of $\widetilde{\mathcal{F}}_{\mathrm{W4}}(\alpha,\theta)$, $\widetilde{\mathcal{E}}_{\mathrm{W4}}(\alpha,\theta)$ and $\widetilde{\mathcal{I}}_{\mathrm{W4}}(\alpha,\theta)$ for $\theta\in[0,\pi]$.}
		\label{fig: lambda WBDF4}
	\end{figure}
	
	To get the upper bounds of $\sigma_{\mathrm{F},\mathrm{W}}^{(4,\alpha)}$ and $\sigma_{\mathrm{E},\mathrm{W}}^{(4,\alpha)}$ and the lower bound of $\lambda_{\mathrm{I},\mathrm{W}}^{(4,\alpha)}$, one can follow the proof of Proposition \ref{prop: WBDF3} to check the following inequalities for $\alpha\ge1$ and $\theta\in[0,2\pi)$,
	\begin{align*}
		\mathcal{F}_{\mathrm{W4}}:=&\,1-a_{\mathrm{W}}^{(4)}(\theta)a_{\mathrm{W}}^{(4)}(-\theta)\le0,\\
		\mathcal{E}_{\mathrm{W4}}:=&\,(20\alpha-4)^2c_{\mathrm{W}}^{(4)}(\theta)c_{\mathrm{W}}^{(4)}(-\theta)-(42 \alpha+3)^2a_{\mathrm{W}}^{(4)}(\theta)a_{\mathrm{W}}^{(4)}(-\theta)\le0,\\
		\mathcal{I}_{\mathrm{W4}}	:=&\,(400\alpha-80)\Re\kbrab{b_{\mathrm{W}}^{(4)}(\theta)a_{\mathrm{W}}^{(4)}(-\theta)}-(120\alpha-61)a_{\mathrm{W}}^{(4)}(\theta)a_{\mathrm{W}}^{(4)}(-\theta)\ge0,
	\end{align*}
	while for the parameter $\alpha\ge6/5$,
	\begin{align*}
	\mathcal{I}_{\mathrm{W4}}	:=&\,(20\alpha-4)\Re\kbrab{b_{\mathrm{W}}^{(4)}(\theta)a_{\mathrm{W}}^{(4)}(-\theta)}-(6 \alpha-3)a_{\mathrm{W}}^{(4)}(\theta)a_{\mathrm{W}}^{(4)}(-\theta)\ge0.
	\end{align*}
	The technical details (finding the extreme values of quadratic polynomials) are rather lengthy and omitted here.
	As numerical illustrations, Figure \ref{fig: lambda WBDF4} depicts the following three auxiliary functions $$\widetilde{\mathcal{F}}_{\mathrm{W4}}(\alpha,\theta):=\frac1{\abst{a_{\mathrm{W}}^{(4)}(\theta)}}- 1,\quad \widetilde{\mathcal{E}}_{\mathrm{W4}}(\alpha,\theta):=\frac{\abst{c_{\mathrm{W}}^{(4)}(\theta)}}{\abst{a_{\mathrm{W}}^{(4)}(\theta)}}-\frac{3(14\alpha+1)}{4(5\alpha-1)},$$ and $\widetilde{\mathcal{I}}_{\mathrm{W4}}(\alpha,\theta):=\Re\kbraB{\frac{b_{\mathrm{W}}^{(4)}(\theta)}{a_{\mathrm{W}}^{(4)}(\theta)}}-\frac{120\alpha-61}{60(14\alpha +1)}$ for $\theta\in[0,\pi]$ with the five fixed parameters $\alpha=1.00,1.05,1.10,1.15,1.20$.
	It completes the proof.
\end{proof}

\subsection{WBDF5 scheme} For $\rmk=5$, we take $b_{\mathrm{W},0}^{(5)}=\alpha$, and $b_{\mathrm{W},j}^{(5)}=0$ for $2\le j\le5$ in the system \eqref{scheme: multistep-linear order conditions} with $q=5$ and recover the WBDF5 scheme with
\begin{align}\label{scheme: WBDF5}
	&\vec{a}_{\mathrm{W}}^{(5)}=\brab{\tfrac{125 \alpha+12}{60}, \tfrac{77-240 \alpha}{60}, \tfrac{180 \alpha-43}{60},\tfrac{17-80\alpha}{60}, \tfrac{5 \alpha -1}{20}},\\
	&\vec{b}_{\mathrm{W}}^{(5)}=\brab{\alpha ,1-\alpha ,0,0,0,0},\quad
	\vec{c}_{\mathrm{W}}^{(5)}=\brab{4 \alpha +1,-10 \alpha ,10 \alpha ,-5 \alpha ,\alpha}.	\notag
\end{align}
The first characteristic polynomial ${\varrho}_{a,\mathrm{W}}^{(5)}(\zeta)$ is a Schur polynomial if $\alpha>\frac15$, which ensures that the implicit part of the WBDF5 scheme is strongly $A(0)$-stable. The formula \eqref{scheme: general imex Truncation Error} gives the leading error 
\begin{align}\label{truncation: WBDF5}
	R_{\mathrm{W}}^{(5,\alpha)}=\frac{1-6 \alpha}{30}u_t^{(6)}(t_n)\tau^5+\alpha \mathcal{F}_t^{(5)}[u(t_n)]\tau^5.
\end{align}
We have the following proposition.
\begin{proposition}\label{prop: WBDF5}
	For the WBDF5 scheme \eqref{scheme: WBDF5} with the free parameter $\alpha\ge1$, it holds that
	\begin{align*}
		& 1\le \sigma_{\mathrm{F},\mathrm{W}}^{(5,\alpha)}\le \frac{47\alpha-2}{40\alpha},\quad
		\frac{15 (30 \alpha +1)}{32 (5 \alpha -1)}\le\sigma_{\mathrm{E},\mathrm{W}}^{(5,\alpha)}\le\frac{45 (10 \alpha +1)}{32 (5 \alpha -1)},\\
		&	\frac{5 (6 \alpha -5)}{32 (5 \alpha -1)}
		\le\lambda_{\mathrm{I},\mathrm{W}}^{(5,\alpha)}\le\frac{15(2\alpha-1) }{32 (5 \alpha -1) }
		\;\;\text{such that }\;\;
		\frac{6 \alpha -5}{90 \alpha +9}\le\mathfrak{I}_{\mathrm{IE},\mathrm{W}}^{(5,\alpha)}\le\frac{2 \alpha -1}{30 \alpha +1}.
	\end{align*}
\end{proposition}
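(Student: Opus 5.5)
I will follow the template of the proofs of Propositions \ref{prop: WBDF3} and \ref{prop: WBDF4}. The semi-generating functions of the WBDF5 scheme \eqref{scheme: WBDF5} are
\[
a_{\mathrm{W}}^{(5)}(\theta)=\sum_{j=0}^{4}a_{\mathrm{W},j}^{(5)}e^{\imath j\theta},\quad
b_{\mathrm{W}}^{(5)}(\theta)=\alpha+(1-\alpha)e^{\imath\theta},\quad
c_{\mathrm{W}}^{(5)}(\theta)=\sum_{j=0}^{4}c_{\mathrm{W},j}^{(5)}e^{\imath j\theta}.
\]
The lower bounds on $\sigma$ and the upper bound on $\lambda$ come from evaluating at special angles. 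For instance, $1/|a_{\mathrm{W}}^{(5)}(0)|=1$ gives $\sigma_{\mathrm{F},\mathrm{W}}^{(5,\alpha)}\ge1$. At $\theta=\pi$ one finds $a_{\mathrm{W}}^{(5)}(\pi)=\frac{32(5\alpha-1)}{30}$, $c_{\mathrm{W}}^{(5)}(\pi)=30\alpha+1$ and $b_{\mathrm{W}}^{(5)}(\pi)=2\alpha-1$. These give
\[
\sigma_{\mathrm{E},\mathrm{W}}^{(5,\alpha)}\ge\frac{15(30\alpha+1)}{32(5\alpha-1)}
\quad\text{and}\quad
\lambda_{\mathrm{I},\mathrm{W}}^{(5,\alpha)}\le\frac{15(2\alpha-1)}{32(5\alpha-1)}.
\]
Dividing the two $\theta=\pi$ values yields the upper bound $\frac{2\alpha-1}{30\alpha+1}$ for $\mathfrak{I}_{\mathrm{IE},\mathrm{W}}^{(5,\alpha)}$.

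For the opposite bounds I reduce each claim to a trigonometric polynomial inequality in $x=\cos\theta$. I will compute $|a|^2=a(\theta)a(-\theta)$, $|c|^2$ and $\Re[b(\theta)a(-\theta)]$, each a cosine polynomial of degree at most $4$. Writing $\cos k\theta$ in terms of $x$ turns each into a polynomial in $x\in[-1,1]$ of degree at most $4$ whose coefficients are polynomial in $\alpha$. The three target inequalities are
\[
\mathcal{F}_{\mathrm{W5}}:=(47\alpha-2)^2|a|^2-1600\alpha^2\ge0,
\]
\[
\mathcal{E}_{\mathrm{W5}}:=(32(5\alpha-1))^2|c|^2-(45(10\alpha+1))^2|a|^2\le0,
\]
\[
\mathcal{I}_{\mathrm{W5}}:=32(5\alpha-1)\Re[b\bar a]-5(6\alpha-5)|a|^2\ge0.
\]
Consistency gives $a(0)=b(0)=c(0)=1$, so each of these expressions has a known sign at $x=1$. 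Moreover, $a$, $b$ and $c$ agree to high order at $\theta=0$, so I expect a factor $\sin^2\frac{\theta}{2}=\frac{1-x}{2}$, or a higher power of it, to split off. Where the bound is attained at $\pi$ instead, a factor $\cos^2\frac\theta2$ should split off, as in the WBDF3 computation. After removing such factors, what remains is a polynomial of degree at most $3$ in $x$.

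For the remaining polynomial I will try to show the sign on $[-1,1]$ for all $\alpha\ge1$. The substitution $\alpha=1+s$ with $s\ge0$ should make the polynomial in $(s,x)$ have coefficients of one sign, or at least make it bounded by its values at the endpoints $x=\pm1$ via monotonicity or concavity in $x$. If that fails, I will bound the cubic from below by a linear minorant, as done in the WBDF3 proof. I expect this step to be the main obstacle. The constants in the statement, such as $\frac{47\alpha-2}{40\alpha}$ and $\frac{5(6\alpha-5)}{32(5\alpha-1)}$, are not attained at $\theta=0$ or $\theta=\pi$. They are therefore deliberately non-sharp bounds, chosen so that the residual polynomial is easy to sign, probably via a crude estimate such as $|\cos\theta|\le1$ applied to the higher-degree terms. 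Making this rigorous uniformly in $\alpha$ may require a case split in $\alpha$, and I would check the resulting inequalities numerically, as in Figure \ref{fig: lambda WBDF4}, before writing out the algebra.

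Finally, the bound $\mathfrak{I}_{\mathrm{IE},\mathrm{W}}^{(5,\alpha)}\ge\frac{6\alpha-5}{90\alpha+9}$ follows by dividing the lower bound of $\lambda_{\mathrm{I},\mathrm{W}}^{(5,\alpha)}$ by the upper bound of $\sigma_{\mathrm{E},\mathrm{W}}^{(5,\alpha)}$. The factors $32(5\alpha-1)$ cancel and $5/45=1/9$.
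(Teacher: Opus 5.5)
Your proposal is essentially the paper's proof. It uses the same lower bounds from $\theta=0$ and $\theta=\pi$, and the same three auxiliary inequalities for $\mathcal{F}_{\mathrm{W5}}$, $\mathcal{E}_{\mathrm{W5}}$, $\mathcal{I}_{\mathrm{W5}}$, whose verification the paper also omits and only illustrates numerically. There are two small slips: $a_{\mathrm{W}}^{(5)}(\pi)=\frac{32(5\alpha-1)}{15}$, not $\frac{32(5\alpha-1)}{30}$ (the bounds you state are the ones for the correct value), and since none of the three auxiliary functions vanishes at $\theta=0$ or $\theta=\pi$ (the bounds are not sharp), no $\sin^2\frac{\theta}{2}$ or $\cos^2\frac{\theta}{2}$ factor splits off, so you are left with genuine quartics in $x=\cos\theta$ rather than cubics.
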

\begin{proof}
	The semi-generating functions are
	$$a_{\mathrm{W}}^{(5)}(\theta)=\tfrac{125 \alpha+12}{60}+\tfrac{77-240 \alpha}{60}e^{\imath \theta}
	+ \tfrac{180 \alpha-43}{60}e^{2\imath \theta}+\tfrac{17-80\alpha}{60}e^{3\imath \theta}+\tfrac{5 \alpha -1}{20}e^{4\imath \theta},$$ $b_{\mathrm{W}}^{(5)}(\theta)=\alpha+(1-\alpha)e^{\imath \theta}$ and 
	$c_{\mathrm{W}}^{(5)}(\theta)=4 \alpha +1-10 \alpha e^{\imath \theta}+10 \alpha e^{2\imath \theta}-5 \alpha e^{3\imath \theta}+\alpha e^{4\imath \theta}.$
	The definitions in \eqref{def: lambda sigma} give $\sigma_{\mathrm{F},\mathrm{W}}^{(5,\alpha)}\ge \frac{1}{\abst{a_{\mathrm{W}}^{(5)}(0)}}=1,$ $\sigma_{\mathrm{E},\mathrm{W}}^{(5,\alpha)}\ge 
	\frac{\abst{c_{\mathrm{W}}^{(5)}(\pi)}}{\abst{a_{\mathrm{W}}^{(5)}(\pi)}}=\frac{15 (30 \alpha +1)}{32 (5 \alpha -1)}$ and	$\lambda_{\mathrm{I},\mathrm{W}}^{(5,\alpha)}\le 
	\frac{b_{\mathrm{W}}^{(5)}(\pi)}{a_{\mathrm{W}}^{(5)}(\pi)}
	=\frac{15-30 \alpha }{32-160 \alpha }$. 
	
	\begin{figure}[htb!]
		\centering
		\subfigure[ $\widetilde{\mathcal{F}}_{\mathrm{W5}}(\alpha,\theta)$]
		{\includegraphics[width=1.67in]{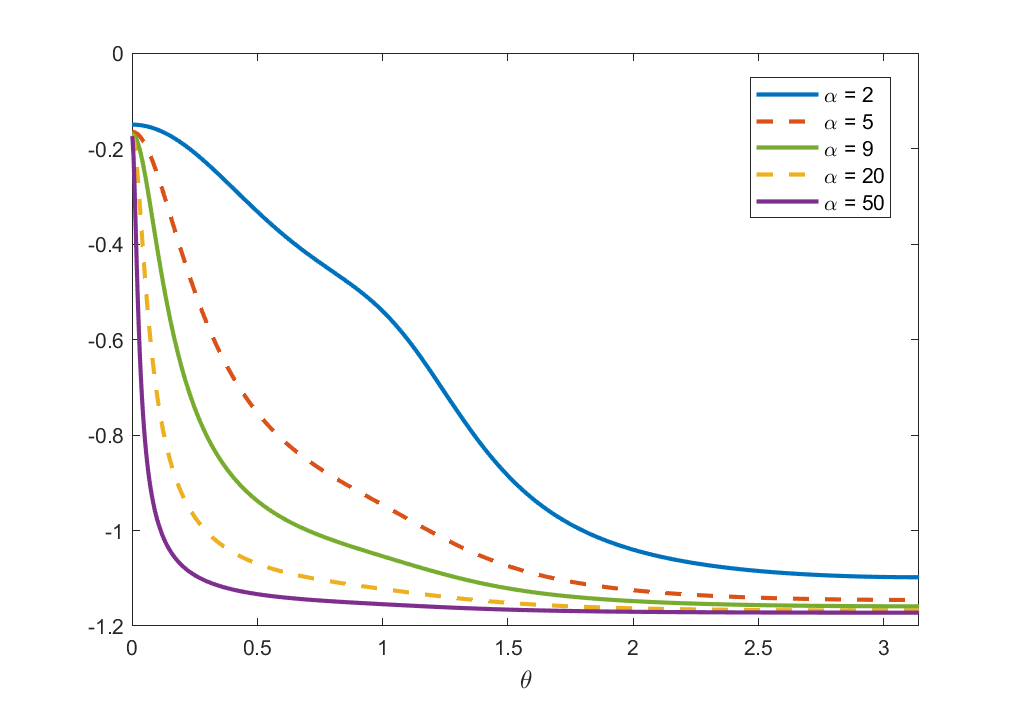}}
		\subfigure[ $\widetilde{\mathcal{E}}_{\mathrm{W5}}(\alpha,\theta)$]
		{\includegraphics[width=1.67in]{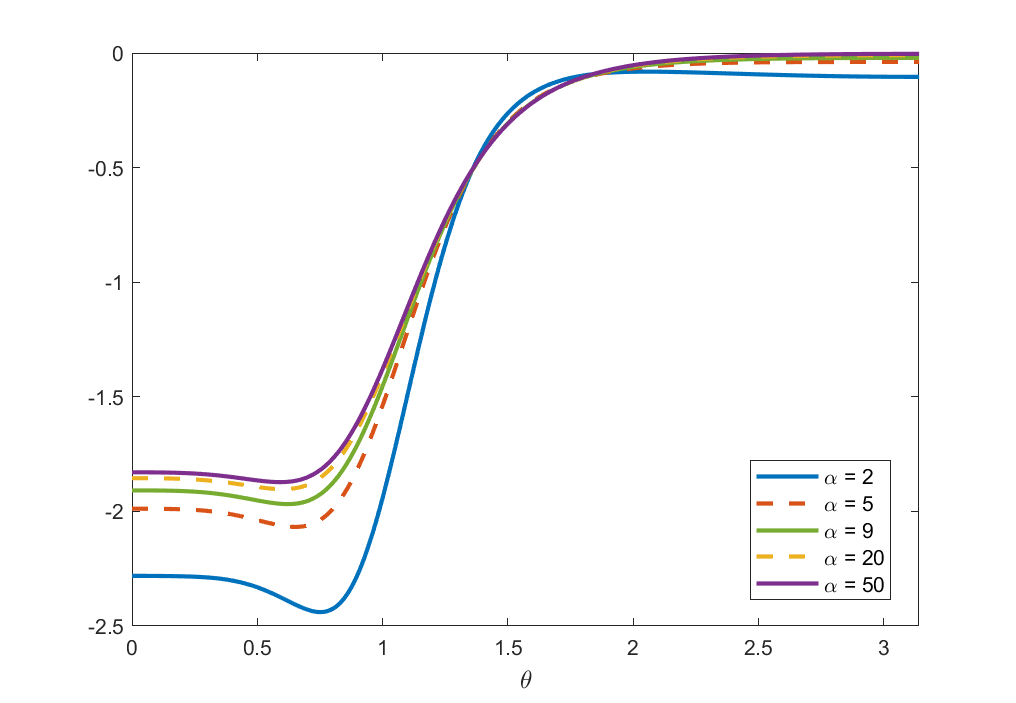}}
		\subfigure[ $\widetilde{\mathcal{I}}_{\mathrm{W5}}(\alpha,\theta)$]
		{\includegraphics[width=1.67in]{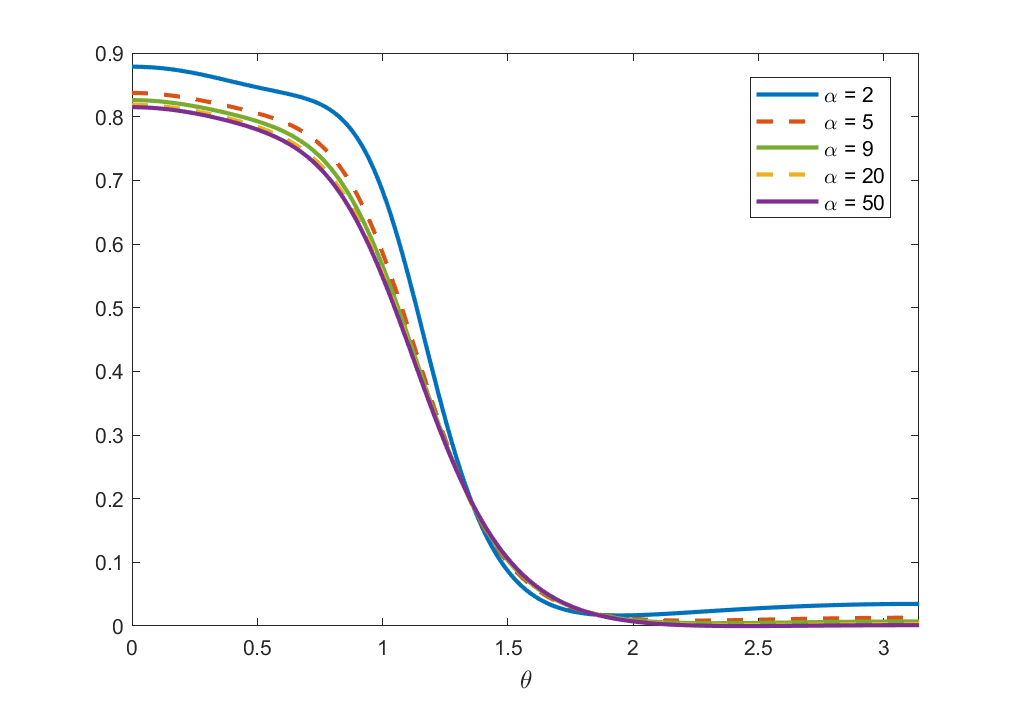}}
		\caption{Curves of $\widetilde{\mathcal{F}}_{\mathrm{W5}}(\alpha,\theta)$, $\widetilde{\mathcal{E}}_{\mathrm{W5}}(\alpha,\theta)$ and $\widetilde{\mathcal{I}}_{\mathrm{W5}}(\alpha,\theta)$ for $\theta\in[0,\pi]$.}
		\label{fig: lambda WBDF5}
	\end{figure}
	
	To get the upper bounds of $\sigma_{\mathrm{F},\mathrm{W}}^{(5,\alpha)}$ and $\sigma_{\mathrm{E},\mathrm{W}}^{(5,\alpha)}$ and the lower bound of $\lambda_{\mathrm{I},\mathrm{W}}^{(5,\alpha)}$, one can follow the proof of Proposition \ref{prop: WBDF3} to check the following inequalities for $\alpha\ge1$ and $\theta\in[0,2\pi)$,
	\begin{align*}
		\mathcal{F}_{\mathrm{W5}}:=&\,(40\alpha)^2-(47\alpha-2)^2a_{\mathrm{W}}^{(5)}(\theta)a_{\mathrm{W}}^{(5)}(-\theta)\le0,\\
		\mathcal{E}_{\mathrm{W5}}:=&\,(160 \alpha -32)^2c_{\mathrm{W}}^{(5)}(\theta)c_{\mathrm{W}}^{(5)}(-\theta)
		-(450 \alpha +45)^2a_{\mathrm{W}}^{(5)}(\theta)a_{\mathrm{W}}^{(5)}(-\theta)\le0,\\
		\mathcal{I}_{\mathrm{W5}}	:=&\,
		(160 \alpha-32)\Re\kbrab{b_{\mathrm{W}}^{(5)}(\theta)a_{\mathrm{W}}^{(5)}(-\theta)}
		-(30\alpha-25)a_{\mathrm{W}}^{(5)}(\theta)a_{\mathrm{W}}^{(5)}(-\theta)\ge0,
	\end{align*}
	while the technical details are rather lengthy and omitted here. As numerical illustrations, Figure \ref{fig: lambda WBDF5} depicts the following three auxiliary functions $$\widetilde{\mathcal{F}}_{\mathrm{W5}}(\alpha,\theta):=\frac1{\abst{a_{\mathrm{W}}^{(5)}(\theta)}}- \frac{47\alpha-2}{40\alpha},\quad \widetilde{\mathcal{E}}_{\mathrm{W5}}(\alpha,\theta):=\frac{\abst{c_{\mathrm{W}}^{(5)}(\theta)}}{\abst{a_{\mathrm{W}}^{(5)}(\theta)}}-\frac{45 (10 \alpha +1)}{32 (5 \alpha -1)},$$ and $\widetilde{\mathcal{I}}_{\mathrm{W5}}(\alpha,\theta):=\Re\kbraB{\frac{b_{\mathrm{W}}^{(5)}(\theta)}{a_{\mathrm{W}}^{(5)}(\theta)}}-	\frac{5 (6 \alpha -5)}{32 (5 \alpha -1)}$ for $\theta\in[0,\pi]$ with the five fixed parameters $\alpha=2,5,9,20,50$.
	It completes the proof.
\end{proof}


\section{$\beta$-parameterized GBDF methods}\label{sec: SM-GBDF}

Following the idea of \cite{HuangShen:2024-SM},  the $\beta$-parameterized GBDF methods are constructed by approximating each term of the differential equations at the off-set grid point $t_*:=t_{n-1+\beta}$  $(\beta\ge1)$, 
\begin{align*}
	\sum_{j=0}^{\rmk-1}a_{\mathrm{G},j}^{(\rmk)}\partial_{\tau}u^{n-j}\approx u'(t_*),\quad
	\sum_{j=0}^{\rmk-1}b_{\mathrm{G},j}^{(\rmk)}u^{n-j}\approx u(t_*),\quad
	\sum_{j=0}^{\rmk-1}c_{\mathrm{G},j}^{(\rmk)}u^{n-j-1}\approx u(t_*).
\end{align*}
That is, the discrete coefficients $a_{\mathrm{G},j}^{(\rmk)}$, $b_{\mathrm{G},j}^{(\rmk)}$ and $c_{\mathrm{G},j}^{(\rmk)}$  for $0\le j\le \rmk-1$ can be determined independently
by three linear algebraic systems of Vandermonde-type. 
Note that, the implicit part of the GBDF6 method is not strongly $A(0)$-stable for $\beta>2$ since the first characteristic polynomial $\varrho_{a,\mathrm{G}}$ is not a Schur polynomial if $\beta>2$.  Here we consider the stability property of GBDF-$\rmk$ ($2\le \rmk\le 5$) methods for $\beta\ge1$.

\subsection{GBDF2 scheme} The GBDF2 scheme in \cite{HuangShen:2024-SM} is the same to the WBDF2 scheme \eqref{scheme: WBDF2} with the discrete coefficients
\begin{align}\label{scheme: GBDF2}
	&\vec{a}_{\mathrm{G}}^{(2)}=(\tfrac{1}{2}+\beta,\tfrac{1}{2}-\beta),\quad
	\vec{b}_{\mathrm{G}}^{(2)}=(\beta,1-\beta,0),\quad
	\vec{c}_{\mathrm{G}}^{(2)}=(\beta +1,-\beta).
\end{align}
The formula \eqref{scheme: general imex Truncation Error} gives the leading error 
\begin{align*}
	R_{\mathrm{G}}^{(2,\beta)}=\frac{1-3 \beta}{6}u_t^{(3)}(t_n)\tau^2+\beta \mathcal{F}_t^{(2)}[u(t_n)]\tau^2.
\end{align*}
Proposition \ref{prop: WBDF2} gives $\sigma_{\mathrm{F},\mathrm{G}}^{(2\beta)}=1,$ 
$\sigma_{\mathrm{E},\mathrm{G}}^{(2,\beta)}=\frac{2\beta+1}{2\beta}$ and $\lambda_{\mathrm{I},\mathrm{G}}^{(2,\beta)}=\frac{2\beta-1}{2\beta}$. 
The associated implicit-explicit controllability intensity $\mathfrak{I}_{\mathrm{IE},\mathrm{G}}^{(2,\beta)}
=\frac{2\beta-1}{2\beta+1}$ for $\beta\ge1$.

\subsection{GBDF3 scheme}   
The discrete coefficients of the GBDF3 scheme read
\begin{align}\label{scheme: GBDF3}
	&\vec{a}_{\mathrm{G}}^{(3)}
	=\brab{\tfrac{3 \beta ^2+6 \beta +2}{6},\tfrac{-6 \beta ^2-6 \beta +5}{6},\tfrac{3 \beta ^2-1}{6}},\quad
	\vec{b}_{\mathrm{G}}^{(3)}=\brab{\tfrac{\beta ^2+\beta}{2} ,1-\beta ^2,\tfrac{\beta ^2-\beta}{2},0},\\
	&\vec{c}_{\mathrm{G}}^{(3)}=(\tfrac{\beta ^2+3 \beta +2}{2},-2\beta-\beta ^2,\tfrac{\beta ^2+\beta}{2}).\notag
\end{align}
The first characteristic polynomial ${\varrho}_{a,\mathrm{G}}^{(3)}(\zeta)$ is a Schur polynomial if $\beta >\frac{\sqrt{21}-3}{6}$, which ensures that the implicit part of the GBDF3 scheme is strongly $A(0)$-stable. 
The formula \eqref{scheme: general imex Truncation Error} gives the leading error 
\begin{align}\label{truncation: GBDF3}
	R_{\mathrm{G}}^{(3,\beta)}=\frac{1-\beta-3 \beta ^2}{12}u_t^{(4)}(t_n)\tau^3+\frac{\beta^2+\beta}{2} \mathcal{F}_t^{(3)}[u(t_n)]\tau^3.
\end{align}
We have the following proposition.
\begin{proposition}\label{prop: GBDF3}
	For the GBDF3 scheme \eqref{scheme: GBDF3} with the free parameter $\beta\ge1$, it holds that
	\begin{align*}
		&\sigma_{\mathrm{F},\mathrm{G}}^{(3,\beta)}=1,\quad	\sigma_{\mathrm{E},\mathrm{G}}^{(3,\beta)}=\frac{6 \beta ^2+12 \beta +3}{6 \beta ^2+6 \beta -2},\quad
		\frac{6 \beta ^2-4}{6 \beta ^2+6 \beta -2}\le\lambda_{\mathrm{I},\mathrm{G}}^{(3,\beta)}
		\le\frac{6 \beta ^2-3}{6 \beta ^2+6 \beta -2}\\
		&\text{such that}\quad
		\frac{6\beta ^2-4}{6 \beta ^2+12 \beta +3}\le\mathfrak{I}_{\mathrm{IE},\mathrm{G}}^{(3,\beta)}\le\frac{6\beta ^2-3}{6 \beta ^2+12 \beta +3}.
	\end{align*}
\end{proposition}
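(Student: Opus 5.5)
I will follow the template of the proof of Proposition \ref{prop: WBDF3}. First I write out the semi-generating functions
$$a_{\mathrm{G}}^{(3)}(\theta)=\tfrac{3\beta^2+6\beta+2}{6}+\tfrac{-6\beta^2-6\beta+5}{6}e^{\imath\theta}+\tfrac{3\beta^2-1}{6}e^{2\imath\theta},$$
together with $b_{\mathrm{G}}^{(3)}(\theta)$ and $c_{\mathrm{G}}^{(3)}(\theta)$ from \eqref{scheme: GBDF3}.

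Evaluating them at $\theta=0$ and $\theta=\pi$ gives the one-sided bounds directly from \eqref{def: lambda sigma}. At $\theta=0$ one has $a(0)=1$, so $\sigma_{\mathrm{F},\mathrm{G}}^{(3,\beta)}\ge1$. At $\theta=\pi$ one has $a(\pi)=\frac{12\beta^2+12\beta-4}{6}=\frac{6\beta^2+6\beta-2}{3}$ and $c(\pi)=2\beta^2+6\beta+1+\beta=\ldots$. Computing $c(\pi)=\frac{\beta^2+3\beta+2}{2}+2\beta+\beta^2+\frac{\beta^2+\beta}{2}=2\beta^2+4\beta+1$, which gives $\sigma_{\mathrm{E},\mathrm{G}}^{(3,\beta)}\ge\frac{6\beta^2+12\beta+3}{6\beta^2+6\beta-2}$. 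Similarly $b(\pi)=\beta^2+\beta-1+\beta^2-\ldots=2\beta^2-1$, which gives the upper bound $\lambda_{\mathrm{I},\mathrm{G}}^{(3,\beta)}\le\frac{6\beta^2-3}{6\beta^2+6\beta-2}$.

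For the reverse inequalities I compute the trigonometric polynomials $a(\theta)a(-\theta)$, $c(\theta)c(-\theta)$ and $\Re[b(\theta)a(-\theta)]$. Each is a quadratic polynomial in $x=\cos\theta$, with coefficients polynomial in $\beta$. I then form three differences:
\begin{itemize}[itemindent=-0.5cm]
\item $\mathcal{F}_{\mathrm{G3}}:=1-a a^*$. It vanishes at $\theta=0$ and factors as $\sin^2\frac\theta2\,[p_1(\beta)x+p_0(\beta)]$.
\item $\mathcal{E}_{\mathrm{G3}}:=(6\beta^2+6\beta-2)^2cc^*-(6\beta^2+12\beta+3)^2aa^*$. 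It vanishes at $\theta=\pi$ and factors as $\cos^2\frac\theta2\,[q_1x+q_0]$.
\item $\mathcal{I}_{\mathrm{G3}}:=(6\beta^2+6\beta-2)\Re[ba^*]-(6\beta^2-4)aa^*$. Here I use the weaker constant $6\beta^2-4$ rather than $6\beta^2-3$.
\end{itemize}
Since each bracket is linear in $x\in[-1,1]$, showing $\mathcal{F}_{\mathrm{G3}}\le0$ and $\mathcal{E}_{\mathrm{G3}}\le0$ reduces to checking the sign of the bracket at $x=\pm1$. Those endpoint checks are polynomial inequalities in $\beta\ge1$, which I verify by substituting $\beta=1+s$ and observing that all coefficients are nonnegative. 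This yields $|a|\ge1$ and $|c/a|\le$ the stated value, so both $\sigma$'s are attained exactly.

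The main obstacle is $\mathcal{I}_{\mathrm{G3}}$. With the sharp constant $6\beta^2-3$, the function $\Re[b/a]$ apparently does not attain its minimum at $\theta=\pi$ for all $\beta\ge1$; this is the non-unimodality mentioned in the text, and it is why only bounds are claimed. So $\mathcal{I}_{\mathrm{G3}}$ will not factor as a multiple of $\cos^2\frac\theta2$, and I must show directly that the quadratic $g(x)=r_2x^2+r_1x+r_0\ge0$ on $[-1,1]$. I will first check the endpoints $g(\pm1)\ge0$; $g(-1)>0$ should be automatic, with slack coming from replacing $3$ by $4$. Then I treat the vertex case. If $r_2\le0$, concavity means the endpoints suffice. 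If $r_2>0$, I need the discriminant condition $r_1^2\le4r_2r_0$, or else that the vertex $-r_1/(2r_2)$ lies outside $[-1,1]$. I will verify this last polynomial inequality in $\beta$ again by the shift $\beta=1+s$. If the coefficients turn out mixed in sign, I will split into $\beta\in[1,\beta_0]$ and $\beta\ge\beta_0$ and bound each range separately.

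Combining the exact $\sigma_{\mathrm{E},\mathrm{G}}^{(3,\beta)}$ with the two-sided bounds on $\lambda_{\mathrm{I},\mathrm{G}}^{(3,\beta)}$ and dividing then gives the stated range for $\mathfrak{I}_{\mathrm{IE},\mathrm{G}}^{(3,\beta)}$.
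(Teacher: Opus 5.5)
Your proposal is correct and follows the paper's proof essentially step for step. The shared steps are: the one-sided bounds from evaluating at $\theta=0,\pi$; the factorizations $\mathcal{F}_{\mathrm{G3}}=\sin^2\frac{\theta}{2}[\cdots]$ and $\mathcal{E}_{\mathrm{G3}}=\cos^2\frac{\theta}{2}[\cdots]$ with brackets linear in $\cos\theta$; and a direct minimization of the quadratic $\mathcal{I}_{\mathrm{G3}}$ with the weakened constant $6\beta^2-4$. The paper finds the critical point via $\partial_\theta\mathcal{I}_{\mathrm{G3}}$, which is the same as your vertex analysis in $x=\cos\theta$. That vertex lies in $(0,1)$, so your discriminant branch is the one actually needed, and the resulting minimum $6\beta+1+\frac{36\beta^3+75\beta^2-42\beta-41}{4(9\beta^4+18\beta^3+12\beta^2-15\beta-8)}$ is positive for $\beta\ge1$, as your $\beta=1+s$ check would confirm.
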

\begin{proof}
	The associated semi-generating functions are
	$$a_{\mathrm{G}}^{(3)}(\theta)=\tfrac{3 \beta ^2+6 \beta +2}{6}-\tfrac{6 \beta ^2+6 \beta -5}{6}e^{\imath \theta}+\tfrac{3 \beta ^2-1}{6}e^{2\imath \theta},$$
	$b_{\mathrm{G}}^{(3)}(\theta)=\tfrac{\beta ^2+\beta}{2}+(1-\beta ^2)e^{\imath \theta}+\tfrac{\beta ^2-\beta}{2}e^{2\imath \theta}$ and $c_{\mathrm{G}}^{(3)}(\theta)=\tfrac{\beta ^2+3 \beta +2}{2}-(2\beta+\beta ^2) e^{\imath \theta}+\tfrac{\beta ^2+\beta}{2}e^{2\imath \theta}$.
	According to the definitions in \eqref{def: lambda sigma}, it is not difficult to get $\sigma_{\mathrm{F},\mathrm{G}}^{(3,\beta)}\ge 
	\frac{1}{\abst{a_{\mathrm{G}}^{(3)}(0)}}=1$, $\sigma_{\mathrm{E},\mathrm{G}}^{(3,\beta)}\ge 
	\frac{\abst{c_{\mathrm{G}}^{(3)}(\pi)}}{\abst{a_{\mathrm{G}}^{(3)}(\pi)}}=\frac{6 \beta ^2+12 \beta +3}{6 \beta ^2+6 \beta -2}$ and $\lambda_{\mathrm{I},\mathrm{G}}^{(3,\beta)}\le 
	\frac{b_{\mathrm{G}}^{(3)}(\pi)}{a_{\mathrm{G}}^{(3)}(\pi)}
	=\frac{6 \beta ^2-3}{6 \beta ^2+6 \beta -2}$.
	
	Furthermore, direct but lengthy calculations give
	\begin{align*}
		a_{\mathrm{G}}^{(3)}(\theta)a_{\mathrm{G}}^{(3)}(-\theta)=&\,  \tfrac{9 \beta ^4+18 \beta ^3+3 \beta ^2-6 \beta +5}{6}-\tfrac{36 \beta ^4+72 \beta ^3+12 \beta ^2-24 \beta-5}{18}\cos\theta\\
		&\,+\tfrac{9 \beta ^4+18 \beta ^3+3 \beta ^2-6 \beta -2}{18}\cos2 \theta,
		\\
		\Re\kbrab{b_{\mathrm{G}}^{(3)}(\theta)a_{\mathrm{G}}^{(3)}(-\theta)}
		= &\,\tfrac{18 \beta ^4+18 \beta ^3-15 \beta ^2-9 \beta +10}{12}-\tfrac{24 \beta ^4+24 \beta ^3-20 \beta ^2-12 \beta -2}{12} \cos\theta\\
		&\,+\tfrac{6 \beta ^4+6 \beta ^3-5 \beta^2 -3\beta}{12} \cos2 \theta,\\
		c_{\mathrm{G}}^{(3)}(\theta)a_{\mathrm{G}}^{(3)}(-\theta)
		=&\, \tfrac{3 \beta ^4+12 \beta ^3+15 \beta ^2+6 \beta +2}{2}-2\beta  (\beta +1)^2 (\beta +2)\cos\theta\\
		&\,+\tfrac{\beta  (\beta +1)^2 (\beta +2)}{2}\cos2\theta.
	\end{align*}
	It is not difficult to check that $9 \beta ^4+18 \beta ^3-6 \beta ^2-6 \beta +1>0$ for $\beta\ge1$ and
	\begin{align*}
		{\mathcal{F}}_{\mathrm{G3}}:=&\,1-a_{\mathrm{G}}^{(3)}(\theta)a_{\mathrm{G}}^{(3)}(-\theta)
		=\kbraB{\frac{2}{9} (9 \beta ^4+18 \beta ^3+3 \beta ^2-6 \beta -2)(\cos\theta-1)
			-\frac{1}{3}}\sin^2\frac{\theta }{2}\\
		\le	&\,-\frac{1}{3}\sin^2\frac{\theta }{2}\le0.
	\end{align*}
	Thus one has $\frac{1}{\abst{a_{\mathrm{G}}^{(3)}(\theta)}}\le 1$ such that $\sigma_{\mathrm{F},\mathrm{G}}^{(3,\beta)}=1$. Similarly,  since
	\begin{align*}
		{\mathcal{E}}_{\mathrm{G3}}:=&\,(6 \beta ^2+6 \beta -2)^2c_{\mathrm{G}}^{(3)}(\theta)c_{\mathrm{G}}^{(3)}(-\theta)-(6 \beta ^2+12 \beta +3)^2a_{\mathrm{G}}^{(3)}(\theta)a_{\mathrm{G}}^{(3)}(-\theta)\\
		=&\,\cos^2\frac{\theta }{2}\Big[2(3 \beta ^4+30 \beta ^3+57 \beta ^2+30 \beta +2)\cos\theta\Big.\\
			&\,\qquad\Big.-3(2 \beta ^4+44 \beta ^3+94 \beta ^2+52 \beta +3)\Big]\\
		\le&\, -\bra{72 \beta ^3+168 \beta ^2+96 \beta +5}\cos^2\frac{\theta }{2}\le0,
	\end{align*}
	one gets $\frac{\abst{c_{\mathrm{G}}^{(3)}(\theta)}}{\abst{a_{\mathrm{G}}^{(3)}(\theta)}}\le 
	\frac{6 \beta ^2+12 \beta +3}{6 \beta ^2+6 \beta -2}$ for $\theta\in[0,2\pi)$ and thus  $\sigma_{\mathrm{E},\mathrm{G}}^{(3,\beta)}=\frac{6 \beta ^2+12 \beta +3}{6 \beta ^2+6 \beta -2}$.

	To get the lower bound of $\lambda_{\mathrm{I},\mathrm{G}}^{(3,\beta)}$,
	we consider the auxiliary function for $\theta\in[0,\pi]$,
	\begin{align*}
		\mathcal{I}_{\mathrm{G3}}(\beta,\theta):=&\,(6 \beta ^2+6 \beta -2)\Re\kbrab{b^{(3)}(\theta)a^{(3)}(-\theta)}
		-(6 \beta ^2-4)a^{(3)}(\theta)a^{(3)}(-\theta)\\
		=&\, \frac{27 \beta ^4+54 \beta ^3+45 \beta +30}{18}
		-\frac{18 \beta ^4+36 \beta ^3+6 \beta ^2-39 \beta -7}{9}\cos\theta\\
		&\,+\frac{9 \beta ^4+18 \beta ^3+12 \beta ^2-15 \beta -8}{18}\cos2 \theta
	\end{align*}
	with $\mathcal{I}_{\mathrm{G3}}(\beta,0)=2+6\beta$ and $\mathcal{I}_{\mathrm{G3}}(\beta,\pi)=\tfrac{4}{9} (3 \beta ^2+3 \beta -1)^2> \mathcal{I}_{\mathrm{G3}}(\beta,0)$ for $\beta\ge1$.
	Note that,
	\begin{align*}
		\frac{\partial \mathcal{I}_{\mathrm{G3}}(\beta,\theta)}{\partial{\theta}}
		=&\,\frac{\sin\theta}{9}\Big[
			\brat{18 \beta ^4+36 \beta ^3+6 \beta ^2-39 \beta -7}\Big.\\
		&\,\qquad\Big.	-(18 \beta ^4+36 \beta ^3+24 \beta ^2-30 \beta -16)\cos\theta\Big].
	\end{align*}
	Solving $\frac{\partial \mathcal{I}_{\mathrm{G3}}(\beta,\theta)}{\partial{\theta}}=0$ for $\theta\in(0,\pi)$ gives the single stationary point
	\begin{align*}
		\cos\theta_*=1-\frac{9 \left(2 \beta ^2+\beta -1\right)}{2 \left(9 \beta ^4+18 \beta ^3+12 \beta ^2-15 \beta -8\right)}\in(0,1).
	\end{align*}
	Thus, the minimum value of $\mathcal{I}_{\mathrm{G3}}(\beta,\theta)$ for $\theta\in[0,\pi]$ is
	\begin{align*}
		\min_{\theta\in[0,\pi]}\mathcal{I}_{\mathrm{G3}}(\beta,\theta)=&\,\min\big\{\mathcal{I}_{\mathrm{G3}}(\beta,0),\mathcal{I}_{\mathrm{G3}}(\beta,\theta_*)
		\big\}=\mathcal{I}_{\mathrm{G3}}(\beta,\theta_*)\\
		=&\,6 \beta +1+\frac{36 \beta ^3+75 \beta ^2-42 \beta -41}{4 \left(9 \beta ^4+18 \beta ^3+12 \beta ^2-15 \beta -8\right)}>0\quad\text{for $\beta\ge1$.}
	\end{align*}
	The symmetry of $\mathcal{I}_{\mathrm{G3}}(\beta,\theta)$ with respect to $\theta$ implies that
	\begin{align*}
		&	\Re\kbraB{\frac{b_{\mathrm{G}}^{(3)}(\theta)}{a_{\mathrm{G}}^{(3)}(\theta)}}\ge \frac{6 \beta ^2-4}{6 \beta ^2+6 \beta -2}
		\;\;\text{for $\theta\in[0,2\pi)$ and}\;\; \lambda_{\mathrm{I},\mathrm{G}}^{(3,\beta)}\ge\frac{6 \beta ^2-4}{6 \beta ^2+6 \beta -2}.
	\end{align*}
	It completes the proof.
\end{proof}

\subsection{GBDF4 scheme}
The discrete coefficients of the GBDF4 scheme read
\begin{align}\label{scheme: GBDF4}
	&\vec{a}_{\mathrm{G}}^{(4)}
	=\brab{\tfrac{2 \beta ^3+9 \beta ^2+11 \beta +3}{12},\tfrac{-6 \beta ^3-21\beta ^2-9 \beta +13}{12}, \tfrac{6 \beta^3+15 \beta^2-3\beta-5}{12}, \tfrac{-2 \beta^3-3 \beta^2+\beta+1}{12}},\\
	&\vec{b}_{\mathrm{G}}^{(4)}=\brab{\tfrac{\beta^3+3 \beta^2+2 \beta}{6},\tfrac{-\beta^3-2 \beta^2+\beta +2}{2},\tfrac{\beta^3+\beta^2 -2\beta}{2} ,\tfrac{\beta -\beta^3}{6},0},\notag\\
	&\vec{c}_{\mathrm{G}}^{(4)}=\brab{\tfrac{\beta^3+6 \beta^2+11 \beta +6}{6}, \tfrac{-\beta^3-5 \beta^2 -6\beta}{2}, \tfrac{\beta^3+4\beta^2 +3\beta}{2}, \tfrac{-\beta^3-3 \beta^2-2\beta}{6} }.\notag
\end{align}
The first characteristic polynomial ${\varrho}_{a,\mathrm{G}}^{(4)}(\zeta)$ is a Schur polynomial if $\beta >\sqrt{2}-1$, which ensures that the implicit part of the GBDF4 scheme is strongly $A(0)$-stable. 
The formula \eqref{scheme: general imex Truncation Error} gives the leading error 
\begin{align}\label{truncation: GBDF4}
	R_{\mathrm{G}}^{(4,\beta)}=\frac{3-5 \beta ^3-10 \beta ^2}{60}	u_t^{(5)}(t_n)\tau^4
	+\frac{\beta ^3+3 \beta^2 +2\beta}{6}\mathcal{F}_t^{(4)}[u(t_n)]\tau^4.
\end{align}
One has the following proposition.
\begin{proposition}\label{prop: GBDF4}
	For the GBDF4 scheme \eqref{scheme: GBDF4} with the parameter $\beta\ge1$, it holds that
	\begin{align*}
		&\sigma_{\mathrm{F},\mathrm{G}}^{(4,\beta)}\ge 1,\quad
		\sigma_{\mathrm{E},\mathrm{G}}^{(4,\beta)}\ge\frac{4 \beta ^3+18\beta ^2+20 \beta +3}{4 (\beta ^3+3 \beta ^2+\beta -1)},\quad 
		\lambda_{\mathrm{I},\mathrm{G}}^{(4,\beta)}\le\frac{4 \beta ^3+6 \beta ^2-4 \beta -3}{4(\beta ^3+3 \beta ^2+\beta -1)}\\
		&\text{such that}\quad
		\mathfrak{I}_{\mathrm{IE},\mathrm{G}}^{(4,\beta)}\le
		\frac{4 \beta ^3+6 \beta ^2-4 \beta -3}{4 \beta ^3+18\beta ^2+20 \beta +3}.
	\end{align*}
	If the parameter $\beta=9$, then 
	\begin{align*}\sigma_{\mathrm{F},\mathrm{G}}^{(4,9)}\le \frac{49}{45},\;\;
		\sigma_{\mathrm{E},\mathrm{G}}^{(4,9)}\le\frac{2319}{1960},\;\;\lambda_{\mathrm{I},\mathrm{G}}^{(4,9)}\ge \frac{1641}{1960}\;\;
		\text{such that}\;\;
		\mathfrak{I}_{\mathrm{IE},\mathrm{G}}^{(4,9)}\ge \frac{547}{773}\approx 0.707633.
	\end{align*}
\end{proposition}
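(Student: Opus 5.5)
The proof will follow the pattern of Propositions \ref{prop: WBDF3} and \ref{prop: GBDF3}. I first write down the semi-generating functions from \eqref{scheme: GBDF4}:
\[
a_{\mathrm{G}}^{(4)}(\theta)=\sum_{j=0}^{3}a_{\mathrm{G},j}^{(4)}e^{\imath j\theta},\qquad
b_{\mathrm{G}}^{(4)}(\theta)=\sum_{j=0}^{3}b_{\mathrm{G},j}^{(4)}e^{\imath j\theta},\qquad
c_{\mathrm{G}}^{(4)}(\theta)=\sum_{j=0}^{3}c_{\mathrm{G},j}^{(4)}e^{\imath j\theta}.
\]

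\emph{Lower bounds on $\sigma$, upper bound on $\lambda$.} The general inequalities for $\beta\ge1$ follow by evaluating at particular points. The maxima in \eqref{def: lambda sigma} dominate the values at $\theta=0$ and $\theta=\pi$, and the minimum is dominated by them. Consistency gives $a_{\mathrm{G}}^{(4)}(0)=1$, so $\sigma_{\mathrm{F},\mathrm{G}}^{(4,\beta)}\ge1$.

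At $\theta=\pi$ I compute the alternating sums. For the $a$-coefficients,
\[
a_{\mathrm{G}}^{(4)}(\pi)=\tfrac{1}{12}\big(16\beta^3+48\beta^2+16\beta-16\big)=\tfrac{4}{3}\big(\beta^3+3\beta^2+\beta-1\big).
\]
For the $c$-coefficients,
\[
c_{\mathrm{G}}^{(4)}(\pi)=\tfrac16\big(8\beta^3+36\beta^2+40\beta+6\big).
\]
For the $b$-coefficients, I expect
\[
b_{\mathrm{G}}^{(4)}(\pi)=\tfrac{1}{6}\big(4\beta^3+6\beta^2-4\beta-3\big)\cdot\ldots,
\]
to be checked directly. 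Taking the ratios reproduces the stated fractions
\[
\frac{4\beta^3+18\beta^2+20\beta+3}{4(\beta^3+3\beta^2+\beta-1)}
\qquad\text{and}\qquad
\frac{4\beta^3+6\beta^2-4\beta-3}{4(\beta^3+3\beta^2+\beta-1)}.
\]
Here $a_{\mathrm{G}}^{(4)}(\pi)>0$ because $\beta\ge1$. Dividing the two ratios gives the upper bound on $\mathfrak{I}_{\mathrm{IE},\mathrm{G}}^{(4,\beta)}$. This part is routine arithmetic.

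\emph{Two-sided bounds for $\beta=9$.} For the fixed value $\beta=9$ the coefficients become explicit rationals, and I follow the auxiliary-function technique used earlier. I form the real trigonometric polynomials in $\cos\theta,\cos2\theta,\cos3\theta$:
\[
\mathcal{F}_{\mathrm{G4}}:=45^2-49^2\,a(\theta)a(-\theta),\qquad
\mathcal{E}_{\mathrm{G4}}:=1960^2\,c(\theta)c(-\theta)-2319^2\,a(\theta)a(-\theta),
\]
\[
\mathcal{I}_{\mathrm{G4}}:=1960\,\Re\big[b(\theta)a(-\theta)\big]-1641\,a(\theta)a(-\theta).
\]
The goal is to show $\mathcal{F}_{\mathrm{G4}}\le0$, $\mathcal{E}_{\mathrm{G4}}\le0$ and $\mathcal{I}_{\mathrm{G4}}\ge0$ on $[0,\pi]$; symmetry in $\theta$ then covers $[0,2\pi)$. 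To do this I substitute $x=\cos\theta\in[-1,1]$, which turns each expression into a cubic polynomial in $x$ with rational coefficients. Consistency forces a factor: $\mathcal{F}_{\mathrm{G4}}$ and $\mathcal{I}_{\mathrm{G4}}$ should vanish to second order at $\theta=0$, so I factor out $(1-x)$.

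\emph{Main obstacle.} I expect the sign check of the remaining quadratic on $[-1,1]$ to be the hardest step, and it is delicate because the bounds at $\beta=9$ are not attained at $\theta=\pi$ (the functions are not unimodal). I will locate its critical points exactly and evaluate the polynomial at them and at the endpoints $x=\pm1$, as in the $\theta_*$ argument of Proposition \ref{prop: GBDF3}. If the factorization is ugly, I fall back on a Sturm-sequence or interval check of the cubic on $[-1,1]$ with exact rational arithmetic, confirming a strictly positive minimum.

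\emph{Conclusion.} Once the three sign conditions hold, they give
\[
\sigma_{\mathrm{F},\mathrm{G}}^{(4,9)}\le\tfrac{49}{45},\qquad
\sigma_{\mathrm{E},\mathrm{G}}^{(4,9)}\le\tfrac{2319}{1960},\qquad
\lambda_{\mathrm{I},\mathrm{G}}^{(4,9)}\ge\tfrac{1641}{1960}.
\]
The quotient is then $\mathfrak{I}_{\mathrm{IE},\mathrm{G}}^{(4,9)}\ge1641/2319=547/773$.
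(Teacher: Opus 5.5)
This is essentially the paper's proof. The one-sided bounds come from evaluating at $\theta=0$ and $\theta=\pi$. Your $a_{\mathrm{G}}^{(4)}(\pi)$ and $c_{\mathrm{G}}^{(4)}(\pi)$ are right, and the value you left open is $b_{\mathrm{G}}^{(4)}(\pi)=\tfrac13(4\beta^3+6\beta^2-4\beta-3)$. For $\beta=9$, your three auxiliary functions are the paper's $\mathcal{F}_{\mathrm{G4}}(9,\theta)$, $\mathcal{E}_{\mathrm{G4}}(9,\theta)$ and $\mathcal{I}_{\mathrm{G4}}(9,\theta)$ up to the positive factors $4,4,2$. They are to be sign-checked as cubics in $\cos\theta$, a check the paper omits.

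One step of your plan is false: consistency does not put a factor $1-\cos\theta$ into $\mathcal{F}_{\mathrm{G4}}$ or $\mathcal{I}_{\mathrm{G4}}$.
\begin{itemize}
\item Since $a(0)=b(0)=1$, you get $\mathcal{F}_{\mathrm{G4}}(0)=45^2-49^2=-376$ and $\mathcal{I}_{\mathrm{G4}}(0)=1960-1641=319$. Such a factor appears only when the target constant equals the value at $\theta=0$, as for the constant $1$ in Propositions \ref{prop: WBDF3} and \ref{prop: GBDF3}.
\item What vanishes at $s:=1-\cos\theta=0$ are the unscaled quantities. At $\beta=9$ one finds $18|a(\theta)|^2=18+3s-58204s^2+3870699s^3$, $|c(\theta)|^2=1-3960s^2+290400s^3$ and $12\,\Re[b(\theta)\overline{a(\theta)}]=12-2s-35242s^2+2214780s^3$.
\item None of $\frac{49}{45}$, $\frac{2319}{1960}$, $\frac{1641}{1960}$ is attained at an endpoint. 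At $\theta=0$ all three functions equal $1$; at $\theta=\pi$ they equal $\frac{3}{3920}$, $\frac{4557}{3920}$, $\frac{3363}{3920}$.
\end{itemize}
So there is no endpoint root to divide out and no ``remaining quadratic''. You have to handle the full cubics, which means your Sturm/exact-interval fallback is the actual argument.

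That fallback does succeed. For instance, $12\,\mathcal{I}_{\mathrm{G4}}=3828-7202s-5399144s^2+106424094s^3$ has minimum about $1.5\times10^3$ on $[0,2]$. The true extremes are $\max 1/|a|\approx1.058$, $\max|c/a|\approx1.176$ and $\min\Re(b/a)\approx0.844$, attained near $\cos\theta\approx0.99$ and $\cos\theta\approx0.95$. Hence all three inequalities hold strictly.
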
 

\begin{proof}
	The associated semi-generating functions  read
	\begin{align*}
		a_{\mathrm{G}}^{(4)}(\theta)=&\,\tfrac{2 \beta ^3+9 \beta ^2+11 \beta +3}{12}+\tfrac{-6 \beta ^3-21\beta ^2-9 \beta +13}{12}e^{\imath \theta}\\&\,
		+\tfrac{6 \beta^3+15 \beta^2-3\beta-5}{12}e^{2\imath \theta}+\tfrac{-2 \beta^3-3 \beta^2+\beta+1}{12}e^{3\imath \theta},\\
		b_{\mathrm{G}}^{(4)}(\theta)=&\,\tfrac{\beta^3+3 \beta^2+2 \beta}{6}+\tfrac{-\beta^3-2 \beta^2+\beta +2}{2}e^{\imath \theta}
		+\tfrac{\beta^3+\beta^2 -2\beta}{2}e^{2\imath \theta}+\tfrac{\beta -\beta^3}{6}e^{3\imath \theta},\\
		c_{\mathrm{G}}^{(4)}(\theta)=&\,\tfrac{\beta^3+6 \beta^2+11 \beta +6}{6}+\tfrac{-\beta^3-5 \beta^2 -6\beta}{2}e^{\imath \theta}
		+\tfrac{\beta^3+4\beta^2 +3\beta}{2}e^{2\imath \theta}+\tfrac{-\beta^3-3 \beta^2-2\beta}{6}e^{3\imath \theta}.
	\end{align*}
	According to the definitions in \eqref{def: lambda sigma}, it is not difficult to get $\sigma_{\mathrm{F},\mathrm{G}}^{(4,\beta)}\ge\frac{1}{\abst{a_{\mathrm{G}}^{(4)}(0)}}=1$,
	$\sigma_{\mathrm{E},\mathrm{G}}^{(4,\beta)}\ge\frac{\abst{c_{\mathrm{G}}^{(4)}(\pi)}}{\abst{a_{\mathrm{G}}^{(4)}(\pi)}}
	=\frac{4 \beta ^3+18\beta ^2+20 \beta +3}{4 (\beta ^3+3 \beta ^2+\beta -1)}$ and
	$\lambda_{\mathrm{I},\mathrm{G}}^{(4,\beta)}\le\frac{b_{\mathrm{G}}^{(4)}(\pi)}{a_{\mathrm{G}}^{(4)}(\pi)}
	=\frac{4 \beta ^3+6 \beta ^2-4 \beta -3}{4(\beta ^3+3 \beta ^2+\beta -1)}$. 
	
	To find the upper bounds of $\sigma_{\mathrm{F},\mathrm{G}}^{(4,\beta)}$, $\sigma_{\mathrm{E},\mathrm{G}}^{(4,\beta)}$ and the lower bound of $\lambda_{\mathrm{I},\mathrm{G}}^{(4,\beta)}$, it is to consider  
	the following auxiliary functions
	\begin{align*}
		{\mathcal{F}}_{\mathrm{G4}}(\beta,\theta):=&\,100\beta^2-(11\beta-1)^2a_{\mathrm{G}}^{(4)}(\theta)a_{\mathrm{G}}^{(4)}(-\theta),\\
		{\mathcal{E}}_{\mathrm{G4}}(\beta,\theta):=&\,16(\beta ^3+3 \beta ^2+\beta -1)^2c_{\mathrm{G}}^{(4)}(\theta)c_{\mathrm{G}}^{(4)}(-\theta)\\
		&\,-(4 \beta ^3+19 \beta ^2+20 \beta +3)^2a_{\mathrm{G}}^{(4)}(\theta)a_{\mathrm{G}}^{(4)}(-\theta),\\
		{\mathcal{I}}_{\mathrm{G4}}(\beta,\theta):=&\,4(\beta ^3+3 \beta ^2+\beta -1)\Re\kbrab{b_{\mathrm{G}}^{(4)}(\theta)a_{\mathrm{G}}^{(4)}(-\theta)}\\
		&\,
		-(4 \beta ^3+5 \beta ^2-4 \beta -3)a_{\mathrm{G}}^{(4)}(\theta)a_{\mathrm{G}}^{(4)}(-\theta).
	\end{align*}
	For any given value of $\beta$ such as $\beta=9$, the functions ${\mathcal{F}}_{\mathrm{G4}}(9,\theta)$, ${\mathcal{E}}_{\mathrm{G4}}(9,\theta)$ and ${\mathcal{I}}_{\mathrm{G4}}(9,\theta)$ are cubic polynomials with respect to $y=\cos\theta\in[-1,1]$. 
	Actually, it is easy to verify  that
	${\mathcal{F}}_{\mathrm{G4}}(9,\theta)\le0$, ${\mathcal{E}}_{\mathrm{G4}}(9,\theta)\le0$ and ${\mathcal{I}}_{\mathrm{G4}}(9,\theta)\ge0$
	for $\theta\in[0,2\pi)$ and they lead to the claimed bounds, while the technical details are omitted. For the general case of $\beta\ge1$, it would be rather complex and lengthy to verify that ${\mathcal{F}}_{\mathrm{G4}}(\beta,\theta)\le0$, ${\mathcal{E}}_{\mathrm{G4}}(\beta,\theta)\le0$ and ${\mathcal{I}}_{\mathrm{G4}}(\beta,\theta)\ge0$
	for $\theta\in[0,2\pi)$ and we will check the results numerically in Remark \ref{remark: lambda GBDF4} for some fixed $\beta$. 
	It completes the proof.
\end{proof}

\begin{figure}[htb!]
	\centering
	\subfigure[ $\widetilde{\mathcal{F}}_{\mathrm{G4}}(\beta,\theta)$]
	{\includegraphics[width=1.67in]{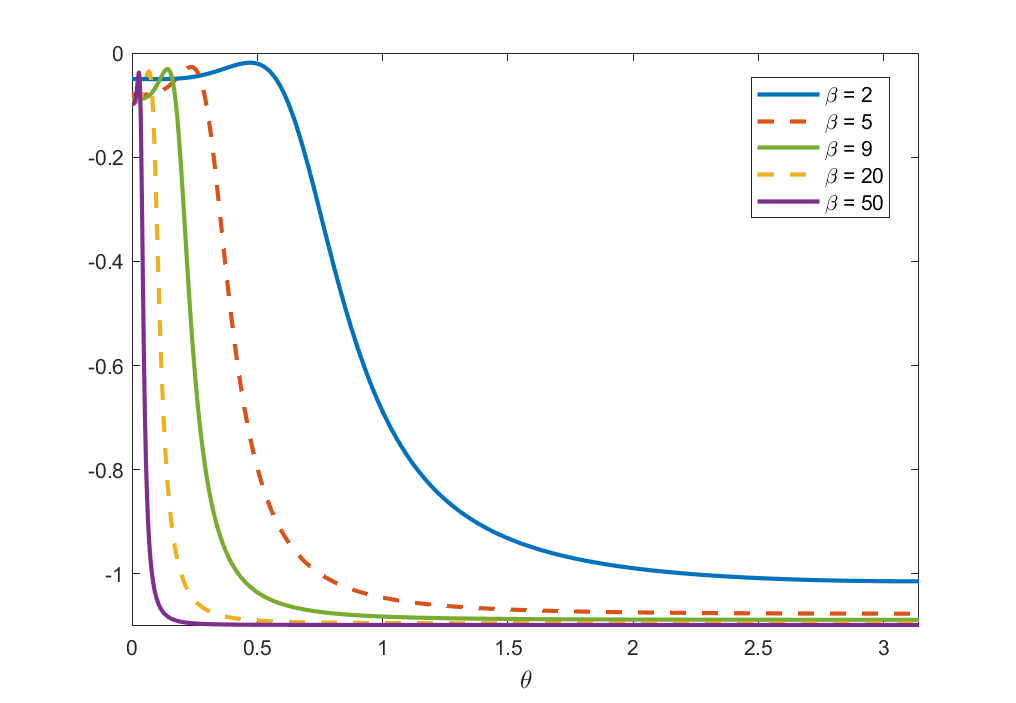}}
	\subfigure[ $\widetilde{\mathcal{E}}_{\mathrm{G4}}(\beta,\theta)$]
	{\includegraphics[width=1.67in]{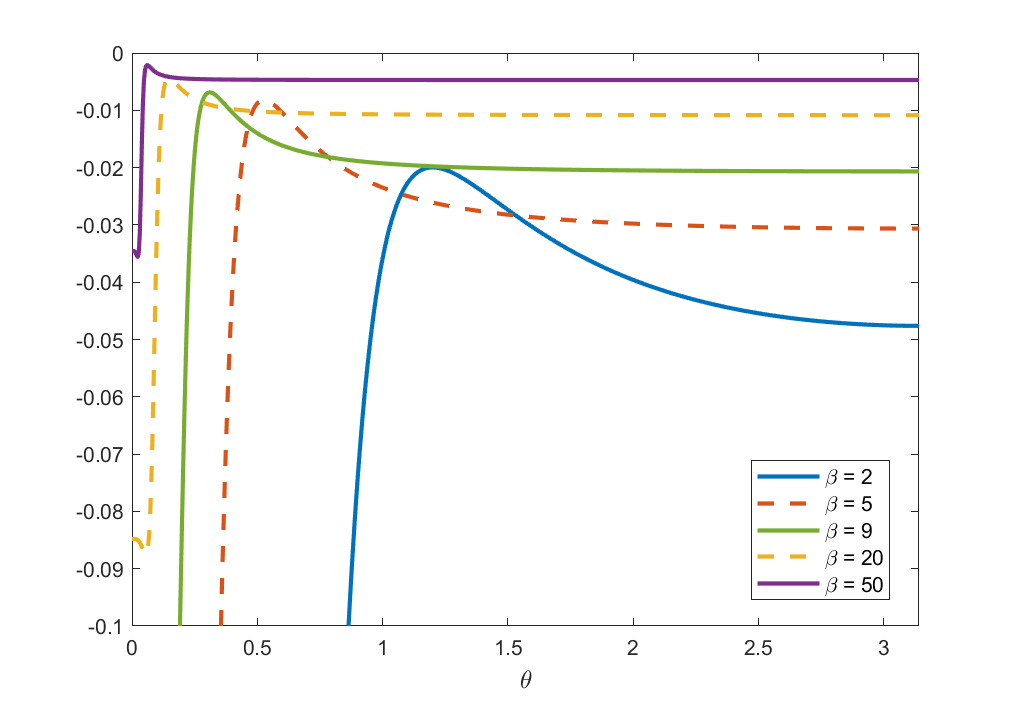}}
	\subfigure[ $\widetilde{\mathcal{I}}_{\mathrm{G4}}(\beta,\theta)$]
	{\includegraphics[width=1.67in]{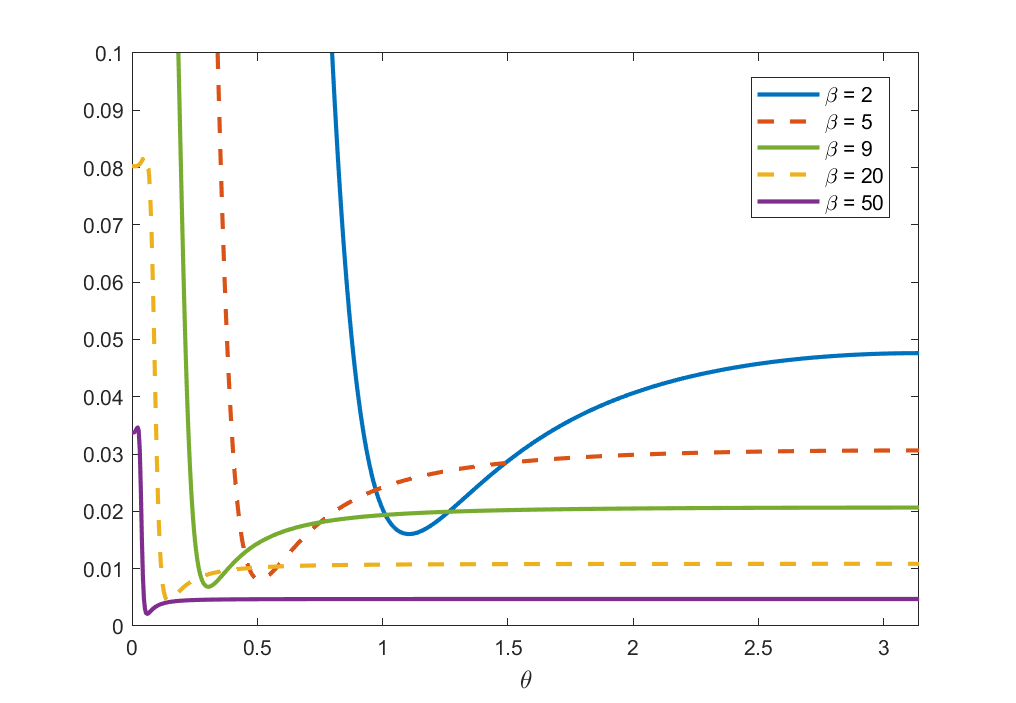}}
	\caption{Curves of $\widetilde{\mathcal{F}}_{\mathrm{G4}}(\beta,\theta)$, $\widetilde{\mathcal{E}}_{\mathrm{G4}}(\beta,\theta)$ and $\widetilde{\mathcal{I}}_{\mathrm{G4}}(\beta,\theta)$ for $\theta\in[0,\pi]$.}
	\label{fig: lambda GBDF4}
\end{figure}

\begin{remark}\label{remark: lambda GBDF4}
	For the GBDF4 scheme \eqref{scheme: GBDF4} with the parameter $\beta\ge1$, numerical tests suggest that
	\begin{align*}
		&	\widetilde{\mathcal{F}}_{\mathrm{G4}}(\beta,\theta):=\frac1{\abst{a_{\mathrm{G}}^{(4)}(\theta)}}- \frac{11\beta-1}{10\beta}\le0,\\
		&\widetilde{\mathcal{E}}_{\mathrm{G4}}(\beta,\theta):=\frac{\abst{c_{\mathrm{G}}^{(4)}(\theta)}}{\abst{a_{\mathrm{G}}^{(4)}(\theta)}}-\frac{4 \beta ^3+19\beta ^2+20 \beta +3}{4 (\beta ^3+3 \beta ^2+\beta -1)}\le0,\\
		&
		\widetilde{\mathcal{I}}_{\mathrm{G4}}(\beta,\theta):=\Re\kbra{\frac{b_{\mathrm{G}}^{(4)}(\theta)}{a_{\mathrm{G}}^{(4)}(\theta)}}-\frac{4 \beta ^3+5 \beta ^2-4 \beta -3}{4(\beta ^3+3 \beta ^2+\beta -1)}\ge0,
	\end{align*}	
		see Figure \ref{fig: lambda GBDF4}, 
	in which the auxiliary functions
	$\widetilde{\mathcal{F}}_{\mathrm{G4}}(\beta,\theta)$, $\widetilde{\mathcal{E}}_{\mathrm{G4}}(\beta,\theta)$ and $\widetilde{\mathcal{I}}_{\mathrm{G4}}(\beta,\theta)$ are depicted for $\theta\in[0,\pi]$ with five fixed parameters $\beta=2,5,9,20$ and 50.	
	One has
		\begin{align*}
	&\sigma_{\mathrm{F},\mathrm{G}}^{(4,\beta)}\le \frac{11\beta-1}{10\beta},\quad\sigma_{\mathrm{E},\mathrm{G}}^{(4,\beta)}\le\frac{4 \beta ^3+19\beta ^2+20 \beta +3}{4 (\beta ^3+3 \beta ^2+\beta -1)},\quad
	\lambda_{\mathrm{I},\mathrm{G}}^{(4,\beta)}\ge\frac{4 \beta ^3+5 \beta ^2-4 \beta -3}{4(\beta ^3+3 \beta ^2+\beta -1)},\\
		&\text{such that}\quad
		\mathfrak{I}_{\mathrm{IE},\mathrm{G}}^{(4,\beta)}\ge\frac{4 \beta ^3+5 \beta ^2-4 \beta -3}{4 \beta ^3+19\beta ^2+20 \beta +3}.
	\end{align*}	

\end{remark}

\subsection{GBDF5 scheme}

The discrete coefficients of the GBDF5 scheme read
\begin{align}\label{scheme: GBDF5}
	\vec{a}_{\mathrm{G}}^{(5)}
	=&\,\big(\tfrac{5\beta ^4+40\beta ^3+105 \beta ^2+100 \beta+24}{120},\tfrac{-10 \beta ^4-70 \beta ^3-135 \beta ^2-25 \beta +77}{60},\big.\\
	&\,\quad\big.\tfrac{15\beta ^4+90 \beta ^3+120 \beta ^2-45\beta-43}{60},
	\tfrac{-10 \beta ^4-50 \beta ^3-45 \beta ^2+25 \beta +17}{60},\big.\notag\\
	&\,\qquad\big.\tfrac{5 \beta ^4+20 \beta ^3+15 \beta ^2-10 \beta -6}{120}\big),\notag\\
	\vec{b}_{\mathrm{G}}^{(5)}=&\,\big(\tfrac{\beta(\beta ^3+6 \beta ^2+11 \beta +6)}{24},\tfrac{-\beta ^4-5 \beta ^3-5 \beta ^2+5 \beta +6}{6},\tfrac{\beta(\beta ^3+4 \beta ^2+\beta -6)}{4},\big.\notag\\
		&\,\qquad\big. \tfrac{\beta (-\beta ^3-3 \beta ^2+\beta +3)}{6}, \tfrac{\beta(\beta ^3+2 \beta ^2-\beta -2)}{24},0\big),\notag\\
	\vec{c}_{\mathrm{G}}^{(5)}=&\,\big(\tfrac{\beta ^4+10 \beta ^3+35 \beta ^2+50 \beta +24}{24},
	-\tfrac{\beta  (\beta ^3+9 \beta ^2+26 \beta +24)}{6},\big.\notag\\
	&\,\qquad\big.\tfrac{\beta(\beta ^3+8 \beta ^2+19 \beta +12)}{4},
	-\tfrac{\beta(\beta ^3+7 \beta ^2+14 \beta +8)}{6},
	\tfrac{\beta (\beta ^3+6 \beta ^2+11 \beta +6)}{24}\big).\notag
\end{align}
The first characteristic polynomial ${\varrho}_{a,\mathrm{G}}^{(5)}(\zeta)$ is a Schur polynomial if $\beta >0.50969$, which ensures that the implicit part of the GBDF5 scheme is strongly $A(0)$-stable. 
The formula \eqref{scheme: general imex Truncation Error} gives the leading error 
\begin{align}\label{truncation: GBDF5}
	R_{\mathrm{G}}^{(5,\beta)}=\tfrac{24-15 \beta ^4-70 \beta ^3-75 \beta ^2+16 \beta}{720}u_t^{(6)}(t_n)\tau^5
	+\tfrac{\beta (\beta ^3+6 \beta ^2+11 \beta +6)}{24}\mathcal{F}_t^{(5)}[u(t_n)]\tau^5.
\end{align}
One has the following proposition.
\begin{proposition}\label{prop: GBDF5}
	For the GBDF5 scheme \eqref{scheme: GBDF5} with the parameter $\beta\ge1$, it holds that $\sigma_{\mathrm{F},\mathrm{G}}^{(5,\beta)}\ge 1,$
	\begin{align*}
		&\sigma_{\mathrm{E},\mathrm{G}}^{(5,\beta)}\ge\frac{5(2 \beta ^4+16 \beta ^3+40 \beta ^2+32 \beta +3)}{10 \beta ^4+60 \beta ^3+90 \beta ^2-32},\quad 
		\lambda_{\mathrm{I},\mathrm{G}}^{(5,\beta)}\le\frac{5(2 \beta ^4+8 \beta ^3+4 \beta ^2-8 \beta -3)}{10 \beta ^4+60 \beta ^3+90 \beta ^2-32}\\
		&\text{such that}\quad
		\mathfrak{I}_{\mathrm{IE},\mathrm{G}}^{(5,\beta)}\le
		\frac{2 \beta ^4+8 \beta ^3+4 \beta ^2-8 \beta -3}{2 \beta ^4+16 \beta ^3+40 \beta ^2+32 \beta +3}.
	\end{align*}
	If the parameter $\beta=20$, then $\sigma_{\mathrm{F},\mathrm{G}}^{(5,20)}\le \frac{399}{200},$
	\begin{align*}
		\sigma_{\mathrm{E},\mathrm{G}}^{(5,20)}\le\frac{2501825}{2115968},
		\quad\lambda_{\mathrm{I},\mathrm{G}}^{(5,20)}\ge \frac{1769085}{2115968}\quad
		\text{such that}\quad
		\mathfrak{I}_{\mathrm{IE},\mathrm{G}}^{(5,20)}\ge \frac{353817}{500365}\approx 0.70712.
	\end{align*}
\end{proposition}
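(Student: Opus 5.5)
The plan follows the template of Propositions \ref{prop: GBDF3} and \ref{prop: GBDF4}. The statement has two parts: general one-sided bounds valid for all $\beta\ge1$, and explicit two-sided bounds at the particular value $\beta=20$.

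\emph{Step 1: the one-sided bounds for $\beta\ge1$.} I would write out the semi-generating functions $a_{\mathrm{G}}^{(5)}(\theta)$, $b_{\mathrm{G}}^{(5)}(\theta)$ and $c_{\mathrm{G}}^{(5)}(\theta)$ from \eqref{scheme: GBDF5} and evaluate them only at $\theta=0$ and $\theta=\pi$. Consistency gives $a_{\mathrm{G}}^{(5)}(0)=1$, so $\sigma_{\mathrm{F},\mathrm{G}}^{(5,\beta)}\ge1$, exactly as in Lemma \ref{lemma: IELM stability intensity upper bound}.

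At $\theta=\pi$ I would compute the alternating sums $a(\pi)=\sum_j(-1)^ja_j$, $b(\pi)$ and $c(\pi)$. Multiplying numerators and denominators by $120$ should give $a(\pi)=(10\beta^4+60\beta^3+90\beta^2-32)/60$, together with the corresponding numerators for $c(\pi)$ and $b(\pi)$. Since $\sigma_{\mathrm{E}}$ is a maximum and $\lambda_{\mathrm{I}}$ is a minimum in \eqref{def: lambda sigma}, the definitions then give
$\sigma_{\mathrm{E},\mathrm{G}}^{(5,\beta)}\ge |c(\pi)|/|a(\pi)|$ and $\lambda_{\mathrm{I},\mathrm{G}}^{(5,\beta)}\le b(\pi)/a(\pi)$. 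Taking the ratio of these two bounds yields the stated upper bound on $\mathfrak{I}_{\mathrm{IE},\mathrm{G}}^{(5,\beta)}$. I would check along the way that $a(\pi)>0$ for $\beta\ge1$, which is immediate because $90-32>0$.

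\emph{Step 2: the $\beta=20$ bounds.} I would fix $\beta=20$, so all coefficients become rational numbers, and introduce three auxiliary functions:
$\mathcal{F}_{\mathrm{G5}}:=200^2-399^2\,a(\theta)a(-\theta)$,
$\mathcal{E}_{\mathrm{G5}}:=2115968^2\,c(\theta)c(-\theta)-2501825^2\,a(\theta)a(-\theta)$, and
$\mathcal{I}_{\mathrm{G5}}:=2115968\,\Re[b(\theta)a(-\theta)]-1769085\,a(\theta)a(-\theta)$.

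Each of these is a trigonometric polynomial $\sum_{k=0}^4 p_k\cos k\theta$, hence a quartic polynomial in $y=\cos\theta\in[-1,1]$. The goal is to show $\mathcal{F}_{\mathrm{G5}}\le0$, $\mathcal{E}_{\mathrm{G5}}\le0$ and $\mathcal{I}_{\mathrm{G5}}\ge0$ on $[-1,1]$. These inequalities give $1/|a|\le 399/200$, $|c|/|a|\le 2501825/2115968$ and $\Re(b/a)=\Re(b\bar a)/|a|^2\ge 1769085/2115968$. Dividing the last two bounds gives $\mathfrak{I}\ge 1769085/2501825=353817/500365$.

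To prove the sign conditions I would factor out the known zeros. For $\mathcal{F}$, consistency forces $a(0)=1$, so $1-|a|^2$ has a factor $\sin^2(\theta/2)\propto(1-y)$, and the remaining cubic can be bounded as in Proposition \ref{prop: WBDF3}. For $\mathcal{E}$ and $\mathcal{I}$, I expect the chosen constants to coincide with the values at $\theta=\pi$, or possibly at an interior extremum. If they coincide with the $\theta=\pi$ values, the polynomial has a double root at $y=-1$, so a factor $(1+y)^2$ (equivalently $\cos^4(\theta/2)$) or at least $(1+y)$ can be extracted. The remaining cubic or quadratic factor would then be checked for a definite sign on $[-1,1]$ through its endpoint values and its critical points, found by solving a quadratic, with exact rational arithmetic.

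\emph{Main obstacle.} The hardest step is the sign verification for $\mathcal{E}_{\mathrm{G5}}$ and $\mathcal{I}_{\mathrm{G5}}$. The extrema need not occur at $\theta=\pi$: the stated constants differ from the $\theta=\pi$ values, and the text notes that these functions are not unimodal. So the quartic in $y$ may have an interior near-minimum, and the constant was presumably chosen slightly below the true extremum. I would first locate the critical points numerically, then certify positivity exactly. To do so, I would try either a Sturm sequence on $[-1,1]$ or a sum-of-squares decomposition, for example writing the polynomial as $(1+y)q_1(y)+(1-y)q_2(y)$ with $q_1,q_2$ nonnegative, using rational arithmetic. The large integer coefficients make this lengthy but routine.
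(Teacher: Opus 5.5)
Your plan follows the paper's proof. The one-sided bounds come from evaluating at $\theta=0$ and $\theta=\pi$. For $\beta=20$ the paper uses exactly your three auxiliary functions (its $\mathcal{F}_{\mathrm{G5}}(20,\theta)$, $\mathcal{E}_{\mathrm{G5}}(20,\theta)$, $\mathcal{I}_{\mathrm{G5}}(20,\theta)$), regards them as quartics in $y=\cos\theta$, and asserts the sign conditions with the details omitted. Your exact certification would supply what the paper leaves out. Two points need fixing.

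\textbf{First, an arithmetic slip.} The alternating sum is
$a_{\mathrm{G}}^{(5)}(\pi)=(10\beta^4+60\beta^3+90\beta^2-32)/15$, not $/60$. Together with
$c_{\mathrm{G}}^{(5)}(\pi)=(2\beta^4+16\beta^3+40\beta^2+32\beta+3)/3$ and
$b_{\mathrm{G}}^{(5)}(\pi)=(2\beta^4+8\beta^3+4\beta^2-8\beta-3)/3$, this gives the stated ratios.

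\textbf{Second, your shortcut for $\mathcal{F}_{\mathrm{G5}}$ does not work.}
\begin{itemize}
\item At $\beta=20$ the modulus $|a_{\mathrm{G}}^{(5)}(\theta)|$ dips sharply. Its minimum satisfies $\min|a_{\mathrm{G}}^{(5)}|^2\approx 0.254$, attained near $\theta\approx\pm0.114$.
\item Consequently $\sigma_{\mathrm{F},\mathrm{G}}^{(5,20)}\approx 1.98$, just under $399/200$. This is why the constant is $399/200$ rather than $1$.
\item So $1-|a_{\mathrm{G}}^{(5)}|^2$ changes sign, and $\mathcal{F}_{\mathrm{G5}}$ has no zero at $y=1$; its value there is $200^2-399^2<0$.
\item The factor-and-sign argument of Proposition \ref{prop: WBDF3}, which shows $1-|a|^2\le 0$, is therefore unavailable.
\end{itemize}
You must treat $\mathcal{F}_{\mathrm{G5}}$ with the same interior-extremum certification you planned for $\mathcal{E}_{\mathrm{G5}}$ and $\mathcal{I}_{\mathrm{G5}}$. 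The threshold to beat is $(200/399)^2\approx 0.2513$.

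Expect thin margins throughout. All three extrema occur at $|\theta|$ between roughly $0.11$ and $0.13$ (so $y\approx 0.99$), where $a_{\mathrm{G}}^{(5)}$ is nearly zero. There I find $\max|c/a|\approx 1.178$ against the claimed $2501825/2115968\approx1.182$, and $\min\Re(b/a)\approx 0.840$ against $1769085/2115968\approx 0.836$. Your caution is justified: exact rational root isolation near $y\approx0.99$ is genuinely needed, not just endpoint checks.
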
 

\begin{proof} With the discrete coefficients in \eqref{scheme: GBDF5}, one can write out 
	the associated semi-generating functions $a_{\mathrm{G}}^{(5)}(\theta)=\sum_{j=0}^{4}a_{\mathrm{G},j}^{(5)}e^{\imath j\theta}$,
	$b_{\mathrm{G}}^{(5)}(\theta)=\sum_{j=0}^{4}b_{\mathrm{G},j}^{(5)}e^{\imath j\theta}$
	and $c_{\mathrm{G}}^{(5)}(\theta)=\sum_{j=0}^{4}c_{\mathrm{G},j}^{(5)}e^{\imath j\theta}$.	
	According to the definitions in \eqref{def: lambda sigma},  $\sigma_{\mathrm{F},\mathrm{G}}^{(5,\beta)}\ge\frac{1}{\abst{a_{\mathrm{G}}^{(5)}(0)}}=1$,
		\begin{align*}
	&	\sigma_{\mathrm{E},\mathrm{G}}^{(5,\beta)}\ge
		\frac{\abst{c_{\mathrm{G}}^{(5)}(\pi)}}{\abst{a_{\mathrm{G}}^{(5)}(\pi)}}
		=\frac{5(2 \beta ^4+16 \beta ^3+40 \beta ^2+32 \beta +3)}{10 \beta ^4+60 \beta ^3+90 \beta ^2-32},\\
	&	\lambda_{\mathrm{I},\mathrm{G}}^{(5,\beta)}\le\frac{b_{\mathrm{G}}^{(5)}(\pi)}{a_{\mathrm{G}}^{(5)}(\pi)}
		=\frac{5(2 \beta ^4+8 \beta ^3+4 \beta ^2-8 \beta -3)}{10 \beta ^4+60 \beta ^3+90 \beta ^2-32}.
	\end{align*}

	To find the upper bounds of $\sigma_{\mathrm{F},\mathrm{G}}^{(5,\beta)}$, $\sigma_{\mathrm{E},\mathrm{G}}^{(5,\beta)}$ and the lower bound of $\lambda_{\mathrm{I},\mathrm{G}}^{(5,\beta)}$, it is to consider  
	the following auxiliary functions
	\begin{align*}
		{\mathcal{F}}_{\mathrm{G5}}(\beta,\theta):=&\,
		100\beta^2-(20\beta-1)^2a_{\mathrm{G}}^{(4)}(\theta)a_{\mathrm{G}}^{(4)}(-\theta),\\
		{\mathcal{E}}_{\mathrm{G5}}(\beta,\theta):=&\,(10 \beta ^4+60 \beta ^3+90 \beta ^2-32)^2c_{\mathrm{G}}^{(5)}(\theta)c_{\mathrm{G}}^{(5)}(-\theta)\\
		&\,-25(2 \beta ^4+21 \beta ^3+29 \beta ^2+38 \beta +5)^2a_{\mathrm{G}}^{(45)}(\theta)a_{\mathrm{G}}^{(5)}(-\theta),\\
		{\mathcal{I}}_{\mathrm{G5}}(\beta,\theta):=&\,(10 \beta ^4+60 \beta ^3+90 \beta ^2-32)\Re\kbrab{b_{\mathrm{G}}^{(5)}(\theta)a_{\mathrm{G}}^{(5)}(-\theta)}\\
		&\,-5 (2 \beta ^4+3 \beta ^3+25 \beta ^2-9 \beta -3)a_{\mathrm{G}}^{(5)}(\theta)a_{\mathrm{G}}^{(5)}(-\theta).
	\end{align*}
	For any $\beta\ge18$ such as $\beta=20$, the functions ${\mathcal{F}}_{\mathrm{G5}}(20,\theta)$, ${\mathcal{E}}_{\mathrm{G5}}(20,\theta)$ and ${\mathcal{I}}_{\mathrm{G5}}(20,\theta)$ are quartic polynomials with respect to $y=\cos\theta\in[-1,1]$. 
	Actually, it is easy to verify  that
	${\mathcal{F}}_{\mathrm{G5}}(20,\theta)\le0$, ${\mathcal{E}}_{\mathrm{G5}}(20,\theta)\le0$ and ${\mathcal{I}}_{\mathrm{G5}}(20,\theta)\ge0$
	for $\theta\in[0,2\pi)$ and they lead to the claimed bounds, while the technical details are omitted. For the general case of $\beta\ge1$, we will check the results numerically in Remark \ref{remark: lambda GBDF5} for some fixed $\beta$. 
	It completes the proof.
\end{proof}

\begin{figure}[htb!]
	\centering
	\subfigure[ $\widetilde{\mathcal{F}}_{\mathrm{G5}}(\beta,\theta)$]
	{\includegraphics[width=1.67in]{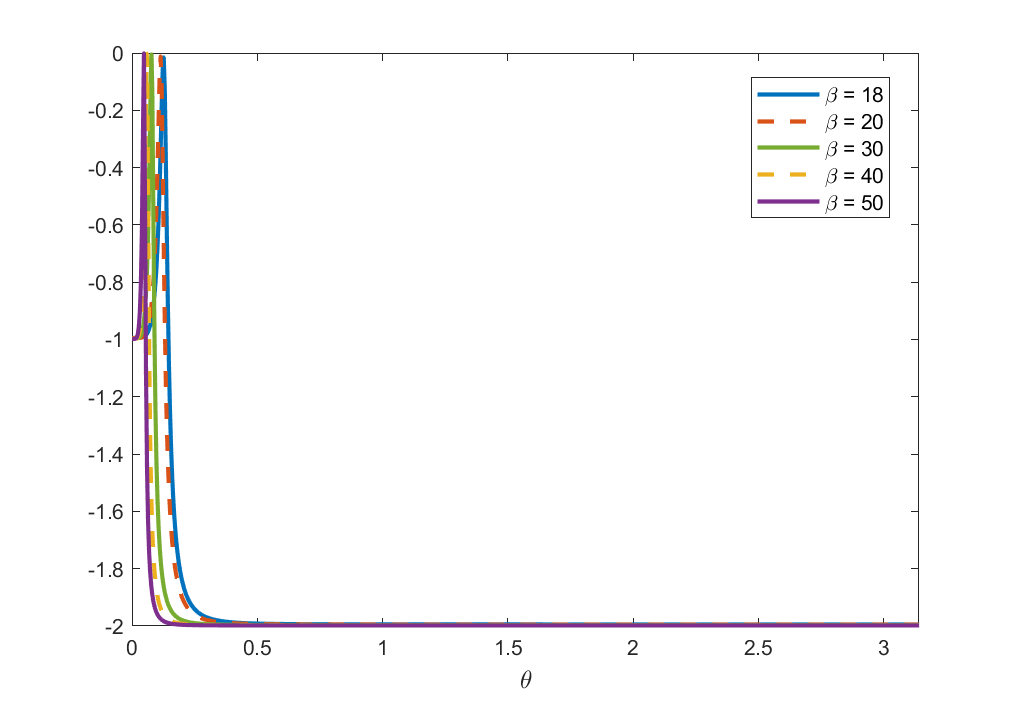}}
	\subfigure[ $\widetilde{\mathcal{E}}_{\mathrm{G5}}(\beta,\theta)$]
	{\includegraphics[width=1.67in]{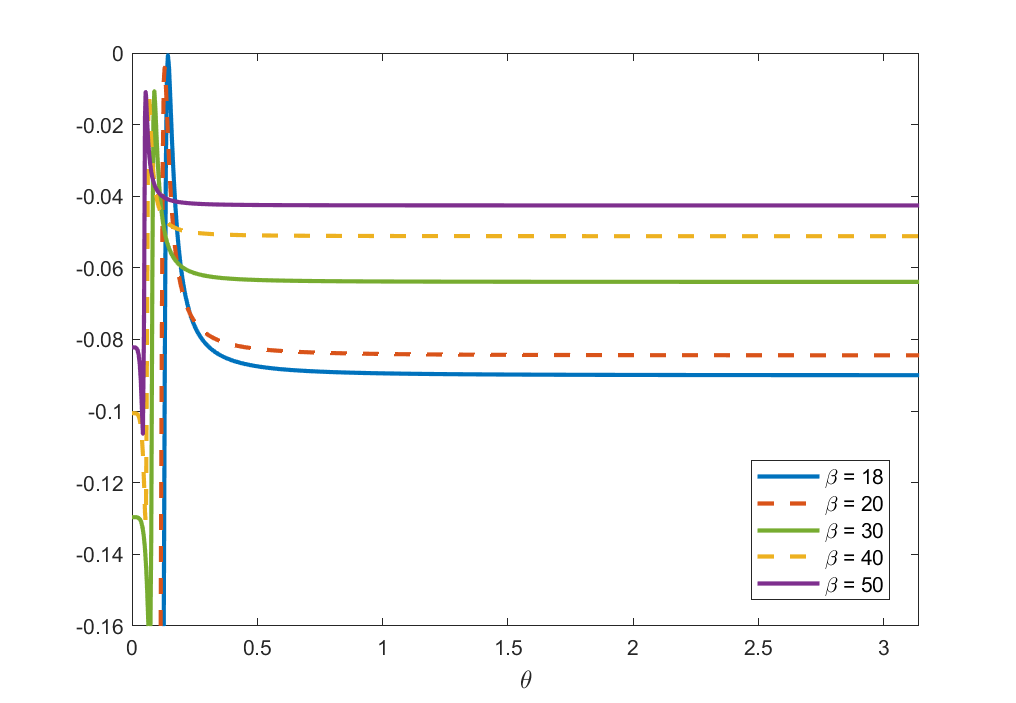}}
	\subfigure[ $\widetilde{\mathcal{I}}_{\mathrm{G5}}(\beta,\theta)$]
	{\includegraphics[width=1.67in]{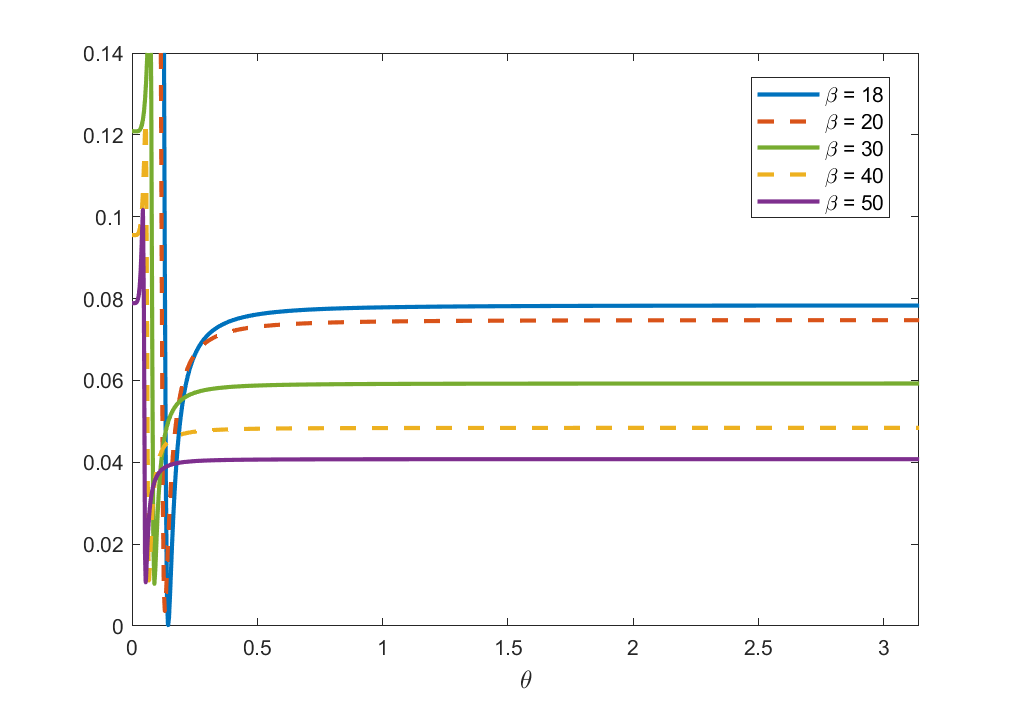}}
	\caption{Curves of $\widetilde{\mathcal{F}}_{\mathrm{G5}}(\beta,\theta)$, $\widetilde{\mathcal{E}}_{\mathrm{G5}}(\beta,\theta)$ and $\widetilde{\mathcal{I}}_{\mathrm{G5}}(\beta,\theta)$ for $\theta\in[0,\pi]$.}
	\label{fig: lambda GBDF5}
\end{figure}

\begin{figure}[htb!]
	\centering
	\subfigure[ $\widehat{\mathcal{F}}_{\mathrm{G5}}(\beta,\theta)$]
	{\includegraphics[width=1.67in]{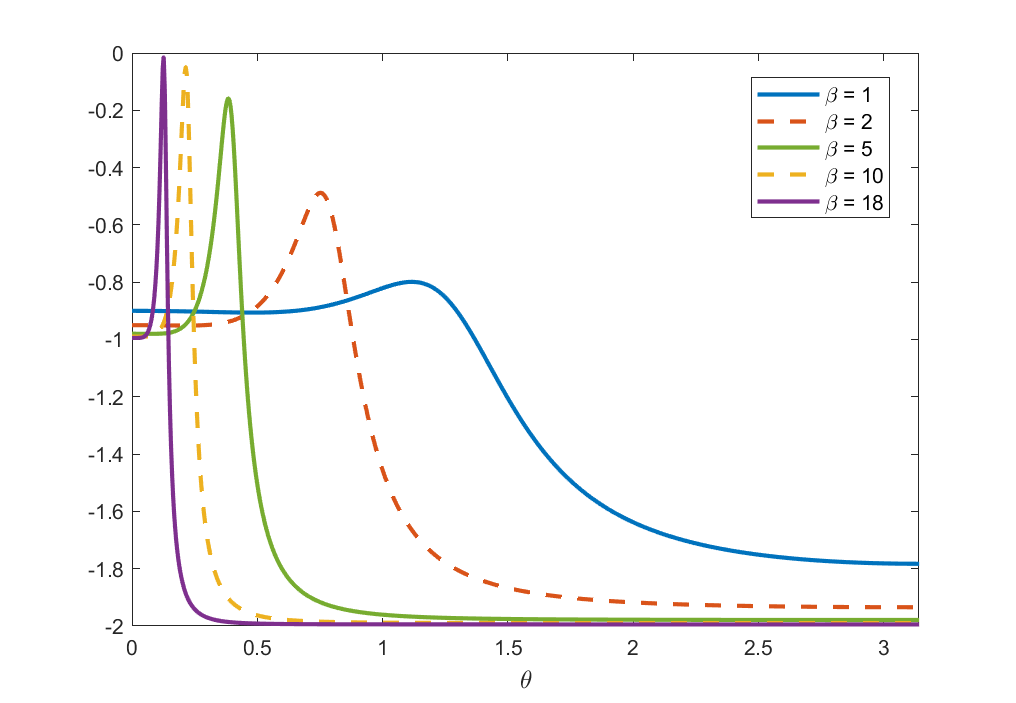}}
	\subfigure[ $\widehat{\mathcal{E}}_{\mathrm{G5}}(\beta,\theta)$]
	{\includegraphics[width=1.67in]{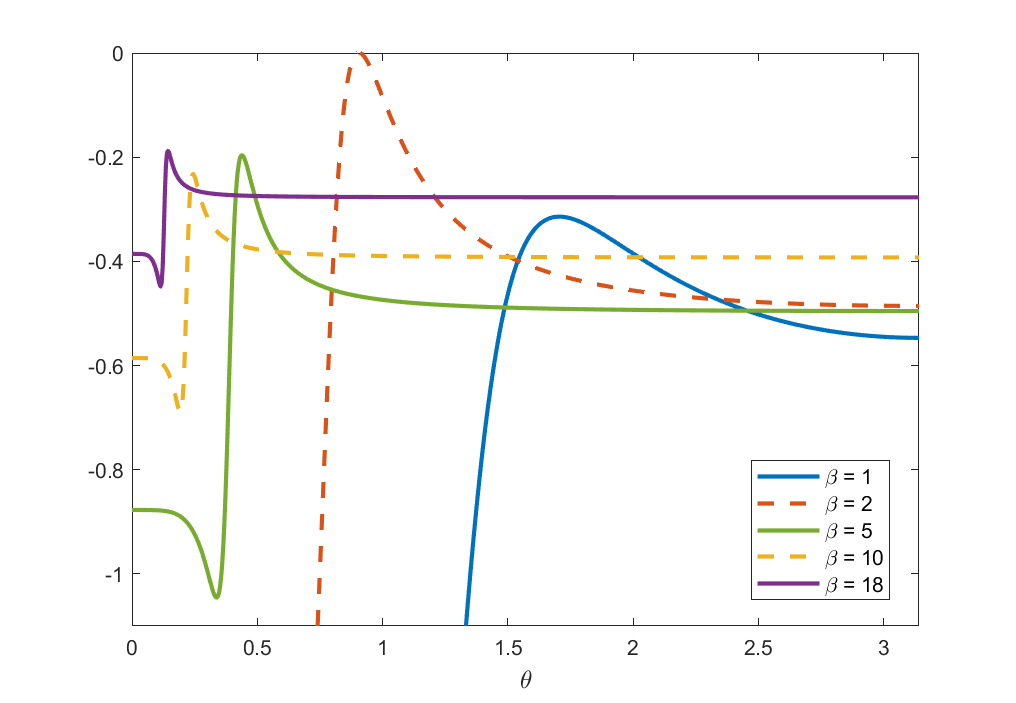}}
	\subfigure[ $\widehat{\mathcal{I}}_{\mathrm{G5}}(\beta,\theta)$]
	{\includegraphics[width=1.67in]{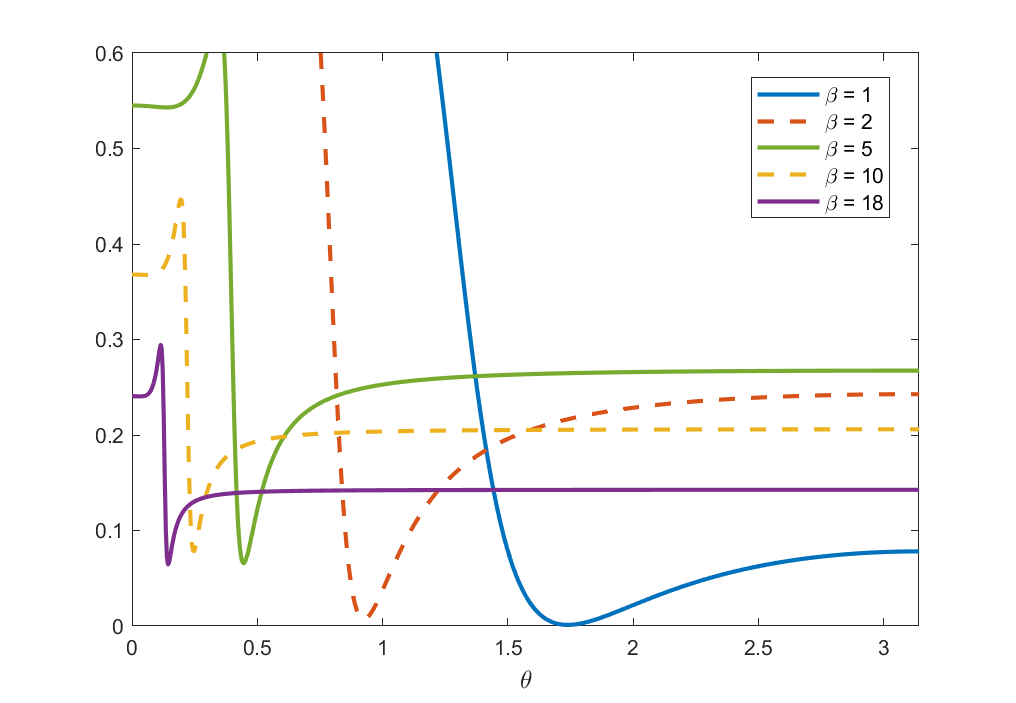}}
	\caption{Curves of $\widehat{\mathcal{F}}_{\mathrm{G5}}(\beta,\theta)$, $\widehat{\mathcal{E}}_{\mathrm{G5}}(\beta,\theta)$ and $\widehat{\mathcal{I}}_{\mathrm{G5}}(\beta,\theta)$ for $\theta\in[0,\pi]$.}
	\label{fig: lambda GBDF5small}
\end{figure}

\begin{remark}\label{remark: lambda GBDF5}
	For the GBDF5 scheme \eqref{scheme: GBDF5} with the parameter $\beta\ge18$, numerical tests suggest that
	\begin{align*}
		&	\widetilde{\mathcal{F}}_{\mathrm{G5}}(\beta,\theta):=\frac1{\abst{a_{\mathrm{G}}^{(5)}(\theta)}}- \frac{20\beta-1}{10\beta}\le0,\\
		&	\widetilde{\mathcal{E}}_{\mathrm{G5}}(\beta,\theta):=\frac{\abst{c_{\mathrm{G}}^{(5)}(\theta)}}{\abst{a_{\mathrm{G}}^{(5)}(\theta)}}-\frac{5 (2 \beta ^4+21 \beta ^3+29 \beta ^2+38 \beta +5)}{10 \beta ^4+60 \beta ^3+90 \beta ^2-32}\le0,\\
		&
		\widetilde{\mathcal{I}}_{\mathrm{G5}}(\beta,\theta):=\Re\kbraB{\frac{b_{\mathrm{G}}^{(5)}(\theta)}{a_{\mathrm{G}}^{(5)}(\theta)}}-\frac{5 (2 \beta ^4+3 \beta ^3+25 \beta ^2-9 \beta -3)}{10 \beta ^4+60 \beta ^3+90 \beta ^2-32}\ge0,
	\end{align*}	
	see Figure \ref{fig: lambda GBDF5}, 
	in which the auxiliary functions
	$\widetilde{\mathcal{F}}_{\mathrm{G5}}(\beta,\theta)$, $\widetilde{\mathcal{E}}_{\mathrm{G5}}(\beta,\theta)$ and $\widetilde{\mathcal{I}}_{\mathrm{G5}}(\beta,\theta)$ are depicted for $\theta\in[0,\pi]$ with the fixed parameters $\beta=18,20,30,40$ and 50.	They suggest that for $\beta\ge18$, $\sigma_{\mathrm{F},\mathrm{G}}^{(5,\beta)}\le \frac{20\beta-1}{10\beta},$
	\begin{align*}
		&\sigma_{\mathrm{E},\mathrm{G}}^{(5,\beta)}\le\frac{5 (2 \beta ^4+21 \beta ^3+29 \beta ^2+38 \beta +5)}{10 \beta ^4+60 \beta ^3+90 \beta ^2-32},\quad
		\lambda_{\mathrm{I},\mathrm{G}}^{(5,\beta)}\ge\frac{5 (2 \beta ^4+3 \beta ^3+25 \beta ^2-9 \beta -3)}{10 \beta ^4+60 \beta ^3+90 \beta ^2-32},\\
		&
		\text{such that}\quad
		\mathfrak{I}_{\mathrm{IE},\mathrm{G}}^{(4,\beta)}\ge\frac{2 \beta ^4+3 \beta ^3+25 \beta ^2-9 \beta -3}{2 \beta ^4+21 \beta ^3+29 \beta ^2+38 \beta +5}.
	\end{align*}	
	Also, numerical tests suggest the rough bounds for $1\le \beta<18$, $\sigma_{\mathrm{F},\mathrm{G}}^{(5,\beta)}\le \frac{20\beta-1}{10\beta},$
	\begin{align*}
		&\sigma_{\mathrm{E},\mathrm{G}}^{(5,\beta)}\le\frac{5(2 \beta ^4+30 \beta ^3+32 \beta ^2+38 \beta +5)}{10 \beta ^4+60 \beta ^3+90 \beta ^2-32},\quad
		\lambda_{\mathrm{I},\mathrm{G}}^{(5,\beta)}\ge\frac{5(2 \beta ^4+\beta ^3+4 \beta ^2-4 \beta -2)}{10 \beta ^4+60 \beta ^3+90 \beta ^2-32}\\
		&
		\text{such that}\quad
		\mathfrak{I}_{\mathrm{IE},\mathrm{G}}^{(4,\beta)}\ge\frac{2 \beta ^4+\beta ^3+4 \beta ^2-4 \beta -2}{2 \beta ^4+30 \beta ^3+32 \beta ^2+38 \beta +5},
	\end{align*}	
	see Figure \ref{fig: lambda GBDF5small}, 
	where the auxiliary functions
	$\widehat{\mathcal{F}}_{\mathrm{G5}}(\beta,\theta)$, $\widehat{\mathcal{E}}_{\mathrm{G5}}(\beta,\theta)$ and $\widehat{\mathcal{I}}_{\mathrm{G5}}(\beta,\theta)$ are defined in similar to $\widetilde{\mathcal{F}}_{\mathrm{G5}}(\beta,\theta)$, $\widetilde{\mathcal{E}}_{\mathrm{G5}}(\beta,\theta)$ and $\widetilde{\mathcal{I}}_{\mathrm{G5}}(\beta,\theta)$, respectively.
\end{remark}

\subsection{$\delta$-parameterized NIMEX methods}

This subsection revisits the $\delta$-parameterized NIMEX  schemes \cite{RosalesSeiboldShirokoffZhou:2017-SM} with some comments for the value of implicit-explicit controllability intensity. The $\rmk$-step NIMEX  schemes \eqref{scheme: NIMEX multistep} are determined by the three characteristic polynomials (for the sake of consistency in the present context, the range of free parameter $\delta$ is modified from   $0<\delta\le1$ in the original papers \cite{RosalesSeiboldShirokoffZhou:2017-SM} to $\delta\ge1$ with the transform $\delta\leftarrow1/\delta$)
$$\varrho_{a,\mathrm{N}}^{(\rmk)}(\zeta):=\sum_{j=1}^{\rmk}\frac{f_{\mathrm{N}}^{(j)}(1)}{j!}(\zeta-1)^{j-1}\quad \text{with}\quad f_{\mathrm{N}}(z)=(\delta z-\delta+1)^{\rmk}\ln z,$$ 
$\varrho_{b,\mathrm{N}}^{(\rmk)}(\zeta):=(\delta\zeta-\delta+1)^{\rmk}$ and $\varrho_{c,\mathrm{N}}^{(\rmk)}(\zeta):=(\delta\zeta-\delta+1)^{\rmk}-\delta^{\rmk}(\zeta-1)^{\rmk}$,
respectively.

We take $\rmk=2$ with $\delta\ge6/5$. 
The formula \eqref{scheme: general imex Truncation Error} gives the leading error 
\begin{align*}
	R_{\mathrm{N}}^{(2,\delta)}=-\frac{3 \delta ^2-3 \delta +1}{3}u_t^{(3)}(t_n)\tau^2+\delta ^2 \mathcal{F}_t^{(2)}[u(t_n)]\tau^2.
\end{align*}
The corresponding semi-generating functions are $a_{\mathrm{N}}^{(2)}(\theta)=2 \delta -\frac{1}{2}+(\frac{3}{2}-2 \delta)e^{\imath\theta}$,
$b_{\mathrm{N}}^{(2)}(\theta)=\delta ^2+2 \delta(1 -\delta)e^{\imath\theta}+(1-\delta)^2e^{2\imath\theta}$ and $c_{\mathrm{N}}^{(2)}(\theta)=2 \delta+(1-2 \delta)e^{\imath\theta}$. By following the proof of Proposition \ref{prop: GBDF3}, one can find that $\sigma_{\mathrm{F},\mathrm{N}}^{(2,\delta)}=1,$ 
$\sigma_{\mathrm{E},\mathrm{N}}^{(2,\delta)}=\frac{4 \delta-1}{4 \delta-2}$ and
$$\lambda_{\mathrm{I},\mathrm{N}}^{(2,\delta)}=\frac{4 \sqrt{2} \sqrt{(2 \delta -1)^3 \left(4 \delta ^2-5 \delta +1\right)^2}+192 \delta ^4-416 \delta ^3+316 \delta ^2-100 \delta +11}{\left(16 \delta ^2-16 \delta +3\right)^2}$$
such that
$$\mathfrak{I}_{\mathrm{IE},\mathrm{N}}^{(2,\delta)}=\frac{2 \braB{192 \delta ^4-416 \delta ^3+316 \delta ^2-100 \delta +11+4 \sqrt{2} \sqrt{(2 \delta -1)^3 \left(4 \delta ^2-5 \delta +1\right)^2}}}{(2 \delta -1)^{-1}(3-4 \delta )^2 (4 \delta -1)^3}.$$
One can find that $\max_{\delta\ge6/5}\lambda_{\mathrm{I},\mathrm{N}}^{(2,\delta)}=\lambda_{\mathrm{I},\mathrm{N}}^{(2,\frac{11}4)}
=\frac{27}{32}$ and $\max_{\delta\ge6/5}\mathfrak{I}_{\mathrm{IE},\mathrm{N}}^{(2,\delta)}\approx 0.795354$ as $\delta\approx8.5176$. However, the controllability intensity $\mathfrak{I}_{\mathrm{IE},\mathrm{G}}^{(2,\beta)}=\tfrac{2\beta-1}{2\beta +1}$ of GBDF2 scheme for the parameter  $\beta\ge \frac{9}{2}$ is always larger than the maximum value of $\mathfrak{I}_{\mathrm{IE},\mathrm{N}}^{(2,\delta)}$.

Consider  $\rmk=3$ with  $\delta\ge2$. 
The formula \eqref{scheme: general imex Truncation Error} gives the leading error 
\begin{align*}
	R_{\mathrm{N}}^{(3,\delta)}=\frac{1}{4} \left(-4 \delta ^3+6 \delta ^2-4 \delta +1\right)u_t^{(4)}(t_n)\tau^3+\delta ^3 \mathcal{F}_t^{(3)}[u(t_n)]\tau^3.
\end{align*}
The semi-generating functions are
\begin{align*}
	&a_{\mathrm{N}}^{(3)}(\theta)=3 \delta ^2-\frac{3 \delta }{2}+\frac{1}{3}-(6 \delta ^2-6 \delta +\frac{7}{6})e^{\imath\theta}+(3 \delta ^2-\frac{9 \delta }{2}+\frac{11}{6})e^{2\imath\theta},\\
	&b_{\mathrm{N}}^{(3)}(\theta)=\delta ^3+3\delta^2(1 -\delta)e^{\imath\theta}+3\delta(1 -\delta)^2e^{2\imath\theta}
	+(1-\delta)^3e^{3\imath\theta},
\end{align*}
and $c_{\mathrm{N}}^{(3)}(\theta)=3 \delta ^2+3 \delta(1-2 \delta)e^{\imath\theta}
+(3 \delta ^2-3 \delta +1)e^{2\imath\theta}$. By following the proof of Proposition \ref{prop: GBDF3}, we obtain that $\sigma_{\mathrm{F},\mathrm{N}}^{(3,\delta)}=1,$ 
$\sigma_{\mathrm{E},\mathrm{N}}^{(3,\delta)}=\frac{36 \delta ^2-18 \delta +3}{36 \delta ^2-36 \delta +10}$ and
$$\lambda_{\mathrm{I},\mathrm{N}}^{(3,\delta)}\ge \frac{24 \delta ^2-21 \delta +3}{36 \delta ^2-36 \delta +10}
\quad\text{ such that}\quad
\mathfrak{I}_{\mathrm{IE},\mathrm{N}}^{(3,\delta)}\ge\frac{24 \delta ^2-21 \delta +3}{36 \delta ^2-18 \delta +3}.
$$
One can find that $\frac{19}{37}\le\mathfrak{I}_{\mathrm{IE},\mathrm{N}}^{(3,\delta)}<\frac{88}{125}$ for  $\delta\ge2$. 
However, if  $\beta\ge 5.39469$, 
the controllability intensity $\mathfrak{I}_{\mathrm{IE},\mathrm{G}}^{(3,\beta)}$ of GBDF3 scheme is always larger than $\mathfrak{I}_{\mathrm{IE},\mathrm{N}}^{(3,\delta)}$.

 Although the controllability intensities $\mathfrak{I}_{\mathrm{IE},\mathrm{N}}^{(\rmk,\delta)}$ of NIMEX-$\rmk$ methods \eqref{scheme: NIMEX multistep}	are not comparable to those of the GBDF-$\rmk$ schemes, we find that 
the implicit part of the NIMEX-$\rmk$ methods \eqref{scheme: NIMEX multistep} for $2\le\rmk\le9$ are strongly $A(0)$-stable and fulfill the assumptions of Lemma \ref{lemma: bound quadratic form} for proper parameter $\delta$. 
Actually, the first characteristic polynomial $\varrho_{a,\mathrm{N}}^{(\rmk)}(\zeta)$  is a Schur polynomial if the parameter $\delta>\frac{1}{2}$, $\delta>\frac{1}{2}$, $\delta>\frac{1}{2}$, $\delta >0.705247$, $\delta>0.856554$, $\delta>1.0241$, $\delta>1.20559$ and $\delta>1.39984$ corresponding to the order index $\rmk=2,3,\cdots,8$ and 9, respectively.

\section{$\gamma$-parameterized SIELM methods}\label{sec: SM-SIELM}

This section discusses a new class of $\gamma$-parameterized SIELM methods for which the associated implicit-explicit controllability intensity $\mathfrak{I}_{\mathrm{IE}}^{(\rmk)}$ can approach the optimal value 1 as the parameter $\gamma$ is properly large, especially for $2\le\rmk\le5$. For the nonlinear parabolic model \eqref{nonlinearModel}, they can be formulated as follows
\begin{align}\label{scheme: our IELM multistep-SM}	
	\sum_{j=0}^{\rmk-1}a_{\mathrm{S},j}^{(\rmk)}\partial_{\tau}u^{n-j}
	+\varpi\sum_{j=0}^{\rmk}b_{\mathrm{S},j}^{(\rmk)}\mathcal{L}u^{n-j}
	=\sum_{j=0}^{\rmk-1}c_{\mathrm{S},j}^{(\rmk)}\mathcal{F}(u^{n-j-1})\quad\text{for $n\ge\rmk$,}
\end{align}
where  $a_{\mathrm{S},j}^{(\rmk)}$, $b_{\mathrm{S},j}^{(\rmk)}$ and $c_{\mathrm{S},j}^{(\rmk)}$ are determined by the three characteristic polynomials 
$${\varrho}_{a,\mathrm{S}}^{(\rmk)}(\zeta):=\sum_{j=1}^{\rmk}\frac{f_{\mathrm{S}}^{(j)}(1)}{j!}(\zeta-1)^{j-1}\quad \text{with}\quad f_{\mathrm{S}}(z)=(\gamma z-\gamma+1)^{\rmk-1}z\ln z,$$ 
$\varrho_{b,\mathrm{S}}^{(\rmk)}(\zeta):=\zeta(\gamma\zeta-\gamma+1)^{\rmk-1}$ and $\varrho_{c,\mathrm{S}}^{(\rmk)}(\zeta):=\zeta(\gamma\zeta-\gamma+1)^{\rmk-1}-\gamma^{\rmk-1}(\zeta-1)^{\rmk}$,
respectively. By the Routh-Hurwitz criterion,  one can check that  all roots of $\varrho_{a,\mathrm{S}}^{(\rmk)}(\zeta)$ satisfy $\abs{\zeta}<1$ if  $\gamma>0$, $\gamma>\frac{1}{\sqrt{6}}$, $\gamma>0$, $\gamma >0.647518$, $\gamma >0.830364$, $\gamma >1.02816$, 
$\gamma>1.23687$ and $\gamma>1.45371$ corresponding to the order index $\rmk=2,3,\cdots,8$ and 9, respectively, which ensure that the implicit part of the SIELM-$\rmk$ methods \eqref{scheme: our IELM multistep-SM}	 for $2\le\rmk\le9$ are strongly $A(0)$-stable.


\subsection{SIELM2 scheme}
As noted, the SIELM2 scheme is the same to the WBDF2 or GBDF2 scheme. The associated discrete  coefficients
\begin{align}\label{scheme: SIELM2}
	&\vec{a}_{\mathrm{S}}^{(2)}=(\tfrac{1}{2}+\gamma,\tfrac{1}{2}-\gamma),\quad
	\vec{b}_{\mathrm{S}}^{(2)}=(\gamma,1-\gamma,0),\quad
	\vec{c}_{\mathrm{S}}^{(2)}=(\gamma +1,-\gamma ).
\end{align}
 The formula \eqref{scheme: general imex Truncation Error} gives the leading error 
\begin{align*}
	R_{\mathrm{S}}^{(2,\gamma)}=-\frac{3 \gamma-1}{6}u_t^{(3)}(t_n)\tau^2+\gamma \mathcal{F}_t^{(2)}[u(t_n)]\tau^2.
\end{align*}
The results of Proposition \ref{prop: WBDF2} gives $\sigma_{\mathrm{F},\mathrm{S}}^{(2,\gamma)}=1,$ 
$\sigma_{\mathrm{E},\mathrm{S}}^{(2,\gamma)}=\frac{2\gamma+1}{2\gamma}$, $\lambda_{\mathrm{I},\mathrm{S}}^{(2,\gamma)}=\frac{2\gamma-1}{2\gamma}$ and
then $\mathfrak{I}_{\mathrm{IE},\mathrm{S}}^{(2,\gamma)}
=\frac{2\gamma-1}{2\gamma+1}$ for the free parameter $\gamma\ge1$.

\subsection{SIELM3 scheme}   
The SIELM3 scheme has the discrete coefficients
\begin{align}\label{scheme: SIELM3}
	&\vec{a}_{\mathrm{S}}^{(3)}
	=\brab{\gamma ^2+\gamma -\tfrac{1}{6},\tfrac{5}{6}-2 \gamma ^2,\gamma ^2-\gamma +\tfrac{1}{3}},\\
	&\vec{b}_{\mathrm{S}}^{(3)}=\brab{\gamma ^2,2 \gamma -2 \gamma ^2,\gamma ^2-2 \gamma +1,0},
	\vec{c}_{\mathrm{S}}^{(3)}=(\gamma ^2+2 \gamma ,-2 \gamma ^2-2 \gamma +1,\gamma ^2).\notag
\end{align}
The formula \eqref{scheme: general imex Truncation Error} gives the leading error 
\begin{align}\label{truncation: SIELM3}
	R_{\mathrm{S}}^{(3,\gamma)}=-\frac{6 \gamma ^2-4 \gamma +1}{12}u_t^{(4)}(t_n)\tau^3+\gamma ^2  \mathcal{F}_t^{(3)}[u(t_n)]\tau^3.
\end{align}
By following the proof of Proposition \ref{prop: WBDF3}, it is easy to verify the following result.
\begin{proposition}\label{prop: SIELM3}
	For the SIELM3 scheme \eqref{scheme: SIELM3} with the free parameter $ \gamma\ge1$, it holds that
	$\sigma_{\mathrm{F},\mathrm{S}}^{(3, \gamma)}=1,$ 
	\begin{align*}
			\sigma_{\mathrm{E},\mathrm{S}}^{(3, \gamma)}=\frac{3 (4 \gamma ^2+4 \gamma -1)}{12 \gamma ^2-2},\;\;
		\lambda_{\mathrm{I},\mathrm{S}}^{(3, \gamma)}=\frac{3 (2\gamma-1)^2}{12 \gamma ^2-2}
		\;\;\text{such that}\;\;
		\mathfrak{I}_{\mathrm{IE},\mathrm{S}}^{(3, \gamma)}=\frac{(2\gamma-1)^2}{4 \gamma ^2+4 \gamma -1}.
	\end{align*}
\end{proposition}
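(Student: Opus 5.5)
The plan follows the proof of Proposition \ref{prop: WBDF3}. First, evaluating the three semi-generating functions at $\theta=0$ and $\theta=\pi$ yields the candidate extreme values; then three auxiliary trigonometric polynomials show that these values are in fact the global extrema over $\theta\in[0,2\pi)$.

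From \eqref{scheme: SIELM3} the semi-generating functions are
\[
a_{\mathrm{S}}^{(3)}(\theta)=\gamma^2+\gamma-\tfrac16+\bigl(\tfrac56-2\gamma^2\bigr)e^{\imath\theta}+\bigl(\gamma^2-\gamma+\tfrac13\bigr)e^{2\imath\theta},
\qquad
b_{\mathrm{S}}^{(3)}(\theta)=\gamma^2+2\gamma(1-\gamma)e^{\imath\theta}+(1-\gamma)^2e^{2\imath\theta},
\]
\[
c_{\mathrm{S}}^{(3)}(\theta)=\gamma^2+2\gamma-(2\gamma^2+2\gamma-1)e^{\imath\theta}+\gamma^2e^{2\imath\theta}.
\]
Consistency gives $a_{\mathrm{S}}^{(3)}(0)=1$. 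At $\theta=\pi$ one computes
\[
a_{\mathrm{S}}^{(3)}(\pi)=\tfrac{12\gamma^2-2}{3},\qquad b_{\mathrm{S}}^{(3)}(\pi)=(2\gamma-1)^2,\qquad c_{\mathrm{S}}^{(3)}(\pi)=4\gamma^2+4\gamma-1.
\]
By \eqref{def: lambda sigma} these give the one-sided bounds $\sigma_{\mathrm{F},\mathrm{S}}^{(3,\gamma)}\ge1$, $\sigma_{\mathrm{E},\mathrm{S}}^{(3,\gamma)}\ge\frac{3(4\gamma^2+4\gamma-1)}{12\gamma^2-2}$ and $\lambda_{\mathrm{I},\mathrm{S}}^{(3,\gamma)}\le\frac{3(2\gamma-1)^2}{12\gamma^2-2}$. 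The ratio of the last two is the claimed $\mathfrak{I}_{\mathrm{IE},\mathrm{S}}^{(3,\gamma)}$.

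For the reverse inequalities I would expand $a_{\mathrm{S}}^{(3)}(\theta)a_{\mathrm{S}}^{(3)}(-\theta)$, $c_{\mathrm{S}}^{(3)}(\theta)c_{\mathrm{S}}^{(3)}(-\theta)$ and $\Re\bigl[b_{\mathrm{S}}^{(3)}(\theta)a_{\mathrm{S}}^{(3)}(-\theta)\bigr]$ as trigonometric polynomials $p_0+p_1\cos\theta+p_2\cos2\theta$ with coefficients polynomial in $\gamma$. Writing $12\gamma^2-2=:d$, I would then form three auxiliary functions:
\[
\mathcal{F}_{\mathrm{S3}}:=1-a_{\mathrm{S}}^{(3)}(\theta)a_{\mathrm{S}}^{(3)}(-\theta),
\]
\[
\mathcal{E}_{\mathrm{S3}}:=d^2c_{\mathrm{S}}^{(3)}(\theta)c_{\mathrm{S}}^{(3)}(-\theta)-9(4\gamma^2+4\gamma-1)^2a_{\mathrm{S}}^{(3)}(\theta)a_{\mathrm{S}}^{(3)}(-\theta),
\]
\[
\mathcal{I}_{\mathrm{S3}}:=d\,\Re\bigl[b_{\mathrm{S}}^{(3)}(\theta)a_{\mathrm{S}}^{(3)}(-\theta)\bigr]-3(2\gamma-1)^2a_{\mathrm{S}}^{(3)}(\theta)a_{\mathrm{S}}^{(3)}(-\theta).
\]
Each is a quadratic in $y=\cos\theta$. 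Since $\mathcal{F}_{\mathrm{S3}}$ vanishes at $\theta=0$, it factors as $\sin^2\frac{\theta}{2}\,(q_0+q_1\cos\theta)$. Since $\mathcal{E}_{\mathrm{S3}}$ and $\mathcal{I}_{\mathrm{S3}}$ vanish at $\theta=\pi$, they factor as $\cos^2\frac{\theta}{2}\,(r_0+r_1\cos\theta)$. The remaining linear factor in $\cos\theta$ attains its extreme at $\cos\theta=\pm1$, so it suffices to check its sign at those two endpoints for $\gamma\ge1$. These endpoint checks reduce to positivity of explicit one-variable polynomials in $\gamma$, which follows by inspecting leading coefficients after substituting $\gamma=1+s$ with $s\ge0$. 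This would give $\mathcal{F}_{\mathrm{S3}}\le0$, $\mathcal{E}_{\mathrm{S3}}\le0$ and $\mathcal{I}_{\mathrm{S3}}\ge0$.

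Dividing by $\lvert a_{\mathrm{S}}^{(3)}(\theta)\rvert^2>0$ then yields
\[
\Bigl|\tfrac{1}{a_{\mathrm{S}}^{(3)}}\Bigr|\le1,\qquad
\Bigl|\tfrac{c_{\mathrm{S}}^{(3)}}{a_{\mathrm{S}}^{(3)}}\Bigr|\le\tfrac{3(4\gamma^2+4\gamma-1)}{12\gamma^2-2},\qquad
\Re\Bigl[\tfrac{b_{\mathrm{S}}^{(3)}}{a_{\mathrm{S}}^{(3)}}\Bigr]\ge\tfrac{3(2\gamma-1)^2}{12\gamma^2-2}.
\]
This uses the identity $\Re(b/a)=\Re(b\bar a)/|a|^2$. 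Combined with the first step, these give equalities in all three quantities, and hence the stated value of $\mathfrak{I}_{\mathrm{IE},\mathrm{S}}^{(3,\gamma)}$.

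The main obstacle is the algebra for $\mathcal{I}_{\mathrm{S3}}$. It involves degree-four polynomials in $\gamma$, and the sign of the linear factor $r_0+r_1\cos\theta$ must be verified at both endpoints. If the endpoint at $\cos\theta=1$ turns out not to be sign-definite, I would instead treat $\mathcal{I}_{\mathrm{S3}}$ directly as a quadratic in $\cos\theta$ and locate its interior critical point, as in the proof of Proposition \ref{prop: GBDF3}.
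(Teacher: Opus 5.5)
Your proposal is correct and is essentially the paper's own argument, which simply says to repeat the proof of Proposition~\ref{prop: WBDF3}: take one-sided bounds from the values at $\theta=0,\pi$, then factor the three auxiliary quadratics in $\cos\theta$ by $\sin^2\frac{\theta}{2}$ or $\cos^2\frac{\theta}{2}$ and check the sign of the linear cofactor at $\cos\theta=\pm1$. These endpoint checks do go through for $\gamma\ge1$; for instance, the linear cofactor of $\mathcal{I}_{\mathrm{S3}}$ equals $12\gamma-5$ at $\cos\theta=1$ and $96\gamma^3-144\gamma^2+68\gamma-\frac{31}{3}$ at $\cos\theta=-1$, both positive, so your critical-point fallback is not needed.
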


\subsection{SIELM4 scheme}   
The SIELM4 scheme has the discrete coefficients
\begin{align}\label{scheme: SIELM4}
	&\vec{a}_{\mathrm{S}}^{(4)}
	=\big(\gamma ^3+\tfrac{3 \gamma ^2}{2}-\tfrac{\gamma }{2}+\tfrac{1}{12},-3 \gamma ^3-\tfrac{3 \gamma ^2}{2}+3 \gamma -\tfrac{5}{12},\big.\\
	&\big.\hspace{3cm}3 \gamma ^3-\tfrac{3 \gamma ^2}{2}-\tfrac{3 \gamma }{2}+\tfrac{13}{12},-\gamma ^3+\tfrac{3 \gamma ^2}{2}-\gamma +\tfrac{1}{4}\big),\notag\\
	&\vec{b}_{\mathrm{S}}^{(4)}=\brab{\gamma ^3,3 \gamma ^2-3 \gamma ^3,3 \gamma ^3-6 \gamma ^2+3 \gamma ,-\gamma ^3+3 \gamma ^2-3 \gamma +1,0},\notag\\
	&\vec{c}_{\mathrm{S}}^{(4)}=(\gamma ^3+3 \gamma ^2,-3 \gamma ^3-6 \gamma ^2+3 \gamma ,3 \gamma ^3+3 \gamma ^2-3 \gamma +1,-\gamma ^3).\notag
\end{align}
The formula \eqref{scheme: general imex Truncation Error} gives the leading error 
\begin{align}\label{truncation: SIELM4}
	R_{\mathrm{S}}^{(4,\gamma)}=-\frac{10 \gamma ^3-10 \gamma ^2+5 \gamma -1}{20} 
	u_t^{(5)}(t_n)\tau^4+\gamma ^3  \mathcal{F}_t^{(4)}[u(t_n)]\tau^4.
\end{align}
By following the proof of Proposition \ref{prop: WBDF3}, one can verify the following result.
\begin{proposition}\label{prop: SIELM4}
	For the SIELM4 scheme \eqref{scheme: SIELM4} with the free parameter $ \gamma\ge6/5$, it holds that
	\begin{align*}
		&\sigma_{\mathrm{F},\mathrm{S}}^{(4, \gamma)}=1,\quad	
		\sigma_{\mathrm{E},\mathrm{S}}^{(4, \gamma)}=\frac{3 (8 \gamma ^3+12 \gamma ^2-6 \gamma +1)}{4(6 \gamma ^3-3 \gamma +1)},\quad
		\lambda_{\mathrm{I},\mathrm{S}}^{(4, \gamma)}=\frac{3 (2 \gamma -1)^3}{4 (6 \gamma ^3-3 \gamma +1)}
		\\
		&\text{such that}\quad
		\mathfrak{I}_{\mathrm{IE},\mathrm{S}}^{(4, \gamma)}=\frac{(2 \gamma -1)^3}{8 \gamma ^3+12 \gamma ^2-6 \gamma +1}.
	\end{align*}
\end{proposition}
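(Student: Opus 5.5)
The plan follows the template of the proof of Proposition \ref{prop: WBDF3}. I will write the semi-generating functions of the SIELM4 scheme \eqref{scheme: SIELM4} explicitly:
\begin{align*}
a_{\mathrm{S}}^{(4)}(\theta)=\sum_{j=0}^{3}a_{\mathrm{S},j}^{(4)}e^{\imath j\theta},\quad
b_{\mathrm{S}}^{(4)}(\theta)=\sum_{j=0}^{3}b_{\mathrm{S},j}^{(4)}e^{\imath j\theta},\quad
c_{\mathrm{S}}^{(4)}(\theta)=\sum_{j=0}^{3}c_{\mathrm{S},j}^{(4)}e^{\imath j\theta}.
\end{align*}
I will first evaluate them at $\theta=0$ and $\theta=\pi$. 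Consistency gives $a_{\mathrm{S}}^{(4)}(0)=1$. At $\theta=\pi$, direct summation should give $a_{\mathrm{S}}^{(4)}(\pi)=\frac{4}{3}(6\gamma^3-3\gamma+1)$ (up to the exact normalization), $b_{\mathrm{S}}^{(4)}(\pi)=(2\gamma-1)^3$ and $c_{\mathrm{S}}^{(4)}(\pi)=8\gamma^3+12\gamma^2-6\gamma+1$. The definitions \eqref{def: lambda sigma} then immediately yield the one-sided bounds
\begin{align*}
\sigma_{\mathrm{F},\mathrm{S}}^{(4,\gamma)}\ge1,\quad
\sigma_{\mathrm{E},\mathrm{S}}^{(4,\gamma)}\ge\frac{|c_{\mathrm{S}}^{(4)}(\pi)|}{|a_{\mathrm{S}}^{(4)}(\pi)|},\quad
\lambda_{\mathrm{I},\mathrm{S}}^{(4,\gamma)}\le\frac{b_{\mathrm{S}}^{(4)}(\pi)}{a_{\mathrm{S}}^{(4)}(\pi)},
\end{align*}
and these are exactly the claimed values.

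For the reverse inequalities I will introduce three auxiliary trigonometric polynomials, each even in $\theta$:
\begin{align*}
\mathcal{F}_{\mathrm{S4}}&:=1-a_{\mathrm{S}}^{(4)}(\theta)a_{\mathrm{S}}^{(4)}(-\theta),\\
\mathcal{E}_{\mathrm{S4}}&:=|a_{\mathrm{S}}^{(4)}(\pi)|^2\,c_{\mathrm{S}}^{(4)}(\theta)c_{\mathrm{S}}^{(4)}(-\theta)-|c_{\mathrm{S}}^{(4)}(\pi)|^2\,a_{\mathrm{S}}^{(4)}(\theta)a_{\mathrm{S}}^{(4)}(-\theta),\\
\mathcal{I}_{\mathrm{S4}}&:=a_{\mathrm{S}}^{(4)}(\pi)\,\Re\bigl[b_{\mathrm{S}}^{(4)}(\theta)a_{\mathrm{S}}^{(4)}(-\theta)\bigr]-b_{\mathrm{S}}^{(4)}(\pi)\,a_{\mathrm{S}}^{(4)}(\theta)a_{\mathrm{S}}^{(4)}(-\theta).
\end{align*}
The goal is to prove $\mathcal{F}_{\mathrm{S4}}\le0$, $\mathcal{E}_{\mathrm{S4}}\le0$ and $\mathcal{I}_{\mathrm{S4}}\ge0$ on $[0,2\pi)$. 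These give $|a_{\mathrm{S}}^{(4)}|\ge1$, the maximum of $|c_{\mathrm{S}}^{(4)}/a_{\mathrm{S}}^{(4)}|$ at $\pi$, and the minimum of $\Re[b_{\mathrm{S}}^{(4)}/a_{\mathrm{S}}^{(4)}]$ at $\pi$, respectively. Each function is a cubic polynomial in $y=\cos\theta\in[-1,1]$, because the coefficient vectors have length four.

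By construction, $\mathcal{F}_{\mathrm{S4}}$ vanishes at $y=1$. Since $\mathcal{F}_{\mathrm{S4}}$ is even and the scheme has high-order consistency, it should in fact carry a factor $\sin^2\frac{\theta}{2}$. Likewise $\mathcal{E}_{\mathrm{S4}}$ and $\mathcal{I}_{\mathrm{S4}}$ vanish at $y=-1$, so each carries a factor $\cos^2\frac{\theta}{2}=\frac{1+y}{2}$. Factoring these out leaves a quadratic $q(y)$ with coefficients polynomial in $\gamma$. The sign condition then reduces to a sign of $q$ on $[-1,1]$. I will check it by evaluating $q(\pm1)$ and, when the leading coefficient has the wrong sign, also locating the vertex and evaluating $q$ there. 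This produces polynomial inequalities in $\gamma$, which I will verify for $\gamma\ge6/5$ by writing $\gamma=6/5+s$ with $s\ge0$ and checking that all coefficients are positive. Remark \ref{remark: bound quadratic form} together with Lemma \ref{lemma: bound quadratic form} (with $\lambda_{\mathrm{I}}>0$, since $2\gamma-1>0$) then gives the proposition, and the intensity is the quotient $\lambda_{\mathrm{I},\mathrm{S}}^{(4,\gamma)}/\sigma_{\mathrm{E},\mathrm{S}}^{(4,\gamma)}$.

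The main obstacle is $\mathcal{I}_{\mathrm{S4}}\ge0$. The threshold $\gamma\ge6/5$ signals that the residual quadratic is only barely nonnegative: its vertex enters $[-1,1]$ and its value there is a high-degree polynomial in $\gamma$ whose positivity fails for smaller $\gamma$. I would first compute the discriminant of $q$ and show either that it is negative, or that the vertex lies outside $[-1,1]$, for $\gamma\ge6/5$. If that is too messy, the fallback is to bound $q(y)$ below by the chord between its endpoint values plus a controlled error term, as in the WBDF4 case.
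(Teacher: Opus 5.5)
Your proposal is correct and follows the paper's route: the paper only says to follow the proof of Proposition \ref{prop: WBDF3}. That means using the values at $\theta=0,\pi$ for the one-sided bounds, then sign checks of the three auxiliary trigonometric polynomials, and the paper supports the reverse inequalities only with plots for sample $\gamma$. Your endpoint values $a_{\mathrm{S}}^{(4)}(\pi)=\frac43(6\gamma^3-3\gamma+1)$, $b_{\mathrm{S}}^{(4)}(\pi)=(2\gamma-1)^3$ and $c_{\mathrm{S}}^{(4)}(\pi)=8\gamma^3+12\gamma^2-6\gamma+1$ are exact, and factoring out $\sin^2\frac\theta2$ or $\cos^2\frac\theta2$ and checking the residual quadratics would make it rigorous.

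One correction to your forecast about the $\mathcal{I}_{\mathrm{S4}}$ check. Written in $\sin^2\frac\theta2$, its residual quadratic has positive constant and linear coefficients for $\gamma\ge 6/5$, so its minimum on the interval is at an endpoint, not at an interior vertex. What fails for smaller $\gamma$ (e.g.\ $\gamma=1$, where SIELM4 reduces to BDF4) is the endpoint condition at $\theta=\pi$: there $\theta=\pi$ stops being a local minimizer of $\Re\bigl[b_{\mathrm{S}}^{(4)}(\theta)/a_{\mathrm{S}}^{(4)}(\theta)\bigr]$.
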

As numerical illustrations for the claimed results in Proposition \ref{prop: SIELM4}, Figure \ref{fig: lambda SIELM4} depicts the following three auxiliary functions
\begin{align*}
	&\widetilde{\mathcal{F}}_{\mathrm{S4}}(\gamma,\theta):=\frac1{\abst{a_{\mathrm{S}}^{(4)}(\theta)}}- 1,\quad
	\widetilde{\mathcal{E}}_{\mathrm{S4}}(\gamma,\theta):=\frac{\abst{c_{\mathrm{S}}^{(4)}(\theta)}}{\abst{a_{\mathrm{S}}^{(4)}(\theta)}}-\frac{3 (8 \gamma ^3+12 \gamma ^2-6 \gamma +1)}{4(6 \gamma ^3-3 \gamma +1)},
	\\
	&\widetilde{\mathcal{I}}_{\mathrm{S4}}(\gamma,\theta):=\Re\kbraB{\frac{b_{\mathrm{S}}^{(4)}(\theta)}{a_{\mathrm{S}}^{(4)}(\theta)}}-\frac{3 (2 \gamma -1)^3}{4 (6 \gamma ^3-3 \gamma +1)}
\end{align*}
for $\theta\in[0,\pi]$ with the fixed parameters $\gamma=6/5,3,7,10,30$.

\begin{figure}[htb!]
	\centering
	\subfigure[ $\widetilde{\mathcal{F}}_{\mathrm{S4}}(\gamma,\theta)$]
	{\includegraphics[width=1.67in]{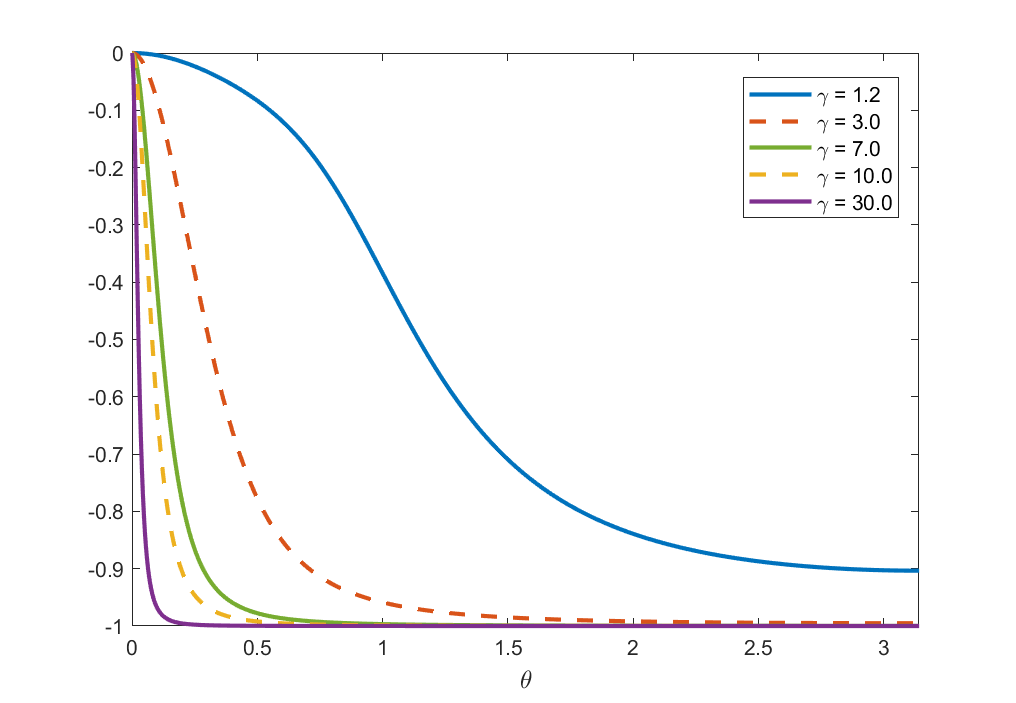}}
	\subfigure[ $\widetilde{\mathcal{E}}_{\mathrm{S4}}(\gamma,\theta)$]
	{\includegraphics[width=1.67in]{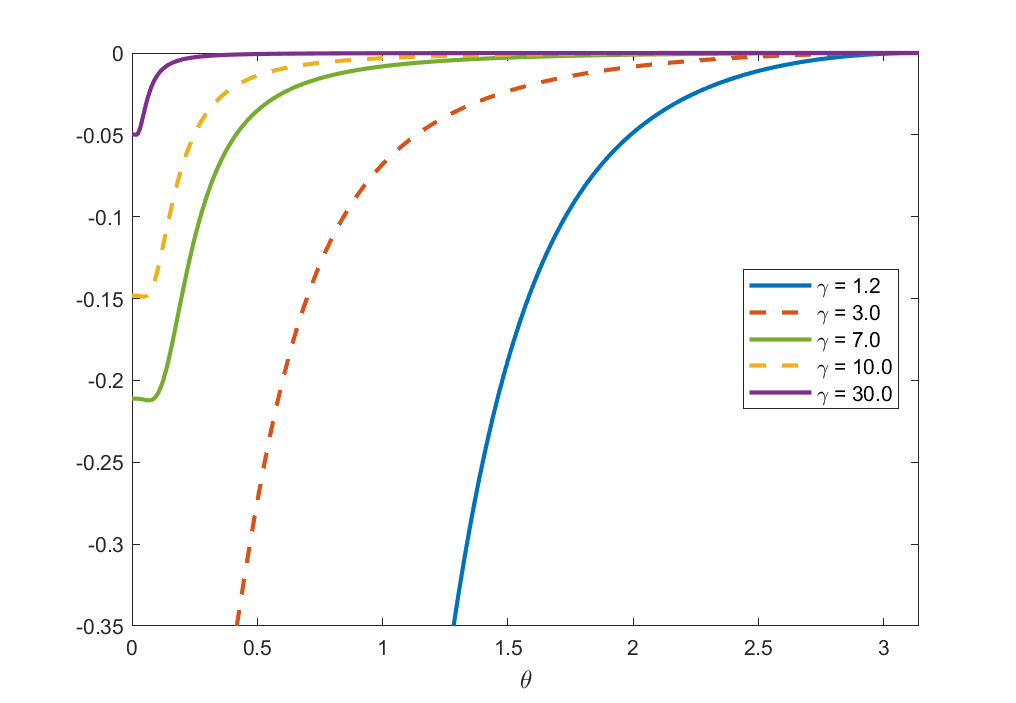}}
	\subfigure[ $\widetilde{\mathcal{I}}_{\mathrm{S4}}(\gamma,\theta)$]
	{\includegraphics[width=1.67in]{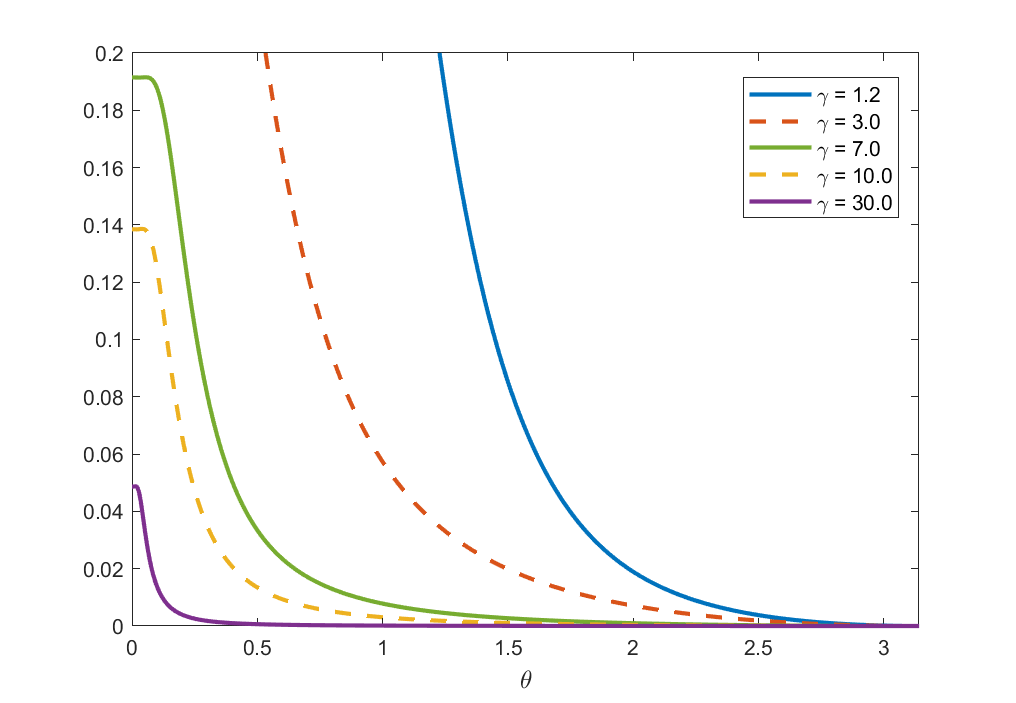}}
	\caption{Curves of $\widetilde{\mathcal{F}}_{\mathrm{S4}}(\gamma,\theta)$, $\widetilde{\mathcal{E}}_{\mathrm{S4}}(\gamma,\theta)$ and $\widetilde{\mathcal{I}}_{\mathrm{S4}}(\gamma,\theta)$ for $\theta\in[0,\pi]$.}
	\label{fig: lambda SIELM4}
\end{figure}

\subsection{SIELM5 scheme}   
The SIELM5 scheme has the discrete coefficients
\begin{align}\label{scheme: SIELM5}
	&\vec{a}_{\mathrm{S}}^{(5)}
	=\big(\gamma ^4+2 \gamma ^3-\gamma ^2+\tfrac{\gamma }{3}-\tfrac{1}{20},-4 \gamma ^4-4 \gamma ^3+7 \gamma ^2-2 \gamma +\tfrac{17}{60},\\
	&\qquad\qquad\big.6 \gamma ^4-9 \gamma ^2+6 \gamma -\tfrac{43}{60},
	-4 \gamma ^4+4 \gamma ^3+\gamma ^2-\tfrac{10 \gamma }{3}+\tfrac{77}{60},\notag\\
	&\qquad\qquad\gamma ^4-2 \gamma ^3+2 \gamma ^2-\gamma +\tfrac{1}{5}\big),\notag
\end{align}
while $\vec{b}_{\mathrm{S}}^{(5)}$ and $\vec{c}_{\mathrm{S}}^{(5)}$ can be generated by the 
second and third characteristic polynomials $\varrho_{b,\mathrm{S}}^{(5)}(\zeta):=\zeta(\gamma\zeta-\gamma+1)^{4}$ and $\varrho_{c,\mathrm{S}}^{(5)}(\zeta):=\zeta(\gamma\zeta-\gamma+1)^{4}-\gamma^{4}(\zeta-1)^{5}$,
respectively.
The formula \eqref{scheme: general imex Truncation Error} gives the leading error 
\begin{align}\label{truncation: SIELM5}
	R_{\mathrm{S}}^{(5,\gamma)}=-\frac{15 \gamma ^4-20 \gamma ^3+15 \gamma ^2-6 \gamma +1}{30}
	u_t^{(6)}(t_n)\tau^5+\gamma ^4  \mathcal{F}_t^{(5)}[u(t_n)]\tau^5.
\end{align}
By following the proof of Proposition \ref{prop: WBDF3}, one can verify the following result.
\begin{proposition}\label{prop: SIELM5}
	For the SIMES5 scheme \eqref{scheme: SIELM5} with the free parameter $ \gamma\ge7/5$, it holds that $\sigma_{\mathrm{F},\mathrm{S}}^{(5, \gamma)}=1,$
	\begin{align*}
		&\sigma_{\mathrm{E},\mathrm{S}}^{(5, \gamma)}=
		\frac{15(16 \gamma ^4+32 \gamma ^3-24 \gamma ^2+8 \gamma -1)}{16 (15 \gamma ^4-15 \gamma ^2+10 \gamma -2)},\;\;
		\lambda_{\mathrm{I},\mathrm{S}}^{(5, \gamma)}=\frac{15 (2\gamma-1)^4}{16(15 \gamma ^4-15 \gamma ^2+10 \gamma -2)}
		\\
		&\text{such that}\quad
		\mathfrak{I}_{\mathrm{IE},\mathrm{S}}^{(5, \gamma)}=\frac{(2\gamma-1)^4}{16 \gamma ^4+32 \gamma ^3-24 \gamma ^2+8 \gamma -1}.
	\end{align*}
\end{proposition}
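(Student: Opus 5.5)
The plan follows the template of the proof of Proposition~\ref{prop: WBDF3}. The first step is to write out the semi-generating functions $a_{\mathrm{S}}^{(5)}(\theta)$, $b_{\mathrm{S}}^{(5)}(\theta)$ and $c_{\mathrm{S}}^{(5)}(\theta)$ from \eqref{scheme: SIELM5} and the polynomials $\varrho_{b,\mathrm{S}}^{(5)}$, $\varrho_{c,\mathrm{S}}^{(5)}$. Evaluating at $\theta=0$ and $\theta=\pi$ then gives the one-sided bounds directly. Consistency gives $a_{\mathrm{S}}^{(5)}(0)=1$, so $\sigma_{\mathrm{F},\mathrm{S}}^{(5,\gamma)}\ge1$. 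At $\theta=\pi$ (i.e. $\zeta=-1$) I expect $a_{\mathrm{S}}^{(5)}(\pi)=\tfrac{16}{15}(15\gamma^4-15\gamma^2+10\gamma-2)$, $b_{\mathrm{S}}^{(5)}(\pi)=(2\gamma-1)^4$ and $\abs{c_{\mathrm{S}}^{(5)}(\pi)}=16\gamma^4+32\gamma^3-24\gamma^2+8\gamma-1$. These values give $\sigma_{\mathrm{E},\mathrm{S}}^{(5,\gamma)}\ge\abs{c_{\mathrm{S}}^{(5)}(\pi)}/\abs{a_{\mathrm{S}}^{(5)}(\pi)}$ and $\lambda_{\mathrm{I},\mathrm{S}}^{(5,\gamma)}\le b_{\mathrm{S}}^{(5)}(\pi)/a_{\mathrm{S}}^{(5)}(\pi)$, which are exactly the claimed values. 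Checking these evaluations is routine, since $(\gamma\zeta-\gamma+1)^4$ at $\zeta=-1$ equals $(1-2\gamma)^4$.

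The reverse inequalities reduce, after multiplying by $\abs{a_{\mathrm{S}}^{(5)}(\theta)}^2$, to sign conditions on three even trigonometric polynomials of degree at most $4$ in $\theta$. With $y=\cos\theta\in[-1,1]$ these are quartic polynomials in $y$:
\begin{align*}
\mathcal{F}_{\mathrm{S5}}&:=1-a_{\mathrm{S}}^{(5)}(\theta)a_{\mathrm{S}}^{(5)}(-\theta)\le0,\\
\mathcal{E}_{\mathrm{S5}}&:=A_\gamma^2\,c_{\mathrm{S}}^{(5)}(\theta)c_{\mathrm{S}}^{(5)}(-\theta)-C_\gamma^2\,a_{\mathrm{S}}^{(5)}(\theta)a_{\mathrm{S}}^{(5)}(-\theta)\le0,\\
\mathcal{I}_{\mathrm{S5}}&:=A_\gamma\,\Re\kbrab{b_{\mathrm{S}}^{(5)}(\theta)a_{\mathrm{S}}^{(5)}(-\theta)}-B_\gamma\,a_{\mathrm{S}}^{(5)}(\theta)a_{\mathrm{S}}^{(5)}(-\theta)\ge0.
\end{align*}
Here $A_\gamma=16(15\gamma^4-15\gamma^2+10\gamma-2)$, $B_\gamma=15(2\gamma-1)^4$ and $C_\gamma=15(16\gamma^4+\cdots-1)$. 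Note that $A_\gamma>0$ for $\gamma\ge7/5$.

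Each polynomial vanishes at an endpoint, and I would factor out that zero. $\mathcal{F}_{\mathrm{S5}}$ vanishes at $y=1$ because $\abs{a(0)}=1$. Since the scheme has order $5$, $a_{\mathrm{S}}^{(5)}(\theta)-1$ matches $1-e^{\imath\theta}$ to high order, so I expect $\mathcal{F}_{\mathrm{S5}}=\sin^2\frac{\theta}{2}\,p_3(y)$. Likewise $\mathcal{E}_{\mathrm{S5}}$ and $\mathcal{I}_{\mathrm{S5}}$ vanish at $y=-1$ by the choice of constants, so they factor as $\cos^2\frac{\theta}{2}\,q_3(y)$ and $\cos^2\frac{\theta}{2}\,r_3(y)$ with cubic cofactors. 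It then remains to show $p_3\le0$, $q_3\le0$ and $r_3\ge0$ on $[-1,1]$ for all $\gamma\ge7/5$.

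For these cubics I would write each as a polynomial in $(1+y)$ or $(1-y)$ with coefficients polynomial in $\gamma$. I would then try to show that every coefficient has a definite sign for $\gamma\ge7/5$, for example by substituting $\gamma=7/5+s$ with $s\ge0$ and checking that all coefficients in $s$ are positive. Where that fails, a fallback is to bound the cubic by its endpoint values plus a discriminant or critical-point argument in the style of the GBDF3 proof.

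The main obstacle is $r_3\ge0$, the condition for $\lambda_{\mathrm{I}}$. The threshold $\gamma\ge7/5$ suggests that $\Re[b/a]$ stops attaining its minimum at $\theta=\pi$ for smaller $\gamma$. So the cubic $r_3$ likely has a root approaching $[-1,1]$ near the lower end of the range, and a crude coefficient-sign argument may not close. In that case I would locate the interior critical points of $r_3$ explicitly and verify positivity there as a polynomial inequality in $\gamma$, checking it at $\gamma=7/5$ with a monotonicity argument in $\gamma$.
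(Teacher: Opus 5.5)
This is essentially the paper's argument. The paper simply invokes the template of Proposition~\ref{prop: WBDF3}: one-sided bounds from $\theta=0,\pi$, where your values $a_{\mathrm{S}}^{(5)}(\pi)=\frac{16}{15}(15\gamma^4-15\gamma^2+10\gamma-2)$, $b_{\mathrm{S}}^{(5)}(\pi)=(2\gamma-1)^4$ and $c_{\mathrm{S}}^{(5)}(\pi)=16\gamma^4+32\gamma^3-24\gamma^2+8\gamma-1$ are correct, followed by sign conditions on the three auxiliary polynomials, which the paper backs only with the plots in Figure~\ref{fig: lambda SIELM5}, so your explicit plan for the cubic cofactors goes further than the paper. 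My own numerical check at $\gamma=7/5$ confirms the obstacle you anticipated: the second $\theta$-derivative of $\Re[b_{\mathrm{S}}^{(5)}/a_{\mathrm{S}}^{(5)}]$ at $\theta=\pi$ is only about $10^{-3}$, and your cubic $r_3$ has $r_3(\pm1)>0$ but negative $(1-y)^3$ and $(1+y)^2$ coefficients, so near the lower end of the range you will need the critical-point fallback rather than the coefficient-sign test.
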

As numerical illustrations for the claimed results in Proposition \ref{prop: SIELM5}, Figure \ref{fig: lambda SIELM5} depicts the following three auxiliary functions $\widetilde{\mathcal{F}}_{\mathrm{S5}}(\gamma,\theta):=\frac1{\abst{a_{\mathrm{S}}^{(5)}(\theta)}}- 1$,
	\begin{align*}
	&\widetilde{\mathcal{E}}_{\mathrm{S5}}(\gamma,\theta):=\frac{\abst{c_{\mathrm{S}}^{(5)}(\theta)}}{\abst{a_{\mathrm{S}}^{(5)}(\theta)}}-	\frac{15(16 \gamma ^4+32 \gamma ^3-24 \gamma ^2+8 \gamma -1)}{16 (15 \gamma ^4-15 \gamma ^2+10 \gamma -2)},
	\\
	&\widetilde{\mathcal{I}}_{\mathrm{S5}}(\gamma,\theta):=\Re\kbraB{\frac{b_{\mathrm{S}}^{(5)}(\theta)}{a_{\mathrm{S}}^{(5)}(\theta)}}-\frac{15 (2\gamma-1)^4}{16(15 \gamma ^4-15 \gamma ^2+10 \gamma -2)}
\end{align*}
for $\theta\in[0,\pi]$ with the parameter $\gamma=7/5,3,7,10$ and 30.

\begin{figure}[htb!]
	\centering
	\subfigure[ $\widetilde{\mathcal{F}}_{\mathrm{S5}}(\gamma,\theta)$]
	{\includegraphics[width=1.67in]{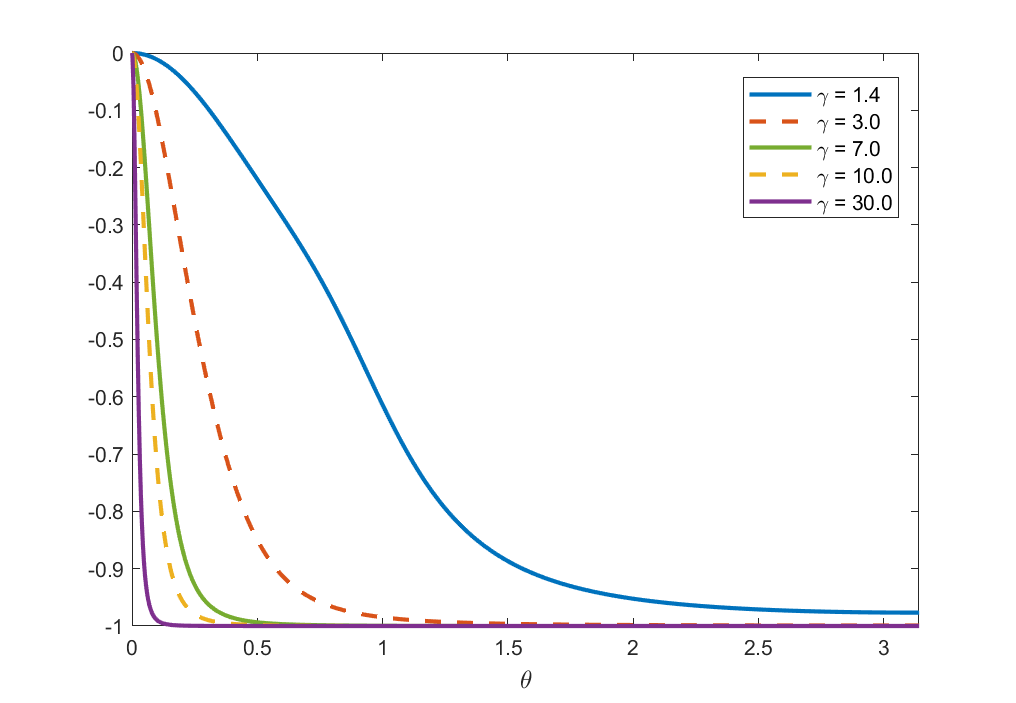}}
	\subfigure[ $\widetilde{\mathcal{E}}_{\mathrm{S5}}(\gamma,\theta)$]
	{\includegraphics[width=1.67in]{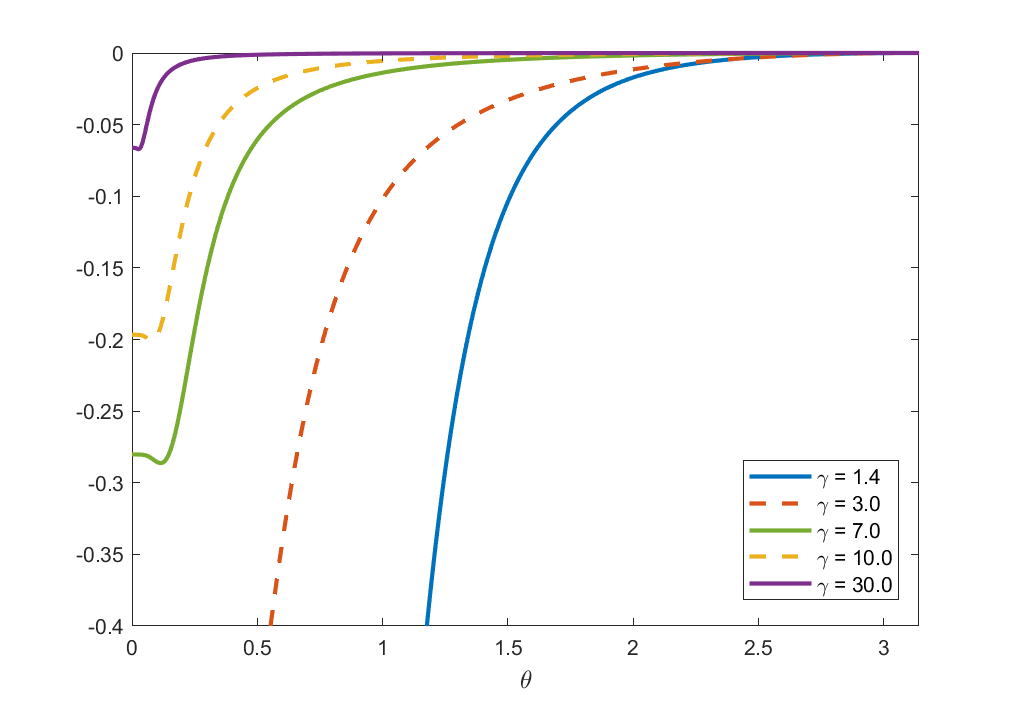}}
	\subfigure[ $\widetilde{\mathcal{I}}_{\mathrm{S5}}(\gamma,\theta)$]
	{\includegraphics[width=1.67in]{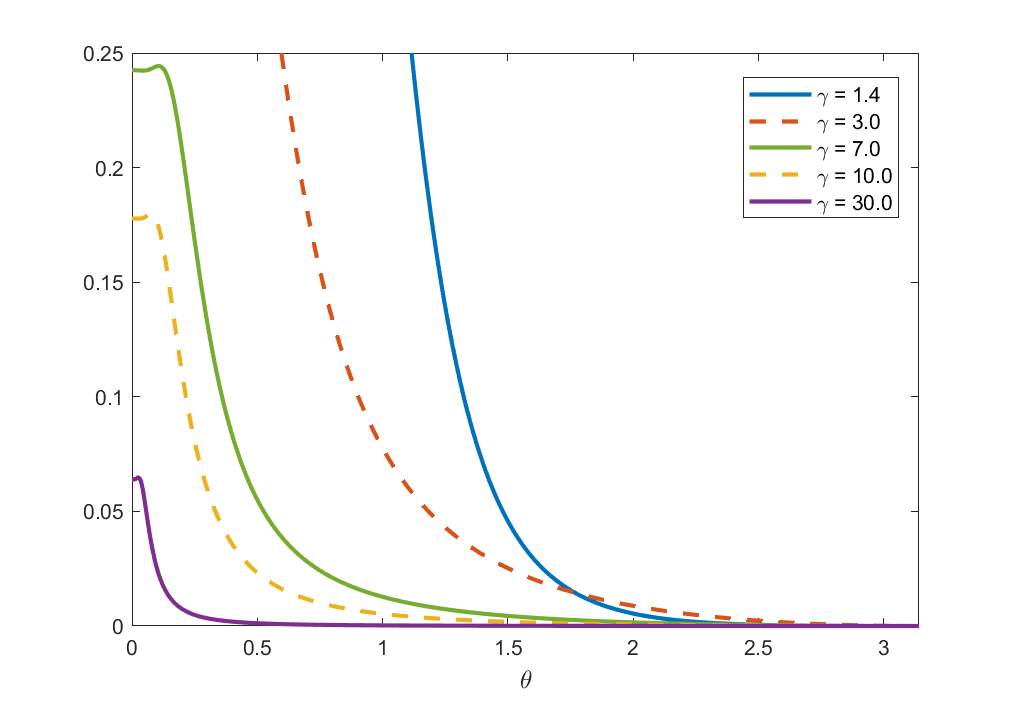}}
	\caption{Curves of $\widetilde{\mathcal{F}}_{\mathrm{S5}}(\gamma,\theta)$, $\widetilde{\mathcal{E}}_{\mathrm{S5}}(\gamma,\theta)$ and $\widetilde{\mathcal{I}}_{\mathrm{S5}}(\gamma,\theta)$ for $\theta\in[0,\pi]$.}
	\label{fig: lambda SIELM5}
\end{figure}

\subsection{SIELM6 scheme} 
The SIELM6 scheme has the discrete coefficient
\begin{align}\label{scheme: SIELM6}
	&\vec{a}_{\mathrm{S}}^{(6)}
	=\big(\gamma ^5+\tfrac{5 \gamma ^4}{2}-\tfrac{5 \gamma ^3}{3}+\tfrac{5 \gamma ^2}{6}-\tfrac{\gamma }{4}+\tfrac{1}{30},\\
	&\qquad
	-5 \gamma ^5-\tfrac{15 \gamma ^4}{2}+\tfrac{40 \gamma ^3}{3}-\tfrac{35 \gamma ^2}{6}+\tfrac{5 \gamma }{3}-\tfrac{13}{60},\notag\\
	&\qquad10 \gamma ^5+5 \gamma ^4-\tfrac{80 \gamma ^3}{3}+20 \gamma ^2-5 \gamma +\tfrac{37}{60},\notag\\
	&\qquad-10 \gamma ^5+5 \gamma ^4+\tfrac{50 \gamma ^3}{3}-\tfrac{70 \gamma ^2}{3}+10 \gamma -\tfrac{21}{20},\notag\\
	&\qquad5 \gamma ^5-\tfrac{15 \gamma ^4}{2}+\tfrac{5 \gamma ^3}{3}+\tfrac{35 \gamma ^2}{6}-\tfrac{65 \gamma }{12}+\tfrac{29}{20},\notag\\
	&\qquad-\gamma ^5+\tfrac{5 \gamma ^4}{2}-\tfrac{10 \gamma ^3}{3}+\tfrac{5 \gamma ^2}{2}-\gamma +\tfrac{1}{6}\big),\notag
\end{align}
while $\vec{b}_{\mathrm{S}}^{(6)}$ and $\vec{c}_{\mathrm{S}}^{(6)}$ can be generated by the 
second and third characteristic polynomials $\varrho_{b,\mathrm{S}}^{(6)}(\zeta):=\zeta(\gamma\zeta-\gamma+1)^{5}$ and $\varrho_{c,\mathrm{S}}^{(6)}(\zeta):=\zeta(\gamma\zeta-\gamma+1)^{5}-\gamma^{5}(\zeta-1)^{6}$,
respectively.
The formula \eqref{scheme: general imex Truncation Error} gives the leading error 
\begin{align}\label{truncation: SIELM6}
	R_{\mathrm{S}}^{(6,\gamma)}=\tfrac{-21 \gamma ^5+35 \gamma ^4-35 \gamma ^3+21 \gamma ^2-7 \gamma +1}{42} 	u_t^{(7)}(t_n)\tau^6+\gamma ^5  \mathcal{F}_t^{(6)}[u(t_n)]\tau^6.
\end{align}
By following the proof of Proposition \ref{prop: WBDF3}, one has the following result.
\begin{proposition}\label{prop: SIELM6}
	For the SIMES6 scheme \eqref{scheme: SIELM6} with  $ 2\le\gamma\le80$, it holds that $\sigma_{\mathrm{F},\mathrm{S}}^{(6, \gamma)}=1,$
	\begin{align*}
		&\sigma_{\mathrm{E},\mathrm{S}}^{(6, \gamma)}=
		\frac{15(32 \gamma ^5+80 \gamma ^4-80 \gamma ^3+40 \gamma ^2-10 \gamma +1)}{16(30 \gamma ^5-50 \gamma ^3+50 \gamma ^2-20 \gamma +3)},\\
		&	\lambda_{\mathrm{I},\mathrm{S}}^{(6, \gamma)}=\frac{15 (2 \gamma -1)^5}{16 \left(30 \gamma ^5-50 \gamma ^3+50 \gamma ^2-20 \gamma +3\right)}\\
		&\text{such that}\quad\mathfrak{I}_{\mathrm{IE},\mathrm{S}}^{(6, \gamma)}=\frac{(2 \gamma -1)^5}{32 \gamma ^5+80 \gamma ^4-80 \gamma ^3+40 \gamma ^2-10 \gamma +1}.
	\end{align*}
\end{proposition}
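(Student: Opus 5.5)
Following the proof of Proposition \ref{prop: WBDF3}, I would first compute the three candidate extreme values from the semi-generating functions $a_{\mathrm{S}}^{(6)}(\theta)$, $b_{\mathrm{S}}^{(6)}(\theta)$ and $c_{\mathrm{S}}^{(6)}(\theta)$ at $\theta=0$ and $\theta=\pi$. Consistency gives $a_{\mathrm{S}}^{(6)}(0)=b_{\mathrm{S}}^{(6)}(0)=c_{\mathrm{S}}^{(6)}(0)=1$, so $\sigma_{\mathrm{F},\mathrm{S}}^{(6,\gamma)}\ge1$. At $\theta=\pi$ I would use the characteristic-polynomial form of Remark \ref{remark: bound quadratic form} with $\zeta=-1$, so that $\abst{a^{(6)}(\pi)}=\abst{\varrho_a(-1)}$ and similarly for $b,c$. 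Since $\varrho_{b,\mathrm{S}}(-1)=-(1-2\gamma)^5=(2\gamma-1)^5$, one gets $\varrho_{c,\mathrm{S}}(-1)=(2\gamma-1)^5-64\gamma^5$. Summing the coefficients in \eqref{scheme: SIELM6} with alternating signs should give $\abst{a_{\mathrm{S}}^{(6)}(\pi)}=\frac{16}{15}(30\gamma^5-50\gamma^3+50\gamma^2-20\gamma+3)$. These values produce the lower bound for $\sigma_{\mathrm{E}}$ and the upper bound for $\lambda_{\mathrm{I}}$ stated in the proposition, with equality still to be shown.

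To prove the reverse inequalities I would form the three auxiliary trigonometric polynomials
\begin{align*}
&\mathcal{F}_{\mathrm{S6}}:=1-a_{\mathrm{S}}^{(6)}(\theta)a_{\mathrm{S}}^{(6)}(-\theta),\qquad
\mathcal{E}_{\mathrm{S6}}:=D^2c_{\mathrm{S}}^{(6)}(\theta)c_{\mathrm{S}}^{(6)}(-\theta)-N^2a_{\mathrm{S}}^{(6)}(\theta)a_{\mathrm{S}}^{(6)}(-\theta),\\
&\mathcal{I}_{\mathrm{S6}}:=D\,\Re\kbrab{b_{\mathrm{S}}^{(6)}(\theta)a_{\mathrm{S}}^{(6)}(-\theta)}-M a_{\mathrm{S}}^{(6)}(\theta)a_{\mathrm{S}}^{(6)}(-\theta).
\end{align*}
Here $D=16(30\gamma^5-\cdots+3)$, $N=15(32\gamma^5+\cdots+1)$ and $M=15(2\gamma-1)^5$. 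Each is a polynomial of degree at most $6$ in $y=\cos\theta\in[-1,1]$. By construction $\mathcal{F}_{\mathrm{S6}}$ vanishes at $y=1$, and $\mathcal{E}_{\mathrm{S6}}$ and $\mathcal{I}_{\mathrm{S6}}$ vanish at $y=-1$. High-order consistency should make the zero at $y=1$ of high multiplicity, i.e. $\mathcal{F}_{\mathrm{S6}}$ carries a factor $\sin^2\frac\theta2=(1-y)/2$ (in fact a power of it). Similarly, $\mathcal{E}_{\mathrm{S6}}$ and $\mathcal{I}_{\mathrm{S6}}$ carry the factor $\cos^2\frac\theta2=(1+y)/2$. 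I would factor these out explicitly, using computer algebra for the coefficients, which are polynomials in $\gamma$. This leaves residual polynomials $p_F(\gamma,y)$, $p_E(\gamma,y)$, $p_I(\gamma,y)$ of degree $\le5$ in $y$. The goal is to show $p_F\le0$, $p_E\le0$ and $p_I\ge0$ on $[-1,1]\times[2,80]$. Given these, $\abst{a}\ge1$, $\abst{c}/\abst{a}\le N/D$ and $\Re[b/a]=\Re[b\bar a]/\abst{a}^2\ge M/D$. Equality is attained at $\theta=0$ or $\theta=\pi$, giving the claimed identities, and the ratio gives $\mathfrak{I}_{\mathrm{IE},\mathrm{S}}^{(6,\gamma)}$ after cancelling the factor $15/16$.

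The main obstacle is the sign verification of the residual polynomials, which are quintic in $y$ with degree-$10$ coefficients in $\gamma$. The upper limit $\gamma\le80$ shows that the sign genuinely fails for large $\gamma$, presumably in $p_I$, so no crude dominance argument in the leading power of $\gamma$ will work. I would first substitute $y=-1+2s$ with $s\in[0,1]$, and also $\gamma=2+t$, and try to show that all coefficients of the resulting bivariate polynomial have a fixed sign. Where that fails near $\gamma=80$, I would split $[2,80]$ into subintervals and, on each, bound the polynomial via interval arithmetic or Sturm sequences in $y$. As in the earlier propositions, I would complement this certification with plots of the normalized auxiliary functions $\widetilde{\mathcal{F}}_{\mathrm{S6}}$, $\widetilde{\mathcal{E}}_{\mathrm{S6}}$, $\widetilde{\mathcal{I}}_{\mathrm{S6}}$ for representative $\gamma$ values, in particular near the endpoints $2$ and $80$.
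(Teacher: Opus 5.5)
Your proposal follows the paper's route: the values at $\theta=0$ and $\theta=\pi$ give the one-sided bounds, and you compute them correctly (e.g.\ $\abst{a_{\mathrm{S}}^{(6)}(\pi)}=\frac{16}{15}(30\gamma^5-50\gamma^3+50\gamma^2-20\gamma+3)$); then, as in the proof of Proposition~\ref{prop: WBDF3}, the reverse inequalities reduce to sign conditions on $\mathcal{F}_{\mathrm{S6}}$, $\mathcal{E}_{\mathrm{S6}}$ and $\mathcal{I}_{\mathrm{S6}}$, which the paper supports for SIELM6 only by plotting the normalized functions at $\gamma=2,4,10,30,80$, so your certification plan on $[2,80]$ would be more rigorous than what the paper gives. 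Two harmless slips: the auxiliary polynomials have degree $5$ in $y=\cos\theta$ (since $b_{\mathrm{S},6}^{(6)}=0$), and high-order consistency does not give a multiple zero of $\mathcal{F}_{\mathrm{S6}}$ at $y=1$, because $1-\abst{a_{\mathrm{S}}^{(6)}(\theta)}^2=-(5\gamma^2-5\gamma+\tfrac{1}{12})\theta^2+O(\theta^4)$, so only a single factor $\sin^2\frac{\theta}{2}$ can be extracted.
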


As numerical illustrations for the claimed results in Proposition \ref{prop: SIELM6}, Figure \ref{fig: lambda SIELM6} depicts the following three auxiliary functions $\widetilde{\mathcal{F}}_{\mathrm{S6}}(\gamma,\theta)$, $\widetilde{\mathcal{E}}_{\mathrm{S6}}(\gamma,\theta)$ and $\widetilde{\mathcal{I}}_{\mathrm{S6}}(\gamma,\theta)$, defined similar to the functions $\widetilde{\mathcal{F}}_{\mathrm{S5}}(\gamma,\theta)$, $\widetilde{\mathcal{E}}_{\mathrm{S5}}(\gamma,\theta)$ and $\widetilde{\mathcal{I}}_{\mathrm{S5}}(\gamma,\theta)$, respectively, for $\theta\in[0,\pi]$ with  $\gamma=2,4,10,30$ and 80. 

\begin{figure}[htb!]
	\centering
	\subfigure[ $\widetilde{\mathcal{F}}_{\mathrm{S6}}(\gamma,\theta)$]
	{\includegraphics[width=1.67in]{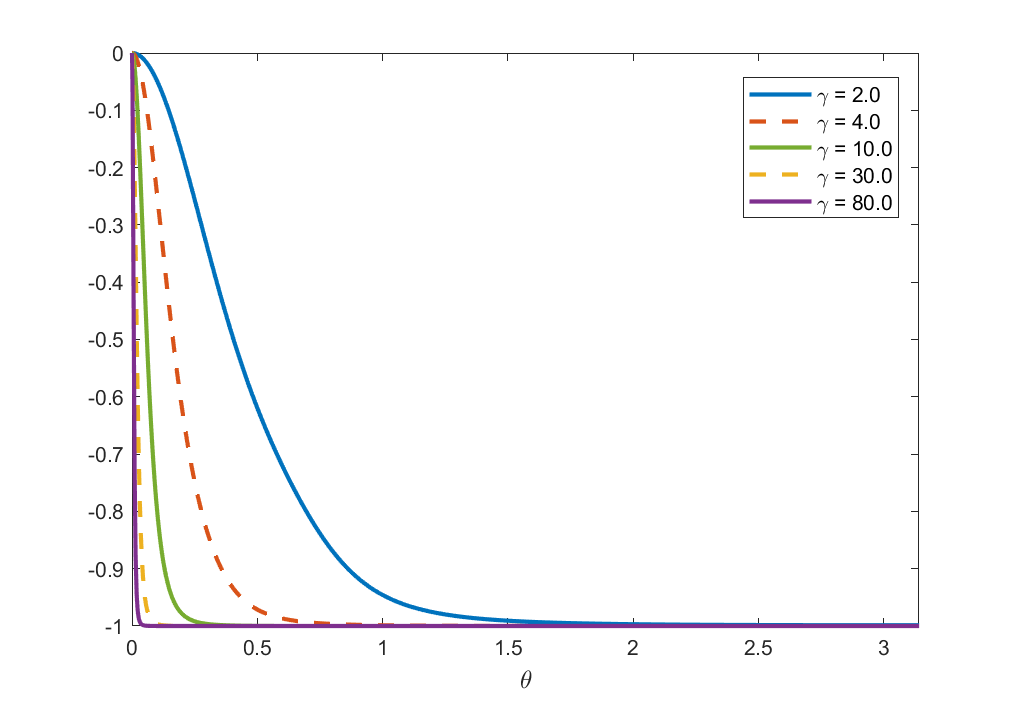}}
	\subfigure[ $\widetilde{\mathcal{E}}_{\mathrm{S6}}(\gamma,\theta)$]
	{\includegraphics[width=1.67in]{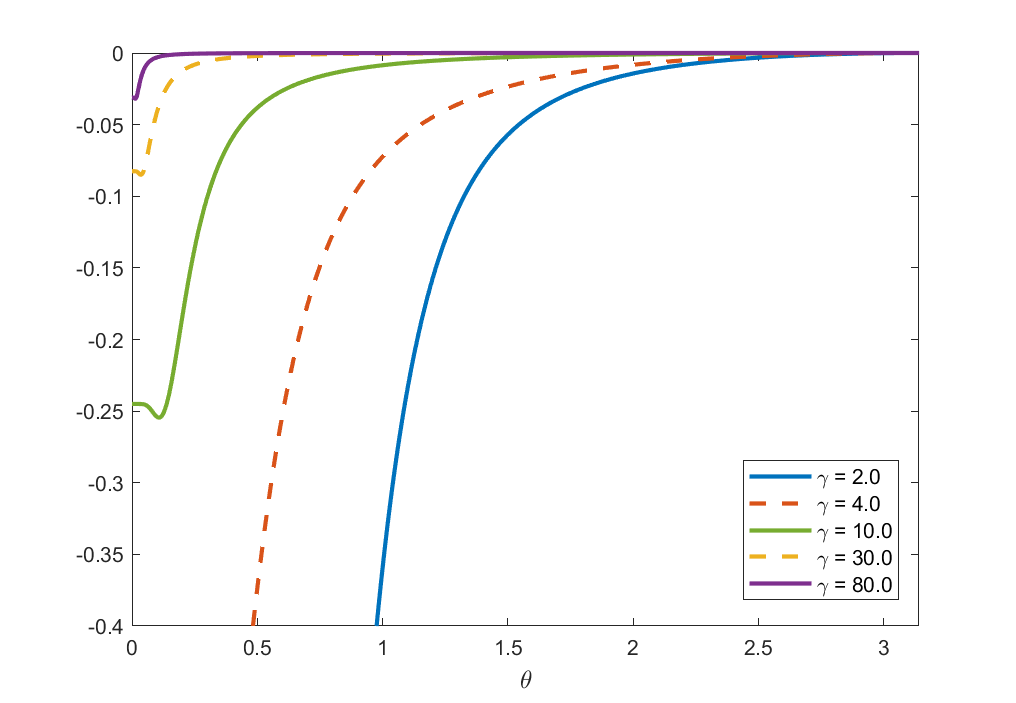}}
	\subfigure[ $\widetilde{\mathcal{I}}_{\mathrm{S6}}(\gamma,\theta)$]
	{\includegraphics[width=1.67in]{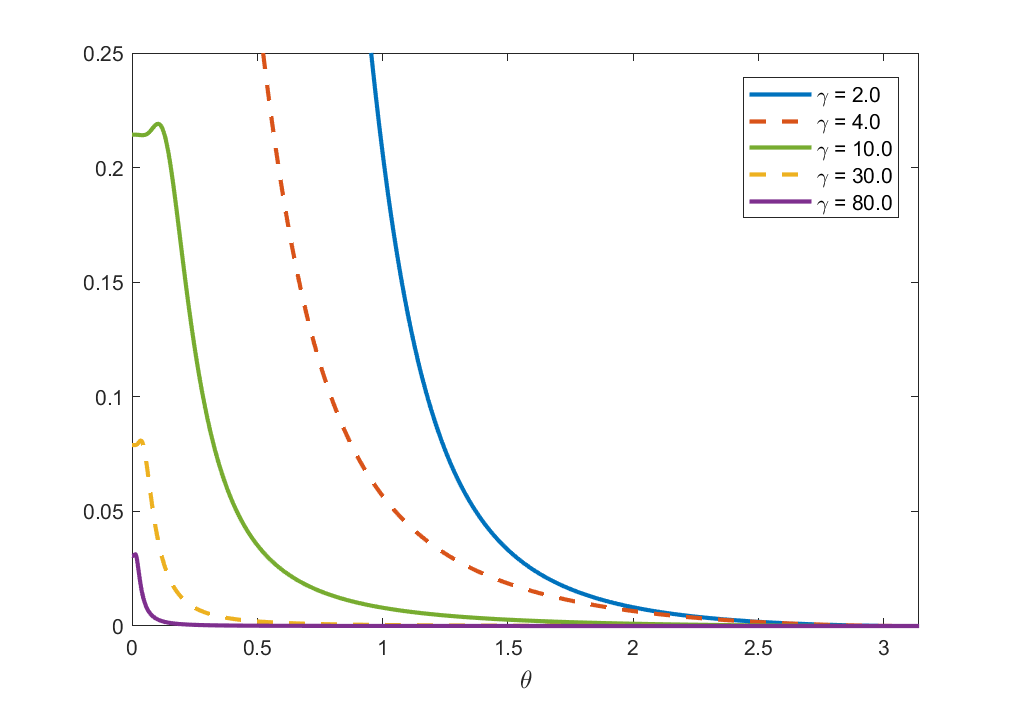}}
	\caption{Curves of $\widetilde{\mathcal{F}}_{\mathrm{S6}}(\gamma,\theta)$, $\widetilde{\mathcal{E}}_{\mathrm{S6}}(\gamma,\theta)$ and $\widetilde{\mathcal{I}}_{\mathrm{S6}}(\gamma,\theta)$ for $\theta\in[0,\pi]$.}
	\label{fig: lambda SIELM6}
\end{figure}

\subsection{SIELM7 scheme}   
The SIELM7 scheme has the discrete coefficient
\begin{align}\label{scheme: SIELM7}
	&\vec{a}_{\mathrm{S}}^{(7)}
	=\big(\gamma ^6+3 \gamma ^5-\tfrac{5 \gamma ^4}{2}+\tfrac{5 \gamma ^3}{3}-\tfrac{3 \gamma ^2}{4}+\tfrac{\gamma }{5}-\tfrac{1}{42},\\
	&\qquad-6 \gamma ^6-12 \gamma ^5+\tfrac{45 \gamma ^4}{2}-\tfrac{40 \gamma ^3}{3}+\tfrac{23 \gamma ^2}{4}-\tfrac{3 \gamma }{2}+\tfrac{37}{210},\notag\\
	&\qquad15 \gamma ^6+15 \gamma ^5-60 \gamma ^4+\tfrac{155 \gamma ^3}{3}-20 \gamma ^2+5 \gamma -\tfrac{241}{420},\notag\\
	&\qquad-20 \gamma ^6+65 \gamma ^4-\tfrac{260 \gamma ^3}{3}+45 \gamma ^2-10 \gamma +\tfrac{153}{140},\notag\\
	&\qquad 15 \gamma ^6-15 \gamma ^5-\tfrac{45 \gamma ^4}{2}+\tfrac{175 \gamma ^3}{3}-\tfrac{185 \gamma ^2}{4}+15 \gamma -\tfrac{197}{140},\notag\\
	&\qquad1-6 \gamma ^6+12 \gamma ^5-\tfrac{15 \gamma ^4}{2}-\tfrac{20 \gamma ^3}{3}+\tfrac{53 \gamma ^2}{4}-\tfrac{77 \gamma }{10}+\tfrac{223}{140},\notag\\
	&\qquad\gamma ^6-3 \gamma ^5+5 \gamma ^4-5 \gamma ^3+3 \gamma ^2-\gamma +\tfrac{1}{7}\big),\notag
\end{align}
while $\vec{b}_{\mathrm{S}}^{(7)}$ and $\vec{c}_{\mathrm{S}}^{(7)}$ can be generated by the 
second and third characteristic polynomials $\varrho_{b,\mathrm{S}}^{(7)}(\zeta):=\zeta(\gamma\zeta-\gamma+1)^{6}$ and $\varrho_{c,\mathrm{S}}^{(7)}(\zeta):=\zeta(\gamma\zeta-\gamma+1)^{6}-\gamma^{6}(\zeta-1)^{7}$,
respectively.
The formula \eqref{scheme: general imex Truncation Error} gives the leading error 
\begin{align}\label{truncation: SIELM7}
	R_{\mathrm{S}}^{(7,\gamma)}=-\tfrac{28 \gamma ^6-56 \gamma ^5+70 \gamma ^4-56 \gamma ^3+28 \gamma ^2-8 \gamma +1}{56}u_t^{(8)}(t_n)\tau^7+\gamma ^6  \mathcal{F}_t^{(7)}[u(t_n)]\tau^7.
\end{align}
By following the proof of Proposition \ref{prop: WBDF3}, one has the following result.
\begin{proposition}\label{prop: SIELM7}
	For the SIMES7 scheme \eqref{scheme: SIELM7} with the free parameter $ 11/5\le\gamma\le60$, it holds that  
	$\sigma_{\mathrm{F},\mathrm{S}}^{(7, \gamma)}=1,$ $\sigma_{\mathrm{E},\mathrm{S}}^{(7,\gamma)}=\frac{\abst{c_{\mathrm{S}}^{(7)}(\pi)}}{\abst{a_{\mathrm{S}}^{(7)}(\pi)}},$
	\begin{align*}
		&\lambda_{\mathrm{I},\mathrm{S}}^{(7, \gamma)}=\frac{105 (1-2 \gamma )^6}{16 \left(420 \gamma ^6-1050 \gamma ^4+1400 \gamma ^3-840 \gamma ^2+252 \gamma -31\right)},
		\\
		&\text{such that}\quad
		\mathfrak{I}_{\mathrm{IE},\mathrm{S}}^{(7, \gamma)}=\frac{(1-2 \gamma )^6}{64 \gamma ^6+192 \gamma ^5-240 \gamma ^4+160 \gamma ^3-60 \gamma ^2+12 \gamma -1}.
	\end{align*}
\end{proposition}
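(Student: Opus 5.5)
The plan follows the template of Proposition \ref{prop: WBDF3}. With $\vec{b}_{\mathrm{S}}^{(7)}$ and $\vec{c}_{\mathrm{S}}^{(7)}$ expanded from $\varrho_{b,\mathrm{S}}^{(7)}$ and $\varrho_{c,\mathrm{S}}^{(7)}$, I write out $a_{\mathrm{S}}^{(7)}(\theta)$, $b_{\mathrm{S}}^{(7)}(\theta)$ and $c_{\mathrm{S}}^{(7)}(\theta)$ via \eqref{matrix: A_L B_L C_L generating function}.

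\textbf{Step 1: the trivial half of each bound.} By Remark \ref{remark: bound quadratic form}, $a^{(7)}_{\mathrm{S}}(\pi)=-\varrho_{a,\mathrm{S}}^{(7)}(-1)$, and similarly for $b$ and $c$. Evaluating at $\theta=0$ and $\theta=\pi$ gives
\begin{align*}
\sigma_{\mathrm{F},\mathrm{S}}^{(7,\gamma)}\ge 1/\abst{a_{\mathrm{S}}^{(7)}(0)}=1,\quad
\sigma_{\mathrm{E},\mathrm{S}}^{(7,\gamma)}\ge \abst{c_{\mathrm{S}}^{(7)}(\pi)}/\abst{a_{\mathrm{S}}^{(7)}(\pi)},\quad
\lambda_{\mathrm{I},\mathrm{S}}^{(7,\gamma)}\le b_{\mathrm{S}}^{(7)}(\pi)/a_{\mathrm{S}}^{(7)}(\pi).
\end{align*}
Next I compute the explicit endpoint values. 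Since $\varrho_b(-1)=-(1-2\gamma)^6$, the value $b(\pi)$ is $(1-2\gamma)^6$ up to sign. Since $\varrho_c(-1)=-(1-2\gamma)^6+128\gamma^6$, the stated denominator $64\gamma^6+192\gamma^5-\cdots$ should come out after dividing by the normalization of $a(\pi)$. Computing $a(\pi)=\sum_j(-1)^ja_{\mathrm{S},j}^{(7)}$ from \eqref{scheme: SIELM7} should give $\frac{16}{105}(420\gamma^6-1050\gamma^4+\cdots-31)$. Taking the ratio $\lambda_{\mathrm{I}}/\sigma_{\mathrm{E}}$ then yields the stated $\mathfrak{I}_{\mathrm{IE},\mathrm{S}}^{(7,\gamma)}$, once the reverse inequalities in Step 2 show that all three extremes are attained at $\theta\in\{0,\pi\}$.

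\textbf{Step 2: reverse inequalities via trigonometric polynomials in $y=\cos\theta$.} I form three auxiliary functions:
\begin{align*}
\mathcal{F}_{\mathrm{S7}}&:=1-a(\theta)a(-\theta),\\
\mathcal{E}_{\mathrm{S7}}&:=\abst{a(\pi)}^2c(\theta)c(-\theta)-\abst{c(\pi)}^2a(\theta)a(-\theta),\\
\mathcal{I}_{\mathrm{S7}}&:=a(\pi)\,\Re[b(\theta)a(-\theta)]-b(\pi)\,a(\theta)a(-\theta).
\end{align*}
Each is a polynomial of degree at most $6$ in $y\in[-1,1]$, with coefficients polynomial in $\gamma$. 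The goal is $\mathcal{F}_{\mathrm{S7}}\le0$, $\mathcal{E}_{\mathrm{S7}}\le0$ and $\mathcal{I}_{\mathrm{S7}}\ge0$ on $[-1,1]$ for $11/5\le\gamma\le60$.

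Two factors come out structurally. $\mathcal{F}_{\mathrm{S7}}$ vanishes at $\theta=0$, and by the order-$7$ consistency it vanishes to high order there, so it factors as $(1-y)P(y)$. Likewise $\mathcal{E}_{\mathrm{S7}}$ and $\mathcal{I}_{\mathrm{S7}}$ vanish at $\theta=\pi$, so they factor as $(1+y)Q(y)$ and $(1+y)R(y)$. This mirrors the $\sin^2\frac\theta2$ and $\cos^2\frac\theta2$ factorizations in the proof of Proposition \ref{prop: WBDF3}. It then remains to show that the quintic cofactors $P$, $Q$, $R$ have a fixed sign on $[-1,1]$.

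\textbf{Main obstacle.} The sign analysis of the quintic cofactors, uniformly over the whole range $\gamma\in[11/5,60]$, is the hard step. For this I would try the substitution $\gamma=11/5+s$ (and separately $\gamma=60-s$), together with $y=-1+2t$, $t\in[0,1]$. The hope is that, in these variables, the cofactors have coefficients in the Bernstein basis of $t$ that are all of one sign as polynomials in $s\ge0$. Where that fails, I would subdivide the $\gamma$-interval and use Sturm sequences, or interval arithmetic, on each piece.

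The upper limit $\gamma\le60$ signals that some coefficient changes sign for large $\gamma$: the unimodality of $\Re[b/a]$ or of $|c/a|$ is lost there. I would therefore expect $\mathcal{E}_{\mathrm{S7}}$ to be the binding constraint near $\gamma=60$, and $\mathcal{I}_{\mathrm{S7}}$ near $\gamma=11/5$. Wherever a fully symbolic certificate is out of reach, I would, as the paper does for Propositions \ref{prop: SIELM4}--\ref{prop: SIELM6}, supplement the argument with plots of the auxiliary functions at representative values of $\gamma$.

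Once the three signs are established, Lemma \ref{lemma: bound quadratic form} and \eqref{intensity: lambda sigma-SIELM} complete the proof.
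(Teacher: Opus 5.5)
Your plan follows the paper's own route. The paper's proof is just ``follow the proof of Proposition~\ref{prop: WBDF3}'': endpoint values at $\theta=0,\pi$ for the easy direction, then sign checks of the auxiliary polynomials in $\cos\theta$ for the reverse one. For the sign step you flag as the main obstacle, the paper offers only plots at $\gamma=11/5,4,10,30,60$, so a completed Bernstein/Sturm certification would go beyond what the paper provides.

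There are two slips, neither of which affects your argument:
\begin{itemize}
\item Since $e^{6\imath\pi}=1$, one has $a_{\mathrm{S}}^{(7)}(\pi)=\varrho_{a,\mathrm{S}}^{(7)}(-1)$ and $c_{\mathrm{S}}^{(7)}(\pi)=\varrho_{c,\mathrm{S}}^{(7)}(-1)=128\gamma^6-(1-2\gamma)^6$. Only $b_{\mathrm{S}}^{(7)}(\pi)=-\varrho_{b,\mathrm{S}}^{(7)}(-1)=(1-2\gamma)^6$ picks up a sign. This is harmless because you compute $a_{\mathrm{S}}^{(7)}(\pi)=\sum_j(-1)^ja_{\mathrm{S},j}^{(7)}>0$ directly.
\item $1-\abst{a_{\mathrm{S}}^{(7)}(\theta)}^2$ has only a simple zero in $1-y$ at $\theta=0$, not a high-order one. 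Its $\theta^2$-coefficient is $-(6\gamma^2-6\gamma+\tfrac1{12})\neq0$. This is harmless because your factorization uses only the simple factor $1-y$.
\end{itemize}
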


As numerical illustrations for the claimed results in Proposition \ref{prop: SIELM7}, Figure \ref{fig: lambda SIELM7} depicts the following three auxiliary functions $\widetilde{\mathcal{F}}_{\mathrm{S7}}(\gamma,\theta)$, $\widetilde{\mathcal{E}}_{\mathrm{S7}}(\gamma,\theta)$ and $\widetilde{\mathcal{I}}_{\mathrm{S7}}(\gamma,\theta)$, defined similar to the functions $\widetilde{\mathcal{F}}_{\mathrm{S5}}(\gamma,\theta)$, $\widetilde{\mathcal{E}}_{\mathrm{S5}}(\gamma,\theta)$ and $\widetilde{\mathcal{I}}_{\mathrm{S5}}(\gamma,\theta)$, respectively, for $\theta\in[0,\pi]$ with  $\gamma=11/5,4,10,30$ and 60. 

\begin{figure}[htb!]
	\centering
	\subfigure[ $\widetilde{\mathcal{F}}_{\mathrm{S7}}(\gamma,\theta)$]
	{\includegraphics[width=1.67in]{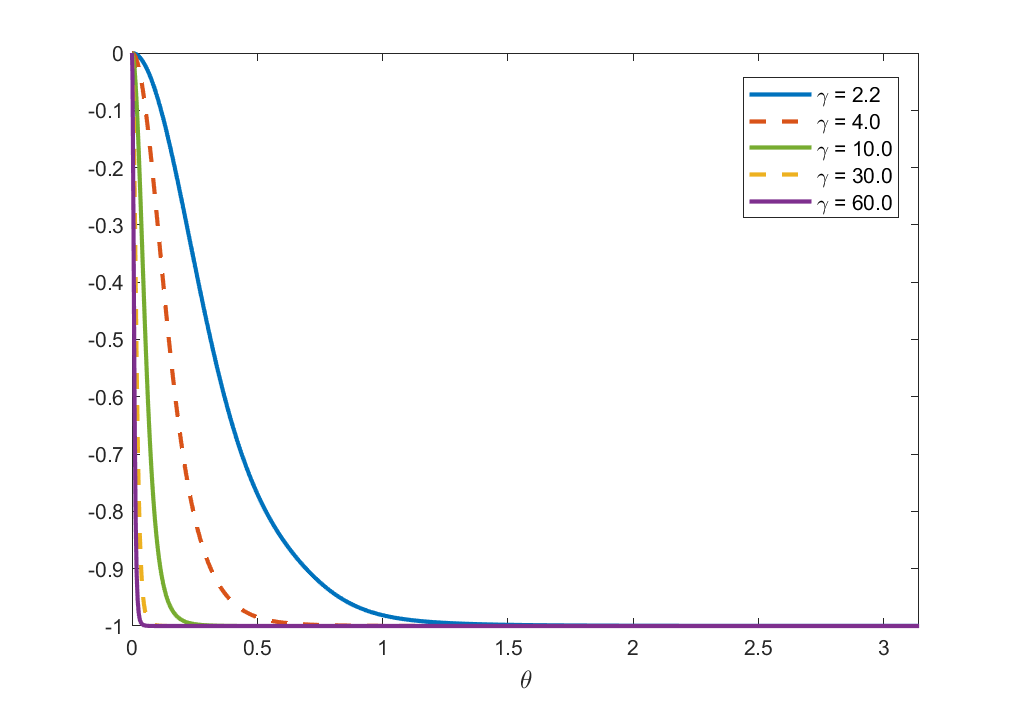}}
	\subfigure[ $\widetilde{\mathcal{E}}_{\mathrm{S7}}(\gamma,\theta)$]
	{\includegraphics[width=1.67in]{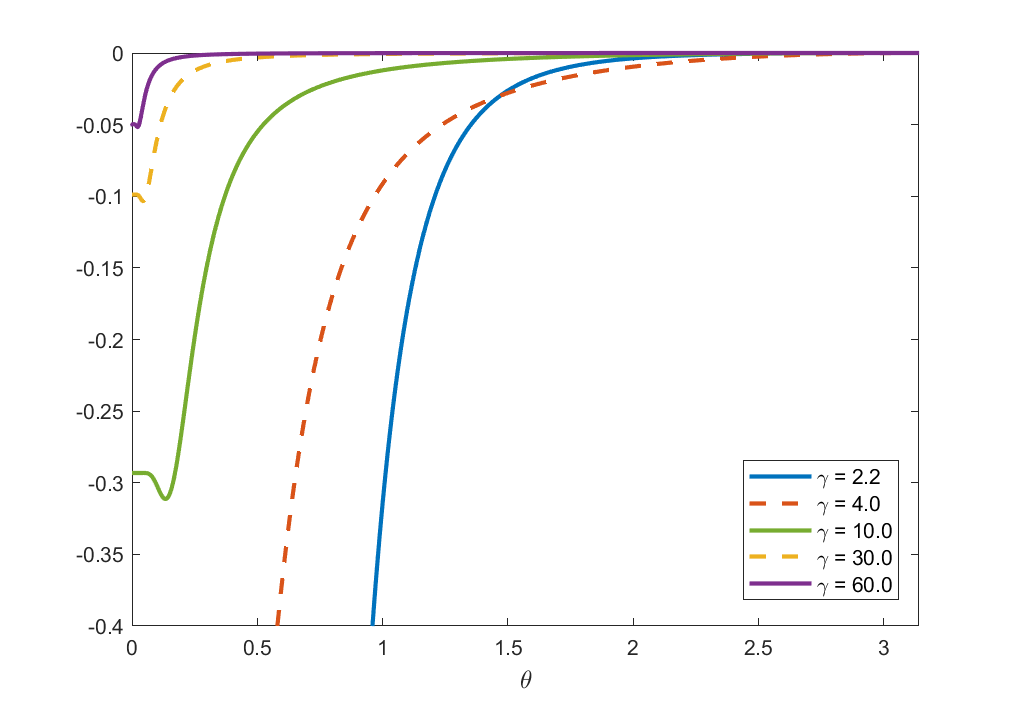}}
	\subfigure[ $\widetilde{\mathcal{I}}_{\mathrm{S7}}(\gamma,\theta)$]
	{\includegraphics[width=1.67in]{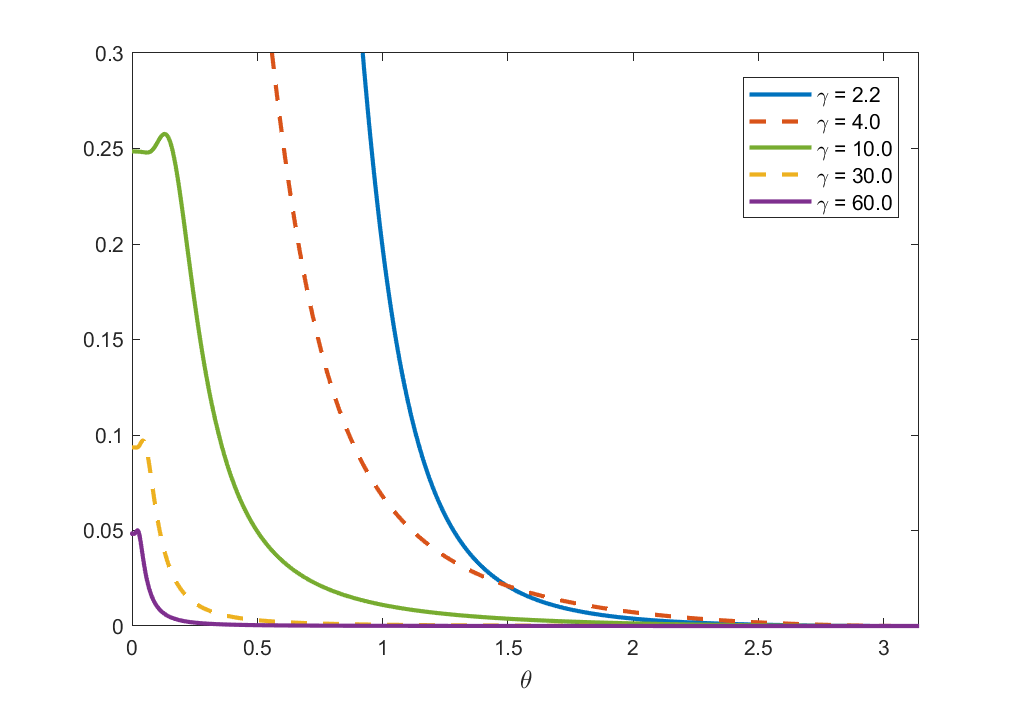}}
	\caption{Curves of $\widetilde{\mathcal{F}}_{\mathrm{S7}}(\gamma,\theta)$, $\widetilde{\mathcal{E}}_{\mathrm{S7}}(\gamma,\theta)$ and $\widetilde{\mathcal{I}}_{\mathrm{S7}}(\gamma,\theta)$ for $\theta\in[0,\pi]$.}
	\label{fig: lambda SIELM7}
\end{figure}

\subsection{SIELM8 scheme}   
The discrete coefficients  $\vec{a}_{\mathrm{S}}^{(8)}$, $\vec{b}_{\mathrm{S}}^{(8)}$ and $\vec{c}_{\mathrm{S}}^{(8)}$  of the SIELM8 scheme
can be generated by the three
characteristic polynomials 
$${\varrho}_{a,\mathrm{S}}^{(8)}(\zeta):=\sum_{j=1}^{9}\frac{f_{\mathrm{S}}^{(j)}(1)}{j!}(\zeta-1)^{j-1}\quad \text{with}\quad f_{\mathrm{S}}(z)=(\gamma z-\gamma+1)^{7}z\ln z,$$ 
$\varrho_{b,\mathrm{S}}^{(8)}(\zeta):=\zeta(\gamma\zeta-\gamma+1)^{7}$ and $\varrho_{c,\mathrm{S}}^{(8)}(\zeta):=\zeta(\gamma\zeta-\gamma+1)^{7}-\gamma^{7}(\zeta-1)^{8}$,
respectively.
The formula \eqref{scheme: general imex Truncation Error} gives the leading error 
\begin{align}\label{truncation: SIELM8}
	R_{\mathrm{S}}^{(8,\gamma)}=\tfrac{-36 \gamma ^7\!+84 \gamma ^6\!-126 \gamma ^5\!+126 \gamma ^4\!-84 \gamma ^3\!+36 \gamma ^2\!-9 \gamma +1}{72}u_t^{(9)}\!(t_n)\tau^8\!+\!\gamma ^7 \!\mathcal{F}_t^{(8)}\![u(t_n)]\tau^8.
\end{align}

\begin{figure}[htb!]
	\centering
	\subfigure[ $\widetilde{\mathcal{F}}_{\mathrm{S8}}(\gamma,\theta)$]
	{\includegraphics[width=1.67in]{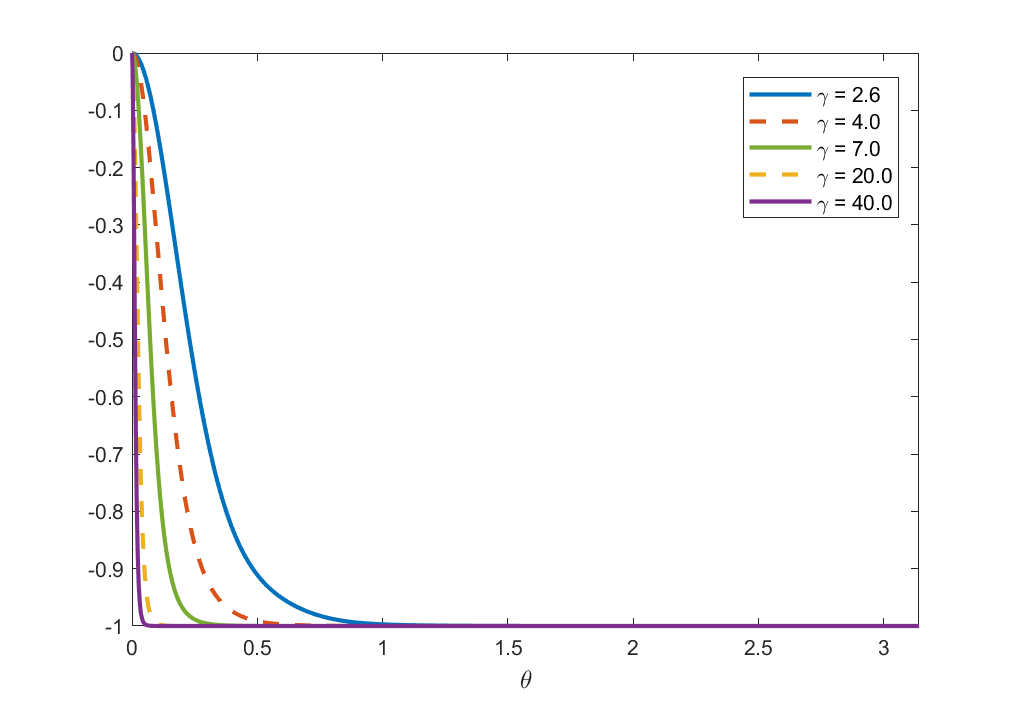}}
	\subfigure[ $\widetilde{\mathcal{E}}_{\mathrm{S8}}(\gamma,\theta)$]
	{\includegraphics[width=1.67in]{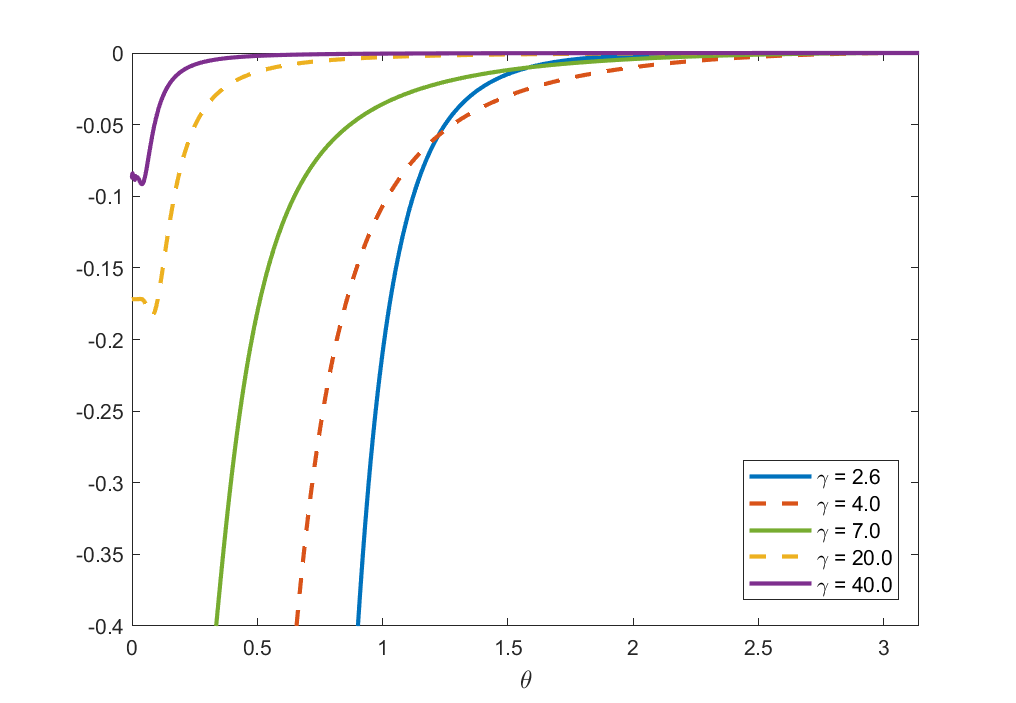}}
	\subfigure[ $\widetilde{\mathcal{I}}_{\mathrm{S8}}(\gamma,\theta)$]
	{\includegraphics[width=1.67in]{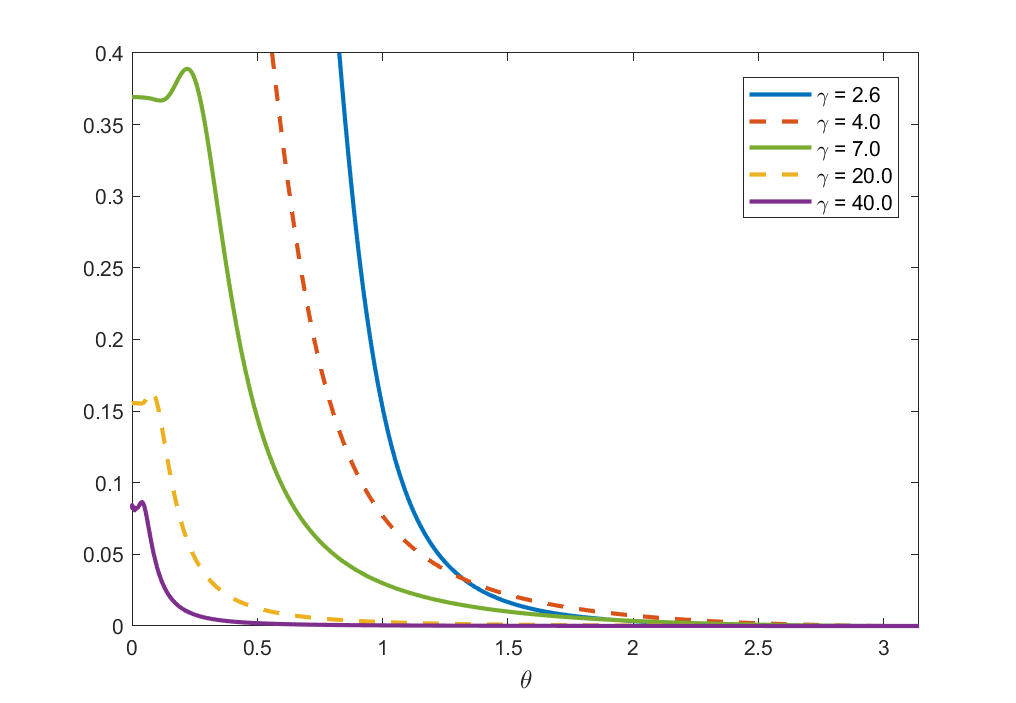}}
	\caption{Curves of $\widetilde{\mathcal{F}}_{\mathrm{S8}}(\gamma,\theta)$, $\widetilde{\mathcal{E}}_{\mathrm{S8}}(\gamma,\theta)$ and $\widetilde{\mathcal{I}}_{\mathrm{S8}}(\gamma,\theta)$ for $\theta\in[0,\pi]$.}
	\label{fig: lambda SIELM8}
\end{figure} 

By following the proof of Proposition \ref{prop: WBDF3}, one has the following result.
\begin{proposition}\label{prop: SIELM8}
	For the SIMES8 scheme with  $ 13/5\le\gamma\le40$, it holds that $\sigma_{\mathrm{F},\mathrm{S}}^{(8, \gamma)}=1,$ $\sigma_{\mathrm{E},\mathrm{S}}^{(8,\gamma)}=\frac{\abst{c_{\mathrm{S}}^{(8)}(\pi)}}{\abst{a_{\mathrm{S}}^{(8)}(\pi)}},$
	\begin{align*}
		&	\lambda_{\mathrm{I},\mathrm{S}}^{(8, \gamma)}=\frac{105 (2 \gamma -1)^7}{32 \left(420 \gamma ^7-1470 \gamma ^5+2450 \gamma ^4-1960 \gamma ^3+882 \gamma ^2-217 \gamma +23\right)},
		\\
		&\text{such that}\quad
		\mathfrak{I}_{\mathrm{IE},\mathrm{S}}^{(8, \gamma)}=\frac{(2 \gamma -1)^7}{128 \gamma ^7+448 \gamma ^6-672 \gamma ^5+560 \gamma ^4-280 \gamma ^3+84 \gamma ^2-14 \gamma +1}.
	\end{align*}
\end{proposition}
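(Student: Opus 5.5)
The plan follows the template of Proposition \ref{prop: WBDF3}: first produce the candidate extreme values by evaluating at $\theta=0$ and $\theta=\pi$, then show that these values are global extrema by proving sign conditions on trigonometric polynomials in $y=\cos\theta$.

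\textbf{Step 1: candidate values.} I write out the SIELM8 coefficients by expanding $\varrho_{a,\mathrm{S}}^{(8)}$, $\varrho_{b,\mathrm{S}}^{(8)}$ and $\varrho_{c,\mathrm{S}}^{(8)}$. The semi-generating functions then follow from Remark \ref{remark: bound quadratic form}, namely $a_{\mathrm{S}}^{(8)}(\theta)=e^{7\imath\theta}\varrho_{a,\mathrm{S}}^{(8)}(e^{-\imath\theta})$ and its analogues for $b$ and $c$. Consistency gives $a_{\mathrm{S}}^{(8)}(0)=1$, so $\sigma_{\mathrm{F},\mathrm{S}}^{(8,\gamma)}\ge1$. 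At $\zeta=-1$ we have
$\varrho_{b,\mathrm{S}}^{(8)}(-1)=-(1-2\gamma)^7=(2\gamma-1)^7$ up to the sign factor $e^{8\imath\pi}$, and $\varrho_{c,\mathrm{S}}^{(8)}(-1)=(2\gamma-1)^7-2^8\gamma^7$. The remaining value $\varrho_{a,\mathrm{S}}^{(8)}(-1)$ is computed from the truncated Taylor series of $f_{\mathrm{S}}$. These evaluations should give the displayed value $\lambda_{\mathrm{I},\mathrm{S}}^{(8,\gamma)}=b_{\mathrm{S}}^{(8)}(\pi)/a_{\mathrm{S}}^{(8)}(\pi)$, with $a_{\mathrm{S}}^{(8)}(\pi)$ proportional to $420\gamma^7-1470\gamma^5+\cdots+23$. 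The ratio $\abst{c_{\mathrm{S}}^{(8)}(\pi)}/\abst{a_{\mathrm{S}}^{(8)}(\pi)}$ is taken as the candidate for $\sigma_{\mathrm{E}}$. Since both $\lambda_{\mathrm{I}}$ and $\sigma_{\mathrm{E}}$ carry the same denominator $\abst{a_{\mathrm{S}}^{(8)}(\pi)}$, the stated formula for $\mathfrak{I}_{\mathrm{IE},\mathrm{S}}^{(8,\gamma)}$ follows. I will check directly that $2^8\gamma^7-(2\gamma-1)^7$ equals the displayed degree-seven denominator. The definitions \eqref{def: lambda sigma} immediately give the one-sided bounds: $\sigma_{\mathrm{F}}$ and $\sigma_{\mathrm{E}}$ are at least their candidates, and $\lambda_{\mathrm{I}}$ is at most its candidate.

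\textbf{Step 2: reverse inequalities.} Writing $M_{\mathrm{E}}:=\abst{c_{\mathrm{S}}^{(8)}(\pi)}/\abst{a_{\mathrm{S}}^{(8)}(\pi)}$ and $\lambda_*:=\lambda_{\mathrm{I},\mathrm{S}}^{(8,\gamma)}$, I introduce three auxiliary functions:
$\mathcal{F}_{\mathrm{S8}}:=1-a_{\mathrm{S}}^{(8)}(\theta)a_{\mathrm{S}}^{(8)}(-\theta)$,
$\mathcal{E}_{\mathrm{S8}}:=\abst{a_{\mathrm{S}}^{(8)}(\pi)}^2c_{\mathrm{S}}^{(8)}(\theta)c_{\mathrm{S}}^{(8)}(-\theta)-\abst{c_{\mathrm{S}}^{(8)}(\pi)}^2a_{\mathrm{S}}^{(8)}(\theta)a_{\mathrm{S}}^{(8)}(-\theta)$, and
$\mathcal{I}_{\mathrm{S8}}:=\Re\kbrab{b_{\mathrm{S}}^{(8)}(\theta)a_{\mathrm{S}}^{(8)}(-\theta)}-\lambda_*\,a_{\mathrm{S}}^{(8)}(\theta)a_{\mathrm{S}}^{(8)}(-\theta)$.
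Each is a real cosine polynomial of degree at most $7$, hence a polynomial in $y=\cos\theta\in[-1,1]$. The goal is to show
\begin{align*}
\mathcal{F}_{\mathrm{S8}}\le0,\qquad \mathcal{E}_{\mathrm{S8}}\le0,\qquad \mathcal{I}_{\mathrm{S8}}\ge0\qquad\text{for } 13/5\le\gamma\le40.
\end{align*}

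By construction, $\mathcal{F}_{\mathrm{S8}}$ vanishes at $y=1$, while $\mathcal{E}_{\mathrm{S8}}$ and $\mathcal{I}_{\mathrm{S8}}$ vanish at $y=-1$. So I factor out $\sin^2\frac\theta2=(1-y)/2$ from $\mathcal{F}_{\mathrm{S8}}$, and $\cos^2\frac\theta2=(1+y)/2$ from $\mathcal{E}_{\mathrm{S8}}$ and $\mathcal{I}_{\mathrm{S8}}$. This leaves degree-six polynomials $P_\gamma(y)$ whose sign must be fixed on $[-1,1]$.

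\textbf{Step 3: the sign of $P_\gamma$ (main obstacle).} I expect this to be the main obstacle, because for degree six in $y$ with coefficients of degree up to $14$ in $\gamma$ the closed-form extremum search used for Proposition \ref{prop: GBDF3} is no longer available. My first attempt is to substitute $y=-1+2s$ with $s\in[0,1]$, or $y=1-2s$ for $\mathcal{F}_{\mathrm{S8}}$, and $\gamma=13/5+t$. I would then expand in powers of $s$ and $t$ and hope all coefficients share one sign on the bounded box $s\in[0,1]$, $t\in[0,187/5]$. Where that fails, I would fall back to a Sturm sequence or interval-arithmetic certification in $(y,\gamma)$ over the compact box. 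This certification must also show that $a_{\mathrm{S}}^{(8)}(\pi)$ and $c_{\mathrm{S}}^{(8)}(\pi)$ keep fixed signs.

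The upper limit $\gamma\le40$ presumably marks where a sign coefficient changes, with unimodality failing near $\theta=\pi$. The lower limit $13/5$ is where $\mathcal{I}_{\mathrm{S8}}$ first touches zero in the interior. Figure \ref{fig: lambda SIELM8} indicates both. For the rigorous part I would rely on the computer-assisted certification, with Lemma \ref{lemma: bound quadratic form} supplying the interpretation of the resulting bounds.
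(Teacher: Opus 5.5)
Your proposal is correct in outline and follows the same route as the paper. The paper's proof of Proposition~\ref{prop: SIELM8} only says to follow Proposition~\ref{prop: WBDF3}: take the one-sided bounds from $\theta=0$ and $\theta=\pi$, then establish sign conditions on $\mathcal{F}_{\mathrm{S8}}$, $\mathcal{E}_{\mathrm{S8}}$ and $\mathcal{I}_{\mathrm{S8}}$; it supports that final sign step only with the plots in Figure~\ref{fig: lambda SIELM8} at $\gamma=13/5,4,7,20,40$. So your certified check over $(y,\gamma)\in[-1,1]\times[13/5,40]$ would be stronger than what the paper provides; your remarks about what happens at $\gamma=13/5$ and $\gamma=40$ are speculation and are not needed for the argument.
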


As numerical illustrations for the claimed results in Proposition \ref{prop: SIELM8}, Figure \ref{fig: lambda SIELM8} depicts the following three auxiliary functions $\widetilde{\mathcal{F}}_{\mathrm{S8}}(\gamma,\theta)$, $\widetilde{\mathcal{E}}_{\mathrm{S8}}(\gamma,\theta)$ and $\widetilde{\mathcal{I}}_{\mathrm{S8}}(\gamma,\theta)$, defined similar to the functions $\widetilde{\mathcal{F}}_{\mathrm{S5}}(\gamma,\theta)$, $\widetilde{\mathcal{E}}_{\mathrm{S5}}(\gamma,\theta)$ and $\widetilde{\mathcal{I}}_{\mathrm{S5}}(\gamma,\theta)$, respectively, for $\theta\in[0,\pi]$ with  $\gamma=13/5,4,7,20$ and 40.

\subsection{SIELM9 scheme}   
The discrete coefficients  $\vec{a}_{\mathrm{S}}^{(9)}$, $\vec{b}_{\mathrm{S}}^{(9)}$ and $\vec{c}_{\mathrm{S}}^{(9)}$  of the SIELM9 scheme
can be generated by the three
characteristic polynomials 
$${\varrho}_{a,\mathrm{S}}^{(9)}(\zeta):=\sum_{j=1}^{9}\frac{f_{\mathrm{S}}^{(j)}(1)}{j!}(\zeta-1)^{j-1}\quad \text{with}\quad f_{\mathrm{S}}(z)=(\gamma z-\gamma+1)^{8}z\ln z,$$ 
$\varrho_{b,\mathrm{S}}^{(9)}(\zeta):=\zeta(\gamma\zeta-\gamma+1)^{8}$ and $\varrho_{c,\mathrm{S}}^{(9)}(\zeta):=\zeta(\gamma\zeta-\gamma+1)^{8}-\gamma^{8}(\zeta-1)^{9}$,
respectively.
The formula \eqref{scheme: general imex Truncation Error} gives the leading error 
\begin{align}\label{truncation: SIELM9}
	R_{\mathrm{S}}^{(9,\gamma)}=&\,\tfrac{-36 \gamma ^7+84 \gamma ^6-126 \gamma ^5+126 \gamma ^4-84 \gamma ^3+36 \gamma ^2-9 \gamma +1}{72}u_t^{(10)}(t_n)\tau^9\\
	&\,+\gamma ^8 \mathcal{F}_t^{(9)}[u(t_n)]\tau^9.\notag
\end{align}

\begin{figure}[htb!]
	\centering
	\subfigure[ $\widetilde{\mathcal{F}}_{\mathrm{S9}}(\gamma,\theta)$]
	{\includegraphics[width=1.67in]{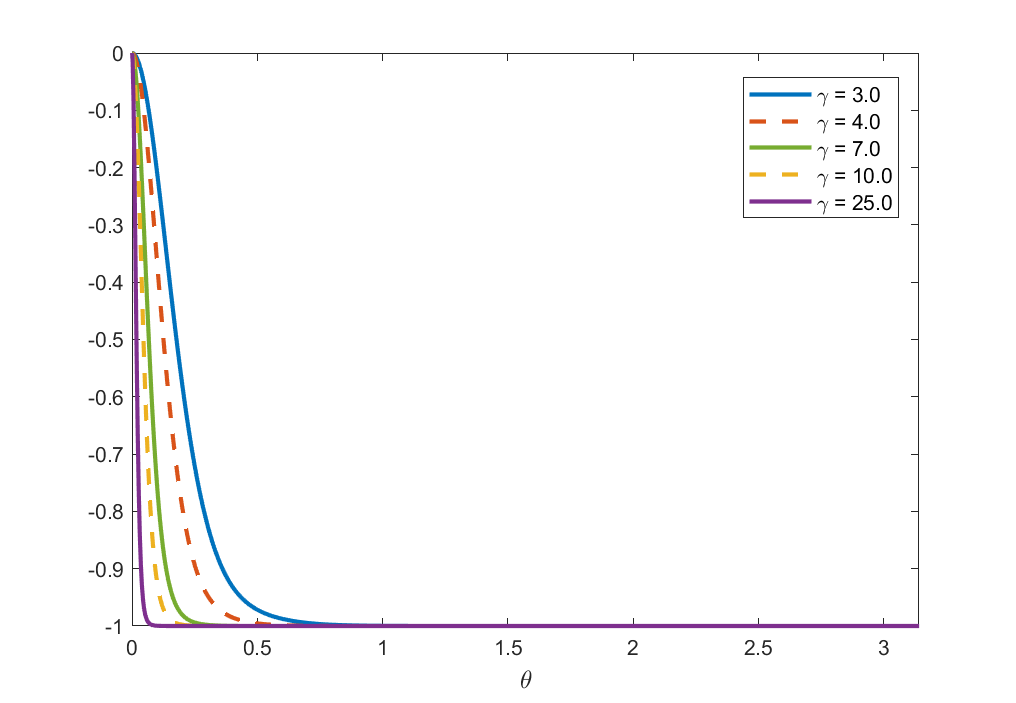}}
	\subfigure[ $\widetilde{\mathcal{E}}_{\mathrm{S9}}(\gamma,\theta)$]
	{\includegraphics[width=1.67in]{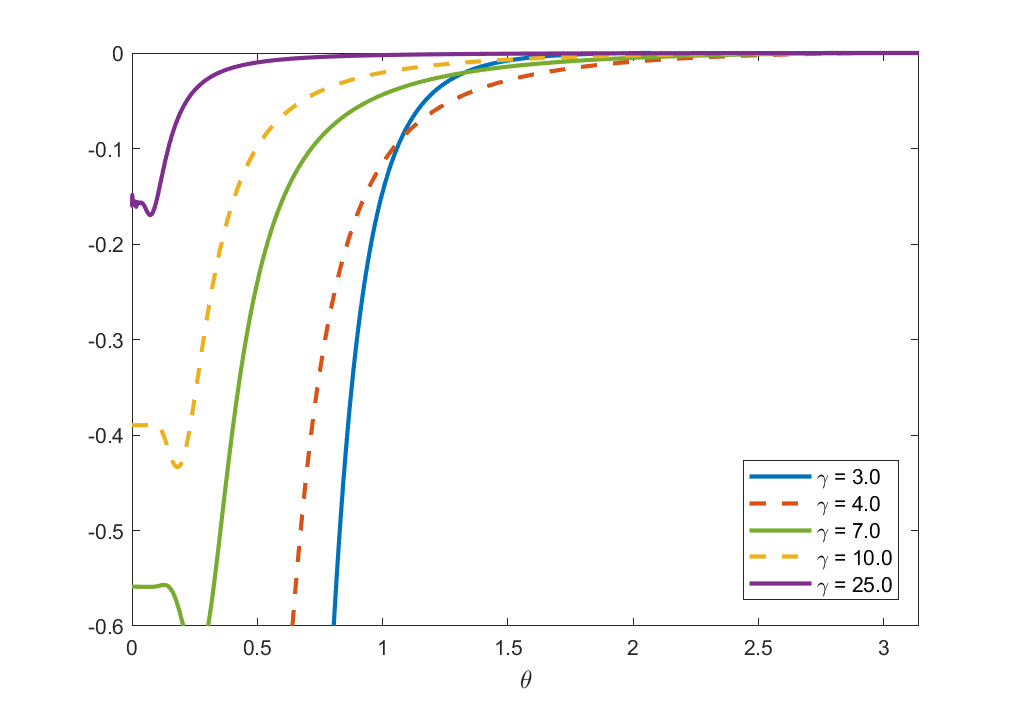}}
	\subfigure[ $\widetilde{\mathcal{I}}_{\mathrm{S9}}(\gamma,\theta)$]
	{\includegraphics[width=1.67in]{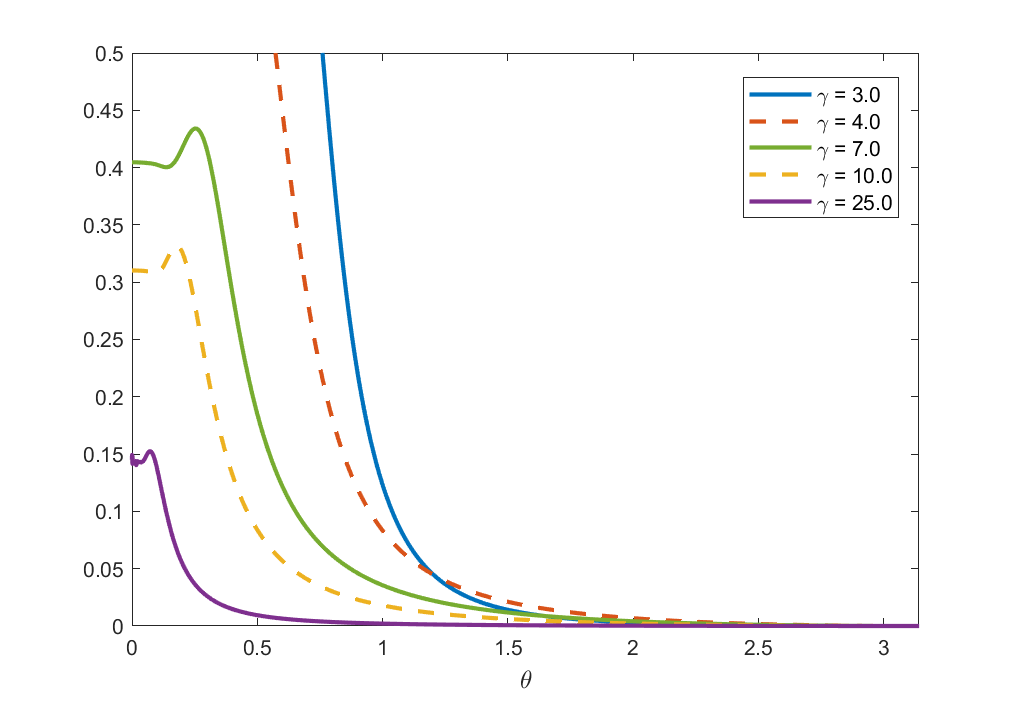}}
	\caption{Curves of $\widetilde{\mathcal{F}}_{\mathrm{S9}}(\gamma,\theta)$, $\widetilde{\mathcal{E}}_{\mathrm{S9}}(\gamma,\theta)$ and $\widetilde{\mathcal{I}}_{\mathrm{S9}}(\gamma,\theta)$ for $\theta\in[0,\pi]$.}
	\label{fig: lambda SIELM9}
\end{figure} 

By following the proof of Proposition \ref{prop: WBDF3}, one has the following result.
\begin{proposition}\label{prop: SIELM9}
	For the SIMES9 scheme with the parameter $ 3\le\gamma\le25$, it holds that $\sigma_{\mathrm{F},\mathrm{S}}^{(9, \gamma)}=1,$ $\sigma_{\mathrm{E},\mathrm{S}}^{(9,\gamma)}
	=\frac{\abst{c_{\mathrm{S}}^{(9)}(\pi)}}{\abst{a_{\mathrm{S}}^{(9)}(\pi)}},$
	\begin{align*}
		&	\lambda_{\mathrm{I},\mathrm{S}}^{(9, \gamma)}=\frac{315 (1-2 \gamma )^8}{256 \left(315 \gamma ^8-1470 \gamma ^6+2940 \gamma ^5-2940 \gamma ^4+1764 \gamma ^3-651 \gamma ^2+138 \gamma -13\right)},
		\\
		&
		\mathfrak{I}_{\mathrm{IE},\mathrm{S}}^{(9, \gamma)}=\frac{(1-2\gamma)^8}{256\gamma^8 + 1024\gamma^7 - 1792\gamma^6 + 1792\gamma^5 - 1120\gamma^4 + 448\gamma^3 - 112\gamma^2 + 16\gamma - 1}.
	\end{align*}
\end{proposition}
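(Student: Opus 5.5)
The plan follows the pattern of Proposition \ref{prop: WBDF3}. The values at $\theta=\pi$ and $\theta=0$ give the reverse inequalities for free, so the work is to show that these endpoint values are global extrema on $[0,2\pi)$.

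\textbf{Step 1 (free bounds).} I would write out the semi-generating functions $a_{\mathrm{S}}^{(9)}(\theta)$, $b_{\mathrm{S}}^{(9)}(\theta)$, $c_{\mathrm{S}}^{(9)}(\theta)$. Their coefficients come from expanding $\varrho_{a,\mathrm{S}}^{(9)}$, $\varrho_{b,\mathrm{S}}^{(9)}$, $\varrho_{c,\mathrm{S}}^{(9)}$ and using Remark \ref{remark: bound quadratic form}, which gives $a^{(9)}(\theta)=e^{8\imath\theta}\varrho_a(e^{-\imath\theta})$ and the analogous formulas for $b$ and $c$. Consistency (Lemma \ref{lemma: IELM stability intensity upper bound}) gives $a(0)=b(0)=c(0)=1$, so $\sigma_{\mathrm{F},\mathrm{S}}^{(9,\gamma)}\ge1$. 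Evaluating at $\theta=\pi$, i.e.\ $\zeta=-1$, gives the lower bound $\sigma_{\mathrm{E},\mathrm{S}}^{(9,\gamma)}\ge|c(\pi)|/|a(\pi)|$. It also gives $\lambda_{\mathrm{I},\mathrm{S}}^{(9,\gamma)}\le b(\pi)/a(\pi)$. Here $b(\pi)=\pm(1-2\gamma)^8$ in magnitude, and $a(\pi)$ is computed from $f_{\mathrm{S}}$, producing the stated rational expression with denominator $256(315\gamma^8-\cdots)$. The ratio of these two quantities yields the formula for $\mathfrak{I}_{\mathrm{IE},\mathrm{S}}^{(9,\gamma)}$, after checking that $|c(\pi)|$ simplifies to the displayed denominator times the common factor.

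\textbf{Step 2 (reduction to polynomials in $y=\cos\theta$).} For the reverse inequalities I would form the three auxiliary trigonometric functions
\[
\mathcal{F}_{\mathrm{S9}}:=1-|a(\theta)|^2,\qquad
\mathcal{E}_{\mathrm{S9}}:=|a(\pi)|^2|c(\theta)|^2-|c(\pi)|^2|a(\theta)|^2,\qquad
\mathcal{I}_{\mathrm{S9}}:=a(\pi)\,\Re\bigl[b(\theta)\overline{a(\theta)}\bigr]-b(\pi)|a(\theta)|^2 .
\]
The goal is to show $\mathcal{F}_{\mathrm{S9}}\le0$, $\mathcal{E}_{\mathrm{S9}}\le0$ and $\mathcal{I}_{\mathrm{S9}}\ge0$ on $[0,\pi]$; evenness in $\theta$ then covers $[0,2\pi)$. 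Each function is a polynomial in $y=\cos\theta$ with coefficients polynomial in $\gamma$: degree $8$ in $y$ for $\mathcal{F}_{\mathrm{S9}}$ and $\mathcal{E}_{\mathrm{S9}}$, and degree at most $9$ for $\mathcal{I}_{\mathrm{S9}}$.

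\textbf{Step 3 (factor out the vanishing endpoint).} $\mathcal{F}_{\mathrm{S9}}$ vanishes at $y=1$; by the order conditions it vanishes to high order there, so it factors as $\sin^2(\theta/2)$ times a polynomial. Similarly $\mathcal{E}_{\mathrm{S9}}$ and $\mathcal{I}_{\mathrm{S9}}$ vanish at $y=-1$ and factor as $\cos^2(\theta/2)$ times a polynomial. I would extract these factors and then prove a definite sign for the cofactor $P(\gamma,y)$ on $[-1,1]\times[3,25]$.

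\textbf{Step 4 (main obstacle).} The cofactors are degree about $7$ in $y$ and $16$ in $\gamma$, so a hand argument is not feasible, and the restriction $\gamma\le25$ shows that positivity genuinely fails outside the window. My first attempt would be to expand $P$ in powers of $(1-y)$ or $(1+y)$ and check that all coefficients have a fixed sign for $\gamma\in[3,25]$, each coefficient being a univariate polynomial in $\gamma$ verifiable by Sturm sequences. If some coefficients change sign, I would switch to a Bernstein expansion on $[-1,1]$ combined with a subdivision of the $\gamma$ interval, certified by exact rational arithmetic. This is the step I expect to be hard, and in the spirit of the paper it would be supplemented by the numerical curves in Figure \ref{fig: lambda SIELM9}.
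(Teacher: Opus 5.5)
Your proposal is correct and follows the paper's route. The paper proves this proposition only by ``following the proof of Proposition~\ref{prop: WBDF3}'': endpoint values at $\theta=0,\pi$, then a sign check of the auxiliary polynomials in $\cos\theta$ after extracting $\sin^2\frac{\theta}{2}$ or $\cos^2\frac{\theta}{2}$, with the sign claims supported only by the curves in Figure~\ref{fig: lambda SIELM9} for $\gamma=3,4,7,10,25$. A certified Sturm/Bernstein check as in your Step~4 would therefore be more rigorous than what the paper offers.

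One harmless slip: $1-|a_{\mathrm{S}}^{(9)}(\theta)|^2$ does not vanish to high order at $y=1$; it has only a simple zero there, since $|a_{\mathrm{S}}^{(9)}(\theta)|^2=1+\bigl(8\gamma^2-8\gamma+\tfrac{1}{12}\bigr)\theta^2+O(\theta^4)$.
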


As numerical illustrations for the claimed results in Proposition \ref{prop: SIELM9}, Figure \ref{fig: lambda SIELM9} depicts the following three auxiliary functions $\widetilde{\mathcal{F}}_{\mathrm{S9}}(\gamma,\theta)$, $\widetilde{\mathcal{E}}_{\mathrm{S9}}(\gamma,\theta)$ and $\widetilde{\mathcal{I}}_{\mathrm{S9}}(\gamma,\theta)$, defined similar to the functions $\widetilde{\mathcal{F}}_{\mathrm{S5}}(\gamma,\theta)$, $\widetilde{\mathcal{E}}_{\mathrm{S5}}(\gamma,\theta)$ and $\widetilde{\mathcal{I}}_{\mathrm{S5}}(\gamma,\theta)$, respectively, for $\theta\in[0,\pi]$ with  $\gamma=3,4,7,10$ and 25.

\end{document}